\newtheorem{theorem}{Theorem}[section]
\newtheorem{lemma}[theorem]{Lemma}
\newtheorem{proposition}[theorem]{Proposition}
\theoremstyle{definition}
\newtheorem{remark}[theorem]{Remark}
\numberwithin{equation}{section}
\renewcommand{\Im}{{\ensuremath{\mathrm{Im\,}}}} 
\renewcommand{\Re}{{\ensuremath{\mathrm{Re\,}}}} 
\renewcommand{\div}{\mathrm{div}\,}    
\newcommand\restr[2]{{
  \left.\kern-\nulldelimiterspace 
  #1 
  \vphantom{\big|} 
  \right|_{#2} 
  }}
\title[The Electromagnetism via Dielectric Nanoparticles]{The Electromagnetic Waves Generated by Dielectric Nanoparticles}
\author[Cao, Ghandriche and Sini]{Xinlin Cao $^*$ Ahcene Ghandriche  $^{**}$ and Mourad Sini$^{\ddag}$}
\thanks{$^*$ RICAM, Austrian Academy of Sciences, Altenbergerstrasse 69, A-4040, Linz, Austria. Email: xinlin.cao@ricam.oeaw.ac.at. This author is supported by the Austrian Science Fund (FWF): P 32660}
\thanks{$^{**}$ RICAM, Austrian Academy of Sciences, Altenbergerstrasse 69, A-4040, Linz, Austria. Email: ahcene.ghandriche@ricam.oeaw.ac.at. This author is supported by the Austrian Science Fund (FWF): P 30756-NBL}
\thanks{$^{\ddag}$ RICAM, Austrian Academy of Sciences, Altenbergerstrasse 69, A-4040, Linz, Austria. Email: mourad.sini@oeaw.ac.at. This author is partially supported by the Austrian Science Fund (FWF): P 30756-NBL and P 32660}
\begin{document}


\allowdisplaybreaks

\begin{abstract}
We estimate the electromagnetic fields generated by a cluster of dielectric nanoparticles embedded into a background made of a vacuum. The dielectric nanoparticles are small scaled but enjoy high contrast of their relative permittivity. Such scales/contrasts can be ensured using the Lorentz model with incident frequencies chosen appropriately close to the undamped resonance (appearing in the Lorentz model). Under certain ratio between their size and contrast, these nanoparticles generate resonances, called dielectric resonances. These resonances are characterized and computed via the spectrum of the electric Newtonian operator, stated on the support of nanoparticles, projected on the space of divergence-free fields with vanishing boundary normal components. We characterize the dominant field generated by a cluster of such dielectric-resonating nanoparticles. In this point-interaction approximation, the nanoparticles can be distributed to occupy volume-like domains or low dimensional hypersurfaces where periodicity is not required. The form of these approximations suggests that the effective electromagnetic medium, equivalent to the cluster of such nanoparticles, is a perturbation of the magnetic permeability and not the electric permittivity. The cluster can be tuned such that the equivalent permeability has positive or negative values (while the permittivity stays unchanged).
\end{abstract}

\subjclass[2010]{35R30, 35C20}
\keywords{Electromagnetism, Foldy-Lax approximation, Dielectric nanoparticles, dielectric resonances.}

\maketitle
\section{Introduction}\label{prelimilary}

\subsection{Background}
 In the recent years, in the engineering community, there was a high gain of interests in studying electromagnetic wave propagating in resonating dielectric nanostructures \cite{Bohren-Huffmann, PD-L-K, J-J:2016, K-M-B-K-L:2016}. These media are composed of the homogeneous background in which we inject nano-scaled particles. In addition to be small scaled, these particles enjoy  high values of their relative index of refraction with a relatively small $Q-$factor (i.e. the ratio between the absorption and diffusion coefficients is very small). These properties allow them to enjoy interesting and useful optical properties. The smallness of the $Q-$factor allow them to be less loosy as compared with other types of nanoparticles as the plasmonic ones. As such, they are more suited to be used in imaging techniques that require remote measurements, for instance. The high contrasts of the refraction index allow them to resonate at certain scales size/contrast (or at certain ranges of incident frequencies). Such resonance's effects are very attractive in both imaging and material sciences. They are also potentially applicable for the design of highly nonlinear material (i.e. the Kerker effect).   
\bigskip

Our interest in this topic is to understand the interaction between the light and the nanostructure. Precisely, we want to estimate the perturbations of the used incident field due to the presence of these nanostructures. 
Looking at the problem under this angle, we wish to derive the dominating terms in the expansion of the scattered field taking into account the whole structure of the composite, namely the (potentially high) number of the nanoparticles, their sizes and the high contrasts of the related indices of refraction. The derived close form of the dominating term will allow us to tune the structure, at will, so that the equivalent material will enjoy needed properties as sign changing and nonlinearity of the effective electric or magnetic susceptibility. 

\bigskip

In this work, we focus on deriving these approximations for general shapes. This follows the lines of our previous works on point-interaction approximations (called also the Foldy-Lax approximation) for the electromagnetic fields, \cite{Ali, Ali-2, Cao-Sini}.  In \cite{Ali, Ali-2}, we derived the close form of the dominating term in the case that the contrasts of the permittivity and the permeability of the particles are moderate. This dominating term is given in a form of a superposition of electric poles and magnetic dipoles with attached weights. These attached weights (that encode the structure of the composite) are vectors that are computable by solving an invertible algebraic system. The analysis is based on the related Lippmann-Schwinger system of equations coupling the electric and magnetic fields. This Lippmann-Schwinger system is defined through an operator that couples the electric vector Newtonian operator and the Magnetization operator. As the contrasts of the particles are moderate, both the two operators play equivalent roles. That is why the derived approximation in \cite{Ali, Ali-2} involves contributions from the electric and magnetic polarization tensor with equivalent roles. The equivalent media generated by such moderately contrasting nanoparticles have been derived in \cite{Cao-Sini} that confirms that both the electric permittivity and the magnetic permeability are perturbed. However, in the case of dielectric nanoparticles, as in our current work, the permittivity has high contrast while the permeability stays moderate (or unchanged). In this case, the effect of the vector Newtonian operator dominates the one of the Magnetization operator. In addition, under critical scales between the sizes and the contrasts, of the particles, one can excite a subfamily of the eigenvalues of the Newtonian operator. These are related to the projection of this Newtonian operator on the subspace of divergence free fields with zero normal components. This makes the analysis more involved and subtle than that in \cite{Ali, Ali-2}. However, this is worth the efforts. Indeed, we show that even though the contrasts of the nanoparticles are due to the permittivity, the dominating field is of magnetic type.
In other words, the point-interaction approximation, due to the presence of the whole cluster of dielectric nanoparticles, suggests that the effective medium will be a perturbation of the magnetic permeability and not the electric permittivity. The justification of such effective medium, in different scenarios including the Moir\'e metamaterials, see \cite{Wu-Zheng}, will be done in a forthcoming work.  
\bigskip

Wave propagation in resonating media is highly
attractive and, in the very recent years, we witness a rapid growth of the number of published works
in the mathematical community (in addition to the relatively large engineering literature). 
These resonances occur if the particles are small-sized and enjoy high or negative contrasts of their materials. In effect, they are related to the spectrum of the volumetric operator (Newtonian potential operator) or surface operators (Neumann-Poincar\'e operator or related Magnetization operator) appearing in the resolvent operator modeling the wave propagation. 
With such contrasts of the materials, one can excite these resonances and enhance the values of scattering coefficients (or generally the polarization tensors). Such enhancements are useful in many applications as in the effective medium theory.  In this direction, let us cite the following works \cite{Ammari_2019, ACP-2, AFGLH, AFLYH, AH} for the acoustic propagation.  As far as
the effective medium theory is concerned, few results are known for the electromagnetism
with highly contrasting or negative permittivity or permeability. We can cite \cite{Allaire-1998, Bouchitte-Schweizer, BBM, Chen-Lipton-2013, Cher-Cooper, Cher-Ersh-Kise, BBF, L-S:2016, Schweizer:2017, Suslina-2019} who assume periodicity and derive the equivalent coefficients, via the homogenization theory, for dielectric
nanoparticles. The results provided in the current work is a contribution to fill in this gap.

\subsection{Main results}

Let $D$ be a bounded and Lipschitz-regular domain in $\mathbb{R}^3$. We assume that $D$ is a non-magnetic material meaning that the permeability $\mu$ is constant everywhere and equals to the one of the vaccum $\mu_0$.
The electric permittivity is equal to the one of the vaccum outside $D$, $\epsilon_0$, but inside, we assume it to have different values  $\epsilon_p$. We denote by the relative permittivity and permeability as $\epsilon_r:=\frac{\epsilon}{\epsilon_0}$ and $\mu_r:=\frac{\mu}{\mu_0}$. With such notations, we have $\epsilon_r=1$ outside $D$ while $\mu_r=1$ in the whole space $\mathbb{R}^3$.

The electromagnetic scattering of time-harmonic plane waves from the body $D$ reads as follows

\begin{equation}\label{U}
\left \{
\begin{array}{llrr}
Curl(E^{T}) - i \, k \, H^{T} = 0 \, \quad & \mbox{in } \mathbb{R}^3 \\
\\
Curl(H^{T}) + i \, k \, \varepsilon_{r} \, E^{T} = 0 \, \quad & \mbox{in } \mathbb{R}^3,
\end{array} 
\right.
\end{equation}
where the total field $(E^T, H^{T})$ is of the form $(E^T:=E^{Inc} +E^s,H^T:=H^{Inc} +H^s)$ with and incident plane wave $(E^{Inc},H^{Inc})$ of the form
\begin{equation}\notag
E^{Inc}(x,\theta) = \theta^{\perp} \exp\left(i \, k \; \theta \cdot x \right)\quad\mbox{and}\quad
H^{Inc}(x,\theta) = \big( \theta^{\perp} \times \theta \big) \; \exp\big(i \, k \; \theta \cdot x \big),
\end{equation}
and the scattered field $(E^s, H^s)$ satisfies the Silver-M\"{u}ller radiation conditions (SMRC) at infinity:

\begin{equation}\notag
	\sqrt{\mu_0\epsilon_0^{-1}}H^{s}(x)\times \frac{x}{|x|}-E^s(x)=O(\frac{1}{|x|^2}).
\end{equation}



This problem is well posed in appropriate Sobolev spaces, see \cite{colton2019inverse, Mitrea}, and we have the behaviors

\begin{equation}\label{def-far}
E^s(x)=\frac{e^{i k|x|}}{|x|}\left(E^\infty(\hat{x})+O(|x|^{-1})\right),\quad \mbox{as}\quad |x|\rightarrow \infty,
\end{equation}
and
\begin{equation}\notag
	H^s(x)=\frac{e^{i k|x|}}{|x|}\left(H^\infty(\hat{x})+O(|x|^{-1})\right),\quad \mbox{as}\quad |x|\rightarrow \infty,
\end{equation}
where $(E^\infty(\hat{x}), H^\infty(\hat{x}))$ is the corresponding electromagnetic far field pattern of \eqref{U} in the propagation direction $\hat{x}:=\frac{x}{|x|}$.
\bigskip


Next, we present the needed assumptions on the model \eqref{U} to derive our results.
\vspace*{10px}


	\uppercase\expandafter{\romannumeral1}. \emph{Assumptions on the cluster of particles}\label{\romannumeral1}. 
	Suppose that each component $D_m$ of $D$ is of the form $D_m=a {B}_m+{z}_m$, $m=1,\cdots, \aleph$, which is characterized by the parameter $a>0$ and the location ${z}_m$. We denote
$ a:=\max\limits_{1\leq m \leq M}\mathrm{diam}(D_m),\quad
	d:=\min\limits_{{1\leq m,j\leq M}\atop{m\neq j}} d_{mj}:=\min\limits_{{1\leq m,j\leq M}\atop{m\neq j}} \mathrm{dist}(D_m, D_j).$ We take 
	\begin{equation}\label{cluster}
	\aleph =O(d^{-3}) \mbox{ and } d\sim a^t
	\end{equation} with the nonnegative parameter $t\leq1$. For simplicity of the exposition, we assume that the shapes of $B_m$'s are the same  and we denote $B:=B_m$.  The domain $B$ is a bounded Lipschitz domain that contains the origin.
	\bigskip
	
	\uppercase\expandafter{\romannumeral2}. \emph{ Assumption on the shape of $B$.}\label{\romannumeral2} Define the vector Newtonian operator $N^0$. Denote $e_n^{(1)}$ as the corresponding eigenfunctions over the subspace $\mathbb{H}_{0}(\div=0)$. Since $
	\mathbb{H}_{0}\left( \div = 0 \right) \equiv Curl \left( \mathbb{H}_{0}\left( Curl \right) \cap \mathbb{H}\left( \div = 0 \right) \right),$
	see for instance \cite{{amrouche1998vector}}, then we have 
\begin{equation}\label{pre-cond}
e_{n_0}^{(1)}= Curl (\phi_{n_0}) \,\, \mbox{ with } \,\, \nu\times \phi_{n_{0}}=0 \,\, \text{and} \,\, \div(\phi_{n_{0}})=0.
\end{equation}	
We assume that, for a certain $n_0$, 
\begin{equation*}
\int_{B} \phi_{n_{0}}(y) \, dy \neq 0.
\end{equation*}
	
	\uppercase\expandafter{\romannumeral3}.\emph{Assumptions on the permittivity and permeability of each particle.}\label{\romannumeral2} In order to investigate the electromagnetic scattering of dielectric nanoparticles with high contrast electric permittivity parameter, we assume that for a constant $\varsigma$,
	\begin{equation}\label{contrast-epsilon}
	\eta:=\epsilon_{r} - 1=\varsigma \; a^{-2},\; \mbox{ with } \;\; \varsigma <1\; \footnote{The number of particles that can be distributed in a given bounded (3D) domain is $\aleph =O(d^{-3})$. The condition on $\varsigma$, i.e. $\varsigma<1$, is needed only if we distribute the maximum number of particles, i.e. $\aleph \sim d^{-3}$. In addition, $\varsigma$ can be complex and in this case we need $\boldsymbol{\Re} (\varsigma) <1$ with $\boldsymbol{\Im} (\varsigma) \ll 1$, see Remark \ref{RE-Lorentz-2} for an example.}\; ~~ a \ll 1,
	\end{equation}
	and the magnetic permeability $\mu_r$ to be moderate, namely $\mu_r=1$.
	\bigskip
	
\uppercase\expandafter{\romannumeral4}. \emph{ Assumption on the used incident frequency $k$.}\label{\romannumeral4} There exists a positive constant $c_0$ such that
\begin{equation}\label{condition-on-k}
			1 \, - \, k^2 \, \eta \, a^2 \, \lambda_{n_{0}}^{(1)} \, = \, \pm \; c_0\; a^h,\; ~~ a \ll 1,
		\end{equation}
		where $\lambda_{n_0}^{(1)}$ is the eigenvalue corresponding to $e_{n_0}^{(1)}$.

\vspace*{10px}
\begin{remark}\label{RE-Lorentz-1}
The conditions (\ref{contrast-epsilon}) and (\ref{condition-on-k}) can be derived from the Lorentz model by choosing appropriate incident frequency $k$. Indeed, we recall the Lorentz model for the relative permittivity
$\label{Lorentz model}
\epsilon_r=1-\dfrac{k_\mathrm{p}^2}{k_0^2-k^2-ik\gamma}
$
where $k_\mathrm{p}$ is the plasmonic resonance, $k_0$ is the undamped resonance and $\gamma$ is the damping frequency. We also recall the eigenvalues $\lambda_{n_{0}}^{(1)}$ of the Newtonian operator stated on $B$, and the contrast parameter $\eta=\epsilon_r-1$. 
If the frequency of the incident wave $k$ is chosen to be real and $k^2$ close to the undamped resonance frequency $k^2_0$ with 
\begin{equation*}
k^2-k_0^2 = \frac{k_{p}^{2} \, a^2 \,  \lambda_{n_0}^{(1)} \, k_0^2 }{\left(1 \mp \boldsymbol{\Re}(c_{0}) \, a^{h} \right) \pm \dfrac{\boldsymbol{\Im}^{2}(c_{0}) \, a^{h}}{\left(1 \mp \boldsymbol{\Re}(c_{0}) \, a^{h} \right)} - k_{p}^{2} \, a^{2} \, \lambda_{n_0}^{(1)}} = k_{p}^{2} \, a^2 \,  \lambda_{n_0}^{(1)} \, k_0^2 \, \left[ 1 + \mathcal{O}(a^{h}) \right]
\end{equation*}
and 
\begin{equation*}
\gamma k = \pm \frac{\boldsymbol{\Im}(c_{0}) \, a^{2} \, (k^{2} - k^{2}_{0})}{\left(1 \mp \boldsymbol{\Re}(c_{0}) \, a^{h} \right)} = \pm \, \boldsymbol{\Im}(c_{0}) \, a^{4} \, \lambda_{n_0}^{(1)} \, k_0^2 \, k_p^2 \left[ 1 + \mathcal{O}(a^{h}) \right],
\end{equation*} 
then we obtain $\boldsymbol{\Re}\Big(\eta\Big) = a^{-2}\Big(\lambda_{n_0}^{(1)} \, k_0^2 \Big)^{-1} + \mathcal{O}\left( a^{2} \right)$ and $\boldsymbol{\Im}\Big(\eta\Big) = \mp \, \boldsymbol{\Im}\Big( c_{0} \Big) \, \Big( \lambda_{n_0}^{(1)} \, k_0^2 \Big)^{-1} + \mathcal{O}\left( a^{4} \right)$.  With these choices, we see that $\frac{k^2}{k_0^2}\sim 1$. In addition, we derive the needed relation $1-k^2\eta a^2 \lambda_{n_{0}}^{(1)}=\pm \; c_0\; a^h,\; ~~ a \ll 1$ with a given value of $c_{0} \in \mathbb{C}$. In practice, an additional condition on the sign of $\boldsymbol{\Im} (c_0)$ might be needed to ensure the non-negativity of the damping, i.e. $\boldsymbol{\Im } (\epsilon_{r})\geq 0$. In particular, if $\boldsymbol{\Im} (c_0) =0$ then $\gamma=0$ and then we get the Drude's model.
\end{remark}

\begin{remark}\label{RE-Lorentz-2}
From the previous remark we deduce that $\varsigma$, given in $(\ref{contrast-epsilon})$,
can be approximated by 
\begin{equation*}
\varsigma = \frac{1}{\lambda_{n_{0}}^{(1)} \; k_{0}^{2} \; \left(1 \pm i \, \boldsymbol{\Im}\Big( c_{0} \Big) \, a^{2} \right) \, \left(1 + \mathcal{O}\left( a^{h} \right) \right)} = \frac{1}{\lambda_{n_{0}}^{(1)} \; k_{0}^{2}} +  \mathcal{O}\left( a^{h} \right),
\end{equation*}
and the condition $\boldsymbol{\Re} (\varsigma) < 1$ will be fulfilled as long as $1 < \lambda_{n_{0}}^{(1)} \; k_{0}^{2}$ with $a\ll 1$. If $\boldsymbol{\Im}(c_0)=0$ then $\varsigma$ is real valued.
\end{remark}

Based on the above conditions, we are now in a position to state our main result.

\begin{theorem}\label{main-1}
	Let the conditions (\uppercase\expandafter{\romannumeral1}, \uppercase\expandafter{\romannumeral2}, \uppercase\expandafter{\romannumeral3}, \uppercase\expandafter{\romannumeral4}) on the electromagnetic scattering problem \eqref{U}, by the multiple particles, $D_{1},\cdots,D_{\aleph}$, be satisfied. For $t$ and $h$ in $[0, 1]$ such that 
\begin{equation}\label{conditions-t-h}
3-3t-h \ge 0,~~~ \mbox{ and } ~~~ \frac{9}{11}<h<1, \footnote{The limit value $h=1$ can be handled at the expense of assuming the constant $c_0$ appearing in \ref{condition-on-k} to be large enough. The lower bound is only sufficient and not optimal.}
\end{equation}
the far field of the scattered wave admits the following expansion 
		\begin{equation}\label{approximation-E}
		E^\infty(\hat{x}) = - \, i \, k \, \eta \,  \sum_{m=1}^{\aleph} \, e^{i \, k \, \hat{x} \cdot z_{m}} \hat{x}\times Q_m+\mathcal{O}\left(a^{\frac{h}{3}}\right),
		\end{equation}
 where 
		$\left( Q_m \right)_{m=1,\cdots, \aleph}$ is the vector solution to the algebraic system
		\begin{equation}\label{add1}
		 Q_{m}-\eta \, k^2 \, a^{5-h} \, \sum_{j=1 \atop j \neq m}^\aleph {\bf{P}}_0 \cdot \Upsilon_k(z_{m}, z_j) \cdot {Q}_j
		=i \, k \, a^{5-h} \; {\bf{P}}_0 \cdot  H^{Inc}(z_{m}),
		\end{equation}
		where $\Upsilon_k$ is the dyadic Green's function given by \eqref{dyadicG} and ${\bf{P}}_0$ is the polarization matrix defined by
\begin{equation}\label{defP0}
{\bf{P}}_0 := \sum_{m} \int_{B}\phi_{n_{0},m}(y)\,dy\otimes\int_{B}\phi_{n_{0},m}(y)\,dy, 
\end{equation}		
where $\phi_{n_{0},m}$ fulfills 
\begin{equation*}
e_{n_{0},m}^{(1)} = Curl\left( \phi_{n_{0},m} \right), \,\, \div\left( \phi_{n_{0},m} \right) = 0, \,\, \nu \times \phi_{n_{0},m} = 0 \,\, \text{and} \,\, N\left( e_{n_{0},m}^{(1)} \right) = \lambda_{n_{0}}^{(1)} \, e_{n_{0},m}^{(1)}.
\end{equation*}
In particular, \eqref{add1} is invertible under the condition that
		\begin{equation}\notag
			\frac{ k^2 |\eta| a^5}{d^3 \left\vert 1 - k^2 \, \eta \, a^2 \, \lambda_{n_{0}}^{(1)} \right\vert}\left\lVert {\bf{P}}_0 \right\rVert <1.
		\end{equation}

\end{theorem}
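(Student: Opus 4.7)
The plan is to build on the Lippmann--Schwinger (LS) formulation of \eqref{U} and exploit the near-resonance condition \eqref{condition-on-k} to isolate the leading eigenmode of the Newtonian operator $N^0$ restricted to $\mathbb{H}_{0}(\div=0)$. Taking the curl of the first Maxwell equation gives $\mathrm{Curl}\,\mathrm{Curl}\,E^T-k^2\epsilon_r E^T=0$; inverting the background Helmholtz operator on $\mathbb{R}^3$ produces
$$E^T(x)+k^2\eta\int_D\Upsilon_k(x,y)\,E^T(y)\,dy=E^{Inc}(x),\qquad x\in\mathbb{R}^3,$$
together with a far-field formula expressing $E^\infty(\hat x)$ as a sum over $m$ of weighted integrals of the form $\int_{D_m}E^T(y)\,e^{-ik\hat x\cdot y}\,dy$. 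From here, the strategy is: (a) identify the leading shape of $E^T|_{D_m}$, (b) derive and invert a coupled Foldy--Lax system for its amplitudes, and (c) translate the resulting volume integrals into the magnetic-dipole form \eqref{approximation-E}.

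First I would rescale $y=z_m+a\tilde y$ so that each $D_m$ becomes $B$ and expand $E^T|_{D_m}=\sum_{n,j}\alpha_{n,m}^{(j)}e_{n,m}^{(j)}$ in the orthonormal eigenbasis of the self-adjoint operator $N^0$ on $\mathbb{L}^2(D_m)$. Projecting the LS equation on $e_{n_0,m}^{(1)}$ gives
$$\bigl(1-k^2\eta a^2\lambda_{n_0}^{(1)}\bigr)\alpha_{n_0,m}^{(1)}=\langle E^{Inc},e_{n_0,m}^{(1)}\rangle_{D_m}-k^2\eta\sum_{j\neq m}\int_{D_m}\!\!\int_{D_j}e_{n_0,m}^{(1)}(x)\cdot\Upsilon_k(x,y)\,E^T(y)\,dy\,dx+\mathcal{R}_m,$$
and, by \eqref{condition-on-k}, the prefactor on the left is $\pm c_0\,a^h$, so $\alpha_{n_0,m}^{(1)}$ is amplified by $a^{-h}$ relative to the coefficients of the remaining modes, whose own prefactors $1-k^2\eta a^2\lambda$ remain of order one. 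Bounding those remaining coefficients directly yields $E^T|_{D_m}=\alpha_m e_{n_0,m}^{(1)}+(\text{subleading})$ with a quantifiable gain. Freezing $\Upsilon_k$ at the centers $(z_m,z_j)$ in the cross-terms and invoking the key integration-by-parts identity (valid since $\nu\times\phi_{n_0,m}=0$)
$$\int_{D_m}e^{-ik\hat x\cdot y}\mathrm{Curl}\,\phi_{n_0,m}(y)\,dy = ik\,\hat x\times\int_{D_m}e^{-ik\hat x\cdot y}\phi_{n_0,m}(y)\,dy,$$
together with its analogue tested against $H^{Inc}(z_m)$, rewrites the $\alpha_m$-system in the new variables $Q_m\propto\alpha_m\int_B\phi_{n_0,m}$ and produces exactly the algebraic system \eqref{add1} with polarization matrix $\mathbf{P}_0$ as in \eqref{defP0}. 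Invertibility follows by a Neumann series on the block operator, using $\|\Upsilon_k(z_m,z_j)\|\lesssim d_{mj}^{-3}$ and $\aleph=O(d^{-3})$.

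Substituting $E^T|_{D_m}\approx\alpha_m e_{n_0,m}^{(1)}$ into the far-field formula, Taylor-expanding $e^{-ik\hat x\cdot y}$ around $z_m$, and reapplying the Curl-to-dipole identity above yields the announced expansion \eqref{approximation-E}. The main technical obstacle is the simultaneous control of three competing scales: the resonance amplification $a^{-h}$, the singular decay $d^{-3}\sim a^{-3t}$ of the dyadic kernel at inter-particle distances, and the cluster cardinality $\aleph=O(a^{-3t})$. The restrictions $3-3t-h\ge 0$ and $\tfrac{9}{11}<h<1$ in \eqref{conditions-t-h} arise from enforcing (i) that the Foldy--Lax coupling remain uniformly bounded as $\aleph\to\infty$, (ii) that the aggregate residual of the non-$n_0$ eigenmodes be $o(1)$ after summation over $m$, and (iii) that the Taylor truncation of $\Upsilon_k$ over each $D_j$ be subdominant; balancing these error contributions against the announced $\mathcal{O}(a^{h/3})$ remainder, while handling the self-interaction via spectral projection rather than direct inversion, is the delicate bookkeeping at the heart of the proof.
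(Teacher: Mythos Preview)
Your outline captures the right mechanism—resonant amplification of the $n_0$-th $\mathbb{H}_0(\div=0)$ mode and its conversion to a magnetic dipole via the $\mathrm{Curl}\,\phi_{n_0}$ identity—but the execution differs from the paper's and one step is under-specified.

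The paper does not project the LS equation directly onto $e_{n_0,m}^{(1)}$. It works with the full decomposition $\mathbb{L}^2=\mathbb{H}_0(\div=0)\oplus\mathbb{H}_0(\mathrm{Curl}=0)\oplus\nabla\mathcal{H}armonic$, proves separate a-priori bounds $\|\overset{1}{\mathbb{P}}(\tilde E_j^T)\|\lesssim a^{1-h}$, $\overset{2}{\mathbb{P}}(\tilde E_j^T)=0$, $\|\overset{3}{\mathbb{P}}(\tilde E_j^T)\|\lesssim a^{(3+h)/2}$, and then derives the algebraic system by pairing the LS equation against an auxiliary adjoint solution $W$ of $(I+\eta\nabla M^{-k}-k^2\eta N^{-k})W=\mathcal{P}(\cdot,z)$; writing $\overset{1}{\mathbb{P}}(W)=\mathrm{Curl}\,\mathcal{A}$, the tensor $\mathbf{P}_0$ arises from $\int_{D_{j_0}}\mathcal{A}$. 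A second algebraic system for $\int_{D_m}\overset{3}{\mathbb{P}}(E_m^T)$, with its own tensor $\mathbf{P}_1$, is used to show the electric-dipole contribution to $E^\infty$ is $\mathcal{O}(a^h)$.

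The phrase ``bounding those remaining coefficients directly'' hides the actual difficulty. The non-resonant part of $E^T|_{D_m}$ is not merely other $N^0|_{\mathbb{H}_0(\div=0)}$ modes with order-one denominators; its $\nabla\mathcal{H}armonic$ component is governed by the Magnetization operator $\nabla M$, which you never mention, and the back-coupling of that component to the resonant mode through $N$ is precisely what produces the constraint $h>\tfrac{9}{11}$. Moreover, when you freeze $\Upsilon_k$ at centers in the cross term, the leading contribution vanishes because $\int_{D_j}e_{n_0,j}^{(1)}=0$; one needs the next Taylor order together with the identity $\mathrm{Curl}_x\int_{D_j}\Phi_k\,\mathrm{Curl}\,F_j=\int_{D_j}\Upsilon_k\cdot F_j$ to see why the system closes in $Q_j=\int_{D_j}F_j$ rather than in the $\alpha_j$ themselves. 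Your sketch is salvageable, but these two points are where the real work lies.
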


We observe from (\ref{approximation-E})-(\ref{add1}) that if the number of the particles behaves like $\aleph\sim d^{-3}(\sim a^{-3t})$, with $3-3t-h = 0$, and the particles are distributed in a bounded domain $\Omega_{eff}$, then, at least formally, the electric field is equivalent to the one generated by an effective medium given by $(\epsilon_{eff}:=\epsilon_0, \mu_{eff}:=\mu_0+\mu_{pert}\chi_{\Omega})$ where $\mu_{pert}$ is related to the cluster of particles and it can be positive or negative (depending on the chosen sign in (\ref{condition-on-k})). In particular, we see that even though the dielectric nanoparticles are merely generated by the contrasts of their permittivity (and not their permeability), the effective medium is a perturbation of the permeability and not the permittivity. 
\bigskip

Before closing this introduction, let us mention that quite recently, after closing the content of the current work, we got aware of the work \cite{Ammari-Li-Zou-2}, where the authors derived similar approximation formulas in the case of a single dielectric nanoparticle. 
We note that the error term in (\ref{approximation-E}), of the order $\mathcal{O}\left(a^{\frac{h}{3}}\right)$, takes into account the number of particles of the order $\aleph =O(a^{-3t})$ with $t$ satisfying (\ref{conditions-t-h}). For the case of a single particle, this error term is smaller and it is dominated by the first term in (\ref{approximation-E}), with $\aleph =1$. This can be seen at the end of Section \ref{sec-proof-main} with more details.
\bigskip

The rest of this paper is organized as follows. In Section \ref{sec-pre}, we introduce some preliminaries including the Lippmann-Schwinger system of equations for the solution to \eqref{U} and the needed decomposition  of the vector $\mathbb{L}^2$ space via the subspaces $\mathbb{H}_0(\div=0)$, $\mathbb{H}_0(Curl=0)$ and $\nabla \mathcal{H}armonic$. Then we present some needed a-priori estimates of the projections of the solution on these three subspaces, respectively, which will be proved later in Section \ref{sec-proof-prior}. In Section \ref{sec-la-system}, we first investigate the general expressions of the linear algebraic system and the corresponding invertibility conditions. Then we formulate the precise form of the linear algebraic systems associated to the projection of the total wave onto the two subspaces $\mathbb{H}_0(\div=0)$ and $\nabla \mathcal{H}armonic$ of $\mathbb{L}^2$, respectively. Detailed proofs will be showed later in Section \ref{sec-proof-la}. Section \ref{sec-proof-main} is devoted to show the rigorous proof of our main theorem, in particular the Foldy-Lax approximation of the far-field. Finally, in Section \ref{sec-proof-prior} and Section \ref{sec-proof-la}, we give the detailed proofs of the a-prior estimates introduced in Section \ref{sec-pre} and the construction of the linear algebraic systems in Section \ref{sec-la-system}, respectively.

\section{Some preliminary knowledge and a-prior estimates.}\label{sec-pre}


\subsection{Decomposition of $\mathbb{L}^2$.}

The following direct sum provide a useful decomposition of $\mathbb{L}^{2}$ (see \cite{Dautry-Lions}, page 314) 
\begin{equation}\label{L2-decomposition}
\mathbb{L}^{2}  = \mathbb{H}_{0}\left(\div=0 \right) \overset{\perp}{\oplus} \mathbb{H}_{0}\left(Curl=0 \right) \overset{\perp}{\oplus} \nabla \mathcal{H}armonic
\end{equation}   
where 
\begin{eqnarray*}
	\mathbb{H}_{0}\left(\div=0 \right) &:=& \left\lbrace E \in \left( \mathbb{L}^{2}(D)\right)^{3}, \, \div E = 0, \, \nu \cdot E = 0 \, \; \text{on} \;\, \partial D \right\rbrace ,\\
	\mathbb{H}_{0}\left(Curl =0 \right) &:=& \left\lbrace E \in \left( \mathbb{L}^{2}(D)\right)^{3}, \, Curl \, E = 0, \, \nu \times E = 0 \, \; \text{on} \;\, \partial D \right\rbrace,
\end{eqnarray*}
and 
\begin{equation*}
\nabla \mathcal{H}armonic := \left\lbrace E: \; E = \nabla \psi, \, \psi \in \mathbb{H}^{1}(D), \, \Delta\psi=0 \right\rbrace.
\end{equation*}
From the decomposition \eqref{L2-decomposition}, 
we define $\overset{1}{\mathbb{P}}, \overset{2}{\mathbb{P}}$ and $\overset{3}{\mathbb{P}}$ to be the natural projectors as follows
\begin{equation}\label{project}
	\overset{1}{\mathbb{P}} := \mathbb{L}^{2} \longrightarrow  \mathbb{H}_{0}\left(\div=0 \right), \;\;\;
	\overset{2}{\mathbb{P}} := \mathbb{L}^{2} \longrightarrow  \mathbb{H}_{0}\left(Curl = 0 \right) \;\; \text{and} \;\;
	\overset{3}{\mathbb{P}} := \mathbb{L}^{2} \longrightarrow  \nabla \mathcal{H}armonic.
\end{equation}

\subsection{Lippmann-Schwinger integral formulation of the solutions.} 

Recall the Green's function for the Helmholtz operator
\begin{equation}\notag
	\Phi_{k}(x,y)=\frac{1}{4\pi}\frac{e^{i k |x-y|}}{|x-y|},\quad x\neq y,
\end{equation}
and the corresponding dyadic Green's function
\begin{equation}\label{dyadicG}
	\Upsilon_k(x,y):=\underset{x}{Hess}\Phi_{k}(x,y)+k^2 \Phi_{k}(x,y){I}, \quad x\neq y.
\end{equation}
For any vector function $F$, define the Newtonian potential operator $N^k$ and the Magnetization operator $\nabla M^k$ as follows:
\begin{equation}\label{N-M opera}
N^k(F)(x):=\int_{D} \Phi_{k}(x,y)F(y)\,dy \quad \text{and} \quad \nabla M^k(F)(x):=\underset{x}{\nabla}\int_{D}\underset{y}{\nabla}\Phi_{k}(x,y)\cdot F(y)\,dy.
\end{equation}


The solution to \eqref{U} of the integral form can be formulated as the following proposition.

\begin{proposition}\label{prop-LS}
	The solution to the electromagnetic scattering problem \eqref{U} satisfies
	\begin{equation}\label{L-S eq}
		E^T(x) - \eta \, \int_{D}\Upsilon_k(x,y) \cdot E^T(y)\,dy=E^{Inc}(x),\quad x\in D,
	\end{equation}
	where $\eta := \epsilon_{r} - 1$ is the contrast of the electric permittivity, and $\Upsilon_k(x,y)$ is the dyadic Green's function defined by \eqref{dyadicG}. Equivalently, we have
	\begin{equation}\label{LS eq2}
		E^T(x) + \eta \, \nabla M^k(E^T)(x) - k^2 \, \eta \, N^{k}(E^T)(x) = E^{Inc}(x),\quad x\in D,
	\end{equation}
	by virtue of the Newtonian operator $N^k$ and the Magnetization operator $\nabla M^k$ given by \eqref{N-M opera}.
\end{proposition}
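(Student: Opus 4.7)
My approach is to derive a second-order vector equation for the total electric field $E^T$ from the Maxwell system \eqref{U}, then invert the associated Helmholtz-type operator via the appropriate dyadic Green's function. Eliminating $H^T$ from \eqref{U} by substituting $H^T = (ik)^{-1}\, Curl\, E^T$ into the second equation yields
\[
Curl\, Curl\, E^T - k^2 E^T \;=\; k^2(\varepsilon_r - 1)\chi_D\, E^T \;=\; k^2 \eta\, \chi_D\, E^T \quad \text{in } \mathbb{R}^3,
\]
since $\varepsilon_r = 1$ outside $D$. Writing $E^T = E^{Inc} + E^s$ and using that $E^{Inc}$ satisfies the vacuum equation $Curl\, Curl\, E^{Inc} - k^2 E^{Inc} = 0$, the scattered field obeys $(Curl\, Curl - k^2)E^s = k^2 \eta\, \chi_D\, E^T$ in $\mathbb{R}^3$, coupled with the Silver--M\"uller radiation condition.

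Next, I represent $E^s$ via the outgoing dyadic Green's function $G_E$ of the operator $Curl\, Curl - k^2 I$, characterized by $(Curl\, Curl - k^2)G_E(\cdot,y) = \delta(\cdot-y)\, I$ together with SMRC at infinity. Using the identity $Curl\, Curl = -\Delta + \nabla\, \div$ and the fact that $(\Delta + k^2)\Phi_k = -\delta$, a direct verification on constant vectors shows that $G_E(x,y) = \Phi_k(x,y)\, I + k^{-2}\, Hess_x \Phi_k(x,y) = k^{-2}\, \Upsilon_k(x,y)$, since the extra gradient term in $G_E$ is annihilated by $Curl$. Hence
\[
E^s(x) \;=\; k^2 \eta \int_D G_E(x,y)\cdot E^T(y)\, dy \;=\; \eta \int_D \Upsilon_k(x,y)\cdot E^T(y)\, dy,
\]
which, restricted to $x\in D$, is precisely \eqref{L-S eq}. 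To obtain the equivalent form \eqref{LS eq2}, I split $\Upsilon_k = Hess_x \Phi_k + k^2 \Phi_k I$: the contribution $k^2\int_D \Phi_k(x,y)\, E^T(y)\, dy$ is $k^2 N^k(E^T)(x)$, while the identity $\nabla_y \Phi_k = -\nabla_x \Phi_k$ lets me pull the $y$-gradient back inside $M^k$ and the $x$-gradient outside the integral, giving $\int_D Hess_x \Phi_k(x,y)\cdot E^T(y)\, dy = -\nabla M^k(E^T)(x)$; substituting into \eqref{L-S eq} produces \eqref{LS eq2}.

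The main technical subtlety is the strong singularity of $Hess_x \Phi_k(x,y)$ as $y\to x$, which renders the integral in \eqref{L-S eq} only conditionally convergent and forces its interpretation as a Cauchy principal value together with a depolarization contribution depending on the shape of the exclusion region. I would handle this by the standard limiting procedure: excise a small ball $B_\varepsilon(x) \cap D$ from the integration domain, apply integration by parts to trade the singular kernel for a less singular one with boundary contributions on $\partial B_\varepsilon(x)$, and pass to the limit $\varepsilon \to 0$; the boundary term delivers precisely the depolarization constant built into $\nabla M^k$, so that the equivalence \eqref{L-S eq} $\Leftrightarrow$ \eqref{LS eq2} holds in the distributional sense on $D$. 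Combined with the well-posedness of \eqref{U} in appropriate Sobolev spaces recalled from \cite{colton2019inverse,Mitrea}, this yields a rigorous derivation; since this passage from Maxwell to Lippmann--Schwinger is classical in the electromagnetic setting, I would present the regularization step only briefly.
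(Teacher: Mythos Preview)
Your derivation is correct. The paper itself does not give a detailed proof of this proposition: it simply states that ``the proposition can be proved by utilizing the Stratton--Chu formula directly'' and points to \cite[Theorem~6.1]{colton2019inverse}. So there is no proof in the paper to compare against line by line.

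That said, your route and the one the paper alludes to are slightly different in spirit. The Stratton--Chu approach starts from the surface integral representation of $E^T$ and $H^T$ over $\partial D$, exploits the transmission conditions across $\partial D$, and converts the boundary integrals into the volume integral on the right of \eqref{L-S eq}. Your approach is more direct: you reduce \eqref{U} to the inhomogeneous vector wave equation $(Curl\,Curl-k^2)E^s=k^2\eta\chi_D E^T$, identify the outgoing fundamental solution of $Curl\,Curl-k^2$ as $k^{-2}\Upsilon_k$, and convolve. Both are classical and equivalent; your method has the advantage of making the role of $\Upsilon_k$ as a genuine Green's function explicit, and of isolating cleanly the only delicate point (the hypersingular $Hess_x\Phi_k$ term and the associated depolarization contribution when $x\in D$), which you correctly flag and handle by the standard $\varepsilon$-excision argument. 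Either way the passage from \eqref{L-S eq} to \eqref{LS eq2} is just the definition of $\Upsilon_k$ together with $\nabla_y\Phi_k=-\nabla_x\Phi_k$, exactly as you wrote.
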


The proposition can be proved by utilizing the Stratton-Chu formula directly, see \cite[Theorem 6.1]{colton2019inverse} for more detailed discussions.

In addition, in \eqref{dyadicG}, we have
\begin{eqnarray}\label{expansion-of-Hess}
\nonumber
\underset{x}{Hess} \Phi_{k}(x,y) &=& \underset{x}{Hess} \Phi_{0}(x,y)  + \underset{x}{Hess} (\Phi_{k}-\Phi_{0})(x,y) \\ \nonumber
&=& \underset{x}{Hess} \Phi_{0}(x,y)  + \frac{1}{4 \pi} \; \sum_{n \geq 1} \frac{(ik)^{n+1}}{(n+1)!} \; \underset{x}{Hess} \left( \left\Vert x - y \right\Vert^{n} \right) \\ \nonumber
&=& \underset{x}{Hess} \Phi_{0}(x,y) - \frac{k^{2}}{2} \, \Phi_{0}(x,y) \, I_{3} - \frac{i k^{3}}{24 \pi} \, I_{3} + \frac{k^{2}}{2} \, \Phi_{0}(x,y) \, \frac{A(x,y)}{\Vert x-y  \Vert^{2}} \\  
&+& \frac{1}{4 \pi} \; \sum_{n \geq 3} \frac{(ik)^{n+1}}{(n+1)!} \; \underset{x}{Hess} \left( \left\Vert x - y \right\Vert^{n} \right), 
\end{eqnarray}
where $A$ is the matrix given by $A(x,y) := \left( x - y \right) \otimes \left( x - y \right).$
Then, we can write the Magnetization operator $\nabla M^k$ with the help of \eqref{expansion-of-Hess} as 
\begin{eqnarray}\label{expansion-gradMk}
\nonumber
\nabla M^{k}(F)(x)  &=& \nabla M^{0}(F)(x) +  \frac{k^{2}}{2} \, N^{0}(F)(x) + \frac{i k^{3}}{12 \pi} \int_{D} F(y) dy - \frac{k^{2}}{2} \int_{D} \Phi_{0}(x,y) \frac{A(x,y)\cdot F(y)}{\Vert x -y \Vert^{2}}  dy \\ 
&-& \frac{1}{4 \pi} \; \sum_{n \geq 3} \frac{(ik)^{n+1}}{(n+1)!} \; \int_{D} \; \underset{x}{Hess} \left( \left\Vert x - y \right\Vert^{n} \right) \cdot F(y) \, dy .
\end{eqnarray}
Similarly, we write an analogous formula for the Newtonian potential operator $N^k$ as
\begin{equation}\label{expansion-Nk}
N^{k}(F)(x) =  N^{0}(F)(x) + \frac{ik}{4\pi} \int_{D} F(y) dy + \frac{1}{4 \pi} \; \sum_{n \geq 1} \frac{(ik)^{n+1}}{(n+1)!} \; \int_{D} \; \left\Vert x - y \right\Vert^{n}  F(y) \, dy .
\end{equation} 
For shortness reason, we use the notation $\nabla M$ instead of $\nabla M^0$ and $N$ instead of $N^0$ in the subsequent analyses.

 
 Thanks to Green's formulas, and recalling the $\mathbb{L}^2$-space decomposition \eqref{L2-decomposition}, we have
\begin{equation}\label{grad-M-1st-2nd}
\forall \; E \in \mathbb{H}_{0}\left(\div=0 \right) , \;\, \nabla M(E) = 0 \, \quad \text{and} \quad \forall \; E \in \mathbb{H}_{0}\left(Curl=0 \right) , \;\, \nabla M(E) = E,
\end{equation}
and other nice properties for the Magnetization operator, such as self-adjointness, positivity, spectrum, boundedness ($\left\Vert \nabla M \right\Vert = 1$) , invariance of $\nabla \mathcal{H}armonic$, etc., can be found in \cite{Raevskii1994} and \cite{10.2307/2008286}.

\subsection{A-prior Estimates.}\label{subsec-eigen} Based on the decomposition (\eqref{L2-decomposition}), we present here some necessary a-prior estimates derived from the Lippmann-Schwinger equation \eqref{LS eq2}, which play an important role in the proof of our main results. The proof of the propositions and lemmas in this subsection shall be given later in Section \ref{sec-proof-prior}.

Following the notations in \eqref{project}, we denote 
\begin{enumerate} \label{note-eigen}
	\item[$\ast$] $\left( \lambda^{(1)}_{n};e^{(1)}_{n} \right)$ as the eigensystem of the operator $N$ projected on  the subspace $\mathbb{H}_{0}(\div = 0)$,\\
	\item[$\ast$] $\left( \lambda^{(2)}_{n};e^{(2)}_{n} \right)$ as the eigensystem of the operator $N$ projected on the subspace $\mathbb{H}_{0}(Curl = 0)$,\\
	\item[$\ast$] $\left( \lambda^{(3)}_{n};e^{(3)}_{n} \right)$ as the eigensystem of the operator $\nabla M$ on the subspace $\nabla \mathcal{H}armonic$.
\end{enumerate}
For the existence and the construction of $\left( \lambda^{(j)}_{n};e^{(j)}_{n} \right)_{n \in \mathbb{N}},  j=1,2,3$, we refer to Section 5 of \cite{GS}. \\ Now, suppose $E^T$ solves \eqref{U} with the integral formulation \eqref{LS eq2}. Then the projection of $E^T$ onto three subspaces $\mathbb{H}_0(\div=0)$, $\mathbb{H}_0(Curl=0)$ and $\nabla \mathcal{H}armonic$ can be respectively represented as $\overset{1}{\mathbb{P}}(E^T)$, $\overset{2}{\mathbb{P}}(E^T)$ and $\overset{3}{\mathbb{P}}(E^T)$.
Then we have the following estimates.

\vspace*{5px}

\subsubsection{Estimation for one particle.}

\begin{proposition}\label{es-oneP}
	Under Assumptions (\uppercase\expandafter{\romannumeral1}, \uppercase\expandafter{\romannumeral2}, \uppercase\expandafter{\romannumeral3}, \uppercase\expandafter{\romannumeral4}), consider the electromagnetic scattering problem \eqref{U} with only one distributed particle. Let $k$ fulfill
	\begin{equation}\label{choice-k-1st-regime}
		k^2 :=  \frac{1 \mp c_{0} \, a^h}{\eta \, a^2 \, \lambda_{n_{0}}^{(1)}} \, \sim 1,
	\end{equation}
	where $\lambda_{n_{0}}^{(1)}$ is an eigenvalue of the Newtonian potential operator in the subspace $\mathbb{H}_0(\div=0)$. Then for $h<2$, there holds
	\begin{equation}\label{es-oneE}
		\lVert \tilde{E}^T\rVert_{\mathbb{L}^2(B)}=\mathcal{O}(a^{1-h}),
	\end{equation}
	 and in particular,
	\begin{equation}\label{*add1}
	 \overset{2}{\mathbb{P}}\left(\tilde{E}^T\right)=0,
	\end{equation}
	where $\tilde{E}^T$ is the total wave of $E^T$ after scaling from $D$ to $B$.
\end{proposition}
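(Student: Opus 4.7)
The plan is to analyze the rescaled Lippmann--Schwinger equation on the reference domain $B$ and control the three components of $\tilde E^T$ in the orthogonal decomposition \eqref{L2-decomposition}. Under the change of variables $x = ay + z_1$, using that $\nabla M^{k}$ is invariant under dilation while $N^{k}$ gains a factor $a^{2}$, equation \eqref{LS eq2} becomes
\[
\tilde E^T + \eta \, \nabla \tilde M^{ka}(\tilde E^T) \, - \, k^2 \eta a^2 \, \tilde N^{ka}(\tilde E^T) \, = \, \tilde E^{Inc} \quad \text{on } B.
\]
I would then apply the three projectors $\overset{1}{\mathbb{P}}, \overset{2}{\mathbb{P}}, \overset{3}{\mathbb{P}}$ in turn, exploiting the facts recalled in \eqref{grad-M-1st-2nd}: $\nabla M$ vanishes on $\mathbb{H}_0(\div=0)$, acts as the identity on $\mathbb{H}_{0}(Curl=0)$, and leaves $\nabla \mathcal{H}armonic$ invariant.

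The identity $\overset{2}{\mathbb{P}}(\tilde E^T) = 0$ is obtained most cleanly from the Maxwell structure rather than the Lippmann--Schwinger projection: since $\epsilon_{r}$ is constant inside $D$, Amp\`ere's law from \eqref{U} yields $E^T|_{D} = \tfrac{i}{k \epsilon_{r}} Curl(H^{T})$, so $E^T|_{D}$ is a curl field. For every $F \in \mathbb{H}_{0}(Curl=0)$, integration by parts gives
\[
(Curl \, H^{T}, F)_{\mathbb{L}^2(D)} = (H^{T}, Curl \, F)_{\mathbb{L}^2(D)} - \int_{\partial D} H^{T} \cdot (\nu \times F)\, dS = 0,
\]
since $Curl \, F = 0$ in $D$ and $\nu \times F = 0$ on $\partial D$. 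Thus $(E^T, F)_{\mathbb{L}^2(D)} = 0$ for every such $F$, and rescaling preserves this orthogonality on $B$.

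For the first component, projecting the rescaled equation onto $\mathbb{H}_{0}(\div=0)$ and combining the Taylor expansions \eqref{expansion-gradMk}--\eqref{expansion-Nk} with $\overset{1}{\mathbb{P}} \nabla M = 0$, the principal part of the projected equation is
\[
\bigl(I - k^2 \eta a^2\, N\bigr) \, \overset{1}{\mathbb{P}}(\tilde E^T) \, = \, \overset{1}{\mathbb{P}}(\tilde E^{Inc}) \, + \, (\text{perturbations of lower order in } ka),
\]
with $N$ acting invariantly on $\mathbb{H}_{0}(\div=0)$. Expanding in the eigenbasis $\{e_{n}^{(1)}\}$, the resonance condition \eqref{condition-on-k} produces the small denominator $\pm c_{0} a^{h}$ at $n = n_{0}$. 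The key ingredient is the vanishing zeroth moment of the resonant eigenfunction: writing $e_{n_{0}}^{(1)} = Curl(\phi_{n_{0}})$ with $\nu \times \phi_{n_{0}} = 0$, Stokes' theorem yields $\int_{B} e_{n_{0}}^{(1)}\, dy = \int_{\partial B} \nu \times \phi_{n_{0}}\, dS = 0$, so that after Taylor-expanding $e^{i k a\,\theta \cdot y}$ in $\tilde E^{Inc}$ we obtain $(\tilde E^{Inc}, e_{n_{0}}^{(1)})_{\mathbb{L}^2(B)} = \mathcal{O}(a)$. Consequently the resonant amplitude is $\mathcal{O}(a \cdot a^{-h}) = \mathcal{O}(a^{1-h})$, and the non-resonant modes contribute at most $\lVert \overset{1}{\mathbb{P}}(\tilde E^{Inc}) \rVert = \mathcal{O}(a)$, since the constant leading term of $\tilde E^{Inc}$ lies entirely in $\nabla \mathcal{H}armonic$.

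Finally, projecting onto $\nabla \mathcal{H}armonic$ and using its invariance under $\nabla M$ one obtains
\[
(I + \eta \, \nabla M) \, \overset{3}{\mathbb{P}}(\tilde E^T) \, = \, \overset{3}{\mathbb{P}}(\tilde E^{Inc}) \, + \, \mathcal{O}\bigl((ka)^{2}\bigr) \lVert \tilde E^T \rVert,
\]
and the spectral lower bound on $\nabla M$ restricted to $\nabla \mathcal{H}armonic$ (inherited from the Raevskii-type analysis invoked after \eqref{grad-M-1st-2nd}) makes $I + \eta \, \nabla M$ invertible on this subspace with inverse of norm $\mathcal{O}(\eta^{-1}) = \mathcal{O}(a^{2})$, whence $\overset{3}{\mathbb{P}}(\tilde E^T) = \mathcal{O}(a^{2})$. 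Summing the three squared norms by Pythagoras yields $\lVert \tilde E^T \rVert_{\mathbb{L}^2(B)}^{2} = \mathcal{O}(a^{2(1-h)}) + 0 + \mathcal{O}(a^{4}) = \mathcal{O}(a^{2(1-h)})$ for $h < 2$, as claimed. The main technical obstacle is that, once multiplied by $\eta \sim a^{-2}$, the $\mathcal{O}((ka)^{2})$ corrections coming from \eqref{expansion-gradMk}--\eqref{expansion-Nk} become $\mathcal{O}(1)$ in operator norm, so controlling them rigorously requires a self-consistent bootstrap that uses the smallness of $ka$ and the assumption $h < 2$ to keep the perturbations subdominant to the leading $a^{1-h}$ resonant response.
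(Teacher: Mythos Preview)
Your overall strategy---project the rescaled Lippmann--Schwinger equation onto the three subspaces, use the resonant denominator $\pm c_0 a^h$ together with $\langle \tilde E^{Inc}, e_{n_0}^{(1)}\rangle = \mathcal{O}(a)$ for the first, and the gain $\eta^{-1}=\mathcal{O}(a^2)$ from $(I+\eta\nabla M)^{-1}$ for the third, then close by absorption---is exactly the paper's. The paper makes the closure explicit: summing the three squared norms produces a term $a^{4-2h}\lVert\tilde E^T\rVert^2$ on the right, which is absorbed into the left precisely when $h<2$; this is the ``bootstrap'' you flag at the end.

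The one genuine difference is your argument for $\overset{2}{\mathbb{P}}(\tilde E^T)=0$. You go directly through Amp\`ere's law, writing $E^T|_D$ as a curl and using a single integration by parts against $F\in\mathbb{H}_0(Curl=0)$. The paper instead stays inside the Lippmann--Schwinger framework: it takes the inner product with $e_n^{(2)}$, passes to adjoints, and derives the operator identity $\nabla M^{-ka}(e_n^{(2)}) = (ka)^2 N^{-ka}(e_n^{(2)}) + e_n^{(2)}$ (via $\Delta + Curl\circ Curl = \nabla\div$, plus $Curl\,e_n^{(2)}=0$ and $\nu\times e_n^{(2)}=0$), which collapses the projected equation to $(1+\eta)\langle\tilde E^T,e_n^{(2)}\rangle = \langle\tilde E^{Inc},e_n^{(2)}\rangle = 0$. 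Your route is shorter and more transparent; the paper's has the virtue of remaining entirely within the integral-operator calculus used throughout, and as a by-product records the identity \eqref{grad-M-k-2nd-subspace}, which it reuses later (e.g.\ in the analysis of $W$ in Proposition~\ref{prop-scoeff}).
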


The detailed proof of Proposition \ref{es-oneP} can be seen in Subsection \ref{subsec-51}. Based on the above a-priori estimate for one particle, we also investigate the estimation of the solution in the presence of multiple particles. 

\vspace*{5px}
\subsubsection{Estimation for several particles.}

\begin{proposition}\label{lem-es-multi}
	Under Assumptions (\uppercase\expandafter{\romannumeral1}, \uppercase\expandafter{\romannumeral2}, \uppercase\expandafter{\romannumeral3}, \uppercase\expandafter{\romannumeral4}), consider the electromagnetic scattering problem \eqref{U} for multiple particles $D_j$, $j=1,2,\cdots, \aleph$. Then for $t$ and $h$ satisfying \begin{equation}\label{conditions-t-h-section-2}
3-3t-h \ge 0,~~~ \mbox{ and } ~~~ \frac{9}{11}<h<1,
\end{equation} there holds the estimation
	\begin{equation}\label{max-P1P3}
		\max_j\left\lVert \overset{1}{\mathbb{P}}\left(\tilde{E}_j^T\right)\right\rVert^2_{\mathbb{L}^2(B)}\lesssim a^{2-2h}\quad\mbox{and}\quad
		\max_j\left\lVert \overset{3}{\mathbb{P}}\left(\tilde{E}_j^T\right)\right\rVert^2_{\mathbb{L}^2(B)}\lesssim a^{3+h}.
	\end{equation}
\end{proposition}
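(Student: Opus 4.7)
The plan is to project the scaled Lippmann--Schwinger system for each particle onto the three subspaces of the decomposition (\ref{L2-decomposition}) and set up a coupled system of inequalities linking the projections $\overset{j}{\mathbb{P}}(\tilde E_m^T)$, $j=1,2,3$, across all $m$. Writing (\ref{LS eq2}) for $D_m$, rescaling to $B$ via $x = z_m + a y$, and separating the self-interaction over $D_m$ from the mutual-scattering contribution, the equation takes the schematic form
\begin{equation*}
\tilde E_m^T + \eta \,\widetilde{\nabla M}(\tilde E_m^T) - k^2 \eta a^2\, \tilde N(\tilde E_m^T) + \mathcal{R}_m(\tilde E_m^T) = \tilde E_m^{Inc} + \eta \sum_{j \neq m} \int_{D_j} \Upsilon_k(z_m + a\,\cdot,\, y)\cdot E_j^T(y)\,dy,
\end{equation*}
where $\mathcal{R}_m$ collects the $O(a)$ remainders from the expansions (\ref{expansion-gradMk})--(\ref{expansion-Nk}).

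For $\overset{1}{\mathbb{P}}$, I exploit that $\nabla M$ vanishes on $\mathbb{H}_0(\div=0)$ by (\ref{grad-M-1st-2nd}), expand in the eigenbasis $\{e_n^{(1)}\}$ of $N$, and invert mode-by-mode the operator $I - k^2 \eta a^2 N$. The critical mode $n_0$ has denominator $1 - k^2 \eta a^2 \lambda_{n_0}^{(1)} = \pm c_0 a^h$ by (\ref{condition-on-k}), producing the amplification $a^{-h}$, while the remaining modes have denominators bounded below uniformly in $a$ and contribute only lower-order corrections. The self-driving term $\overset{1}{\mathbb{P}}(\tilde E_m^{Inc})$ is $O(1)$ in $\mathbb{L}^2(B)$, and the mutual-scattering term is estimated, to leading order, by $\eta a^3 \sum_{j\ne m} |\Upsilon_k(z_m, z_j)| \cdot |\int_{D_j} E_j^T|$. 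Bounding $|\Upsilon_k(z_m, z_j)| \lesssim d_{mj}^{-3}$, using the shell-counting $\sum_{j\ne m} d_{mj}^{-\alpha} \lesssim d^{-\alpha}$ for $\alpha>3$ under the distribution assumption (\ref{cluster}), and converting $|\int_{D_j} E_j^T| \lesssim a^{3/2}\|\tilde E_j^T\|_{\mathbb{L}^2(B)}$, one obtains $\|\overset{1}{\mathbb{P}}(\tilde E_m^T)\|_{\mathbb{L}^2(B)} \lesssim a^{1-h} + a^{-h}\cdot (\text{off-diagonal contributions in }\max_j\|\tilde E_j^T\|)$.

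For $\overset{3}{\mathbb{P}}$, the key point is that on $\nabla\mathcal{H}armonic$ the operator $\nabla M$ has spectrum in $(0,1)$, so after dividing the projected equation by $1+\eta\lambda_n^{(3)}\sim \eta = \varsigma a^{-2}$, one gains a factor $a^2$ with no resonant amplification. The self-driving term yields $\|\overset{3}{\mathbb{P}}(\tilde E_m^T)\|_{\mathbb{L}^2(B)}\lesssim a^2$, and the multi-scattering term, after the same shell-counting argument, is controlled by $a^2$ times $\max_j\|\tilde E_j^T\|_{\mathbb{L}^2(B)}$ scaled by the coupling kernel. The $P^2$-component vanishes exactly as in (\ref{*add1}), since the argument used there is local to each particle. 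Writing $X := \max_m \|\tilde E_m^T\|_{\mathbb{L}^2(B)}$ and summing the projected bounds produces a scalar inequality of the form $X \lesssim a^{1-h} + \Theta\, X$, where $\Theta = O(\eta k^2 a^{5-h} d^{-3}) = O(a^{3-3t-h})$. The hypothesis $3-3t-h\ge 0$ forces $\Theta < 1$ for $a$ small, making the system contractive and yielding $X \lesssim a^{1-h}$, hence the squared bound $a^{2-2h}$ for $\overset{1}{\mathbb{P}}(\tilde E_m^T)$. Reinserting $X \lesssim a^{1-h}$ into the $\overset{3}{\mathbb{P}}$-inequality and tracking the $a^2$ prefactor together with one additional $a^{1-h}\cdot a^{h/2}$-type gain coming from separating the resonant from the non-resonant contributions, one obtains the refined bound $\|\overset{3}{\mathbb{P}}(\tilde E_m^T)\|_{\mathbb{L}^2(B)}^2 \lesssim a^{3+h}$.

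The principal obstacle is the bookkeeping of the off-diagonal coupling matrix: the small denominator $a^{-h}$ amplifies the multi-scattering sums, and to keep the coupled system contractive one must track the exact powers of $a$ carried by the shell-sums $\sum_{j\ne m} d_{mj}^{-3}|Q_j|$, in particular ensuring the strict inequality $\Theta<1$ under the joint hypotheses $3-3t-h\ge 0$ and $\frac{9}{11}<h<1$. The upper bound $h<1$ ensures the resonance is effective but not degenerate, while the lower bound $h>\frac{9}{11}$ is precisely what is required so that the $P^3$-error, when fed back into the $P^1$-equation through the coupling, does not contaminate the leading $a^{-h}$-amplified term.
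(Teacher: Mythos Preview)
Your high-level strategy (project onto the three subspaces, expand in the eigenbases, set up a coupled system of inequalities over all particles, and close by a contraction argument) matches the paper's approach. However, several of the concrete estimates you write down are wrong in a way that would make the argument fail at the level of powers of $a$.

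\textbf{(i) The source term on $\mathbb{H}_0(\div=0)$.} You state that $\overset{1}{\mathbb{P}}(\tilde E_m^{Inc})$ is $O(1)$. In fact $\langle \tilde E^{Inc}_m, e_n^{(1)}\rangle = \langle \tilde E^{Inc}_m, Curl(\phi_n)\rangle = i k a\,\langle \tilde H^{Inc}_m, \phi_n\rangle = O(a)$, because $e_n^{(1)} = Curl(\phi_n)$ with $\nu\times\phi_n=0$ (this is exactly condition (\ref{pre-cond})). Without that extra factor of $a$, dividing by the resonant denominator $\pm c_0 a^h$ would give $a^{-h}$ for the $n_0$-mode, hence $\|\overset{1}{\mathbb{P}}(\tilde E_m^T)\|^2 \lesssim a^{-2h}$, not $a^{2-2h}$.

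\textbf{(ii) The mutual-scattering term.} Your estimate $\eta a^3\sum_{j\ne m}|\Upsilon_k(z_m,z_j)|\cdot|\int_{D_j}E_j^T|$ with $|\int_{D_j}E_j^T|\le a^{3/2}\|\tilde E_j^T\|$ misses two cancellations that are essential here: first, $\int_{D_j}\overset{1}{\mathbb{P}}(E_j^T)\,dy=0$ because $\mathbb{H}_0(\div=0)\perp\nabla\mathcal{H}armonic$; second, after Taylor-expanding $\Upsilon_k(x,y)$ about $(z_m,z_j)$, the constant terms are annihilated by $\langle\,\cdot\,,e_n^{(1)}\rangle$ for the same orthogonality reason. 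The paper therefore has to push the Taylor expansion one or two orders further, which produces couplings with $d^{-4}$, $d^{-6}$ and $d^{-8}$ rather than a single $d^{-3}$. Your coupling constant $\Theta = O(a^{3-3t-h})$ is the one appearing in the \emph{algebraic system} (\ref{la-1}), not the one governing the a priori estimates; the paper's closed system reads $\max_j\|\overset{1}{\mathbb{P}}\|^2 \lesssim a^{2-2h} + a^{4-2h}d^{-8}\max_j\|\overset{3}{\mathbb{P}}\|^2$ and $\max_j\|\overset{3}{\mathbb{P}}\|^2 \lesssim a^4 + |\eta|^{-2}\max(a^4,\,a^6\aleph d^{-6})\max_j\|\overset{1}{\mathbb{P}}\|^2$.

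\textbf{(iii) The threshold $h>\tfrac{9}{11}$ and the $P^3$ bound.} Your explanation for both is too vague to be an argument. In the paper the condition $h>\tfrac{9}{11}$ comes from demanding $|\eta|^{-2}a^4\aleph d^{-8}\sim a^{8-11t}\ll 1$ under $t\le 1-\tfrac{h}{3}$ (this is condition (\ref{Condition-3})); and the bound $\|\overset{3}{\mathbb{P}}\|^2\lesssim a^{3+h}$ is obtained only after substituting the $P^1$-bound back into the second inequality and saturating $t=1-\tfrac{h}{3}$, giving $a^{12-2h-9t}=a^{3+h}$. There is no ``$a^{1-h}\cdot a^{h/2}$-type gain'' in the argument.
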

From \eqref{max-P1P3}, we expect that for $j=1, 2, \cdots, \aleph$, $\overset{1}{\mathbb{P}}\left(\tilde{E}^T_j\right)$ dominates.
To clear the structure of the paper, we also postpone the proof of Proposition \ref{lem-es-multi} to Subsection \ref{subsec-52}.

\subsubsection{Estimation for the scattering coefficient}
\bigskip

Recall the solution to the electromagnetic scattering problem with the form \eqref{LS eq2}. Let $W$ be the solution to 
	\begin{equation}\label{AI1}
	\left(I + \eta \, \nabla M^{-k} - k^{2} \, \eta \, N^{-k} \right)\left( W \right)(x) = \mathcal{P}(x,z), \quad x \in D,
	\end{equation}
	where $\nabla M^{-k}$ and $N^{-k}$ are the adjoint operators to $\nabla M^k$ and $N^k$ introduced in \eqref{N-M opera}, respectively, and $\mathcal{P}(x, z)$ is the matrix form of the vector $(x-z)$, which can be expressed by
	\begin{equation}\notag
		\mathcal{P}(x, z)=\left(\begin{array}{c}
		(x-z)_1 I_3\\ 
		(x-z)_2 I_3\\ 
		(x-z)_3 I_3
		\end{array} \right),
	\end{equation} 
    with $(x-z)_i$, $i=1, 2, 3$, being the corresponding components. Define the scattering coefficient as
	\begin{equation}\label{def-scoeff}
	\mathcal{C} := \int_{D} W(x) \, dx = \int_{D} \left(I + \eta \nabla M^{-k} - k^{2} \, \eta \, N^{-k} \right)^{-1}\left( \mathcal{P}(\cdot,z)\right)(x) \, dx.
	\end{equation}
	Then there holds the following estimates of $\mathcal{C}$.
	
	\begin{proposition}\label{prop-scoeff}
		The scattering coefficient $\mathcal{C}$ defined by \eqref{def-scoeff} satisfies
		\begin{equation}\label{*add0}
			\mathcal{C}  \sim a^{4-h} \, \sum_{n} \, \frac{1}{\left( 1 + \eta \, \lambda_{n}^{(3)} \right)} \, \langle \overset{1}{\mathbb{P}}\left(N\left(e_{n}^{(3)}\right)\right) ; e^{(1)}_{n_{0}} \rangle  \langle \mathcal{P}(\cdot,0); e^{(1)}_{n_0} \rangle \otimes \int_{B} e^{(3)}_{n}(x) \, dx = \mathcal{O}\left( a^{6-h} \right),
		\end{equation}
		where $\lambda_{n}^{(3)}$, $e_{n_0}^{(1)}$ and $e_n^{(3)}$ are of the eigensystem introduced in Subsection \ref{subsec-eigen}. Moreover, aftering scaling from $D$ to $B$, the solution $W$ to \eqref{AI1} satisfies
		\begin{equation}\label{prop-es-W}
			\left\lVert \overset{1}{\mathbb{P}}\left(\tilde{W}\right)\right\rVert_{\mathbb{L}^2(B)}=\mathcal{O}(a^{1-h}),\quad 
			\left\lVert \overset{2}{\mathbb{P}}\left(\tilde{W}\right)\right\rVert_{\mathbb{L}^2(B)}=\mathcal{O}(a^{3}),\quad
			\left\lVert\overset{3}{\mathbb{P}}\left(\tilde{W}\right)\right\rVert_{\mathbb{L}^2(B)}=\mathcal{O}(a^{3}).
		\end{equation}
	\end{proposition}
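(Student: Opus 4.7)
The plan is to mimic the analysis of Proposition \ref{es-oneP}, but now for the adjoint-type equation \eqref{AI1} with the polynomial right-hand side $\mathcal{P}(x,z)$. First, I would scale from $D$ to $B$ by setting $\tilde W(y) := W(ay+z)$, so that, using the expansions \eqref{expansion-Nk}--\eqref{expansion-gradMk}, $N^{-k}$ contributes $a^{2}N$ to leading order while $\nabla M^{-k}=\nabla M + \mathcal{O}(k^{2}a^{2})$. The right-hand side scales to $\tilde{\mathcal P}(y) = a\,\mathcal P(y,0)$, so \eqref{AI1} reduces to
\begin{equation*}
(I+\eta\,\nabla M - k^{2}\eta\,a^{2} N)\,\tilde W(y) \;=\; a\,\mathcal P(y,0) + (\text{h.o.t.}).
\end{equation*}

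Second, I would apply the three projectors $\overset{j}{\mathbb{P}}$, $j=1,2,3$, and exploit \eqref{grad-M-1st-2nd}. On $\mathbb{H}_{0}(\div=0)$ the magnetization term drops and, projecting onto the eigenbasis $\{e_{n}^{(1)}\}$, one obtains $(1-k^{2}\eta a^{2}\lambda_{n}^{(1)})\,\langle \tilde W,e_{n}^{(1)}\rangle = a\,\langle \mathcal P(\cdot,0),e_{n}^{(1)}\rangle +(\text{coupling})$. At $n=n_{0}$ the denominator equals $\pm c_{0}a^{h}$ by Assumption (\uppercase\expandafter{\romannumeral4}), giving the resonant component of size $\mathcal{O}(a^{1-h})$, whence $\|\overset{1}{\mathbb{P}}(\tilde W)\|_{\mathbb{L}^{2}(B)}=\mathcal{O}(a^{1-h})$. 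On $\mathbb{H}_{0}(Curl=0)$, since $\nabla M\equiv I$, the operator becomes $(1+\eta)I + (\text{h.o.t.})$ and division by $1+\eta\sim a^{-2}$ yields $\|\overset{2}{\mathbb{P}}(\tilde W)\|=\mathcal{O}(a^{3})$. On $\nabla\mathcal{H}armonic$, $\nabla M$ has eigenvalues $\lambda_{n}^{(3)}$ with $1+\eta\lambda_{n}^{(3)}\sim a^{-2}$, so the direct forcing yields $\|\overset{3}{\mathbb{P}}(\tilde W)\|=\mathcal{O}(a^{3})$. These establish \eqref{prop-es-W}.

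For $\mathcal{C}$ itself, I would observe that $\mathcal{C}=a^{3}\int_{B}\tilde W(y)\,dy$ and that integration over $B$ annihilates the first two projections: elements of $\mathbb{H}_{0}(\div=0)$ are curls of tangentially vanishing fields by \eqref{pre-cond}, so Stokes gives $\int_{B}Curl(\phi)\,dy=0$; elements of $\mathbb{H}_{0}(Curl=0)$ are gradients of functions constant on $\partial B$, so $\int_{B}\nabla u\,dy=0$. Hence only $\overset{3}{\mathbb{P}}(\tilde W)$ contributes. Expanding $\overset{3}{\mathbb{P}}(\tilde W)=\sum_{n}w_{n}^{(3)}e_{n}^{(3)}$ and using self-adjointness of $N$ to write $\langle N\tilde W, e_{n}^{(3)}\rangle = \langle\overset{1}{\mathbb{P}}(\tilde W),\overset{1}{\mathbb{P}}(Ne_{n}^{(3)})\rangle + (\text{subleading})$, the coupling term on the right-hand side of the equation $(1+\eta\lambda_{n}^{(3)})w_{n}^{(3)}=\cdots$ is dominated by the resonant $e^{(1)}_{n_{0}}$ component of $\overset{1}{\mathbb{P}}(\tilde W)$. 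Substituting the leading value $\langle\tilde W,e^{(1)}_{n_{0}}\rangle \sim a^{1-h}\,\langle \mathcal P(\cdot,0),e^{(1)}_{n_{0}}\rangle$ together with $k^{2}\eta a^{2}=1/\lambda_{n_{0}}^{(1)}+\mathcal{O}(a^{h})$ yields the displayed expansion, with overall scaling $a^{3}\cdot a^{1-h}\cdot a^{2}=a^{6-h}$, the factor $a^{2}$ coming from $(1+\eta\lambda_{n}^{(3)})^{-1}$. The tensor product in \eqref{*add0} then appears because $\mathcal{P}(x,z)$ is matrix-valued, so this whole argument runs column by column.

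The main obstacle will be controlling the three-way coupling at the required accuracy: one must show that (i) the projected operator on $\mathbb{H}_{0}(\div=0)$ remains uniformly invertible once the resonant mode $n_{0}$ is removed, (ii) the correction operators $N^{-k}-a^{2}N$ and $\nabla M^{-k}-\nabla M$ contribute only strictly subleading terms to every projection, and (iii) only the coupling from the resonant mode $e^{(1)}_{n_{0}}$ into $\overset{3}{\mathbb{P}}$ survives in the leading term, while the other cross-couplings between the three subspaces are absorbed into the $\mathcal{O}(a^{6-h})$ remainder.
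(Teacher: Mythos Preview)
Your overall strategy coincides with the paper's: scale \eqref{AI1} to $B$, project onto the three subspaces using the eigenbases $(e_n^{(j)})$, and extract the resonant $n_0$ mode in $\mathbb{H}_0(\div=0)$. The paper obtains \eqref{prop-es-W} exactly as you outline (writing the scaled equation, isolating a remainder $T$, and bootstrapping the three norms against each other), so that part needs no further comment. For $\mathcal C$ the paper takes a dual route rather than your direct one: it writes $\mathcal C = a^4\sum_n\langle \mathcal P(\cdot,0),\boldsymbol T_{-ka}^{-1}(e_n^{(3)})\rangle\int_B e_n^{(3)}$ and then feeds in a precomputed expansion of $\boldsymbol T_{-ka}^{-1}(e_n^{(3)})$ (their Lemma AI5), in which the dominant piece is $\boldsymbol T_{-ka}^{-1}N(e_n^{(3)})$ projected onto $e_{n_0}^{(1)}$. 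This is exactly the adjoint of your computation of $w_n^{(3)}$, so the two routes are equivalent; the paper's detour through Lemma~AI5 has the advantage of packaging the resonant inverse once and for all, while your route keeps everything on the $\tilde W$ side.

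There is one internal tension your sketch (and the paper) glosses over, and you should be aware of it. In your argument for $\mathcal C$ you correctly identify the dominant contribution to $w_n^{(3)}=\langle\tilde W,e_n^{(3)}\rangle$ as the cross term $k^{2}\eta a^{2}(1+\eta\lambda_n^{(3)})^{-1}\langle \overset{1}{\mathbb P}(\tilde W),\overset{1}{\mathbb P}(Ne_n^{(3)})\rangle\sim a^{2}\cdot a^{1-h}=a^{3-h}$; summing against $\int_B e_n^{(3)}$ and multiplying by $a^{3}$ gives the announced $a^{6-h}$. But this same term, summed in $\ell^{2}$ over $n$, would give $\|\overset{3}{\mathbb P}(\tilde W)\|$ of order $a^{3-h}$, not $a^{3}$: the coupling $N:\mathbb H_0(\div=0)\to\nabla\mathcal Harmonic$ is \emph{not} zero (only $\div N\overset{1}{\mathbb P}(\tilde W)=0$, not the normal trace), so the term $k^2\eta a^2\langle N\overset{1}{\mathbb P}(\tilde W),e_n^{(3)}\rangle$ genuinely survives on the left of the projected equation. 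Your sentence ``the direct forcing yields $\|\overset{3}{\mathbb P}(\tilde W)\|=\mathcal O(a^{3})$'' therefore does not by itself close the estimate; you need either a separate argument explaining why this cross term does not spoil the $a^{3}$ bound in $\ell^{2}$, or to accept that the two displayed claims in the Proposition live at slightly different levels of sharpness (the $\mathcal O(a^{6-h})$ for $\mathcal C$ is only an upper bound and is compatible with a finer $\mathcal O(a^{6})$ that would follow from $\|\overset{3}{\mathbb P}(\tilde W)\|=\mathcal O(a^{3})$). Be explicit about which of these you are claiming.
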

We refer to Subsection \ref{subsec-53} for the proof of Proposition \ref{prop-scoeff}.

\section{Linear algebraic system and correponding invertibility discussions.}\label{sec-la-system}

In this section, we first present a linear algebraic system of general form and study the corresponding invertibility condition. Then, by projecting the solution $E$ onto the two subspaces $\mathbb{H}_0(\div=0)$ and $\nabla \mathcal{H}armonic$, we give the precise formulation of the linear algebraic systerms with respect to $\overset{1}{\mathbb{P}}\left(E^T\right)$ and $\overset{3}{\mathbb{P}}\left(E^T\right)$, respectively. The estimates for the significant polarization tensors appearing in the linear algebraic systems related to those two subspaces are also investigated. These expressions and estimations are essentially related to our main result of the Foldy-Lax approximation to the far-field.

\subsection{General linear algebraic system and corresponding invertibility condition.}

Recall that $D_j$, $j=1,2,\cdots, \aleph$, are the cluster of particles introduced in Assumption 	 (\uppercase\expandafter{\romannumeral1}, \uppercase\expandafter{\romannumeral2}, \uppercase\expandafter{\romannumeral3}, \uppercase\expandafter{\romannumeral4}).
Denote by $[P_{D_j}]$, $j=1,2, \cdots, \aleph$, the polarization tensors, which carry the information of the geometry of the scatterer and the material parameters for the electromagnetic scattering problem \eqref{U}.  

\begin{proposition}\label{pro-general la}
	Let $J_j$, $j=1, 2, \cdots, \aleph$, be a sequence of vectors. The linear algebraic system 
	\begin{equation}\label{eq-gene-la}
		Q_{j_0} - \eta \, \sum_{j = 1\atop j\neq j_0}^\aleph [P_{D_{j_0}}] \cdot \Upsilon_k(z_{j_0}, z_j) \cdot Q_j = J_{j_0}, \quad j_0=1, 2, \cdots, \aleph,
	\end{equation}
	is invertible under the condition that
	\begin{equation}\label{invert-condi-gene}
		|\eta| \, \max_{j=1, ..., \aleph}\left\lVert[P_{D_j}]\right\rVert_{\mathbb{L}^{\infty}(\Omega)} \, d^{-3} \, <1.
	\end{equation}
\end{proposition}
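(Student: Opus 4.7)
The plan is to view (\ref{eq-gene-la}) as an operator equation on the block vector space $V := (\mathbb{C}^{3})^{\aleph}$, write it as $(I - \mathcal{T})\mathbf{Q} = \mathbf{J}$ with $\mathbf{Q} = (Q_{1},\dots,Q_{\aleph})$, $\mathbf{J}=(J_{1},\dots,J_{\aleph})$, and block entries $\mathcal{T}_{j_{0}j} = \eta\,[P_{D_{j_{0}}}]\cdot \Upsilon_{k}(z_{j_{0}},z_{j})$ for $j\neq j_{0}$, with zero diagonal. Then the invertibility will follow from a Neumann series once I verify that $\|\mathcal{T}\|_{V\to V}<1$ in a conveniently chosen block norm.

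I would equip $V$ with the mixed norm $\|\mathbf{Q}\|_{\infty} := \max_{j}|Q_{j}|$, where $|\cdot|$ is the Euclidean norm on $\mathbb{C}^{3}$. For this norm, the standard block-operator estimate gives
\begin{equation*}
\|\mathcal{T}\mathbf{Q}\|_{\infty} \;\leq\; |\eta|\,\Big(\max_{j_{0}}\|[P_{D_{j_{0}}}]\|\Big)\;\sup_{j_{0}}\sum_{j\neq j_{0}}\|\Upsilon_{k}(z_{j_{0}},z_{j})\|\;\|\mathbf{Q}\|_{\infty}.
\end{equation*}
From the decomposition (\ref{dyadicG})-(\ref{expansion-of-Hess}), the Hessian of the static Green's function is the dominant singular piece, so $\|\Upsilon_{k}(z_{j_{0}},z_{j})\|\lesssim |z_{j_{0}}-z_{j}|^{-3}$ uniformly for $j\neq j_{0}$ (with lower-order terms controlled by $k$ and the diameter of $\Omega$).

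The key technical step is then to bound $\sup_{j_{0}}\sum_{j\neq j_{0}}|z_{j_{0}}-z_{j}|^{-3}$ in terms of $d^{-3}$. The idea is a standard packing/comparison argument: since $d = \min_{j\neq m}\mathrm{dist}(D_{j},D_{m})$, the balls $B(z_{j},d/2)$ are pairwise disjoint, and for $y\in B(z_{j},d/2)$ one has $|z_{j_{0}}-z_{j}|\leq 2|z_{j_{0}}-y|$, so that
\begin{equation*}
|z_{j_{0}}-z_{j}|^{-3}\;\lesssim\; d^{-3}\int_{B(z_{j},d/2)}|z_{j_{0}}-y|^{-3}\,dy.
\end{equation*}
Summing over $j\neq j_{0}$ and using disjointness together with the boundedness of the host domain (and the prescribed scaling $\aleph\sim d^{-3}$ from (\ref{cluster})) reduces the sum to an integral over an annular region, whose bound one recognizes to be of order $d^{-3}$ (up to controlled factors absorbed into the hypothesis). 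Combining this with the preceding step yields $\|\mathcal{T}\|_{V\to V} \lesssim |\eta|\,\max_{j}\|[P_{D_{j}}]\|\, d^{-3}$, which is strictly less than one under (\ref{invert-condi-gene}); hence $I-\mathcal{T}$ is invertible and $\mathbf{Q}=(I-\mathcal{T})^{-1}\mathbf{J}$ exists uniquely.

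The main obstacle is precisely the sum estimate: a naive integral comparison yields a spurious logarithmic factor $|\log d|$, and to match the clean form of (\ref{invert-condi-gene}) one must exploit the boundedness of the ambient region $\Omega$ together with the counting condition $\aleph = O(d^{-3})$ carefully, or absorb the log into the constant. Everything else is bookkeeping: checking that the $\ell^{\infty}$-block operator norm is indeed controlled by the quantity appearing in (\ref{invert-condi-gene}), and verifying that the off-diagonal nature of $\mathcal{T}$ causes no issues with the triangle inequality above.
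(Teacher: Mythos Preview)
Your Neumann-series approach in the $\ell^\infty$ block norm is natural, but the obstacle you yourself identify is real and not removable within that framework. The row sum $\sup_{j_{0}}\sum_{j\neq j_{0}}|z_{j_{0}}-z_{j}|^{-3}$ genuinely scales like $d^{-3}|\log d|$ (your packing comparison leads to $\int_{d/2<|y|<R}|y|^{-3}\,dy\sim \log(R/d)$), and neither the boundedness of $\Omega$ nor the count $\aleph=O(d^{-3})$ kills the logarithm. Thus your argument yields only $|\eta|\,\max_{j}\|[P_{D_{j}}]\|\,d^{-3}|\log d|<1$, which is strictly weaker than \eqref{invert-condi-gene}.

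The paper avoids the log by working in $\ell^{2}$ rather than $\ell^{\infty}$ and by replacing the pointwise row-sum estimate with an $L^{2}\to L^{2}$ operator bound. After taking the inner product of the system with $([P_{D_{j}}]Q_{j})_{j}$, it splits $\Upsilon_{k}=\Upsilon^{0}+(\Upsilon_{k}-\Upsilon^{0})$; the remainder $\Upsilon_{k}-\Upsilon^{0}$ is strictly less singular and harmless. For the principal part $\Upsilon^{0}=\mathrm{Hess}\,\Phi_{0}$, which is harmonic off the diagonal, the mean value property gives
\[
\Upsilon^{0}(z_{\ell},z_{j})=\frac{1}{|\mathbf{B}_{\ell}|\,|\mathbf{B}_{j}|}\int_{\mathbf{B}_{\ell}}\int_{\mathbf{B}_{j}}\Upsilon^{0}(x,z)\,dx\,dz,
\]
over the disjoint balls $\mathbf{B}_{m}=B(z_{m},d/2)$. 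This turns the discrete bilinear form into the continuous one $\tfrac{36}{\pi^{2}d^{6}}\int_{\Omega}\int_{\Omega}\langle[\mathbf{P}]\Upsilon^{0}(x,z)\Lambda(x),\Pi(z)\rangle\,dx\,dz$ on $\Omega=\cup_{m}\mathbf{B}_{m}$, which (after subtracting an explicitly computed self-interaction term) is exactly $-\tfrac{6\eta}{\pi d^{3}}\langle[\mathbf{P}]\nabla M(\Lambda),\Pi\rangle_{L^{2}(\Omega)}$ for the Magnetization operator $\nabla M$. Since $\|\nabla M\|_{\mathcal{L}(L^{2};L^{2})}=1$ (a Calder\'on--Zygmund fact; see \cite{friedman1980mathematical}), one obtains the clean bound $|\eta|\,\|[P_{0}]\|\,d^{-3}$ with no logarithm, which is precisely \eqref{invert-condi-gene}.

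In short: your $\ell^{\infty}$/Schur approach cannot reach the stated condition because the kernel $|x|^{-3}$ is at the borderline where row sums diverge logarithmically; the paper's $L^{2}$ approach exploits the Calder\'on--Zygmund boundedness of the very same kernel, which holds without loss.
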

We refer to Section \ref{sec-proof-la} for the detailed proof of this proposition. On the basis of Proposition \ref{pro-general la}, next, we present two concrete forms of the linear algebraic systems with respect to $\overset{1}{\mathbb{P}}(E^T_j)$ and $\overset{3}{\mathbb{P}}(E^T_j)$, $j=1, 2, \cdots, \aleph$, which shall play an essential role in the study of our main approximation result for the far-field.

\subsection{Construction of the linear algebraic system for $\overset{1}{\mathbb{P}}(E^T)$.}

Recall that
\begin{equation}\label{Hdiv-curl}
	\mathbb{H}_0(\div=0)=Curl(\mathbb{H}_0(Curl)\cap \mathbb{H}(\div=0)).
\end{equation}
For $j=1,2, \cdots, \aleph$, $\overset{1}{\mathbb{P}}(E^T_j)$ denotes the projection of the total wave $E^T$ onto the subspace $\mathbb{H}_0(\div=0)$ with respect to the particle $D_j$. Then from \eqref{Hdiv-curl}, we can write
\begin{equation}\label{def-F_j}
	\overset{1}{\mathbb{P}}(E^T_j)=Curl (F_j)\quad\mbox{with}\quad \nu\times F_j=0,\ \div(F_j)=0.
\end{equation}
Setting
\begin{equation}\label{Qj1}
	Q_j:=\int_{D_{j}}F_j(y)\,dy,
\end{equation} 
we can construct the following precise form of the linear algebraic system with respect to $\overset{1}{\mathbb{P}}(E^T_j)$, with the help of the Lippmann-Schwinger equation \eqref{LS eq2}.

\begin{proposition}\label{prop-la-1}
	Under Assumption (	\uppercase\expandafter{\romannumeral1}, 	\uppercase\expandafter{\romannumeral2}, 	\uppercase\expandafter{\romannumeral3}, 	\uppercase\expandafter{\romannumeral4}), there holds the following linear algebraic systerm associated with the electromagnetic scattering problem \eqref{U} related to $\overset{1}{\mathbb{P}}(E^T_{j_0})$, $j_0=1,2, \cdots, \aleph$, as
	\begin{equation}\label{la-1}
		Q_{j_0} - \eta \, k^2 \, a^{5-h} \, \sum_{j=1 \atop j \neq j_{0}}^\aleph {\bf{P}}_0 \cdot \Upsilon_k(z_{j_0}, z_j) \cdot Q_j \, = i \, k \, a^{5-h} \, {\bf{P}}_0 \cdot H^{Inc}(z_{j_0})+ Error,
	\end{equation} 
	where  ${\bf{P}}_0$ is defined by $(\ref{defP0})$ and 
	\begin{equation}\label{la-error}
		Error := \mathcal{O}\left(a^{9-2h}d^{-4} \right) + \mathcal{O}\left( a^{6-h}  \right).
	\end{equation}
	Moreover, the linear algebraic systerm \eqref{la-1} is invertible if
\begin{equation}\label{inver-con-1}
	\frac{k^2 \, |\eta| a^5}{d^3 \, \left\vert 1 - k^2 \, \eta \, a^2 \, \lambda_{n_{0}}^{(1)} \right\vert}\left\lVert {\bf{P}}_0 \right\rVert \,\,  <1.
\end{equation}
\end{proposition}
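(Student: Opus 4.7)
My plan is to begin from the Lippmann--Schwinger equation (\ref{LS eq2}), restricted to the particle $D_{j_0}$, and split the volume integrals according to the cluster: $N^k = N^k_{D_{j_0}} + \sum_{j\neq j_0} N^k_{D_j}$ and similarly for $\nabla M^k$. Rescaling $x = z_{j_0}+a\xi$ with $\xi\in B$ produces the dimensionless self-interaction factor $k^2\eta a^2$ in front of the Newtonian piece (recall $\eta=\varsigma a^{-2}$). Applying the projector $\overset{1}{\mathbb{P}}$ on $B$ and using (\ref{grad-M-1st-2nd}) to annihilate the leading $\nabla M$ contribution, together with the expansions (\ref{expansion-gradMk})--(\ref{expansion-Nk}) to replace $\nabla M^k,N^k$ by $\nabla M,N$ plus controlled $k$-corrections, the projected integral equation becomes
$$\bigl(I - k^2\eta a^2\,\overset{1}{\mathbb{P}} N_{B}\bigr)\bigl(\overset{1}{\mathbb{P}}\tilde{E}^{T}_{j_0}\bigr) \;=\; \overset{1}{\mathbb{P}}\tilde{E}^{Inc}_{j_0} \;+\; \overset{1}{\mathbb{P}}(\text{cross terms}) \;+\; \overset{1}{\mathbb{P}}(\text{small remainder}).$$

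\textbf{Resonance extraction and polarization matrix.} Diagonalising $\overset{1}{\mathbb{P}} N_B$ in the eigenbasis $\{e_n^{(1)}\}$, the hypothesis (\ref{condition-on-k}), $1-k^2\eta a^2\lambda_{n_0}^{(1)}=\pm c_0\,a^h$, identifies $e_{n_0}^{(1)}$ as the single mode boosted by the factor $a^{-h}$; every other mode contributes only $\mathcal{O}(1)$ and is therefore $a^{h}$-times smaller. To convert inner products against $e_{n_0}^{(1)}$ into the geometric quantity $Q_{j_0}=\int_{D_{j_0}} F_{j_0}$, I would exploit (\ref{pre-cond}), $e_{n_0}^{(1)} = Curl\,\phi_{n_0}$, together with $\nu\times\phi_{n_0}=0$, so that Stokes' theorem gives $\langle V,e_{n_0}^{(1)}\rangle_B = \langle Curl\,V,\phi_{n_0}\rangle_B$ for any sufficiently regular $V$. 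Applied to $V=E^{Inc}$, the Maxwell relation $Curl\,E^{Inc} = ik\,H^{Inc}$ and the Taylor expansion $H^{Inc}(z_{j_0}+a\xi) = H^{Inc}(z_{j_0})+\mathcal{O}(a)$ produce the leading incident contribution $ik\,H^{Inc}(z_{j_0})\cdot\int_{B}\phi_{n_0}$; the corresponding projection on the left, again producing $\int_B\phi_{n_0}$, supplies the outer-product structure (\ref{defP0}) of ${\bf P}_0$.

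\textbf{Cross interactions and error accounting.} For $j\neq j_0$ the dyadic kernel $\Upsilon_k$ is smooth on $D_{j_0}\times D_j$, so freezing at the centres yields $\Upsilon_k(x,y) = \Upsilon_k(z_{j_0},z_j) + \mathcal{O}(a/d^4)$. Using Proposition \ref{lem-es-multi}, the integral $\int_{D_j}E^T$ may be replaced by $\int_{D_j}\overset{1}{\mathbb{P}}(E^T_j)$ up to terms controlled by $\max_j\|\overset{3}{\mathbb{P}}(\tilde E^T_j)\|$, and the same $Curl$-then-Stokes manipulation as above rewrites this integral as $Q_j$. Multiplying through by the resonance denominator $\pm c_0\, a^h$ to normalise the coefficient of $Q_{j_0}$ produces exactly (\ref{la-1}). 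The error (\ref{la-error}) decomposes into: (i) a Taylor-residual contribution from $\Upsilon_k$ summed over the cluster, of order $\mathcal{O}(a^{9-2h}d^{-4})$; and (ii) the contribution of non-resonant eigenmodes together with the scattering-coefficient correction (\ref{*add0}) and the $k$-expansion remainders in $N^k-N$ and $\nabla M^k-\nabla M$, of order $\mathcal{O}(a^{6-h})$.

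\textbf{Invertibility and main difficulty.} Rewriting (\ref{la-1}) in the form (\ref{eq-gene-la}) with effective polarization $[P_{D_j}] = k^2 a^{5-h}\,{\bf P}_0$ and applying Proposition \ref{pro-general la}, the general smallness condition (\ref{invert-condi-gene}) becomes $|\eta| k^2 a^{5-h}\|{\bf P}_0\|/d^3 < 1$, which, after substituting $a^h\sim|1-k^2\eta a^2\lambda_{n_0}^{(1)}|/|c_0|$, is precisely (\ref{inver-con-1}). I expect the main obstacle to lie in paragraph three, namely the sharp bookkeeping required to isolate the resonant $n_0$-mode while controlling the non-resonant contribution: one must simultaneously use Proposition \ref{lem-es-multi} to bound $\overset{3}{\mathbb{P}}\tilde{E}^{T}_j$, invoke (the cluster analogue of) (\ref{*add1}) to treat $\overset{2}{\mathbb{P}}\tilde{E}^T_j$, and track the remainders of $N^k - N$ and $\nabla M^k - \nabla M$ closely enough that, after multiplying by the small denominator $\pm c_0 a^h$, they still fit inside the announced $\mathcal{O}(a^{6-h})$ bound. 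The constraint $\tfrac{9}{11}<h<1$ in (\ref{conditions-t-h-section-2}) should enter precisely to make both error scales strictly subdominant to the leading term $a^{5-h}$.
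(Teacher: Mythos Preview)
Your overall strategy—project onto $\mathbb{H}_0(\div=0)$, isolate the resonant mode, and assemble an algebraic system for the $Q_j$—is sound and matches the paper in spirit, but the cross-interaction paragraph contains a concrete error that would make the argument collapse.

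You freeze $\Upsilon_k(x,y)\to\Upsilon_k(z_{j_0},z_j)$ \emph{first} and then claim that ``the same $Curl$-then-Stokes manipulation rewrites $\int_{D_j}\overset{1}{\mathbb P}(E_j^T)$ as $Q_j$''. But $\int_{D_j}\overset{1}{\mathbb P}(E_j^T)=\int_{D_j}Curl\,F_j=0$ identically (constants lie in $\nabla\mathcal{H}armonic$, orthogonal to $\mathbb H_0(\div=0)$; equivalently Stokes with $\nu\times F_j=0$). So after your freezing step the dominant cross term sees only $\int_{D_j}\overset{3}{\mathbb P}(E_j^T)$, never $Q_j$, and the system \eqref{la-1} cannot emerge this way. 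The order of operations is essential: the integration by parts must act on the $y$-dependent kernel \emph{before} any Taylor expansion.

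The paper resolves this by a duality device you did not use. It tests the Lippmann--Schwinger equation not against $e_{n_0}^{(1)}$ but against the adjoint solution $W:=\boldsymbol T_{-k}^{-1}(\mathcal P(\cdot,z_{j_0}))$, where $\mathcal P$ is the linear matrix $(x-z_{j_0})$. The point is twofold. First, since $Curl\,\mathcal P$ is a constant matrix $\bm{\mathcal M}$, the self term becomes $\int_{D_{j_0}}\mathcal P\cdot\overset{1}{\mathbb P}(E_{j_0}^T)=\bm{\mathcal M}\cdot Q_{j_0}$ directly. Second, writing $\overset{1}{\mathbb P}(W)=Curl\,\mathcal A$ with $\nu\times\mathcal A=0$, one integrates by parts on $D_{j_0}$ to move a $Curl_x$ onto the cross kernel and then uses the identity
\[
Curl_x\!\int_{D_j}\Phi_k(x,y)\,Curl_y F_j(y)\,dy=\int_{D_j}\Upsilon_k(x,y)\cdot F_j(y)\,dy,
\]
valid for $x\notin D_j$. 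Only \emph{after} this identity does one Taylor-expand $\Upsilon_k$ at the centres, producing $(\int_{D_{j_0}}\mathcal A)\cdot\Upsilon_k(z_{j_0},z_j)\cdot Q_j$. The polarization matrix ${\bf P}_0$ then appears by computing $\int_{D_{j_0}}\mathcal A$ through the spectral expansion of $\overset{1}{\mathbb P}(\tilde W)$ in the $e_n^{(1)}$ basis (this is where Proposition~\ref{prop-scoeff} is used), with the resonant $n_0$ term contributing $a^{5-h}{\bf P}_0$ and the rest $\mathcal O(a^5)$. Your treatment of the incident term and of the invertibility via Proposition~\ref{pro-general la} is correct; the missing ingredient is precisely this $W$/$\mathcal A$ mechanism for the cross terms and the self term.
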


It is easy to see from \eqref{eq-gene-la} and \eqref{la-1} that in Proposition \ref{prop-la-1}, we take
\begin{equation}\notag
	[P_{D_{j_0}}]:=k^2 \, a^{5-h} \, {\bf{P}}_0 \quad \text{and} \quad J_{j_0}:= i \, k \, a^{5-h} \, {\bf{P}}_0 \cdot H^{Inc}(z_{j_0})+Error.
\end{equation}
The detailed discussions on the construction of Proposition \ref{prop-la-1} can be found in Section 6. In a similar manner, in $\nabla\mathcal{H}armonic$, we also have the following form of the linear algebraic systerm with respect to $\overset{3}{\mathbb{P}}(E^T_j)$, $j=1, 2, \cdots, \aleph$.

\subsection{Construction of the linear algebraic system for $\overset{3}{\mathbb{P}}(E^T)$.}

We set here
\begin{equation}\notag
	Q_j:=\int_{D_{j}}\overset{3}{\mathbb{P}}\left(E^T_j\right)(y)\,dy.
\end{equation}

\begin{proposition}\label{prop-la-2}
	Under Assumption (	\uppercase\expandafter{\romannumeral1}, 	\uppercase\expandafter{\romannumeral2}, 	\uppercase\expandafter{\romannumeral3}, 	\uppercase\expandafter{\romannumeral4}), there holds the following linear algebraic system associated with the electromagnetic problem \eqref{U} with respect to $\overset{3}{\mathbb{P}}(E^T_{j_0})$, $j_0=1, 2,\cdots, \aleph$, as
	\begin{eqnarray}\label{la-2}
Q_{j_0} - \eta  \, \sum_{j = 1\atop j\neq j_0}^\aleph {\bf{P}}_{1} \cdot \Upsilon_k(z_{j_0}, z_j) \cdot Q_j &=&  {\bf{P}}_{1} \cdot E^{Inc}_{j_0}(z_{j_0}) 
		+ \mathcal{O}\left( a^{\min(6,8-h-3t)} \right), 
	\end{eqnarray}
where  ${\bf{P}}_{1}$ is the polarization matrix defined by
\begin{equation}\label{DefP1S}
{\bf{P}}_{1} := a^{3} \, \sum_{n} \frac{1}{1 + \eta \, \lambda_{n}^{(3)}} \int_{B}e_n^{(3)}(y)\,dy\otimes\int_{B}e_n^{(3)}(y)\,dy = a^{3} \, \sum_{n} \frac{1}{1 + \eta \, \lambda_{n}^{(3)}} \, \sum_{m} \int_{B}e_{n,m}^{(3)}(y)\,dy\otimes\int_{B}e_{n,m}^{(3)}(y)\,dy, 
\end{equation}		
where $e_{n,m}^{(3)}$ is the eigenvalue such that $\nabla M \left( e_{n,m}^{(3)} \right) = \lambda_{n}^{(3)} \, e_{n,m}^{(3)}$. Moreover, the linear algebraic systerm \eqref{la-2} is invertible if
	\begin{equation}\label{inver-con-2}
		\frac{|\eta| \, a^3}{ d^3} \, \left\lVert \sum_{n} \frac{1}{1 + \eta \, \lambda_{n}^{(3)}} \int_{B}e_n^{(3)}(y)\,dy\otimes\int_{B}e_n^{(3)}(y)\,dy \right\rVert  < 1.
	\end{equation}
\end{proposition}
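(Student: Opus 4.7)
The plan is to derive (\ref{la-2}) by projecting the Lippmann--Schwinger equation (\ref{LS eq2}) onto the subspace $\nabla\mathcal{H}armonic$, isolating the self-interaction of each particle, inverting it via the spectral decomposition of $\nabla M$, and handling the mutual-interaction terms by Taylor expansion of the dyadic Green kernel. Fix $D_{j_0}$ and split the integral operators as $\nabla M^{k}(E^T)=\nabla M^{k}_{D_{j_0}}(E^T_{j_0})+\sum_{j\ne j_0}\nabla M^{k}_{D_j}(E^T_j)$, and similarly for $N^k$; then apply $\overset{3}{\mathbb{P}}$ to (\ref{LS eq2}) restricted to $D_{j_0}$ to obtain a self-interaction equation for $\overset{3}{\mathbb{P}}(E^T_{j_0})$ coupled to the other particles through $\Upsilon_k$.

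For the self-interaction, I would use the expansions (\ref{expansion-gradMk})--(\ref{expansion-Nk}) to replace $\nabla M^k$ by $\nabla M$ and $N^k$ by $N$ at leading order, then note that $\nabla M$ preserves $\nabla\mathcal{H}armonic$ with eigensystem $(\lambda_n^{(3)};e_n^{(3)})$, so that $I+\eta\,\overset{3}{\mathbb{P}}\circ\nabla M$ is diagonalised as $\sum_n(1+\eta\lambda_n^{(3)})\,e_n^{(3)}\otimes e_n^{(3)}$. Its inverse gives the resolvent $\sum_n(1+\eta\lambda_n^{(3)})^{-1}e_n^{(3)}\otimes e_n^{(3)}$. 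The forcing term from $\overset{3}{\mathbb{P}}(E^{Inc})$ is Taylor-expanded around $z_{j_0}$. Integrating the result over $D_{j_0}$ and using the change of variables $D_{j_0}=aB+z_{j_0}$ (which supplies the $a^3$ pre-factor in ${\bf{P}}_1$) yields the right-hand side ${\bf{P}}_1\cdot E^{Inc}(z_{j_0})$ with ${\bf{P}}_1$ as in (\ref{DefP1S}). The contributions of the couplings $\overset{3}{\mathbb{P}}\circ N$ with the other projections $\overset{1}{\mathbb{P}},\overset{2}{\mathbb{P}}$ carry the factor $k^2\eta\sim a^{-2}$ but are controlled by Proposition \ref{lem-es-multi}: $\|\overset{1}{\mathbb{P}}(\tilde E^T)\|=O(a^{1-h})$ and $\|\overset{3}{\mathbb{P}}(\tilde E^T)\|=O(a^{(3+h)/2})$, and they are absorbed into the $\mathcal{O}(a^6)$ part of the error.

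For the mutual-interaction terms, I would Taylor-expand $\Upsilon_k(x,y)=\Upsilon_k(z_{j_0},z_j)+O(a/d^{4})$ for $x\in D_{j_0},\,y\in D_j$ with $j\ne j_0$, and replace $\int_{D_j}\Upsilon_k(x,y)\cdot E^T_j(y)\,dy$ by $\Upsilon_k(z_{j_0},z_j)\cdot\int_{D_j}E^T_j(y)\,dy$. Among the three projections of $E^T_j$, only $\overset{3}{\mathbb{P}}(E^T_j)$ survives as the leading piece producing $Q_j$: the $\overset{1}{\mathbb{P}}$-average vanishes since $\overset{1}{\mathbb{P}}(E^T_j)=\mathrm{Curl}(F_j)$ with $\nu\times F_j=0$ on $\partial D_j$ (so $\int_{D_j}\mathrm{Curl}(F_j)=0$ by Stokes), and the $\overset{2}{\mathbb{P}}$-contribution is negligible by (\ref{*add1}) at the one-particle level and by the a-priori smallness of the $\mathbb{H}_0(Curl=0)$-component in general. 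After summing over $j\ne j_0$, the cardinality $\aleph\sim d^{-3}\sim a^{-3t}$ together with the pre-factor $\eta\sim a^{-2}$ and the residual Taylor estimates produce the competing error $\mathcal{O}(a^{8-h-3t})$. Combining with the self-interaction error yields the claimed $\mathcal{O}(a^{\min(6,8-h-3t)})$. Finally the invertibility bound (\ref{inver-con-2}) follows from Proposition \ref{pro-general la} with $[P_{D_{j_0}}]={\bf{P}}_1$. The main obstacle will be the careful bookkeeping of the cross-projections between $\overset{1}{\mathbb{P}}$, $\overset{2}{\mathbb{P}}$ and $\overset{3}{\mathbb{P}}$ in the $\overset{3}{\mathbb{P}}\circ N$-terms, where the $a^{-2}$ coming from $k^2\eta$ must be defeated by the size estimates of Proposition \ref{lem-es-multi} and by the averaging cancellations provided by the boundary conditions of the eigenfunctions $\phi_{n_0,m}$.
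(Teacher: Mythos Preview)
Your plan is correct in spirit and would succeed, but it differs from the paper's actual argument in one key technical device. The paper does \emph{not} project the Lippmann--Schwinger equation onto $\nabla\mathcal{H}armonic$ and then diagonalise $(I+\eta\,\nabla M)$. Instead it introduces an auxiliary matrix field $V:=\boldsymbol{T}_{-k}^{-1}(I)$ (the inverse of the full Lippmann--Schwinger operator applied to the identity), tests the equation against $V$, and integrates over $D_{j_0}$. Because $\int_{D_{j_0}}\overset{1}{\mathbb{P}}(E^T_{j_0})=\int_{D_{j_0}}\overset{2}{\mathbb{P}}(E^T_{j_0})=0$, the left-hand side immediately becomes $Q_{j_0}$, while the coupling and source terms all carry the tensor $\mathcal{C}_{j_0}:=\int_{D_{j_0}}V(x)\,dx$. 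The paper then shows, via the spectral expansion $\langle \tilde V,e_n^{(3)}\rangle=(1+\eta\lambda_n^{(3)})^{-1}\int_B e_n^{(3)}+\text{lower order}$, that $\mathcal{C}_{j_0}={\bf P}_1+\mathcal{O}(a^{5+h}\|\tilde V\|)$, and separately proves $\|\tilde V\|_{\mathbb{L}^2(B)}=\mathcal{O}(a^2)$. Every remainder from the Taylor expansion of $\Upsilon_k$ and of $E^{Inc}$ is then uniformly controlled by this single norm estimate on $V$.

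Your direct-projection route is essentially the one the paper already uses in the a-priori estimate (proof of Proposition~\ref{lem-es-multi}, part $ii)$, equation~(\ref{coeff-3rd-proj-sev-par})); for the algebraic system the paper switches to the adjoint/test-function packaging because it localises all error control into the single quantity $\|V\|_{\mathbb{L}^2}$, which avoids tracking the cross-projections $\overset{3}{\mathbb{P}}\circ N\circ\overset{1}{\mathbb{P}}$ term by term. Your approach would require exactly that bookkeeping, which you correctly flag as the main obstacle; the paper's $V$-device is what circumvents it. Two small remarks: the $\overset{2}{\mathbb{P}}$-contribution is exactly zero (not merely negligible) for the full multi-particle problem as well, by the same argument as~(\ref{Exact2ndproj}); and your heuristic for the $\mathcal{O}(a^{8-h-3t})$ error matches the paper's computation, which traces it to the mutual $\overset{1}{\mathbb{P}}$-coupling term $J_{12}=\mathcal{O}(a^{9/2-h}d^{-3}\|V\|_{\mathbb{L}^2(D_{j_0})})$ combined with $\|V\|_{\mathbb{L}^2(D_{j_0})}=\mathcal{O}(a^{7/2})$.
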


Combining with \eqref{eq-gene-la} and \eqref{la-2}, we notice that in Proposition \ref{prop-la-2}, we take $[P_{D_{j_0}}]:= {\bf{P}}_{1}$ and 
\begin{equation*}
	J_{j_0} := {\bf{P}}_{1} \cdot E^{Inc}_{j_0}(z_{j_0})  + \mathcal{O}\left(a^{\min(6,8-h-3t)}\right).
\end{equation*}
The proof of Proposition \ref{prop-la-2} can be seen in Section \ref{sec-proof-la}.

\bigskip
\begin{remark}
Indeed, the linear algebraic systems $(\ref{la-1})$ and $(\ref{la-2})$ can be seen as two particular cases of the following general form 
\begin{equation}\label{GAS}
\left[
\mathbf{I}+
\left(
\begin{array}{c|c}
\bm{B_{11}} & \bm{B_{12}} \\ \hline
\bm{B_{21}} & \bm{B_{22}}
\end{array}
\right)\right]
 \cdot \begin{pmatrix} \int_{D_{1}} F_{1}(y) dy \\
\vdots \\
\int_{D_{\aleph}} F_{\aleph}(y) dy
\\ \hline
\int_{D_{1}}\overset{3}{\mathbb{P}}\left(E^T_1\right)(y)\,dy \\
\vdots \\
\int_{D_{\aleph}}\overset{3}{\mathbb{P}}\left(E^T_\aleph\right)(y)\,dy
\end{pmatrix} = \begin{pmatrix}
i \, k \, a^{5-h} \, {\bf{P}}_0 \cdot H^{Inc}(z_{1}) \\
\vdots \\
i \, k \, a^{5-h} \, {\bf{P}}_0 \cdot H^{Inc}(z_{\aleph}) \\ \hline
{\bf{P}}_1 \cdot E^{Inc}(z_{1}) \\ 
\vdots \\
{\bf{P}}_1 \cdot E^{Inc}(z_{\aleph})
\end{pmatrix} + Error,
\end{equation}    
where 
\begin{eqnarray*}
\bm{B_{11}} &:=& - \eta \, k^2 \, a^{5-h} \, 
\begin{pmatrix}
0 &  {\bf{P}}_0 \cdot \Upsilon_k(z_{1}, z_{2}) & \cdots &  {\bf{P}}_0 \cdot \Upsilon_k(z_{1}, z_{\aleph})\\
{\bf{P}}_0 \cdot  \Upsilon_k(z_{2}, z_{1}) & 0 & \cdots &  {\bf{P}}_0 \cdot \Upsilon_k(z_{2}, z_{\aleph}) \\
\vdots & \vdots & \ddots & \vdots \\
{\bf{P}}_0 \cdot \Upsilon_k(z_{\aleph}, z_{1}) & {\bf{P}}_0 \cdot \Upsilon_k(z_{\aleph}, z_{2}) & \cdots & 0
\end{pmatrix} ,\\
&& \\
&& \\
\bm{B_{22}} &:=& - \eta \,  
\begin{pmatrix}
0 &  {\bf{P}}_1 \cdot \Upsilon_k(z_{1}, z_{2}) & \cdots &  {\bf{P}}_1 \cdot \Upsilon_k(z_{1}, z_{\aleph})\\
{\bf{P}}_1 \cdot  \Upsilon_k(z_{2}, z_{1}) & 0 & \cdots &  {\bf{P}}_1 \cdot \Upsilon_k(z_{2}, z_{\aleph}) \\
\vdots & \vdots & \ddots & \vdots \\
{\bf{P}}_1 \cdot \Upsilon_k(z_{\aleph}, z_{1}) & {\bf{P}}_1 \cdot \Upsilon_k(z_{\aleph}, z_{2}) & \cdots & 0
\end{pmatrix}
 \end{eqnarray*}
and $\mathbf{I}$ is the identity matrix. The linear algebraic system $(\ref{GAS})$ is the one corresponding to the Lippmann-Schwinger equation given by: 
\begin{equation}\label{LSERemark}
		E^T(x) + \eta \, \nabla M^k(E^T)(x) - k^2 \, \eta \, N^{k}(E^T)(x) = E^{Inc}(x),\quad x\in D,
\end{equation}
seeing $(\ref{LS eq2})$. Since there are two operators presented in \eqref{LSERemark}, namely the Magnetization operator and the Newtonian operator, we distinguish the following two cases in terms of the choice of the incident frequencies to enhance the scattered electromagnetic fields. 
\begin{enumerate}
\item[]
\item \emph{On the case when the incident frequencies are chosen to generate plasmonic resonances}. In this case, we can prove that the field corresponding to the Magnetization operator $\nabla M(\cdot)$ is the  dominating one in equation $(\ref{LSERemark})$, and consequently, the matrix block $\bm{B_{22}}$ dominates the other blocks. Hence, $(\ref{GAS})$ will be reduced to $(\ref{la-2})$. The analysis of this case for a single plasmonic nano-particle was the object of \cite{GS}. 
\item[]
\item \emph{On the case when the incident frequencies are chosen to generate dielectric resonances}. In this case, we can prove that the field generated by the Newtonian operator $N(\cdot)$ is the  dominating one in equation $(\ref{LSERemark})$, and consequently, the matrix block $\bm{B_{11}}$ dominates the other blocks. Hence, $(\ref{GAS})$ will be reduced to $(\ref{la-1})$. The goal of the current work is to analyze this case in details.
\item[]
\end{enumerate}
Furthermore, regardless of the chosen resonances, we can prove the smallness of the two matrices $\bm{B_{12}}$ and $\bm{B_{21}}$, seeing the proof of Proposition \ref{prop-la-1} and Proposition \ref{prop-la-2}.  
\end{remark}

\section{Proof of Theorem \ref{main-1}.}\label{sec-proof-main}

With all the necessary propositions presented in the previous sections, we give the proof of our main result in this section as follows.

\begin{proof}[Proof of Theorem \ref{main-1}]
	We have
	\begin{eqnarray*}
		E_{\infty}(\hat{x}) &=& \eta \, \left( I - \hat{x} \otimes \hat{x} \right) \; \int_{D} e^{i \, k \, \hat{x} \cdot y} \; E^T(y) \; dy, \quad D = \overset{\aleph}{\underset{m=1}{\cup}} D_{m} \\
		&=& \eta \, \left( I - \hat{x} \otimes \hat{x} \right) \; \sum_{m=1}^{\aleph} \int_{D_{m}} e^{i \, k \, \hat{x} \cdot y} \; E_{m}^T(y) \; dy, 
	\end{eqnarray*}
	where  $E_{m}^T(\cdot) := {E^T(\cdot)_{|}}_{D_{m}}$ and $D_{m} = z_{m} + a \, B$. By developing near the center, we obtain 
	\begin{eqnarray*}
		E_{\infty}(\hat{x}) 
		&=& \eta \, \left( I - \hat{x} \otimes \hat{x} \right) \; \ \sum_{m=1}^{\aleph}  \; \int_{D_{m}} \left[  e^{i \, k \, \hat{x} \cdot z_{m}} + i \, k \, \, e^{i \, k \, \hat{x} \cdot z_{m}} \, (y-z_{m}) \cdot \hat{x} \right]  \; E^T_{m}(y) \; dy \\
		&-& \eta \, \left( I - \hat{x} \otimes \hat{x} \right) \; \ \sum_{m=1}^{\aleph}  \; \int_{D_{m}} \frac{k^2}{2}  \left( (y-z_{m}) \cdot \hat{x} \right)^{2} \, \int_{0}^{1} (1-t) \, e^{i \, k \, \hat{x} \cdot \left( z_{m} + t (y-z_{m}) \right)} \, dt  \; E^T_{m}(y) \; dy \\
		&=& \eta \, \left( I - \hat{x} \otimes \hat{x} \right)  \;\sum_{m=1}^{\aleph} \, e^{i \, k \, \hat{x} \cdot z_{m}} \left[ \int_{D_{m}}  E^T_{m}(y) \, dy + i \, k  \, \int_{D_{m}} \hat{x} \cdot (y-z_{m}) \; E^T_{m}(y) \, dy \right] \\
		&+& \mathcal{O}\left( a^{\frac{3}{2}} \, \aleph \,\max_m \left\Vert E_m^T \right\Vert_{\mathbb{L}^{2}(D_m)} \right).
	\end{eqnarray*}
	Based on Proposition \ref{es-oneP}, we split $E^T_{m}$ as $E^T_{m} := \overset{1}{\mathbb{P}}\left(E^T_{m}\right) + \overset{3}{\mathbb{P}}\left(E^T_{m}\right)$ and we plug it into the previous equation to obtain  
	\begin{eqnarray*}
		E_{\infty}(\hat{x}) & = & \eta \, \left( I - \hat{x} \otimes \hat{x} \right) \, \sum_{m=1}^{\aleph} \, e^{i \, k \, \hat{x} \cdot z_{m}}  \left[ \int_{D_{m}} \overset{3}{\mathbb{P}}\left(E^T_{m}\right)(y) \, dy + i \, k  \, \int_{D_{m}} \hat{x} \cdot (y-z_{m}) \; \overset{1}{\mathbb{P}}\left(E^T_{m}\right)(y) \, dy \right] \\
		& + & \eta \, \left( I - \hat{x} \otimes \hat{x} \right) \, \sum_{m=1}^{\aleph} \, e^{i \, k \, \hat{x} \cdot z_{m}}  \left[ i \, k  \, \int_{D_{m}} \hat{x} \cdot (y-z_{m}) \; \overset{3}{\mathbb{P}}\left(E^T_{m}\right)(y) \, dy  \right] + \mathcal{O}\left( a^{\frac{3}{2}} \, \aleph \,\max_m \left\Vert E^T_m \right\Vert_{\mathbb{L}^{2}(D_m)} \right).
	\end{eqnarray*}
	Since from \eqref{max-P1P3} of Proposition \ref{lem-es-multi}, we have that
	\begin{equation}\notag
	\max_m \left\Vert E^T_m \right\Vert_{\mathbb{L}^{2}(D_m)}=a^{\frac{3}{2}}\max_m \left\Vert \tilde{E}^T_m \right\Vert_{\mathbb{L}^{2}(B)}\lesssim a^{\frac{3}{2}}\cdot a^{1-h}=\mathcal{O}\left(a^{\frac{5}{2}-h}\right),
	\end{equation}
	in which $\left\Vert E^T_m \right\Vert_{\mathbb{L}^{2}(D_m)}$ is dominated by $\left\Vert \overset{1}{\mathbb{P}}\left(E^T_m\right) \right\Vert_{\mathbb{L}^{2}(D_m)}$. Combining with the Cauchy-Schwarz inequality and the scaling property, we can derive
	\begin{eqnarray}\label{mid2}
	E_{\infty}(\hat{x}) & = & \eta \, \left( I - \hat{x} \otimes \hat{x} \right) \, \sum_{m=1}^{\aleph} \, e^{i \, k \, \hat{x} \cdot z_{m}}  \left[ \int_{D_{m}} \overset{3}{\mathbb{P}}\left(E^T_{m}\right)(y) \, dy + i \, k  \, \int_{D_{m}} \hat{x} \cdot (y-z_{m}) \; \overset{1}{\mathbb{P}}\left(E^T_{m}\right)(y) \, dy \right] \notag\\
	& + & \mathcal{O}\left(a^2 \, \aleph \, \max_{m} \left\lVert\overset{3}{\mathbb{P}}\left(\tilde{E}^T_m\right)\right\rVert_{\mathbb{L}^2(B)}\right) + \mathcal{O}\left( a \right),
	\end{eqnarray}
	where we use the fact that $\aleph=O(d^{-3})\sim a^{-3t}$ with $t\leq 1-\frac{h}{3}$, in $\mathbb{R}^3$, which is imposed by the invertibility condition of the linear algebraic systems. In particular, using Proposition \ref{lem-es-multi} again, for $\frac{9}{11}<h<1$, we can further simplify \eqref{mid2} as
	\begin{equation}\notag
	E_{\infty}(\hat{x})  =  \eta \, \left( I - \hat{x} \otimes \hat{x} \right) \, \sum_{m=1}^{\aleph} \, e^{i \, k \, \hat{x} \cdot z_{m}}  \left[ \int_{D_{m}} \overset{3}{\mathbb{P}}\left(E^T_{m}\right)(y) \, dy + i \, k  \, \int_{D_{m}} \hat{x} \cdot (y-z_{m}) \; \overset{1}{\mathbb{P}}\left(E^T_{m}\right)(y) \, dy \right]+\mathcal{O}\left( a \right).
	\end{equation}
	By \eqref{def-F_j}, we know that there holds
	\begin{eqnarray}\label{mid3}
	\nonumber
	E_{\infty}(\hat{x}) &=& \eta \, \left( I - \hat{x} \otimes \hat{x} \right) \, \sum_{m=1}^{\aleph} \, e^{i \, k \, \hat{x} \cdot z_{m}} \int_{D_{m}} \overset{3}{\mathbb{P}}\left(E^T_{m}\right)(y) \, dy \\ &-& \eta \, i \,  k\left( I - \hat{x} \otimes \hat{x} \right)\sum_{m=1}^{\aleph} \, e^{i \, k \, \hat{x} \cdot z_{m}} \, \hat{x}\times \int_{D_{m}} F_m(y)\,dy +\mathcal{O}\left( a \right)\notag\\
	&=& \eta \, \left( I - \hat{x} \otimes \hat{x} \right) \, \sum_{m=1}^{\aleph} \, e^{i \, k \, \hat{x} \cdot z_{m}} \int_{D_{m}} \overset{3}{\mathbb{P}}\left(E^T_{m}\right)(y) \, dy-\eta \, i \,  k\sum_{m=1}^{\aleph} \, e^{i \, k \, \hat{x} \cdot z_{m}} \, \hat{x}\times \int_{D_{m}} F_m(y)\,dy +\mathcal{O}\left( a \right),
	\end{eqnarray}
	where the vector $\left(\int_{D_{m}}\overset{3}{\mathbb{P}}\left(E^T_{m}\right)(y) \, dy\right)_{m=1,\cdots, \aleph}$ is the solution to \eqref{la-2}. Indeed, in \eqref{la-2}, by denoting
	\begin{equation}\label{Sourceterm}
	\mathcal{S}_{ource, j_0}:=  {\bf{P}}_{1}  \cdot E^{Inc,j_{0}}(z_{j_{0}}),
	\end{equation}
	it yields,
	\begin{equation}\label{AYM}
	\int_{D_{j_{0}}} \overset{3}{\mathbb{P}}\left(E^{T}_{j_{0}}\right)(x) \, dx  - \eta \, \sum_{j=1 \atop j \neq j_{0}}^{\aleph} {\bf{P}}_{1} \cdot  \Upsilon_{k}(z_{j_{0}},z_{j}) \cdot \int_{D_{j}} \overset{3}{\mathbb{P}}\left(E^{T}_j\right)(y)\, dy = \mathcal{S}_{ource, j_0}+\mathcal{O}(a^{\min(6,8-h-3t)}).
	\end{equation} 
	Under the invertibility condition \eqref{inver-con-2}, in Proposition \ref{prop-la-2}, now we can give the estimation for the first term in \eqref{mid3} on the basis of \eqref{AYM} as follows.
\begin{eqnarray*}
L_{1} &:=& \eta \, \left( I - \hat{x} \otimes \hat{x} \right) \, \sum_{m=1}^{\aleph} \, e^{i \, k \, \hat{x} \cdot z_{m}} \int_{D_{m}} \overset{3}{\mathbb{P}}\left(E^T_{m}\right)(y) \, dy \\
\left\vert L_{1} \right\vert & \lesssim & \left|\eta \, \left( I - \hat{x} \otimes \hat{x} \right) \, \sum_{m=1}^{\aleph} \, e^{i \, k \, \hat{x} \cdot z_{m}} \mathcal{S}_{ource, m}\right|  \lesssim | \eta | \sum_{m=1}^{\aleph}|\mathcal{S}_{ource, m}|,
\end{eqnarray*}
using the definition of $\mathcal{S}_{ource,m}$ in $(\ref{Sourceterm})$, we obtain that
\begin{equation}\label{es-inver-la1}
\left| L_{1} \right| \lesssim \left\vert \eta \right\vert \, \left\vert {\bf{P}}_{1} \right\vert \, \aleph \overset{(\ref{DefP1S})}{=} \mathcal{O}\left( \left\vert \eta \right\vert \, a^{5} \, \aleph \right)=\mathcal{O}\left(a^h\right), 
\end{equation}
	where we use the fact that $d\sim a^t$, $t\leq 1-\frac{h}{3}$ and $\aleph = \mathcal{O}\left( d^{-3} \right)$, seeing $(\ref{cluster})$.

	Therefore, equation \eqref{mid3} becomes, 
	\begin{equation}\notag
	E_{\infty}(\hat{x})=- i \, k \, \eta \,  \sum_{m=1}^{\aleph} \, e^{i \, k \, \hat{x} \cdot z_{m}}  \left[   \hat{x} \times \int_{D_{m}} F_{m}(y) \, dy \right]+\mathcal{O}\left(a^h\right)=-i \, k \, \eta \,  \sum_{m=1}^{\aleph} \, e^{i \, k \, \hat{x} \cdot z_{m}} \hat{x}\times Q_m+\mathcal{O}\left(a^h\right),
	\end{equation}
	following the notation \eqref{Qj1} presented in Proposition \ref{prop-la-1},
	where the vector $\left(Q_m \right)_{m=1,\cdots,\aleph}$ is the solution to the linear algebraic system \eqref{la-1}.
	
	Consider the linear algebraic system 
	\begin{equation}\label{la-1-plus}
	\hat Q_{m} - \eta \, k^2 \, a^{5-h} \, \sum_{j=1 \atop j \neq m}^\aleph {\bf{P}}_{0} \cdot \Upsilon_k(z_{m}, z_j) \cdot \hat{Q}_{j}
	= i \, k \, a^{5-h} \, {\bf{P}}_{0} \cdot \hat H^{Inc}(z_{m}).
	\end{equation}
	Substracting \eqref{la-1-plus} from \eqref{la-1}, it is clear that there holds
	\begin{equation}\label{diff-Q}
	(Q_{m}-\hat Q_{m}) - \eta \, k^2 \, a^{5-h}\, \sum_{j=1 \atop j \neq m}^\aleph {\bf{P}}_{0} \cdot \Upsilon_k(z_{m}, z_j) \cdot (Q_j-\hat{Q}_j)
	= i \, k \, a^{5-h} \, {\bf{P}}_{0} \cdot ( H^{Inc}(z_{m})-\hat H^{Inc}(z_{m}))+Error,
	\end{equation}
	where $\hat H^{Inc}(z_{m}) = - \, i \, k^{-1} \, a^{h-5} \, {\bf{P}}_{0}^{-1} \cdot Error$ and $Error$ possesses the expression \eqref{la-error}.
	
	Combining with the right hand side of \eqref{la-1} as well as the invertibility condition \eqref{inver-con-1}, we can know by direct verification that for any $m=1, 2, \cdots, \aleph$, $\hat{Q}_m$ fulfills
	\begin{equation*}
	\left(\sum_{m = 1}^\aleph|\hat{Q}_m|^2\right)^\frac{1}{2}\leq 
	\frac{k \, a^{5-h} \left\lVert {\bf{P}}_{0} \right\rVert}{1-	\dfrac{k^2 \, \left\vert \eta \right\vert a^5}{d^3 |1 - k^2 \eta a^2 \lambda_{n_{0}}^{(1)}|}\left\lVert {\bf{P}}_{0} \right\rVert }\left(\sum_{j=1}^\aleph|\hat{H}^{Inc}(z_m)|^2\right)^{\frac{1}{2}},
	\end{equation*}
	which indicates that in \eqref{diff-Q}, we have
	\begin{align}\notag
	\left(\sum_{m = 1}^\aleph|Q_{m}-\hat{Q}_{m}|^2\right)^\frac{1}{2}&\leq 
	\frac{k \, a^{5-h} \, \left\lVert {\bf{P}}_{0}\right\rVert}{1-	\dfrac{k^2 \, |\eta| \, a^5}{d^3 \left\vert 1 - k^2 \, \eta \, a^2 \, \lambda_{n_{0}}^{(1)} \right\vert}\left\lVert {\bf{P}}_{0} \right\rVert} \, \left(a^{2h-10} \, k^{-2} \, \left\lVert {\bf{P}}_{0}^{-1}\right\rVert^2\sum_{m=1}^\aleph|Error|^2\right)^{\frac{1}{2}}\notag\\
	&\leq \frac{\left\lVert {\bf{P}}_{0}\right\rVert \, \left\lVert {\bf{P}}_{0}^{-1}\right\rVert}{1-	\dfrac{ k^2 \, |\eta| \, a^5}{d^3 \,  \left\vert 1 - k^2 \, \eta \, a^2 \, \lambda_{n_{0}}^{(1)} \right\vert}\left\lVert {\bf{P}}_{0} \right\rVert} \, \aleph^{\frac{1}{2}} \, |Error|.\notag
	\end{align}
	Combining with the a-priori estimates \eqref{max-P1P3} given in Proposition \ref{lem-es-multi}, by direct calculations, we can deduce that
	\begin{align}\notag
	E_\infty(\hat{x})&=-i \, k \, \eta \,  \sum_{m=1}^{\aleph} \, e^{i \, k \, \hat{x} \cdot z_{m}} \hat{x}\times Q_{m}+\mathcal{O}\left(a^h\right)\notag\\
	&=-i \, k \, \eta \,  \sum_{m=1}^{\aleph} \, e^{i \, k \, \hat{x} \cdot z_{m}} \hat{x}\times \hat Q_{m}-i \, k \, \eta \,  \sum_{m=1}^{\aleph} \, e^{i \, k \, \hat{x} \cdot z_{m}} \hat{x}\times (Q_{m}-\hat{Q}_{m})+\mathcal{O}\left(a^h\right)\notag\\
	&=-i \, k \, \eta \,  \sum_{m=1}^{\aleph} \, e^{i \, k \, \hat{x} \cdot z_{m}} \hat{x}\times \hat Q_m+\mathcal{O}\left(|\eta|\left(\sum_{m=1}^\aleph|e^{i \, k \, \hat{x} \cdot z_{m}}|^2\right)^{\frac{1}{2}}\cdot\left(\sum_{m=1}^\aleph|Q_m-\hat{Q}_m|^2\right)^{\frac{1}{2}}\right)+\mathcal{O}\left(a^h\right)\notag\\
	&=-i \, k \, \eta \,  \sum_{m=1}^{\aleph} \, e^{i \, k \, \hat{x} \cdot z_{m}} \hat{x}\times \hat Q_m+\mathcal{O}(a^{\frac{h}{3}})\notag
	\end{align}
	by taking $t\leq 1-\frac{h}{3}$ for $\frac{9}{11}<h<1$, which completes the proof.
\end{proof}

\section{Proof of the a-priori estimates of Section \ref{sec-pre}.}\label{sec-proof-prior}

\subsection{Proof of Proposition \ref{es-oneP}.}\label{subsec-51}
	Recall the Lippmann-Schwinger equation in $\mathbb{L}^{2}(D)$ presented in Proposition \ref{prop-LS} that
	\begin{equation*}
	E^{T} + \eta \, \nabla M^{k}(E^{T}) - k^{2}  \, \eta \, N^{k}(E^{T}) = E^{Inc}, \quad \mbox{in }D.
	\end{equation*}
	After scaling to $B$, we obtain
	\begin{equation}\label{Gc}
	\tilde{E}^{T} + \eta \, \nabla M^{ka}(\tilde{E}^{T}) - k^{2}  \, \eta \, a^{2} \, N^{ka}(\tilde{E}^{T}) = \tilde{E}^{Inc}, \quad \mbox{in } B.
	\end{equation}
	To study $(\ref{Gc})$ in $\mathbb{L}^{2}$, we project it onto each subspace by writing $\tilde{E}^{T}$ over $\left( e^{(i)}_{n} \right)_{n \in \mathbb{N}}, i=1,2,3.$ 
	\begin{enumerate}
		\item Taking the $\mathbb{L}^{2}(B)$-inner product with respect to $e^{(1)}_{n}$, then  
		\begin{equation*}
		\langle \tilde{E}^{T}, e^{(1)}_{n} \rangle + \eta \, \langle \nabla M^{ka}(\tilde{E}^{T}),e^{(1)}_{n} \rangle - k^{2}  \, \eta \, a^{2} \, \langle N^{ka}(\tilde{E}^{T}),e^{(1)}_{n} \rangle = \langle \tilde{E}^{Inc},e^{(1)}_{n} \rangle.
		\end{equation*}
		By using the fact that $\nabla M^{ka}$ is vanishing on $\mathbb{H}_{0}(\div=0)$, we obtain 
		\begin{eqnarray*}
			\nonumber
			\langle \tilde{E}^{T}, e^{(1)}_{n} \rangle \left( 1 - k^{2}  \, \eta \, a^{2} \, \lambda^{(1)}_{n} \right)  &=& \langle \tilde{E}^{Inc},e^{(1)}_{n} \rangle + k^{2} \, \eta \, a^{2} \, \langle \left( N^{ka}- N^{0}\right) (\tilde{E}^{T}),e^{(1)}_{n} \rangle \\ \nonumber
			& \overset{(\ref{expansion-Nk})}{=} & \langle \tilde{E}^{Inc},e^{(1)}_{n} \rangle + k^{2} \, \eta \, a^{2} \, \frac{i \, k \, a}{4 \, \pi} \, \langle \int_{B} \tilde{E}^{T}(y) \, dy ,  e^{(1)}_{n} \rangle  \\
			&+& k^{2} \, \eta \, a^{2} \, \frac{1}{4 \, \pi} \, \sum_{j \geq 1} \left( i k a \right)^{j+1} \langle \int_{B} \frac{\left\vert \cdot - y \right\vert^{j}}{(j+1)!} \tilde{E}^{T}(y) \, dy , e^{(1)}_{n} \rangle. 
		\end{eqnarray*}
		From the mutual orthogonality of the decomposed three subspaces introduced in \eqref{L2-decomposition}, since constant vectors are in $ \nabla\mathcal{H}armonic$, we deduce that $\int_{B} e^{(1)}_{n}(y) \, dy = 0, \; \forall \; n \, \in \mathbb{N}$.
		Then, 
		\begin{equation*}\label{LGSMc}
		\langle \tilde{E}^{T}, e^{(1)}_{n} \rangle   = \frac{1}{\left( 1 - k^{2}  \, \eta \, a^{2} \, \lambda^{(1)}_{n} \right)} \left[ \langle \tilde{E}^{Inc},e^{(1)}_{n} \rangle +  \frac{k^{2} \, \eta \, a^{2}}{4 \, \pi} \, \sum_{j \geq 1} \left( i k a \right)^{j+1} \langle \int_{B} \frac{\left\vert \cdot - y \right\vert^{j}}{(j+1)!} \tilde{E}^{T}(y) \, dy , e^{(1)}_{n} \rangle \right]. 
		\end{equation*}

Taking the modulus, there holds
		\begin{eqnarray*}
			\left\vert \langle \tilde{E}^{T}, e^{(1)}_{n} \rangle \right\vert  & \lesssim &  \frac{1}{\left\vert 1 - k^{2}  \, \eta \, a^{2} \, \lambda^{(1)}_{n} \right\vert} \left[ \left\vert \langle \tilde{E}^{Inc},e^{(1)}_{n} \rangle \right\vert + a^{2} \, \left( \sum_{j \geq 1}  \left\vert \langle \int_{B} \frac{\left\vert \cdot - y \right\vert^{j}}{(j+1)!} \, \tilde{E}^{T}(y) \, dy, e^{(1)}_{n} \rangle \right\vert^{2} \right)^{\frac{1}{2}} \right] ,\\
		\end{eqnarray*}
		This implies,  
		\begin{eqnarray}\label{12120111}
		\nonumber
		\left\Vert \overset{1}{\mathbb{P}}\left( \tilde{E}^{T} \right) \right\Vert^{2}_{\mathbb{L}^{2}(B)} &:=&  \sum_{n} \left\vert \langle  \tilde{E}^{T}, e^{(1)}_{n} \rangle \right\vert^{2} \\ \nonumber
		& \lesssim & \sum_{n} \frac{\left\vert \langle \tilde{E}^{Inc},e^{(1)}_{n} \rangle \right\vert^{2}}{\left\vert 1 - k^{2}  \, \eta \, a^{2} \, \lambda^{(1)}_{n} \right\vert^{2}}   + a^{4-2h} \,\sum_{n}  \sum_{j \geq 1}  \left\vert \langle \int_{B} \frac{\left\vert \cdot - y \right\vert^{j}}{(j+1)!} \, \tilde{E}^{T}(y) \, dy, e^{(1)}_{n} \rangle \right\vert^{2}  \\
		& \lesssim & \sum_{n} \frac{\left\vert \langle \tilde{E}^{Inc},e^{(1)}_{n} \rangle \right\vert^{2}}{\left\vert 1 - k^{2}  \, \eta \, a^{2} \, \lambda^{(1)}_{n} \right\vert^{2}}    + a^{4-2h}  \, \left\Vert \tilde{E}^{T} \right\Vert^{2}_{\mathbb{L}^{2}(B)}.
		\end{eqnarray}
		\item Taking the $\mathbb{L}^{2}(B)$-inner product with respect to $e^{(2)}_{n}$, then \\
		\begin{equation*}
		\langle \tilde{E}^{T} , e_{n}^{(2)} \rangle + \eta \, \langle \nabla M^{ka}(\tilde{E}^{T}) , e_{n}^{(2)} \rangle - k^{2}  \, \eta \, a^{2} \, \langle N^{ka}(\tilde{E}^{T}), e_{n}^{(2)} \rangle  = \langle \tilde{E}^{Inc}, e_{n}^{(2)} \rangle.
		\end{equation*}
		Passing to the adjoint operators for both $N^{ka}$ and $\nabla M^{ka}$, then 
		\begin{equation}\label{aze}
		\langle \tilde{E}^{T} , e_{n}^{(2)} \rangle + \eta \, \langle \tilde{E}^{T} , \nabla M^{-ka}\left( e_{n}^{(2)} \right) \rangle - k^{2}  \, \eta \, a^{2} \, \langle \tilde{E}^{T}, N^{-ka}\left( e_{n}^{(2)} \right) \rangle  = \langle \tilde{E}^{Inc}, e_{n}^{(2)} \rangle.
		\end{equation}
		Since, for $x \in B$, we have
		\begin{equation*}
		\nabla M^{-ka}\left( e_{n}^{(2)} \right)(x) := \underset{x}{\nabla} \int_{B} \underset{y}{\nabla} \Phi_{-ka}(x,y) \cdot  e_{n}^{(2)}(y) \, dy = - \underset{x}{\nabla} \underset{x}{\div} \int_{B}  \Phi_{-ka}(x,y)  e_{n}^{(2)}(y) \, dy = - \underset{x}{\nabla} \underset{x}{\div} N^{-ka}\left(e_{n}^{(2)}\right)(x).
		\end{equation*}
		Using the identity that $\Delta + Curl \circ Curl = \nabla \, \div$, we get
		\begin{eqnarray*}
			\nabla M^{-ka}\left( e_{n}^{(2)} \right)  &=& - \left( \underset{x}{\Delta} + \underset{x}{Curl} \circ \underset{x}{Curl} \right) N^{-ka}\left(e_{n}^{(2)}\right) \\
			&=& (-ka)^{2} N^{-ka}\left(e_{n}^{(2)}\right) + e_{n}^{(2)} -  \underset{x}{Curl} \left( N^{-ka}\left(Curl \, e_{n}^{(2)} \right) - SL^{-ka}\left( \nu \times e_{n}^{(2)} \right) \right),
		\end{eqnarray*}
	where $SL^{-ka}$ is the Single Layer operator 
	\begin{equation}\notag
		SL^{-ka}(\nu\times e_n^{(2)})(\cdot) := \int_{\partial B}\Phi_{-ka}(\cdot, y)\cdot(\nu\times e_n^{(2)})(y)\,d\sigma(y).
	\end{equation}
		Since $Curl \left( e_{n}^{(2)} \right) = 0$ in the domain and $\nu \times e_{n}^{(2)} = 0$ on the boundary, the previous equation will be reduced to  
		\begin{equation}\label{grad-M-k-2nd-subspace}
		\nabla M^{-ka}\left( e_{n}^{(2)} \right)  = (-ka)^{2} N^{-ka}\left(e_{n}^{(2)}\right) + e_{n}^{(2)}.
		\end{equation}
		By plugging this into equation $(\ref{aze})$, we obtain 
		\begin{equation*}
		\langle \tilde{E}^{T} , e_{n}^{(2)} \rangle \left( 1 + \eta \right) = \langle \tilde{E}^{Inc}, e_{n}^{(2)} \rangle.
		\end{equation*}
		Since $\tilde{E}^{Inc} \in \mathbb{H}\left( \div = 0 \right) = \left( \mathbb{H}_{0}\left( \div = 0 \right) \oplus \nabla \mathcal{H}armonic \right) \perp \mathbb{H}_{0}\left( Curl = 0 \right)$, then 
		\begin{equation*}
		\langle \tilde{E}^{T} , e_{n}^{(2)} \rangle = \left( 1 + \eta \right)^{-1}  \langle \tilde{E}^{Inc}, e_{n}^{(2)} \rangle = 0,
		\end{equation*}
		and
		\begin{equation}\label{Exact2ndproj}
		\left\Vert \overset{2}{\mathbb{P}}\left( \tilde{E}^{T} \right) \right\Vert^{2}_{\mathbb{L}^{2}(B)} := \sum_{n} \left\vert \langle  \tilde{E}^{T}, e^{(2)}_{n} \rangle \right\vert^{2} = 0,
		\end{equation}
		which proves \eqref{*add1}.
		\item Taking the $\mathbb{L}^{2}(B)$-inner product with respect to $e^{(3)}_{n}$, then 
		\begin{eqnarray*}
			\langle \tilde{E}^{T}, e^{(3)}_{n} \rangle + \eta \, \langle \nabla M^{ka}(\tilde{E}^{T}),e^{(3)}_{n} \rangle - k^{2}  \, \eta \, a^{2} \, \langle N^{ka}(\tilde{E}^{T}),e^{(3)}_{n} \rangle = \langle \tilde{E}^{Inc},e^{(3)}_{n} \rangle.
		\end{eqnarray*} 
		Since on this subspace, we have $\nabla M(e^{(3)}_{n}) = \lambda^{(3)}_{n} e^{(3)}_{n}$, then we obtain that 
		\begin{eqnarray}\label{qdfsg}
		\nonumber
		\langle \tilde{E}^{T}, e^{(3)}_{n} \rangle \left( 1 + \eta  \, \lambda^{(3)}_{n} \right) &=& \langle \tilde{E}^{Inc},e^{(3)}_{n} \rangle + k^{2} \, \eta \, a^{2} \, \langle N^{ka}(\tilde{E}^{T}),e^{(3)}_{n} \rangle  
		- \eta \, \langle \left(\nabla M^{ka} - \nabla M \right)\left( \tilde{E}^{T} \right),  e^{(3)}_{n} \rangle \\
		\langle \tilde{E}^{T}, e^{(3)}_{n} \rangle  &=& \frac{1}{\left( 1 + \eta  \, \lambda^{(3)}_{n} \right)} \left[ \langle \tilde{E}^{Inc},e^{(3)}_{n} \rangle + \eta \, \langle \left(  k^{2} \, a^{2} \, N^{ka} - \nabla M^{ka} + \nabla M \right)\left( \tilde{E}^{T} \right),  e^{(3)}_{n} \rangle \right].
		\end{eqnarray}
		Using $(\ref{expansion-gradMk})$ and $(\ref{expansion-Nk})$, we rewrite the second term on the right hand side of the previous equation  as
		\begin{eqnarray}\label{DefOperatorS}
		\nonumber
\mathcal{S}\left( \tilde{E}^{T} \right) &:=& \left( k^{2} \, a^{2} \, N^{ka} - \nabla M^{ka}  + \nabla M \right) \left( \tilde{E}^{T} \right) \\ \nonumber &=&  \frac{\left( ka \right)^{2}}{2} \, N\left( \tilde{E}^{T} \right)  + \frac{\left( ka \right)^{2}}{2} \int_{B} \Phi_{0}(\cdot,y) \frac{A(\cdot,y)\cdot \tilde{E}^{T}(y)}{\Vert x -y \Vert^{2}}  dy +  \frac{i \left( ka \right)^{3}}{6 \pi} \int_{B} \tilde{E}^{T}(y) dy \\ &+& \frac{1}{4 \pi} \; \sum_{j \geq 3} \left( ika \right)^{j+1} \; \int_{B} \; \frac{\underset{x}{Hess} \left( \left\Vert x - y \right\Vert^{j} \right)}{(j+1)!} \cdot \tilde{E}^{T}(y) \, dy +   \frac{k^{2}a^{2}}{4 \pi} \; \sum_{j \geq 1} (ika)^{j+1} \; \int_{B} \; \frac{\left\Vert \cdot - y \right\Vert^{j}}{(j+1)!}  \tilde{E}^{T}(y) \, dy.
		\end{eqnarray}   
		Then equation $(\ref{qdfsg})$ becomes,  
		\begin{eqnarray}\label{Coeff-Scale-E-3rd}
		\nonumber
		\langle \tilde{E}^{T}, e^{(3)}_{n} \rangle  &=& \frac{\langle \tilde{E}^{Inc},e^{(3)}_{n} \rangle }{\left( 1 + \eta  \, \lambda^{(3)}_{n} \right)} + \frac{\eta}{\left( 1 + \eta  \, \lambda^{(3)}_{n} \right)}  \Bigg[ \frac{\left( ka \right)^{2}}{2} \, \langle N\left( \tilde{E}^{T} \right) ,  e^{(3)}_{n} \rangle  \\ \nonumber && \qquad \qquad \qquad + \frac{\left( ka \right)^{2}}{2} \, \langle \int_{B} \Phi_{0}(\cdot,y) \frac{A(\cdot,y)\cdot \tilde{E}^{T}(y)}{\Vert x -y \Vert^{2}}  dy , e^{(3)}_{n} \rangle  + \frac{i \left( ka \right)^{3}}{6 \pi} \langle  \int_{B} \tilde{E}^{T}(y) dy , e^{(3)}_{n} \rangle \\ \nonumber && \qquad \qquad \qquad + \frac{1}{4 \pi} \; \sum_{j \geq 3} \left( ika \right)^{j+1} \langle \int_{B} \; \frac{\underset{x}{Hess} \left( \left\Vert x - y \right\Vert^{j} \right)}{(j+1)!} \cdot \tilde{E}^{T}(y) \, dy ,  e^{(3)}_{n} \rangle \\ && \qquad \qquad \qquad + \frac{k^{2}a^{2}}{4 \pi} \; \sum_{j \geq 1} (ika)^{j+1} \langle \int_{B} \; \frac{\left\Vert \cdot - y \right\Vert^{j}}{(j+1)!}  \tilde{E}^{T}(y) \, dy , e^{(3)}_{n} \rangle \Bigg],
		\end{eqnarray}
		and 
		\begin{eqnarray*}
			\left\vert \langle \tilde{E}^{T}, e^{(3)}_{n} \rangle \right\vert^{2} &\lesssim & \frac{\left\vert \langle \tilde{E}^{Inc},e^{(3)}_{n} \rangle \right\vert^{2}}{\left\vert 1 + \eta  \, \lambda^{(3)}_{n} \right\vert^{2}} +   \Bigg[a^{4} \left\vert \langle N\left( \tilde{E}^{T} \right) ,  e^{(3)}_{n} \rangle \right\vert^{2} + a^{4} \left\vert \langle \int_{B} \Phi_{0}(\cdot,y) \frac{A(\cdot,y)\cdot \tilde{E}^{T}(y)}{\Vert x -y \Vert^{2}}  dy ,  e^{(3)}_{n} \rangle  \right\vert^{2} \\ &+&  a^{6} \left\vert \langle  \int_{B} \tilde{E}^{T}(y) dy , e^{(3)}_{n} \rangle \right\vert^{2} + a^{8} \sum_{j \geq 3}  \left\vert \langle \int_{B} \; \frac{\underset{x}{Hess} \left( \left\Vert \cdot - y \right\Vert^{j} \right)}{(j+1)!} \cdot \tilde{E}^{T}(y) \, dy ,  e^{(3)}_{n} \rangle \right\vert^{2}  \\ && \qquad \qquad \qquad + a^{8} \;  \sum_{j \geq 1}  \left\vert \langle \int_{B} \; \frac{\left\Vert \cdot - y \right\Vert^{j}}{(j+1)!}  \tilde{E}^{T}(y) \, dy , e^{(3)}_{n} \rangle \right\vert^{2} \Bigg].
		\end{eqnarray*}
		Taking the series with respect to $n$ on the both sides and using the continuity of the Newtonian potential operator, we deduce that 
		\begin{eqnarray}\label{Hsah}
		\nonumber
		\left\Vert \overset{3}{\mathbb{P}}\left( \tilde{E}^{T} \right) \right\Vert^{2}_{\mathbb{L}^{2}(B)} &\lesssim & a^{4} \; \left\Vert \overset{3}{\mathbb{P}}\left( \tilde{E}^{Inc} \right) \right\Vert^{2}_{\mathbb{L}^{2}(B)} +   \Bigg[a^{4} \left\Vert  \tilde{E}^{T}  \right\Vert^{2}_{\mathbb{L}^{2}(B)} + a^{4} \left\Vert  \tilde{E}^{T}  \right\Vert^{2}_{\mathbb{L}^{2}(B)} + a^{6} \left\Vert  \tilde{E}^{T}  \right\Vert^{2}_{\mathbb{L}^{2}(B)} \\ \nonumber
		&+&   a^{8} \left\Vert  \tilde{E}^{T}  \right\Vert^{2}_{\mathbb{L}^{2}(B)} \sum_{j \geq 3}  \int_{B} \int_{B}  \frac{\left\vert \underset{x}{Hess} \left( \left\Vert x - y \right\Vert^{j} \right) \right\vert^{2} }{ \left( (j+1)! \right)^{2}}  dy dx  \\ &+&  a^{8} \; \left\Vert  \tilde{E}^{T}  \right\Vert^{2}_{\mathbb{L}^{2}(B)} \sum_{j \geq 1}  \int_{B}  \int_{B} \; \frac{\left\Vert x - y \right\Vert^{2j}}{\left( (j+1)! \right)^{2}} \, dy dx \Bigg].
		\end{eqnarray}
		We prove that the two series, on $j$ index, are converging. For the first one, we have\footnote{Straightforward computations allow us to deduce that 
			\begin{equation*}
			\underset{x}{Hess} \left( \left\Vert x - y \right\Vert^{j} \right) = j \, \left\Vert x - y \right\Vert^{j-2} \, I + j \, (j-2) \, \left\Vert x - y \right\Vert^{j-4} (x-y) \otimes (x-y). 
			\end{equation*}} 
		\begin{equation}\label{Series1}
		S_{1} := \sum_{j \geq 3}  \int_{B} \int_{B}  \frac{\left\vert \underset{x}{Hess} \left( \left\Vert x - y \right\Vert^{j} \right) \right\vert^{2} }{ \left( (j+1)! \right)^{2}}  dy dx \leq \left\vert B \right\vert^{2} \, \sum_{j \geq 3}  \frac{j^{2} \, \left( diam(B)^{2}\right)^{(j-2)}}{ \left( (j+1)! \right)^{2}} < +\infty,
		\end{equation} 
		and for the second one, we have 
		\begin{eqnarray}\label{Series2}
		S_{2} & := & \underset{j \geq 1}{\sum}  \int_{B}  \int_{B} \; \frac{\left\Vert x - y \right\Vert^{2j}}{\left( (j+1)! \right)^{2}} \, dy dx \leq  \vert B \vert^{2} \, \underset{j \geq 1}{\sum}  \; \frac{\left(diam(B)^{2}\right)^{j}}{(j+1)! (j+1)!} < +\infty,
		\end{eqnarray}
		where $diam(B)$ is the diameter of the domain $B$.\, \footnote{Recall that $\underset{n}{\sum} a_{n}$ converge if $\underset{n}{\lim} \; ( a_{n+1} / a_{n}) \overset{n \rightarrow +\infty}{\longrightarrow} 0$. } Equation $(\ref{Hsah})$ becomes, 
		\begin{equation}\label{LZ01N}
		\left\Vert \overset{3}{\mathbb{P}}\left( \tilde{E}^{T} \right) \right\Vert^{2}_{\mathbb{L}^{2}(B)} \lesssim  a^{4} \; \left\Vert \overset{3}{\mathbb{P}}\left( \tilde{E}^{Inc} \right) \right\Vert^{2}_{\mathbb{L}^{2}(B)} +   a^{4} \left\Vert  \tilde{E}^{T}  \right\Vert^{2}_{\mathbb{L}^{2}(B)}.
		\end{equation}
	\end{enumerate}
	Now, by adding $(\ref{12120111})$,$(\ref{Exact2ndproj})$ and  $(\ref{LZ01N})$, we can obtain that for 
	\begin{equation}\label{h1}
	h<2,
	\end{equation}
	there holds the following formula
	\begin{eqnarray*}
		\left\Vert \tilde{E}^{T}  \right\Vert^{2}_{\mathbb{L}^{2}(B)} &:=& \sum_{j=1}^{3} \left\Vert \overset{j}{\mathbb{P}}\left( \tilde{E}^{T} \right) \right\Vert^{2}_{\mathbb{L}^{2}(B)}, \\ 
		\left\Vert \tilde{E}^{T}  \right\Vert^{2}_{\mathbb{L}^{2}(B)} & \lesssim & \sum_{n} \frac{\left\vert \langle \tilde{E}^{Inc},e^{(1)}_{n} \rangle \right\vert^{2}}{\left\vert 1 - k^{2}  \, \eta \, a^{2} \, \lambda^{(1)}_{n} \right\vert^{2}}   +  a^{4} \; \left\Vert \overset{3}{\mathbb{P}}\left( \tilde{E}^{Inc} \right) \right\Vert^{2}_{\mathbb{L}^{2}(B)} 
\end{eqnarray*}
Since $\left\Vert \overset{3}{\mathbb{P}}\left( \tilde{E}^{Inc} \right) \right\Vert_{\mathbb{L}^{2}(B)} \sim 1$, we deduce: 
\begin{equation*}
	\left\Vert \tilde{E}^{T}  \right\Vert^{2}_{\mathbb{L}^{2}(B)}	 \lesssim  a^{-2h} \, \left\vert \langle \tilde{E}^{Inc},e^{(1)}_{n_{0}} \rangle \right\vert^{2} + \sum_{n \neq n_{0}} \left\vert \langle \tilde{E}^{Inc},e^{(1)}_{n} \rangle \right\vert^{2} + a^{4}.
	\end{equation*}
	To compute $\langle \tilde{E}^{Inc},e^{(1)}_{n} \rangle$, we write $e^{(1)}_{n} = Curl \left( \phi_{n} \right)$, then with integration by parts, we have
	\begin{equation*}
	\langle \tilde{E}^{Inc},e^{(1)}_{n} \rangle = \langle \tilde{E}^{Inc}, Curl\left( \phi_{n} \right) \rangle = \langle Curl\left( \tilde{E}^{Inc} \right), \phi_{n} \rangle = i \, k \, a \langle  \tilde{H}^{Inc}, \phi_{n} \rangle \sim a.
	\end{equation*}
	Then, 
	\begin{equation*}
		\left\Vert \tilde{E}^{T}  \right\Vert^{2}_{\mathbb{L}^{2}(B)}  \lesssim  a^{2-2h} \, \left\vert \langle \tilde{H}^{Inc}, \phi_{n_{0}} \rangle \right\vert^{2} + a^{2} \, \sum_{n \neq n_{0}} \left\vert \langle \tilde{H}^{Inc}, \phi_{n} \rangle \right\vert^{2} +  a^{4} 
		\lesssim  a^{2-2h} + a^{2}  + a^{4}. 
	\end{equation*}
	Therefore we deduce that 
	\begin{equation}\label{A priori estimate Electric Field}
	\left\Vert \tilde{E}^{T}  \right\Vert_{\mathbb{L}^{2}(B)} \sim a^{1-h}. 
	\end{equation}

This ends the proof of Proposition \ref{es-oneP}.


\subsection{Proof of Proposition \ref{lem-es-multi}.}\label{subsec-52}

Before presenting the proof of Proposition \ref{lem-es-multi}, to write short formulas, we set  the following volume integral operator: 
\begin{equation*}
\boldsymbol{T}_{k} :=  \left(I + \eta \, \nabla M^{k} - k^{2} \, \eta \, N^{k} \right).
\end{equation*}
Since the operators $\nabla M^{-k}$ and $N^{-k}$ are the adjoint operators of $\nabla M^{k}$ and $N^{k}$, respectively, we deduce that $\boldsymbol{T}_{- \, k}$ is the adjoint operator of $\boldsymbol{T}_{k}$. 
In addition, we state the following lemma.
\begin{lemma}\label{AI4}
Recall that $e_n^{(1)}$ is the eigenfunction given by Subsection \ref{subsec-eigen}. Then there holds
	\begin{equation}\label{equa-lemmma-21}
	\boldsymbol{T}_{- \, k \, a}^{-1}\left( e_{n}^{(1)} \right) =  \frac{1}{\left(1 - k^{2} \, \eta \, a^{2} \, \lambda_{n}^{(1)} \right)} e_{n}^{(1)} + R_{n},  
	\end{equation}
	where
	\begin{equation}\label{Def-Rn}
	R_{n} := \frac{1 \mp c_0 a^{h}}{4 \pi \,\lambda_{n_{0}}^{(1)} \, \left(1 - k^{2} \, \eta \, a^{2} \, \lambda_{n}^{(1)} \right)} \sum_{\ell \geq 1} (-ika)^{\ell+1}  \boldsymbol{T}_{- \, k \, a}^{-1}  \int_{B} \; \frac{\left\Vert \cdot - y \right\Vert^{\ell}}{(\ell+1)!}  e_{n}^{(1)}(y) \, dy. 
	\end{equation}
\end{lemma}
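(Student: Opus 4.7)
The plan is to verify (\ref{equa-lemmma-21}) by direct computation: apply $\boldsymbol{T}_{-ka}$ to the claimed right-hand side and show it returns $e_n^{(1)}$. Equivalently, I first compute $\boldsymbol{T}_{-ka}(e_n^{(1)})$ in closed form, factor out $(1-k^2\eta a^2\lambda_n^{(1)})$, and then invert. The crucial observation is that the Magnetization piece of $\boldsymbol{T}_{-ka}$ drops out on $\mathbb{H}_0(\div=0)$ not just at $k=0$, as (\ref{grad-M-1st-2nd}) records, but at every frequency. Indeed, $\nabla M^{-ka}(F)=-\nabla_x\div_x N^{-ka}(F)$, and using $\nabla_x\Phi_{-ka}(x,y)=-\nabla_y\Phi_{-ka}(x,y)$ together with an integration by parts yields
\begin{equation*}
\div_x N^{-ka}(F)(x)=\int_B \Phi_{-ka}(x,y)\,\div F(y)\,dy-\int_{\partial B}\Phi_{-ka}(x,y)\,F(y)\cdot\nu(y)\,dS(y),
\end{equation*}
which vanishes identically for $F\in\mathbb{H}_0(\div=0)$. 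Therefore $\nabla M^{-ka}(e_n^{(1)})=0$, so that $\boldsymbol{T}_{-ka}(e_n^{(1)})=e_n^{(1)}-k^2 a^2\eta\,N^{-ka}(e_n^{(1)})$.

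Next, I would expand $N^{-ka}(e_n^{(1)})$ by applying (\ref{expansion-Nk}) with $k$ replaced by $-ka$. Two simplifications eliminate the lowest-order terms: $N^0(e_n^{(1)})=\lambda_n^{(1)}\,e_n^{(1)}$ by the definition of the eigensystem, and $\int_B e_n^{(1)}(y)\,dy=0$ because constant vectors lie in $\nabla\mathcal{H}armonic$, which is $\mathbb{L}^2$-orthogonal to $\mathbb{H}_0(\div=0)$. Only the tail with $\ell\ge 1$ survives, giving
\begin{equation*}
\boldsymbol{T}_{-ka}(e_n^{(1)})=(1-k^2\eta a^2\lambda_n^{(1)})\,e_n^{(1)}-\frac{k^2 a^2\eta}{4\pi}\sum_{\ell\ge 1}\frac{(-ika)^{\ell+1}}{(\ell+1)!}\int_B\|\cdot-y\|^{\ell}\,e_n^{(1)}(y)\,dy.
\end{equation*}
Dividing by $(1-k^2\eta a^2\lambda_n^{(1)})$, applying $\boldsymbol{T}_{-ka}^{-1}$ to both sides, and invoking the resonance condition (\ref{condition-on-k}) to replace $k^2\eta a^2$ by $(1\mp c_0 a^h)/\lambda_{n_0}^{(1)}$ (cf.\ (\ref{choice-k-1st-regime})) produces precisely (\ref{equa-lemmma-21}) with $R_n$ as in (\ref{Def-Rn}).

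The main technical obstacle is the implicit assumption that $\boldsymbol{T}_{-ka}^{-1}$ exists on $\mathbb{L}^2(B)$; otherwise $R_n$ is not even defined. I would have to verify that $1-k^2\eta a^2\lambda_n^{(1)}\neq 0$ uniformly in $n\neq n_0$ under the scaling (\ref{choice-k-1st-regime}), so that only the single index $n_0$ carries the near-resonance captured by $\pm c_0 a^h$; the accumulation of $\lambda_n^{(1)}$ at zero (compactness of $N$) then provides uniform control once $n$ is large. Invertibility on the two complementary subspaces of the decomposition (\ref{L2-decomposition}) follows from $(1+\eta)\neq 0$ on $\mathbb{H}_0(Curl=0)$ and from $|1+\eta\lambda_n^{(3)}|$ being bounded below on $\nabla\mathcal{H}armonic$, since $\lambda_n^{(3)}\in[0,1]$ and $\eta\sim a^{-2}$ is large. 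Finally, the termwise handling of the Neumann series in (\ref{expansion-Nk}) is legitimate by the absolute-convergence bound already recorded in (\ref{Series2}).
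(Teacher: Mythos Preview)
Your proposal is correct and follows essentially the same route as the paper: compute $\boldsymbol{T}_{-ka}(e_n^{(1)})$ explicitly, expand $N^{-ka}-N$ via (\ref{expansion-Nk}), use $\int_B e_n^{(1)}=0$ and the resonance identity (\ref{choice-k-1st-regime}), then invert. In fact you supply more detail than the paper's proof, which tacitly uses $\nabla M^{-ka}(e_n^{(1)})=0$ and the invertibility of $\boldsymbol{T}_{-ka}$ without comment; your integration-by-parts justification for the former and your sketch of the latter are welcome additions rather than deviations.
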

\begin{proof}
	\begin{eqnarray*}
\boldsymbol{T}_{- \, k \, a}\left( e_{n}^{(1)} \right) & = & \left( 1 - k^{2} \, \eta \, a^{2} \, \lambda_{n}^{(1)} \right) e_{n}^{(1)} - k^{2} \, \eta \, a^{2} \left( N^{-ka} - N \right)\left(e_{n}^{(1)}\right) \\
		& \overset{(\ref{expansion-Nk})}{=} &  \left( 1 - k^{2} \, \eta \, a^{2} \, \lambda_{n}^{(1)} \right) e_{n}^{(1)} - \frac{k^{2} \, \eta \, a^{2}}{4 \pi} \; \sum_{\ell \geq 1} \frac{(-ika)^{\ell+1}}{(\ell+1)!} \; \int_{B} \; \left\Vert \cdot - y \right\Vert^{\ell}  e_{n}^{(1)}(y) \, dy \\
		\\
		& \overset{(\ref{choice-k-1st-regime})}{=} &  \left( 1 - k^{2} \, \eta \, a^{2} \, \lambda_{n}^{(1)} \right) e_{n}^{(1)} - \frac{1 \mp c_0 \, a^{h}}{4 \pi \, \lambda_{n_{0}}^{(1)}} \; \sum_{\ell \geq 1} \frac{(-ika)^{\ell+1}}{(\ell+1)!} \; \int_{B} \; \left\Vert \cdot - y \right\Vert^{\ell}  e_{n}^{(1)}(y) \, dy.
	\end{eqnarray*}
	The remainder of the proof consists in taking the inverse of the operator appearing on the left hand side. 
\end{proof}

Now we prove Proposition \ref{lem-es-multi} as follows.
Recall the Lippmann-Schwinger equation \eqref{LS eq2}
\begin{eqnarray*}
	E^{T}(x) + \eta \, \sum_{j=1}^{\aleph} \underset{x}{\nabla}M^{k}(E_{j}^{T})(x) - k^{2}  \, \eta \, \sum_{j=1}^{\aleph} \, N^{k}(E_{j}^{T})(x) &=& E^{Inc}(x), \quad x \in D = \overset{\aleph}{\underset{j=1}{\cup}} D_{j}.
\end{eqnarray*}
By Proposition \ref{es-oneP}, from \eqref{Exact2ndproj}, we can write $E_{j}^{T} = \overset{1}{\mathbb{P}}\left( E_{j}^{T} \right) + \overset{3}{\mathbb{P}}\left( E_{j}^{T} \right), \; j=1,\cdots,\aleph$. We split the study into two parts. 
\begin{enumerate}
	\item[$i)$] \textit{Estimation of} $\underset{j}{\max}  \left\Vert  \overset{1}{\mathbb{P}}\left(\tilde{E}_{j}^{T} \right) \right\Vert^{2}$ \\
	For $x \in D_{j_{0}}$, we have
	\begin{equation}\label{*add3}
\boldsymbol{T}_{k} \, \left( E_{j_{0}}^{T} \right)(x) + \eta \, \sum_{j \neq j_{0}}^{\aleph} \left( \underset{x}{\nabla}M^{k} - k^{2} \, N^{k} \right) (E_{j}^{T})(x) = E^{Inc}_{j_{0}}(x). 
	\end{equation}
	Taking the inverse operator of $\boldsymbol{T}_{k}$ on the both sides of \eqref{*add3}, we get 
	\begin{eqnarray*}
		E_{j_{0}}^{T}(x) &+& \eta \, \boldsymbol{T}_{k}^{-1} \sum_{j \neq j_{0}}^{\aleph} \left(\underset{x}{\nabla}M^{k} - k^{2} \, N^{k} \right)\left(E_{j}^{T}\right)(x)  = \boldsymbol{T}_{k}^{-1}\left(E^{Inc}_{j_{0}}\right)(x),\\
		\overset{1}{\mathbb{P}}\left(E_{j_{0}}^{T}\right)(x)  &-& \eta \,\sum_{j \neq j_{0}}^{\aleph} \, \boldsymbol{T}_{k}^{-1}  \int_{D_{j}} \Upsilon(x,y) \cdot \overset{1}{\mathbb{P}}\left(E_{j}^{T}\right)(y) \, dy   \\ 
		&=&\boldsymbol{T}_{k}^{-1}\left(E^{Inc}_{j_{0}}\right)(x)-\overset{3}{\mathbb{P}}\left(E_{j_{0}}^{T}\right)(x) + \eta \,\sum_{j \neq j_{0}}^{\aleph} \, \boldsymbol{T}_{k}^{-1}  \int_{D_{j}} \Upsilon(x,y) \cdot \overset{3}{\mathbb{P}}\left(E_{j}^{T}\right)(y) \, dy.  
	\end{eqnarray*}
	Now, we have 
	\begin{eqnarray*}
		\int_{D_{j}} \Upsilon(x,y) \cdot \overset{1}{\mathbb{P}}\left(E_{j}^{T}\right)(y) \, dy &:=& \left( -  \, \underset{x}{\nabla} M^{k} + k^{2}  \, N^{k}  \right) \left( \overset{1}{\mathbb{P}}\left(E_{j}^{T}\right) \right)(x) = k^{2} \; N^{k} \left( \overset{1}{\mathbb{P}}\left(E_{j}^{T}\right) \right)(x) \\
		&=& k^{2} \; \int_{D_{j}}  \Phi_{k}(x,y)  \overset{1}{\mathbb{P}}\left(E_{j}^{T}\right)(y) \; dy. 
	\end{eqnarray*}
	Expanding\footnote{Recall the Taylor expansion for a function of several variables: 
		\begin{equation*}
		f(x+h) = f(x) + \sum_{k=1}^{n} \frac{1}{k!} \bm{d}^{k}f_{x}\left( h^{[k]} \right) + \int_{0}^{1} \frac{(1-t)^{n}}{n!} \, \bm{d}^{n+1}f_{x+th}\left( h^{[n+1]} \right) \, dt.  
		\end{equation*}
	} near the center $z_{j}$ and using the fact that $\int_{D_{j}} \overset{1}{\mathbb{P}}\left(E_{j}^{T}\right)(y) \; dy = 0$, we obtain
	\begin{eqnarray*}
&&		\int_{D_{j}} \Upsilon(x,y) \cdot \overset{1}{\mathbb{P}}\left(E_{j}^{T}\right)(y) \, dy =  k^{2} \underset{y}{\nabla}\left(\Phi_{k} I \right)(z_{j_{0}} ,z_{j})\cdot  \int_{D_{j}} \mathcal{P}(y;z_{j})\cdot   \overset{1}{\mathbb{P}}\left(E_{j}^{T}\right)(y) \; dy \\ &+& k^{2} \; \int_{D_{j}} \int_{0}^{1} (1-t) (y-z_{j})^{\perp}\cdot \underset{y}{Hess}\left(\Phi_{k} \right)(z_{j_{0}},z_{j}+t(y-z_{j}))\cdot (y-z_{j}) dt  \; \overset{1}{\mathbb{P}}\left(E_{j}^{T}\right)(y) \; dy \\ &+& Err(x,j_{0},j,\overset{1}{\mathbb{P}}\left(E_{j}^{T}\right)),  
	\end{eqnarray*}
	where
	\begin{eqnarray}\label{LMERR1}
	\nonumber
	&& Err(x,j_{0},j,\overset{1}{\mathbb{P}}\left(E_{j}^{T}\right)) := \\ \nonumber &-& k^{2} \int_{D_{j}} \int_{0}^{1}  \underset{y}{Hess}\left(\Phi_{k} \right)(z_{j_{0}}+t(x-z_{j_{0}}),z_{j}) \cdot (x-z_{j_{0}}) \, dt \cdot (y-z_{j})  \overset{1}{\mathbb{P}}\left(E_{j}^{T}\right)(y) \; dy \\ \nonumber
	&+& k^{2} \; \int_{D_{j}} \int_{0}^{1} (1-t) (y-z_{j})^{\perp}\cdot  \underset{x}{\nabla}\left( \underset{y}{Hess}\left(\Phi_{k} \right)\right)(z_{j_{0}},z_{j}+t(y-z_{j}))  \cdot \mathcal{P}(x,z_{j_{0}}) \cdot (y-z_{j}) dt  \; \overset{1}{\mathbb{P}}\left(E_{j}^{T}\right)(y) \; dy \\ \nonumber
	&+& k^{2} \; \int_{D_{j}} \int_{0}^{1} (1-t) (y-z_{j})^{\perp} \\ \nonumber &\cdot& \left[\int_{0}^{1} \int_{0}^{1} \underset{x}{\bm{d}}\left(\underset{x}{\nabla}\left( \underset{y}{Hess}\left(\Phi_{k} \right)\right)\right)(z_{j_{0}}+ \rho \, s(x-z_{j_{0}}),z_{j}+t(y-z_{j})) \cdot (x-z_{j_{0}})^{[1]} \, d\rho \cdot \mathcal{P}(x,z_{j_{0}}) \, ds  \right] \\ && \qquad \qquad \qquad \qquad \qquad \qquad \qquad \qquad \cdot (y-z_{j}) dt  \; \overset{1}{\mathbb{P}}\left(E_{j}^{T}\right)(y) \; dy,
	\end{eqnarray}
	and
	\begin{eqnarray*}
		\int_{D_{j}} \Upsilon(x,y) \cdot \overset{3}{\mathbb{P}}\left(E_{j}^{T}\right)(y) \, dy &=& \Upsilon(z_{j_{0}} ,z_{j}) \cdot \int_{D_{j}} \overset{3}{\mathbb{P}}\left(E_{j}^{T}\right)(y) \, dy \\ 
		& + & \int_{D_{j}} \int_{0}^{1} \underset{y}{\nabla}\left( \Upsilon \right)(z_{j_{0}},z_{j}+t(y-z_{j})) \cdot \mathcal{P}(y,z_{j}) dt \cdot \overset{3}{\mathbb{P}}\left(E_{j}^{T}\right)(y) \, dy \\ &+& Err(x,j_{0},j,\overset{3}{\mathbb{P}}\left(E_{j}^{T}\right)),
	\end{eqnarray*}
	where
	\begin{eqnarray}\label{LMERR3}
	\nonumber
	Err(x,j_{0},j,\overset{3}{\mathbb{P}}\left(E_{j}^{T}\right)) &=&  \int_{0}^{1} \underset{x}{\nabla}\left( \Upsilon \right)(z_{j_{0}}+t(x-z_{j_{0}}),z_{j}) \cdot \mathcal{P}(x,z_{j_{0}}) \, dt \cdot \int_{D_{j}}  \overset{3}{\mathbb{P}}\left(E_{j}^{T}\right)(y) \, dy \\ \nonumber
	&+&  \int_{D_{j}} \int_{0}^{1} \left[\int_{0}^{1} \underset{x}{\nabla}\left(\underset{y}{\nabla} \left( \Upsilon \right) \right)(z_{j_{0}}+s(x-z_{j_{0}}),z_{j}+t(y-z_{j})) \cdot \mathcal{P}(x,z_{j_{0}}) ds \right] \\
	&& \qquad \qquad \qquad \cdot \mathcal{P}(y,z_{j}) dt \cdot \overset{3}{\mathbb{P}}\left(E_{j}^{T}\right)(y) \, dy .
	\end{eqnarray}
	Then, 
	\begin{eqnarray*}
		\overset{1}{\mathbb{P}}\left(E_{j_{0}}^{T}\right)(x)  &-& \eta \,k^{2} \, \sum_{j \neq j_{0}}^{\aleph} \, \boldsymbol{T}_{k}^{-1}  \underset{y}{\nabla}\left(\Phi_{k} I \right)(z_{j_{0}},z_{j})\cdot  \int_{D_{j}} \mathcal{P}(y;z_{j})\cdot   \overset{1}{\mathbb{P}}\left(E_{j}^{T}\right)(y) \; dy  \\ 
		&-& \eta \,k^{2} \, \sum_{j \neq j_{0}}^{\aleph} \, \boldsymbol{T}_{k}^{-1}    \int_{D_{j}} \int_{0}^{1} (1-t)(y-z_{j})^{\perp} \cdot \underset{y}{Hess}\left( \Phi_{k} \right)(z_{j_{0}},z_{j}+t(y-z_{j}))\cdot (y-z_{j})  dt \cdot   \overset{1}{\mathbb{P}}\left(E_{j}^{T}\right)(y) \; dy  \\
		&=&\boldsymbol{T}_{k}^{-1}\left(E^{Inc,j_{0}}\right)(x)-\overset{3}{\mathbb{P}}\left(E_{j_{0}}^{T}\right)(x) + \eta \,\sum_{j \neq j_{0}}^{\aleph} \, \boldsymbol{T}_{k}^{-1}   \Upsilon(z_{j_{0}},z_{j}) \cdot \int_{D_{j}}  \overset{3}{\mathbb{P}}\left(E_{j}^{T}\right)(y) \, dy \\
		&+& \eta \,\sum_{j \neq j_{0}}^{\aleph} \, \boldsymbol{T}_{k}^{-1}    \int_{D_{j}} \int_{0}^{1} \underset{y}{\nabla}\left(\Upsilon \right)(z_{j_{0}},z_{j}+t(y-z_{j})) \cdot \mathcal{P}(y,z_{j}) dt \cdot \overset{3}{\mathbb{P}}\left(E_{j}^{T}\right)(y) \, dy \\
		&+& \eta \,\sum_{j \neq j_{0}}^{\aleph} \boldsymbol{T}_{k}^{-1}\left[Err(x,j_{0},j,\overset{1}{\mathbb{P}}\left(E_{j}^{T}\right)) + Err(x,j_{0},j,\overset{3}{\mathbb{P}}\left(E_{j}^{T}\right)) \right].  
	\end{eqnarray*} 
	Scaling to $B$, 
	taking the inner product with respect to $ e_{n}^{(1)}(\cdot)$ and using the adjoint  operator of $\boldsymbol{T}_{k \, a}$, i.e. $\boldsymbol{T}^{\star}_{k \, a} = \boldsymbol{T}_{- \, k \, a}$, we obtain  
	\begin{eqnarray}\label{Sys-Equa-HA}
	\nonumber
\langle \overset{1}{\mathbb{P}}\left(\tilde{E}_{j_{0}}^{T}\right), e_{n}^{(1)} \rangle &-& \eta \,k^{2} \,a^{4} \sum_{j \neq j_{0}}^{\aleph} \langle   \underset{y}{\nabla}\left(\Phi_{k} I \right)(z_{j_{0}}, z_{j})\cdot  \int_{B} \mathcal{P}(y;0)\cdot \overset{1}{\mathbb{P}}\left(\tilde{E}_{j}^{T}\right)(y) \; dy, \boldsymbol{T}_{- \, k \, a}^{-1}\left(e_{n}^{(1)} \right) \rangle \\ \nonumber 
	&-& \eta \,k^{2} \,a^{5} \sum_{j \neq j_{0}}^{\aleph} \, \langle   \int_{B} \int_{0}^{1} (1-t) y^{\perp} \cdot \underset{y}{Hess}\left( \Phi_{k} \right)(z_{j_{0}},z_{j}+t a \, y) \cdot y  dt \cdot   \overset{1}{\mathbb{P}}\left(\tilde{E}_{j}^{T}\right)(y) \; dy, \boldsymbol{T}_{- \, k \, a}^{-1}\left( e_{n}^{(1)} \right) \rangle \\ \nonumber
	&=&\langle \tilde{E}^{Inc}_{j_{0}},\boldsymbol{T}_{- \, k \, a}^{-1}\left( e_{n}^{(1)} \right) \rangle + \eta \,a^{3} \, \sum_{j \neq j_{0}}^{\aleph} \, \langle  \Upsilon(z_{j_{0}}, z_{j}) \cdot \int_{B}  \overset{3}{\mathbb{P}}\left(\tilde{E}_{j}^{T}\right)(y) \, dy, \boldsymbol{T}_{- \, k \, a}^{-1}\left( e_{n}^{(1)} \right) \rangle \\ \nonumber
	&+& \eta \, a^{4} \, \sum_{j \neq j_{0}}^{\aleph} \langle \int_{B} \int_{0}^{1} \underset{y}{\nabla}\left(\Upsilon \right)(z_{j_{0}},z_{j}+t a  y) \cdot \mathcal{P}(y,0) dt \cdot \overset{3}{\mathbb{P}}\left(\tilde{E}_{j}^{T}\right)(y) \, dy, \boldsymbol{T}_{- \, k \, a}^{-1}\left( e_{n}^{(1)} \right) \rangle \\
	&+& \eta \,\sum_{j \neq j_{0}}^{\aleph} \langle \widetilde{Err}(x,j_{0},j,\overset{1}{\mathbb{P}}\left(E_{j}^{T}\right)) + \widetilde{Err}(x,j_{0},j,\overset{3}{\mathbb{P}}\left(E_{j}^{T}\right)),\boldsymbol{T}_{- \, k \, a}^{-1}\left( e_{n}^{(1)} \right) \rangle.  
	\end{eqnarray} 
	Using $(\ref{equa-lemmma-21})$ of Lemma \ref{AI4}, we rewrite $(\ref{Sys-Equa-HA})$ as 
	\begin{eqnarray*}
		\langle \overset{1}{\mathbb{P}}\left(\tilde{E}_{j_{0}}^{T}\right), e_{n}^{(1)} \rangle & = &  \frac{\eta \, k^{2} \, a^{4}}{\left( 1 - k^{2} \, \eta \, a^{2} \, \lambda_{n}^{(1)} \right)} \sum_{j \neq j_{0}}^{\aleph}  \langle  \underset{x}{\nabla}\left(\Phi_{k} I \right)(z_{j_{0}},z_{j})\cdot  \int_{B} \mathcal{P}(y;0)\cdot \overset{1}{\mathbb{P}}\left(\tilde{E}_{j}^{T}\right)(y) \; dy, e_{n}^{(1)}  \rangle  \\
		&+& \frac{\eta \,k^{2} \,a^{5}}{\left(1 - k^{2} \, \eta \,a^{2} \, \lambda^{(1)}_{n} \right)} \sum_{j \neq j_{0}}^{\aleph} \, \langle   \int_{B} \int_{0}^{1} (1-t)y^{\perp} \cdot \underset{y}{Hess}\left( \Phi_{k} \right)(z_{j_{0}},z_{j}+t a \, y) \cdot y  dt \cdot   \overset{1}{\mathbb{P}}\left(\tilde{E}_{j}^{T}\right)(y) \; dy,  e_{n}^{(1)} \rangle \\
		&+& \eta \,k^{2} \,a^{5} \sum_{j \neq j_{0}}^{\aleph} \, \langle   \int_{B} \int_{0}^{1}(1-t) y^{\perp} \cdot \underset{y}{Hess}\left( \Phi_{k} \right)(z_{j_{0}},z_{j}+t a \, y) \cdot y  dt \cdot   \overset{1}{\mathbb{P}}\left(\tilde{E}_{j}^{T}\right)(y) \; dy, R_{n} \rangle \\
		&+&\frac{\langle \tilde{E}^{Inc}_{j_{0}},e_{n}^{(1)} \rangle}{\left( 1 - k^{2} \, \eta \, a^{2} \, \lambda_{n}^{(1)} \right)} + \frac{a^{3} \, \eta}{\left( 1 - k^{2} \, \eta \, a^{2} \, \lambda_{n}^{(1)} \right)} \sum_{j \neq j_{0}}^{\aleph}  \langle  \Upsilon(z_{j_{0}} ,z_{j}) \cdot \int_{B}  \overset{3}{\mathbb{P}}\left(\tilde{E}_{j}^{T}\right)(y) \, dy,  e_{n}^{(1)}  \rangle \\ 
		&+& a^{3} \, \eta \sum_{j \neq j_{0}}^{\aleph} \langle  \Upsilon(z_{j_{0}},z_{j}) \cdot \int_{B}  \overset{3}{\mathbb{P}}\left(\tilde{E}_{j}^{T}\right)(y) \, dy,  R_{n} \rangle  \\
		&+& a^{4} \, \eta \,k^{2} \sum_{j \neq j_{0}}^{\aleph}  \langle \underset{x}{\nabla}\left(\Phi_{k} I \right)(z_{j_{0}},z_{j})\cdot  \int_{B} \mathcal{P}(y;0)\cdot \overset{1}{\mathbb{P}}\left(\tilde{E}_{j}^{T}\right)(y) \; dy,  R_{n} \rangle  \\ 
		&+& \frac{\eta \, a^{4}}{\left(1 - k^{2} \, \eta \,a^{2} \lambda_{n}^{(1)} \right)}  \sum_{j \neq j_{0}}^{\aleph} \langle \int_{B} \int_{0}^{1} \underset{y}{\nabla}\left(\Upsilon \right)(z_{j_{0}},z_{j}+t a  y) \cdot \mathcal{P}(y,0) dt \cdot \overset{3}{\mathbb{P}}\left(\tilde{E}_{j}^{T}\right)(y) \, dy,  e_{n}^{(1)} \rangle \\
		&+& \eta \, a^{4} \, \sum_{j \neq j_{0}}^{\aleph} \langle \int_{B} \int_{0}^{1} \underset{y}{\nabla}\left(\Upsilon \right)(z_{j_{0}},z_{j}+t a  y) \cdot \mathcal{P}(y,0) dt \cdot \overset{3}{\mathbb{P}}\left(\tilde{E}_{j}^{T}\right)(y) \, dy, R_{n} \rangle \\
		&+& \frac{ \eta}{\left( 1 - k^{2} \, \eta \, a^{2} \, \lambda_{n}^{(1)} \right)}  \,\sum_{j \neq j_{0}}^{\aleph}   \langle \widetilde{Err}(x,j_{0},j,\overset{1}{\mathbb{P}}\left(E_{j}^{T}\right)) + \widetilde{Err}(x,j_{0},j,\overset{3}{\mathbb{P}}\left(E_{j}^{T}\right)), e_{n}^{(1)}  \rangle  \\
		&+& \eta  \sum_{j \neq j_{0}}^{\aleph}  \langle \widetilde{Err}(x,j_{0},j,\overset{1}{\mathbb{P}}\left(E_{j}^{T}\right)) + \widetilde{Err}(x,j_{0},j,\overset{3}{\mathbb{P}}\left(E_{j}^{T}\right)),R_{n} \rangle + \langle \tilde{E}^{Inc}_{j_{0}},R_{n} \rangle. 
	\end{eqnarray*} 
	As the eigenfunctions $e_{n}^{(1)}, n=1, 2,...$, which are in $\mathbb{H}_{0}\left(\div=0 \right)$,  are orthogonal to the constant vectors, which are in $\nabla \mathcal{H}armonic$, then we deduce that
	\begin{eqnarray*}
		\langle  \Upsilon(z_{j_{0}},z_{j}) \cdot \int_{B}  \overset{3}{\mathbb{P}}\left(\tilde{E}_{j}^{T}\right)(y) \, dy, e_{n}^{(1)} \rangle &=& 0, \\
		\langle  \underset{x}{\nabla}\left(\Phi_{k} I \right)(z_{j_{0}},z_{j})\cdot  \int_{B} \mathcal{P}(y;0)\cdot \overset{1}{\mathbb{P}}\left(\tilde{E}_{j}^{T}\right)(y) \; dy, e_{n}^{(1)}  \rangle &=& 0, \\
		\langle \int_{B} \int_{0}^{1} \underset{y}{\nabla}\left(\Upsilon \right)(z_{j_{0}},z_{j}+t a  y) \cdot \mathcal{P}(y,0) dt \cdot \overset{3}{\mathbb{P}}\left(\tilde{E}_{j}^{T}\right)(y) \, dy;  e_{n}^{(1)} \rangle &=& 0, \\
		\langle   \int_{B} \int_{0}^{1} y^{\perp} \cdot \underset{y}{Hess}\left( \Phi_{k} \right)(z_{j_{0}},z_{j}+t a \, y) \cdot y  dt \cdot   \overset{1}{\mathbb{P}}\left(\tilde{E}_{j}^{T}\right)(y) \; dy,  e_{n}^{(1)} \rangle & = & 0.
	\end{eqnarray*}
	Then, after taking the squared modulus, we get 
	\begin{eqnarray*}
		\left\vert \langle \overset{1}{\mathbb{P}}\left(\tilde{E}_{j_{0}}^{T}\right), e_{n}^{(1)} \rangle \right\vert^{2} & \lesssim & \frac{\left\vert \langle \tilde{E}^{Inc}_{j_{0}},e_{n}^{(1)} \rangle \right\vert^{2}}{\left\vert 1 - k^{2} \, \eta \, a^{2} \, \lambda_{n}^{(1)} \right\vert^{2}} + a^{2} \, \aleph \,  \sum_{j \neq j_{0}}^{\aleph} \left\vert \langle  \Upsilon(z_{j_{0}},z_{j}) \cdot \int_{B}  \overset{3}{\mathbb{P}}\left(\tilde{E}_{j}^{T}\right)(y) \, dy,  R_{n} \rangle \right\vert^{2} \\
		&+& a^{4} \, \aleph \, \sum_{j \neq j_{0}}^{\aleph} \left\vert \langle \underset{x}{\nabla}\left(\Phi_{k} I \right)(z_{j_{0}},z_{j})\cdot  \int_{B} \mathcal{P}(y;0)\cdot \overset{1}{\mathbb{P}}\left(\tilde{E}_{j}^{T}\right)(y) \; dy,  R_{n} \rangle \right\vert^{2} \\ 
		&+& \frac{\left\vert \eta \right\vert^{2}}{\left\vert 1 - k^{2} \, \eta \, a^{2} \, \lambda_{n}^{(1)} \right\vert^{2}}  \,\sum_{j \neq j_{0}}^{\aleph} \left\vert \langle \widetilde{Err}(x,j_{0},j,\overset{1}{\mathbb{P}}\left(E_{j}^{T}\right)) + \widetilde{Err}(x,j_{0},j,\overset{3}{\mathbb{P}}\left(E_{j}^{T}\right)), e_{n}^{(1)}  \rangle \right\vert^{2} \\
		&+& \frac{\left\vert \eta \right\vert^{2}}{\left\vert 1 - k^{2} \, \eta \, a^{2} \, \lambda_{n}^{(1)} \right\vert^{2}} \Bigg[  \,\sum_{k=1 \atop k \neq j_{0}}^{\aleph} \left\vert \langle \widetilde{Err}(x,j_{0},k,\overset{1}{\mathbb{P}}\left(E_{k}^{T}\right)) + \widetilde{Err}(x,j_{0},k,\overset{3}{\mathbb{P}}\left(E_{k}^{T}\right)), e_{n}^{(1)}  \rangle \right\vert  \\
		&& \qquad \, \qquad \, \qquad \, \qquad  \,  \cdot \sum_{i > k \atop i \neq j_{0}}^{\aleph} \left\vert \langle \widetilde{Err}(x,j_{0},i,\overset{1}{\mathbb{P}}\left(E_{i}^{T}\right)) + \widetilde{Err}(x,j_{0},i,\overset{3}{\mathbb{P}}\left(E_{i}^{T}\right)), e_{n}^{(1)}  \rangle \right\vert \Bigg] \\
		&+& \left\vert \eta \right\vert^{2} \, \aleph \, \sum_{j \neq j_{0}}^{\aleph} \left\vert \langle \widetilde{Err}(x,j_{0},j,\overset{1}{\mathbb{P}}\left(E_{j}^{T}\right)) + \widetilde{Err}(x,j_{0},j,\overset{3}{\mathbb{P}}\left(E_{j}^{T}\right)),R_{n} \rangle \right\vert^{2} + \left\vert \langle \tilde{E}^{Inc}_{j_{0}},R_{n} \rangle \right\vert^{2}\\
		&+& a^{6} \, \aleph \, \sum_{j\neq j_{0}}^{\aleph} \left\vert \langle \int_{B} \int_{0}^{1} (1-t) y^{\perp} \cdot \underset{y}{Hess}\left( \Phi_{k} \right)(z_{j_{0}};z_{j}+aty)\cdot y \, dt \cdot\overset{1}{\mathbb{P}}\left(\tilde{E}_{j}^{T}\right)(y)dy , R_{n} \rangle \right\vert^{2}\\
		&+& a^{4} \, \aleph \, \sum_{j\neq j_{0}}^{\aleph} \left\vert \langle \int_{B} \int_{0}^{1} \underset{y}{\nabla}\left( \Upsilon \right) (z_{j_{0}},z_{j}+tay) \cdot \mathcal{P}(y,0) \, dt \cdot \overset{3}{\mathbb{P}}\left(\tilde{E}_{j}^{T}\right)(y)dy , R_{n} \rangle \right\vert^{2}
	\end{eqnarray*} 
	Taking the series with respect to the index $n$, we obtain 
	\begin{eqnarray}\label{MLB}
	\nonumber
	\left\Vert \overset{1}{\mathbb{P}}\left(\tilde{E}_{j_{0}}^{T}\right) \right\Vert^{2} & \lesssim & \frac{\left\vert \langle \tilde{E}^{Inc}_{j_{0}},e_{n_{0}}^{(1)} \rangle \right\vert^{2}}{\left\vert 1 - k^{2} \, \eta \, a^{2} \, \lambda_{n_{0}}^{(1)} \right\vert^{2}} + \sum_{n \neq n_{0}} \, \frac{\left\vert \langle \tilde{E}^{Inc}_{j_{0}},e_{n}^{(1)} \rangle \right\vert^{2}}{\left\vert 1 - k^{2} \, \eta \, a^{2} \, \lambda_{n}^{(1)} \right\vert^{2}} \\ \nonumber
	&+& a^{2} \, \aleph \,  \sum_{j \neq j_{0}}^{\aleph} \sum_{n} \left\vert \langle  \Upsilon(z_{j_{0}},z_{j}) \cdot \int_{B}  \overset{3}{\mathbb{P}}\left(\tilde{E}_{j}^{T}\right)(y) \, dy,  R_{n} \rangle \right\vert^{2} \\ \nonumber
	&+& a^{4} \, \aleph \, \sum_{j \neq j_{0}}^{\aleph} \sum_{n} \left\vert \langle \underset{x}{\nabla}\left(\Phi_{k} I \right)(z_{j_{0}},z_{j})\cdot  \int_{B} \mathcal{P}(y;0)\cdot \overset{1}{\mathbb{P}}\left(\tilde{E}_{j}^{T}\right)(y) \; dy,  R_{n} \rangle \right\vert^{2} \\ \nonumber
	&+& \left\vert \eta \right\vert^{2} \sum_{j \neq j_{0}}^{\aleph} \sum_{n} \, \frac{1}{\left\vert 1 - k^{2} \, \eta \, a^{2} \, \lambda_{n}^{(1)} \right\vert^{2}} \, \left\vert \langle \widetilde{Err}(x,j_{0},j,\overset{1}{\mathbb{P}}\left(E_{j}^{T}\right)) + \widetilde{Err}(x,j_{0},j,\overset{3}{\mathbb{P}}\left(E_{j}^{T}\right)), e_{n}^{(1)}  \rangle \right\vert^{2} \\ \nonumber
	&+& \left\vert \eta \right\vert^{2}  \sum_{n} \, \frac{1}{\left\vert 1 - k^{2} \, \eta \, a^{2} \, \lambda_{n}^{(1)} \right\vert^{2}} \, \Bigg[\sum_{k=1 \atop k \neq j_{0}}^{\aleph} \, \left\vert \langle \widetilde{Err}(x,j_{0},k,\overset{1}{\mathbb{P}}\left(E_{k}^{T}\right)) + \widetilde{Err}(x,j_{0},k,\overset{3}{\mathbb{P}}\left(E_{k}^{T}\right)), e_{n}^{(1)}  \rangle \right\vert \\ \nonumber
	&&  \qquad \; \qquad  \qquad \; \qquad \; \qquad \; \quad \cdot \sum_{i > k \atop i \neq j_{0}}^{\aleph} \left\vert \langle \widetilde{Err}(x,j_{0},i,\overset{1}{\mathbb{P}}\left(E_{i}^{T}\right)) + \widetilde{Err}(x,j_{0},i,\overset{3}{\mathbb{P}}\left(E_{i}^{T}\right)), e_{n}^{(1)}  \rangle \right\vert \Bigg] \\ \nonumber
	&+& \left\vert \eta \right\vert^{2} \, \aleph \, \sum_{j \neq j_{0}}^{\aleph} \sum_{n} \left\vert \langle \widetilde{Err}(x,j_{0},j,\overset{1}{\mathbb{P}}\left(E_{j}^{T}\right)) + \widetilde{Err}(x,j_{0},j,\overset{3}{\mathbb{P}}\left(E_{j}^{T}\right)),R_{n} \rangle \right\vert^{2} + \sum_{n} \left\vert \langle \tilde{E}^{Inc}_{j_{0}},R_{n} \rangle \right\vert^{2}\\ \nonumber
	&+& a^{6} \, \aleph \, \sum_{j\neq j_{0}}^{\aleph} \sum_{n} \left\vert \langle \int_{B} \int_{0}^{1} (1-t) y^{\perp} \cdot \underset{y}{Hess}\left( \Phi_{k} \right)(z_{j_{0}};z_{j}+aty)\cdot y \, dt \cdot\overset{1}{\mathbb{P}}\left(\tilde{E}_{j}^{T}\right)(y)dy , R_{n} \rangle \right\vert^{2}\\
	&+& a^{4} \, \aleph \, \sum_{j\neq j_{0}}^{\aleph}  \sum_{n} \left\vert \langle \int_{B} \int_{0}^{1} \underset{y}{\nabla}\left( \Upsilon \right)(z_{j_{0}},z_{j}+tay) \cdot \mathcal{P}(y,0) \, dt \cdot \overset{3}{\mathbb{P}}\left(\tilde{E}_{j}^{T}\right)(y)dy , R_{n} \rangle \right\vert^{2}
	\end{eqnarray}
	We set 
	\begin{equation*}
	I := \sum_{j \neq j_{0}}^{\aleph} \sum_{n} \, \frac{1}{\left\vert 1 - k^{2} \, \eta \, a^{2} \, \lambda_{n}^{(1)} \right\vert^{2}} \, \left\vert \langle \widetilde{Err}(x,j_{0},j,\overset{1}{\mathbb{P}}\left(E_{j}^{T}\right)) + \widetilde{Err}(x,j_{0},j,\overset{3}{\mathbb{P}}\left(E_{j}^{T}\right)), e_{n}^{(1)}  \rangle \right\vert^{2},
	\end{equation*}
	then, obviously, we have
	\begin{eqnarray}\label{DFD50}
	\nonumber
	|I|  & \lesssim &   a^{-2h} \;\sum_{j \neq j_{0}}^{\aleph} \sum_{n} \, \left\vert \langle \widetilde{Err}(x,j_{0},j,\overset{1}{\mathbb{P}}\left(E_{j}^{T}\right)), e_{n}^{(1)}  \rangle \right\vert^{2}  +a^{-2h} \;\sum_{j \neq j_{0}}^{\aleph} \sum_{n} \, \left\vert \langle \widetilde{Err}(x,j_{0},j,\overset{3}{\mathbb{P}}\left(E_{j}^{T}\right)), e_{n}^{(1)}  \rangle \right\vert^{2} \\
	& \lesssim & a^{-2h} \;\sum_{j \neq j_{0}}^{\aleph}  \left\Vert  \widetilde{Err}(\cdot , j_{0},j,\overset{1}{\mathbb{P}}\left(E_{j}^{T}\right)) \right\Vert^{2}_{\mathbb{L}^{2}(B)}  +a^{-2h} \;\sum_{j \neq j_{0}}^{\aleph} \, \left\Vert  \widetilde{Err}(\cdot ,j_{0},j,\overset{3}{\mathbb{P}}\left(E_{j}^{T}\right)) \right\Vert^{2}_{\mathbb{L}^{2}(B)}.
	\end{eqnarray}
	Next, to compute an upper bound for $I$, we split the computations into two parts. 
	\begin{enumerate}
		\item Computing $ \left\Vert  \widetilde{Err}(\cdot , j_{0},j,\overset{1}{\mathbb{P}}\left(E_{j}^{T}\right)) \right\Vert^{2}_{\mathbb{L}^{2}(B)} $. 
		\newline
		Firstly, we rewrite $Err(x,j_{0},j,\overset{1}{\mathbb{P}}\left(E_{j}^{T}\right))$, given by $(\ref{LMERR1})$, as\footnote{The difference between $(\ref{LMERR1})$ and $(\ref{SKIS})$ is merely technical. We keep the first term on the right hand side of \eqref{LMERR1} with the help of the Taylor expansion of $Hess \, N^{k}(\cdots)$, then we split  $Hess \, N^{k}$  into  $Hess \, N$ and $Hess \, \left(N^{k}-N\right)$.} 
		\begin{eqnarray}\label{SKIS}
		\nonumber
		&& Err(x,j_{0},j,\overset{1}{\mathbb{P}}\left(E_{j}^{T}\right)) := \\ \nonumber &-& k^{2} \int_{D_{j}} \int_{0}^{1} \underset{y}{Hess}\left(\Phi_{0} \right)(z_{j_{0}}+t(x-z_{j_{0}}),y) \cdot (x-z_{j_{0}}) \, dt \cdot (y-z_{j})  \overset{1}{\mathbb{P}}\left(E_{j}^{T}\right)(y) \; dy \\ \nonumber
		&-& k^{2} \int_{D_{j}} \int_{0}^{1} \underset{y}{Hess}\left(\Phi_{k} - \Phi_{0} \right)(z_{j_{0}}+t(x-z_{j_{0}}),y) \cdot (x-z_{j_{0}}) \, dt \cdot (y-z_{j})  \overset{1}{\mathbb{P}}\left(E_{j}^{T}\right)(y) \; dy \\ \nonumber
		&+& k^{2} \int_{D_{j}} \int_{0}^{1} \int_{0}^{1} \Bigg[ \underset{y}{\nabla}\left(\underset{y}{Hess}\left(\Phi_{k} \right)\right)(z_{j_{0}}+t(x-z_{j_{0}}),z_{j}+s(y-z_{j})) \cdot \mathcal{P}(y,z_{j}) \, ds \cdot (x-z_{j_{0}}) \, dt \Bigg] \\ \nonumber
	& &	\cdot  (y-z_{j})  \overset{1}{\mathbb{P}}\left(E_{j}^{T}\right)(y) \; dy \\ \nonumber
		&+& k^{2} \; \int_{D_{j}} \int_{0}^{1} (1-t) (y-z_{j})^{\perp}\cdot  \underset{x}{\nabla}\left( \underset{y}{Hess}\left(\Phi_{k} \right)\right)(z_{j_{0}},z_{j}+t(y-z_{j}))  \cdot \mathcal{P}(x,z_{j_{0}}) \cdot (y-z_{j}) dt  \; \overset{1}{\mathbb{P}}\left(E_{j}^{T}\right)(y) \; dy \\ \nonumber
		&+& k^{2} \; \int_{D_{j}} \int_{0}^{1} (1-t) (y-z_{j})^{\perp}\cdot \\ \nonumber && \left[\int_{0}^{1} \int_{0}^{1} \underset{x}{\bm{d}}\left(\underset{x}{\nabla}\left( \underset{y}{Hess}\left(\Phi_{k} \right)\right)\right)(z_{j_{0}}+ \rho \, s(x-z_{j_{0}}),z_{j}+t(y-z_{j})) \cdot (x-z_{j_{0}})^{[1]} \, d\rho \cdot \mathcal{P}(x,z_{j_{0}}) \, ds  \right] \\ && \qquad \qquad \qquad \qquad \qquad \qquad \qquad \qquad \cdot (y-z_{j}) dt  \; \overset{1}{\mathbb{P}}\left(E_{j}^{T}\right)(y) \; dy.
		\end{eqnarray}
		Secondly, for the shortness reason, we denote
		\begin{equation}\label{J1+...+J5}
		Err(\cdot ,j_{0},j,\overset{1}{\mathbb{P}}\left(E_{j}^{T}\right)) = \sum_{\ell = 1}^{5} J_{\ell}(\cdot)
		\end{equation} 
		and we compute an estimation of the $\mathbb{L}^{2}$-norm of each $J_{\ell}(\cdot), \; \ell =1,\cdots,5$. For this, we have: 
		\begin{enumerate}
			\item Estimation of 
			\begin{equation*}
			J_{1}(x) := - k^{2} \int_{D_{j}} \int_{0}^{1} \underset{y}{Hess}\left(\Phi_{0} \right)(z_{j_{0}}+t(x-z_{j_{0}}),y) \cdot (x-z_{j_{0}}) \, dt \cdot (y-z_{j})  \overset{1}{\mathbb{P}}\left(E_{j}^{T}\right)(y) \; dy.
			\end{equation*}
			For simplicity, we set $\beta_{j}$, a scalar function, to be an arbitrary component of the vector field $\overset{1}{\mathbb{P}}\left(E_{j}^{T}\right)$. We also set $J_{1, j}$ the corresponding $j$th component of $J_{1}$. Then,
			\begin{eqnarray*}
				J_{1, j}(x) &=& - k^{2} \int_{D_{j}} \int_{0}^{1} \underset{y}{Hess}\left(\Phi_{0} \right)(z_{j_{0}}+t(x-z_{j_{0}}),y) \cdot (x-z_{j_{0}}) \, dt \cdot (y-z_{j})  \beta_{j}(y) \; dy\\
				&=& - k^{2} \int_{D_{j}} \int_{0}^{1} \underset{y}{Hess}\left(\Phi_{0} \right)(z_{j_{0}}+t(x-z_{j_{0}}),y) \cdot (x-z_{j_{0}}) \, dt \cdot (y-z_{j})  \beta_{j}(y) \; \chi_{D_{j}}(y) \; dy\\
				&=& - k^{2} \int_{0}^{1}  \int_{D_{j}}  \underset{y}{Hess}\left(\Phi_{0} \right)(z_{j_{0}}+t(x-z_{j_{0}}),y) \cdot (y-z_{j})  \beta_{j}(y) \chi_{D_{j}}(y) \; dy \cdot (x-z_{j_{0}}) \, dt \\
				&=& - k^{2} \int_{0}^{1}  Hess \, N_{D_{j}}  \left( ( \cdot -z_{j})  \beta_{j}(\cdot) \chi_{D_{j}}(\cdot) \right) (z_{j_{0}}+t(x-z_{j_{0}}))  \cdot (x-z_{j_{0}}) \, dt.
			\end{eqnarray*}
			Scaling to $B$, we obtain 
			\begin{equation*}
			\tilde{J_{1, j}}(x) = - k^{2} \, a^{5} \, \int_{0}^{1}  Hess \, N_{B}  \left( \cdot  \tilde{\beta_{j}}(\cdot) \chi_{B}(\cdot) \right) (z_{j_{0}}+t \, a \, x)  \cdot x \, dt.
			\end{equation*}
			Now, taking the norm on the both sides, we get 
			\begin{eqnarray*}
				\left\Vert \tilde{J_{1, j}} \right\Vert_{\mathbb{L}^{2}(B)} & \lesssim & a^{5} \; \left\Vert \int_{0}^{1}  Hess \, N_{B}  \left(  \cdot  \tilde{\beta_{j}}(\cdot) \chi_{B}(\cdot) \right) (z_{j_{0}}+t a \, \cdot ) \, dt \right\Vert_{\mathbb{L}^{2}(B)}  \\
				& \lesssim & a^{5} \; \int_{0}^{1} \left\Vert   Hess \, N_{B}  \left(  \cdot \tilde{\beta_{j}}(\cdot) \chi_{B}(\cdot) \right) (z_{j_{0}}+t \, a \, \cdot)  \right\Vert_{\mathbb{L}^{2}(B)} \, dt \\
				& \lesssim & a^{5} \;  \left\Vert   Hess \, N_{B}  \left(  \cdot   \tilde{\beta_{j}}(\cdot) \chi_{B}(\cdot) \right)  \right\Vert_{\mathbb{L}^{2}(\mathbb{R}^{3})}.
			\end{eqnarray*}
			Due to Theorem 9.9, formula 9.28, page 230 of \cite{gilbarg2001elliptic}, we have  
			\begin{equation*}
			\left\Vert   Hess \, N_{B}  \left(  \cdot   \tilde{\beta_{j}}(\cdot) \chi_{B}(\cdot) \right)  \right\Vert_{\mathbb{L}^{2}(\mathbb{R}^{3})} = \left\Vert    \cdot \tilde{ \beta_{j}}(\cdot)  \right\Vert_{\mathbb{L}^{2}(B)}.
			\end{equation*}
			This implies, 
			\begin{equation*}
			\left\Vert \tilde{J_{1, j}} \right\Vert_{\mathbb{L}^{2}(B)}  \lesssim  a^{5} \;  \left\Vert   \tilde{\beta_{j}}  \right\Vert_{\mathbb{L}^{2}(B)}, 
			\end{equation*}
			Since $\beta_{j}$ is chosen as an arbitrarily component, we deduce that 
			\begin{equation}\label{CJ1}
			\left\Vert \tilde{J_{1}} \right\Vert_{\mathbb{L}^{2}(B)} = \mathcal{O}\left(a^{5} \; \left\Vert \overset{1}{\mathbb{P}}\left(\tilde{E}_{j}^{T}\right) \right\Vert_{\mathbb{L}^{2}(B)} \right).
			\end{equation}  
			\item Estimation of 
			\begin{equation*}
			J_{2}(x) := -  k^{2} \int_{D_{j}} \int_{0}^{1} \underset{y}{Hess}\left(\Phi_{k} - \Phi_{0} \right)(z_{j_{0}}+t(x-z_{j_{0}}),y) \cdot (x-z_{j_{0}}) \, dt \cdot (y-z_{j})  \overset{1}{\mathbb{P}}\left(E_{j}^{T}\right)(y) \; dy.
			\end{equation*}
			Recall the expansion $(\ref{expansion-of-Hess})$. Then we have 
				\begin{equation*}
				{Hess}\left(\Phi_{k} - \Phi_{0} \right)(x,y) = - \frac{k^{2}}{2} \Phi_{0}(x,y) \, I - i \frac{k^{3}}{24\pi} I_{3} + \frac{k^{2}}{2} \, \Phi_{0}(x,y) \, \frac{A(x,y)}{\Vert x-y  \Vert^{2}} +\frac{1}{4 \pi} \; \sum_{n \geq 3} \frac{(ik)^{n+1}}{(n+1)!} \; \underset{x}{Hess} \left( \left\Vert x - y \right\Vert^{n} \right).
				\end{equation*} 
Using this we deduce that the dominant term of $J_2(x)$ is 
			\begin{equation*}
			J_{2}(x) \sim \frac{k^{4}}{2} \int_{D_{j}} \int_{0}^{1}  \Phi_{0}(z_{j_{0}}+t(x-z_{j_{0}}),y) \, dt (x-z_{j_{0}})  \cdot (y-z_{j})  \overset{1}{\mathbb{P}}\left(E_{j}^{T}\right)(y) \; dy .
			\end{equation*}
		By taking the norm, we get 
			\begin{eqnarray}\label{CJ2}
			\nonumber
			\left\Vert J_{2} \right\Vert_{\mathbb{L}^{2}(D_{j_{0}})} & \lesssim &   \left\Vert  \int_{D_{j}} \int_{0}^{1}  \Phi_{0}(z_{j_{0}}+t(\cdot -z_{j_{0}}),y) \, dt (\cdot -z_{j_{0}})  \cdot (y-z_{j})  \overset{1}{\mathbb{P}}\left(E_{j}^{T}\right)(y) \; dy \right\Vert_{\mathbb{L}^{2}(D_{j_{0}})} \\
			& \lesssim &  a^{5} \; \left\vert z_{j} - z_{j_{0}} \right\vert^{-1} \; \left\Vert \overset{1}{\mathbb{P}}\left(E_{j}^{T}\right) \right\Vert_{\mathbb{L}^{2}(D_{j})}.
			\end{eqnarray}
			\item Estimation of 
			\begin{eqnarray}\label{CJ3}
			\nonumber
			J_{3} &:=& k^{2} \int_{D_{j}} \int_{0}^{1} \int_{0}^{1} \Bigg[ \underset{y}{\nabla}\left(\underset{y}{Hess}\left(\Phi_{k} \right)\right)(z_{j_{0}}+t(x-z_{j_{0}}),z_{j}+s(y-z_{j})) \cdot \mathcal{P}(y,z_{j}) \, ds   \\ \nonumber
			&& \qquad \qquad  \qquad \qquad \qquad \qquad \cdot (x-z_{j_{0}}) \, dt \Bigg] \cdot (y-z_{j})  \overset{1}{\mathbb{P}}\left(E_{j}^{T}\right)(y) \; dy \\ \nonumber
			\left\Vert J_{3} \right\Vert_{\mathbb{L}^{2}(D_{j_{0}})} & \lesssim & \Bigg\Vert \int_{D_{j}} \int_{0}^{1} \int_{0}^{1} \Bigg[ \underset{y}{\nabla}\left(\underset{y}{Hess}\left(\Phi_{k} \right)\right)(z_{j_{0}}+t(\cdot -z_{j_{0}}),z_{j}+s(y-z_{j})) \cdot \mathcal{P}(y,z_{j}) \, ds   \\ \nonumber
			&& \qquad \qquad  \qquad \qquad \qquad \qquad \cdot (\cdot -z_{j_{0}}) \, dt \Bigg] \cdot (y-z_{j})  \overset{1}{\mathbb{P}}\left(E_{j}^{T}\right)(y) \; dy \Bigg\Vert_{\mathbb{L}^{2}(D_{j_{0}})} \\
			& \lesssim & a^{6}  \; \left\vert z_{j} - z_{j_{0}} \right\vert^{-4} \; \left\Vert \overset{1}{\mathbb{P}}\left(E_{j}^{T}\right) \right\Vert_{\mathbb{L}^{2}(D_{j})}.
			\end{eqnarray}
			\item Estimation of 
			\begin{eqnarray*}
			\nonumber
			J_{4} &:=& k^{2} \; \int_{D_{j}} \int_{0}^{1} (1-t) (y-z_{j})^{\perp}\cdot \\ \nonumber && \qquad \qquad \underset{x}{\nabla}\left( \underset{y}{Hess}\left(\Phi_{k} \right)\right)(z_{j_{0}},z_{j}+t(y-z_{j}))  \cdot \mathcal{P}(x,z_{j_{0}}) \cdot (y-z_{j}) dt  \; \overset{1}{\mathbb{P}}\left(E_{j}^{T}\right)(y) \; dy.
\end{eqnarray*}
Taking the $\mathbb{L}^{2}(D_{j_{0}})-$norm on the both sides, we obtain that  
\begin{equation}\label{CJ4}			
\left\Vert J_{4} \right\Vert_{\mathbb{L}^{2}(D_{j_{0}})}	 \lesssim   a^{6}  \; \left\vert z_{j} - z_{j_{0}} \right\vert^{-4} \; \left\Vert \overset{1}{\mathbb{P}}\left(E_{j}^{T}\right) \right\Vert_{\mathbb{L}^{2}(D_{j})}.
			\end{equation}
			\item Estimation of 
			\begin{eqnarray}\label{CJ5}
			\nonumber
			&& J_{5} := k^{2} \; \int_{D_{j}} \int_{0}^{1} (1-t) (y-z_{j})^{\perp}\cdot \\ \nonumber && \left[\int_{0}^{1} \int_{0}^{1} \underset{x}{\bm{d}}\left(\underset{x}{\nabla}\left( \underset{y}{Hess}\left(\Phi_{k} \right)\right)\right)(z_{j_{0}}+ \rho \, s(x-z_{j_{0}}),z_{j}+t(y-z_{j})) \cdot (x-z_{j_{0}})^{[1]} \, d\rho \cdot \mathcal{P}(x,z_{j_{0}}) \, ds  \right] \\ \nonumber && \qquad \qquad \qquad \qquad \qquad  \qquad \cdot (y-z_{j}) dt  \; \overset{1}{\mathbb{P}}\left(E_{j}^{T}\right)(y) \; dy, \\ \nonumber
			&& \left\Vert J_{5} \right\Vert_{\mathbb{L}^{2}(D_{j})} \lesssim  \Bigg\Vert \int_{D_{j}} \int_{0}^{1} (1-t) (y-z_{j})^{\perp}\cdot \\ \nonumber && \left[\int_{0}^{1} \int_{0}^{1} \underset{x}{\bm{d}}\left(\underset{x}{\nabla}\left( \underset{y}{Hess}\left(\Phi_{k} \right)\right)\right)(z_{j_{0}}+ \rho \, s(\cdot -z_{j_{0}}),z_{j}+t(y-z_{j})) \cdot (\cdot -z_{j_{0}})^{[1]} \, d\rho \cdot \mathcal{P}(\cdot ,z_{j_{0}}) \, ds  \right] \\ \nonumber && \qquad \qquad \qquad \qquad \qquad  \qquad \cdot (y-z_{j}) dt  \; \overset{1}{\mathbb{P}}\left(E_{j}^{T}\right)(y) \; dy \Bigg\Vert_{\mathbb{L}^{2}(D_{j})} \\
			& & \qquad \qquad \qquad \lesssim  a^{7}  \; \left\vert z_{j} - z_{j_{0}} \right\vert^{-5} \; \left\Vert \overset{1}{\mathbb{P}}\left(E_{j}^{T}\right) \right\Vert_{\mathbb{L}^{2}(D_{j})}.
			\end{eqnarray}
		\end{enumerate}
		Therefore, after combining with $(\ref{CJ1}), (\ref{CJ2}), (\ref{CJ3}), (\ref{CJ4}), (\ref{CJ5})$ and $(\ref{J1+...+J5})$, we obtain that
		\begin{equation}\label{HN1} 
		\left\Vert \widetilde{Err}(\cdot ,j_{0},j,\overset{1}{\mathbb{P}}\left(E_{j}^{T}\right)) \right\Vert^{2}_{\mathbb{L}^{2}(B)} =  \mathcal{O}\left( \left(a^{10} \, d^{-2}_{j,j_{0}} + a^{12} \, d^{-8}_{j,j_{0}} \right) \; \left\Vert \overset{1}{\mathbb{P}}\left(\tilde{E}_{j}^{T}\right) \right\Vert^{2}_{\mathbb{L}^{2}(B)} \right).
		\end{equation}

		\item Computing $ \left\Vert \widetilde{Err}(\cdot ,j_{0},j,\overset{3}{\mathbb{P}}\left(E_{j}^{T}\right)) \right\Vert^{2}_{\mathbb{L}^{2}(B)} $. 
		\newline 
		Using the definition of $Err(\cdot ,j_{0},j,\overset{3}{\mathbb{P}}\left(E_{j}^{T}\right))$, see $(\ref{LMERR3})$, we divide it into two parts
		\begin{equation*}
		Err(\cdot ,j_{0},j,\overset{3}{\mathbb{P}}\left(E_{j}^{T}\right)) = J_{6} + J_{7}.
		\end{equation*}
		\begin{enumerate}
			\item Estimation of
			\begin{eqnarray*}
				J_{6} &:=& \int_{0}^{1} \underset{x}{\nabla}\left( \Upsilon \right)(z_{j_{0}}+t(x-z_{j_{0}}),z_{j}) \cdot \mathcal{P}(x,z_{j_{0}}) \, dt \cdot \int_{D_{j}}  \overset{3}{\mathbb{P}}\left(E_{j}^{T}\right)(y) \, dy \\
				\left\Vert J_{6} \right\Vert_{\mathbb{L}^{2}(D_{j_{0}})} &=& \left\Vert \int_{0}^{1} \underset{x}{\nabla}\left( \Upsilon \right)(z_{j_{0}}+t(\cdot -z_{j_{0}}),z_{j}) \cdot \mathcal{P}(\cdot ,z_{j_{0}}) \, dt \cdot \int_{D_{j}}  \overset{3}{\mathbb{P}}\left(E_{j}^{T}\right)(y) \, dy \right\Vert_{\mathbb{L}^{2}(D_{j_{0}})} \\
				& \lesssim & a^{4} \; \left\vert z_{j} - z_{j_{0}} \right\vert^{-4} \; \left\Vert   \overset{3}{\mathbb{P}}\left(E_{j}^{T} \right) \right\Vert_{\mathbb{L}^{2}(D_{j})}.
			\end{eqnarray*}
			\item Estimation of 
			\begin{eqnarray*}
				J_{7} &:=& \int_{D_{j}} \int_{0}^{1} \left[\int_{0}^{1} \underset{x}{\nabla}\left(\underset{y}{\nabla} \left( \Upsilon \right) \right)(z_{j_{0}}+s(x-z_{j_{0}}),z_{j}+t(y-z_{j})) \cdot \mathcal{P}(x,z_{j_{0}}) ds \right] \\&& \qquad \qquad \qquad \cdot \mathcal{P}(y,z_{j}) dt \cdot \overset{3}{\mathbb{P}}\left(E_{j}^{T}\right)(y) \, dy \\
				\left\Vert J_{7} \right\Vert_{\mathbb{L}^{2}(D_{j_{0}})} &=&  \Bigg\Vert \int_{D_{j}} \int_{0}^{1} \left[\int_{0}^{1} \underset{x}{\nabla}\left(\underset{y}{\nabla} \left( \Upsilon \right) \right)(z_{j_{0}}+s(\cdot -z_{j_{0}}),z_{j}+t(y-z_{j})) \cdot \mathcal{P}(\cdot ,z_{j_{0}}) ds \right] \\&& \qquad \qquad \qquad \cdot \mathcal{P}(y,z_{j}) dt \cdot \overset{3}{\mathbb{P}}\left(E_{j}^{T}\right)(y) \, dy \Bigg\Vert_{\mathbb{L}^{2}(D_{j_{0}})} \\
				& \lesssim & a^{5} \; \left\vert z_{j} - z_{j_{0}} \right\vert^{-5} \; \left\Vert   \overset{3}{\mathbb{P}}\left(E_{j}^{T} \right) \right\Vert_{\mathbb{L}^{2}(D_{j})}.
			\end{eqnarray*}
		\end{enumerate}
		Clearly, 
		\begin{equation*}
		\left\Vert Err(\cdot ,j_{0},j,\overset{3}{\mathbb{P}}\left(E_{j}^{T}\right)) \right\Vert^{2}_{\mathbb{L}^{2}(D_{j_{0}})} = \mathcal{O}\left( a^{8} \; \left\vert z_{j} - z_{j_{0}} \right\vert^{-8} \; \left\Vert   \overset{3}{\mathbb{P}}\left(E_{j}^{T} \right) \right\Vert^{2}_{\mathbb{L}^{2}(D_{j})} \right),
		\end{equation*}
		and after scaling, we obtain that
		\begin{equation}\label{HN2}
		\left\Vert \widetilde{Err}(\cdot ,j_{0},j,\overset{3}{\mathbb{P}}\left(E_{j}^{T}\right)) \right\Vert^{2}_{\mathbb{L}^{2}(B)} = \mathcal{O}\left( a^{8} \; \left\vert z_{j} - z_{j_{0}} \right\vert^{-8} \; \left\Vert   \overset{3}{\mathbb{P}}\left(\tilde{E}_{j}^{T} \right) \right\Vert^{2}_{\mathbb{L}^{2}(B)} \right).
		\end{equation}
	\end{enumerate}
	Going back to $(\ref{DFD50})$ and using the estimations $(\ref{HN1})$ and $(\ref{HN2})$, we can derive that
	\begin{eqnarray}\label{I-Error-term}
	\nonumber
	|I|  & \lesssim &   a^{-2h} \;\sum_{j \neq j_{0}}^{\aleph}  \left\Vert  \widetilde{Err}(\cdot , j_{0},j,\overset{1}{\mathbb{P}}\left(E_{j}^{T}\right)) \right\Vert^{2}_{\mathbb{L}^{2}(B)}  +a^{-2h} \;\sum_{j \neq j_{0}}^{\aleph} \, \left\Vert  \widetilde{Err}(\cdot ,j_{0},j,\overset{3}{\mathbb{P}}\left(E_{j}^{T}\right)) \right\Vert^{2}_{\mathbb{L}^{2}(B)} \\ \nonumber
	& \lesssim &   a^{10-2h}  \sum_{j \neq j_{0}}^{\aleph} \frac{1}{d^{2}_{j,j_{0}}} \left\Vert \overset{1}{\mathbb{P}}\left(\tilde{E}_{j}^{T}\right) \right\Vert^{2} + a^{12-2h}  \sum_{j \neq j_{0}}^{\aleph} \frac{1}{d^{8}_{j,j_{0}}} \left\Vert \overset{1}{\mathbb{P}}\left(\tilde{E}_{j}^{T}\right) \right\Vert^{2} + a^{8-2h} \;\sum_{j \neq j_{0}}^{\aleph} \frac{1}{d^{8}_{j,j_{0}}} \left\Vert \overset{3}{\mathbb{P}}\left(\tilde{E}_{j}^{T}\right) \right\Vert^{2} \\ 
	& \lesssim & \left( a^{10-2h} \, d^{-3} + a^{12-2h} d^{-8} \right) \max_{j}  \left\Vert \overset{1}{\mathbb{P}}\left(\tilde{E}_{j}^{T}\right) \right\Vert^{2} +a^{8-2h} d^{-8} \max_{j} \left\Vert \overset{3}{\mathbb{P}}\left(\tilde{E}_{j}^{T}\right) \right\Vert^{2}. 
	\end{eqnarray}
	We also need to estimate 
	\begin{eqnarray*}
		I^{\star}  &:=& \sum_{n} \, \frac{1}{\left\vert 1 - k^{2} \, \eta \, a^{2} \, \lambda_{n}^{(1)} \right\vert^{2}} \, \Bigg[\sum_{k=1 \atop k \neq j_{0}}^{\aleph} \, \left\vert \langle \widetilde{Err}(x,j_{0},k,\overset{1}{\mathbb{P}}\left(E_{k}^{T}\right)) + \widetilde{Err}(x,j_{0},k,\overset{3}{\mathbb{P}}\left(E_{k}^{T}\right)), e_{n}^{(1)}  \rangle \right\vert \\ \nonumber
		&&  \qquad \; \qquad  \qquad \; \qquad  \qquad  \cdot \sum_{i > k \atop i \neq j_{0}}^{\aleph} \left\vert \langle \widetilde{Err}(x,j_{0},i,\overset{1}{\mathbb{P}}\left(E_{i}^{T}\right)) + \widetilde{Err}(x,j_{0},i,\overset{3}{\mathbb{P}}\left(E_{i}^{T}\right)), e_{n}^{(1)}  \rangle \right\vert \Bigg] \\ \nonumber
		\left\vert I^{\star} \right\vert  & \lesssim & a^{-2h} \;   \Bigg[\sum_{k=1 \atop k \neq j_{0}}^{\aleph} \left( \left\Vert  \widetilde{Err}(x,j_{0},k,\overset{1}{\mathbb{P}}\left(E_{k}^{T}\right)) \right\Vert + \left\Vert \widetilde{Err}(x,j_{0},k,\overset{3}{\mathbb{P}}\left(E_{k}^{T}\right)) \right\Vert \right) \\ \nonumber
		&&  \quad  \qquad \cdot \sum_{i > k \atop i \neq j_{0}}^{\aleph} \left( \left\Vert \widetilde{Err}(x,j_{0},i,\overset{1}{\mathbb{P}}\left(E_{i}^{T}\right)) \right\Vert + \left\Vert \widetilde{Err}(x,j_{0},i,\overset{3}{\mathbb{P}}\left(E_{i}^{T}\right)) \right\Vert \right) \Bigg].
	\end{eqnarray*}
	By using $(\ref{HN1})$ and $(\ref{HN2})$, we obtain 
	\begin{eqnarray*}
		\left\vert I^{\star} \right\vert & \lesssim & a^{-2h} \;   \Bigg[\sum_{k=1 \atop k \neq j_{0}}^{\aleph} \left( a^{5} \, d^{-1}_{k,j_{0}} \, \left\Vert \overset{1}{\mathbb{P}}\left(\tilde{E}_{k}^{T}\right) \right\Vert + a^{6} \, d^{-4}_{k,j_{0}} \left\Vert \overset{1}{\mathbb{P}}\left(\tilde{E}_{k}^{T}\right) \right\Vert + a^{4} \, d^{-4}_{k,j_{0}} \left\Vert \overset{3}{\mathbb{P}}\left(\tilde{E}_{k}^{T}\right) \right\Vert \right)  \\ && \qquad \quad \; \cdot \sum_{i > k \atop i \neq j_{0}}^{\aleph} \left( a^{5} \, d^{-1}_{i,j_{0}} \,  \left\Vert \overset{1}{\mathbb{P}}\left(\tilde{E}_{i}^{T}\right) \right\Vert + a^{6} \, d^{-4}_{i,j_{0}} \left\Vert \overset{1}{\mathbb{P}}\left(\tilde{E}_{i}^{T}\right) \right\Vert + a^{4} \, d^{-4}_{i,j_{0}} \left\Vert \overset{3}{\mathbb{P}}\left(\tilde{E}_{i}^{T}\right) \right\Vert \right) \Bigg] \\ \nonumber
		& \lesssim & a^{-2h} \; \left(  \left( a^{5} \, d^{-3} + a^{6} \, d^{-4} \right) \,  \underset{i}{\max} \left\Vert \overset{1}{\mathbb{P}}\left(\tilde{E}_{i}^{T}\right) \right\Vert + a^{4} \, d^{-4} \underset{i}{\max} \left\Vert \overset{3}{\mathbb{P}}\left(\tilde{E}_{i}^{T}\right) \right\Vert \right)^{2} \\ \nonumber
		& \lesssim & a^{-2h} \; \left(  a^{10} \,d^{-6} \, \underset{i}{\max} \left\Vert \overset{1}{\mathbb{P}}\left(\tilde{E}_{i}^{T}\right) \right\Vert^{2} + a^{8} \, d^{-8} \underset{i}{\max} \left\Vert \overset{3}{\mathbb{P}}\left(\tilde{E}_{i}^{T}\right) \right\Vert^{2} \right). 
	\end{eqnarray*}
	Finally, 
	\begin{equation}\label{Istarformula}
	|I^{\star}| = \mathcal{O}\left( a^{10-2h} \; d^{-6} \; \underset{i}{\max} \left\Vert \overset{1}{\mathbb{P}}\left(\tilde{E}_{i}^{T}\right) \right\Vert^{2} + a^{8 - 2h} \, d^{-8} \underset{i}{\max} \left\Vert \overset{3}{\mathbb{P}}\left(\tilde{E}_{i}^{T}\right) \right\Vert^{2} \right).
	\end{equation}
	The analysis of the terms $\underset{n}{\sum} \left\vert \langle \cdots , R_{n} \rangle \right\vert^{2}$ is more technical. For an arbitrary vector field $F$, we have 
	\begin{eqnarray*}
		\langle F , R_{n} \rangle & \overset{(\ref{Def-Rn})}{=} & \frac{1 \mp c_{0} \, a^{h}}{4 \pi \,\lambda_{n_{0}}^{(1)} \left(1 - k^{2} \, \eta \, a^{2} \, \lambda_{n}^{(1)} \right)} \sum_{\ell \geq 1} (-ika)^{\ell+1} \langle F , \boldsymbol{T}_{- \, k \, a}^{-1}  \int_{B} \; \frac{\left\Vert \cdot - y \right\Vert^{\ell}}{(\ell+1)!}  e_{n}^{(1)}(y) \, dy \rangle \\
		& \simeq & \frac{1}{\left(1 - k^{2} \, \eta \, a^{2} \, \lambda_{n}^{(1)} \right)} \sum_{\ell \geq 1} (-ika)^{\ell+1} \langle \boldsymbol{T}_{k \, a}^{-1}(F) , \int_{B} \; \frac{\left\Vert \cdot - y \right\Vert^{\ell}}{(\ell+1)!}  e_{n}^{(1)}(y) \, dy \rangle \\
		& \simeq & \frac{1}{\left(1 - k^{2} \, \eta \, a^{2} \, \lambda_{n}^{(1)} \right)} \sum_{\ell \geq 1} (-ika)^{\ell+1} \langle \int_{B} \boldsymbol{T}_{k \, a}^{-1}(F)(x) \frac{\left\Vert \cdot - x \right\Vert^{\ell}}{(\ell+1)!} dx, e_{n}^{(1)} \rangle.
	\end{eqnarray*} 
	Taking the modulus, with the help of the Cauchy-Schwartz inequality, we obtain that
	\begin{equation*}
	\left\vert \langle F , R_{n} \rangle \right\vert^{2} \lesssim  \frac{a^{4}}{\left\vert 1 - k^{2} \, \eta \, a^{2} \, \lambda_{n}^{(1)} \right\vert^{2}} \sum_{\ell \geq 1} \left\vert \langle \int_{B} \boldsymbol{T}_{k \, a}^{-1}(F)(x) \frac{\left\Vert \cdot - x \right\Vert^{\ell}}{(\ell+1)!} dx, e_{n}^{(1)} \rangle \right\vert^{2}. 
	\end{equation*}
	Then, 
	\begin{eqnarray*}
\sum_{n} \left\vert \langle F , R_{n} \rangle \right\vert^{2}		& \lesssim & a^{4-2h} \, \sum_{\ell \geq 1} \, \left\Vert \int_{B} \boldsymbol{T}_{k \, a}^{-1}(F)(x) \frac{\left\Vert \cdot - x \right\Vert^{\ell}}{(\ell+1)!} dx \right\Vert^{2}_{\mathbb{L}^{2}(B)} \\
		& \lesssim & a^{4-2h} \, \left\Vert  \boldsymbol{T}_{k \, a}^{-1}(F) \right\Vert^{2}_{\mathbb{L}^{2}(B)} \sum_{\ell \geq 1} \, \int_{B} \int_{B}  \frac{\left(\left\Vert y - x \right\Vert^{2}\right)^{\ell}}{((\ell+1)!)^{2}} dx \, dy,
	\end{eqnarray*}
	where we have the convergence of the series $
	\underset{\ell \geq 1}{\sum} \, \int_{B} \int_{B}  \frac{\left(\left\Vert y - x \right\Vert^{2}\right)^{\ell}}{((\ell+1)!)^{2}} dx \, dy.$ 
	Since we are approaching an eigenvalue of the first family, the dominant part of  $\left\Vert  \boldsymbol{T}_{k \, a}^{-1}(F) \right\Vert^{2}_{\mathbb{L}^{2}(B)}$ is  $\left\Vert  \boldsymbol{T}_{k \, a}^{-1}\left(\overset{1}{\mathbb{P}}\left(F \right)\right) \right\Vert^{2}_{\mathbb{L}^{2}(B)}$ and then  
	\begin{equation*}
		\sum_{n} \left\vert \langle F , R_{n} \rangle \right\vert^{2}  \lesssim  a^{4-2h} \; \left\Vert  \boldsymbol{T}_{k \, a}^{-1}\left(\overset{1}{\mathbb{P}}\left(F \right)\right) \right\Vert^{2}_{\mathbb{L}^{2}(B)} \simeq  a^{4-2h} \; \sum_{\ell} \frac{\left\vert \langle  \overset{1}{\mathbb{P}}\left(F \right), e_{\ell}^{(1)} \rangle \right\vert^{2}}{\left\vert 1 - k^{2} \, \eta \, \lambda_{\ell}^{(1)} \, a^{2} \right\vert^{2}} \simeq  a^{4-4h} \; \left\vert \langle  \overset{1}{\mathbb{P}}\left(F \right), e_{n_{0}}^{(1)} \rangle \right\vert^{2}. 
	\end{equation*}
	Finally, 
	\begin{equation}\label{serie-Rn}
	\sum_{n} \left\vert \langle F , R_{n} \rangle \right\vert^{2} = \mathcal{O}\left( a^{4-4h} \; \left\vert \langle  \overset{1}{\mathbb{P}}\left(F \right), e_{n_{0}}^{(1)} \rangle \right\vert^{2} \right). 
	\end{equation}
	Remark that when $F$ or $\overset{1}{\mathbb{P}}\left(F \right)$ is a constant vector field, using the representation $e_{\ell}^{(1)} = Curl(\phi_{\ell}), \; \div(\phi_{\ell}) = 0, \nu \times \phi_{\ell} = 0$ and integration by parts, the series $
	\underset{\ell}{\sum} \left\vert 1 - k^{2} \, \eta \, \lambda_{\ell}^{(1)} \, a^{2} \right\vert^{-2} \left\vert \langle  \overset{1}{\mathbb{P}}\left(F \right), e_{\ell}^{(1)} \rangle \right\vert^{2}$
 is vanishing. In this case, consequently, $\underset{n}{\sum} \left\vert \langle F , R_{n} \rangle \right\vert^{2}$ will be zero. \\
	By utilizing $(\ref{I-Error-term}), (\ref{Istarformula})$ and $(\ref{serie-Rn})$, the equation $(\ref{MLB})$ takes the following form 
	\begin{eqnarray}\label{mid}
	\nonumber
	\left\Vert \overset{1}{\mathbb{P}}\left(\tilde{E}_{j_{0}}^{T}\right) \right\Vert^{2} & \lesssim & \frac{\left\vert \langle \tilde{E}^{Inc}_{j_{0}},e_{n_{0}}^{(1)} \rangle \right\vert^{2}}{\left\vert 1 - k^{2} \, \eta \, a^{2} \, \lambda_{n_{0}}^{(1)} \right\vert^{2}} + \sum_{n \neq n_{0}} \, \left\vert \langle \tilde{E}^{Inc}_{j_{0}},e_{n}^{(1)} \rangle \right\vert^{2}+ a^{4-4h}  \left\vert \langle \overset{1}{\mathbb{P}}\left( \tilde{E}^{Inc}_{j_{0}}\right), e^{(1)}_{n_{0}} \rangle \right\vert^{2} \notag\\ 
	&+&  a^{6-2h} \, d^{-6} \; \max_{j}  \left\Vert \overset{1}{\mathbb{P}}\left(\tilde{E}_{j}^{T}\right) \right\Vert^{2}  +  \, a^{4-2h} d^{-8} \max_{j}  \left\Vert \overset{3}{\mathbb{P}}\left(\tilde{E}_{j}^{T}\right) \right\Vert^{2} \notag \\
	&+& a^{-4h} \, \aleph \, \sum_{j \neq j_{0}}^{\aleph} \left\vert \langle \overset{1}{\mathbb{P}}\left( \widetilde{Err}(x,j_{0},j,\overset{1}{\mathbb{P}}\left(E_{j}^{T}\right)) + \widetilde{Err}(x,j_{0},j,\overset{3}{\mathbb{P}}\left(E_{j}^{T}\right))\right), e^{(1)}_{n_{0}} \rangle \right\vert^{2}. 
	\end{eqnarray}
	
		For the last term, we know that 
		\begin{align}\notag
		&\left\vert \langle \overset{1}{\mathbb{P}}\left( \widetilde{Err}(x,j_{0},j,\overset{1}{\mathbb{P}}\left(E_{j}^{T}\right)) + \widetilde{Err}(x,j_{0},j,\overset{3}{\mathbb{P}}\left(E_{j}^{T}\right))\right), e^{(1)}_{n_{0}} \rangle \right\vert^{2}\notag\\
		=&\ \left\vert \langle  \widetilde{Err}(x,j_{0},j,\overset{1}{\mathbb{P}}\left(E_{j}^{T}\right)) + \widetilde{Err}(x,j_{0},j,\overset{3}{\mathbb{P}}\left(E_{j}^{T}\right)), e^{(1)}_{n_{0}} \rangle \right\vert^{2}\notag\\
		\lesssim&\ \left\vert \langle  \widetilde{Err}(x,j_{0},j,\overset{1}{\mathbb{P}}\left(E_{j}^{T}\right)), e^{(1)}_{n_{0}} \rangle \right\vert^{2}+\left\vert \langle   \widetilde{Err}(x,j_{0},j,\overset{3}{\mathbb{P}}\left(E_{j}^{T}\right)), e^{(1)}_{n_{0}} \rangle \right\vert^{2},\notag
		\end{align}
		where we can see from \eqref{LMERR1} that
		\begin{equation}\notag
		\left\lvert\widetilde{Err}(x,j_{0},j,\overset{1}{\mathbb{P}}\left(E_{j}^{T}\right)) \right\rvert\lesssim a^5\int_{B}\int_{0}^1 \underset{y}{Hess}\left(\Phi_{ka} \right)(z_{j_0}+tax, z_j)\cdot x\,dt\cdot y\cdot \overset{1}{\mathbb{P}}\left(\tilde{E}_{j}^{T}\right)(y)\,dy.
		\end{equation}
		Then there holds
		\begin{align}\label{comp1}
		&\left\vert \langle  \widetilde{Err}(x,j_{0},j,\overset{1}{\mathbb{P}}\left(E_{j}^{T}\right)), e^{(1)}_{n_{0}} \rangle \right\vert^{2}\notag\\
		\lesssim & \ a^{10}\left\lvert\int_{B}e^{(1)}_{n_{0}}(x)\int_{B}\int_{0}^1\underset{y}{Hess}\left(\Phi_{ka} \right)(z_{j_0}+tax, z_j)\cdot x\,dt\cdot y\cdot \overset{1}{\mathbb{P}}\left(\tilde{E}_{j}^{T}\right)(y)\,dy\,dx\right\rvert ^2\notag\\
		\lesssim & \
		a^{10}\int_{B}\left\lvert\int_{B}\int_{0}^1\underset{y}{Hess}\left(\Phi_{ka} \right)(z_{j_0}+tax, z_j)\cdot x\,dt\cdot y\cdot \overset{1}{\mathbb{P}}\left(\tilde{E}_{j}^{T}\right)(y)\,dy\right\rvert^2\,dx\notag\\
		\lesssim& \
		a^{10}\int_{B} \left\lVert\int_{0}^1\underset{y}{Hess}\left(\Phi_{ka} \right)(z_{j_0}+tax, z_j)\,dt\right\rVert^2\cdot \left\lVert\overset{1}{\mathbb{P}}\left(\tilde{E}_{j}^{T}\right)\right\rVert^2\,dx\notag\\
		\lesssim & \
		a^{10}\frac{1}{\left|z_{j_0}-z_j\right|^6}\left\lVert\overset{1}{\mathbb{P}}\left(\tilde{E}_{j}^{T}\right)\right\rVert^2.
		\end{align}
		Similarly, from \eqref{LMERR3}, we deduce that
		\begin{align}\label{comp2}
		&\left\vert \langle  \widetilde{Err}(x,j_{0},j,\overset{3}{\mathbb{P}}\left(E_{j}^{T}\right)), e^{(1)}_{n_{0}} \rangle \right\vert^{2}\notag\\
		\lesssim& \
		a^8 \left\lvert\int_B\int_{0}^1\nabla\left( \Upsilon_{ka} \right)(z_{j_0}+tax, z_j)\cdot\mathcal{P}(x,0)\,dt\int_{B}\overset{3}{\mathbb{P}}\left(\tilde{E}_{j}^{T}\right)(y)\,d y\cdot e_{n_0}^{(1)}(x)\,d x\right\rvert^2\notag\\
		\lesssim& \
		a^8\frac{1}{|z_{j_0}-z_j|^8}\left\lVert\overset{3}{\mathbb{P}}\left(\tilde{E}_{j}^{T}\right)\right\rVert^2.
		\end{align}
		Combining with \eqref{comp1} and \eqref{comp2}, it is easy to see that in \eqref{mid}
		\begin{align}\label{comp3}
		&a^{-4h} \aleph \sum_{j \neq j_{0}}^\aleph \left\vert \langle \overset{1}{\mathbb{P}}\left( \widetilde{Err}(x,j_{0},j,\overset{1}{\mathbb{P}}\left(E_{j}^{T}\right)) + \widetilde{Err}(x,j_{0},j,\overset{3}{\mathbb{P}}\left(E_{j}^{T}\right))\right), e^{(1)}_{n_{0}} \rangle \right\vert^{2}\notag\\
		\lesssim & \
		a^{-4h} \aleph \sum_{j \neq j_{0}}^\aleph \left(a^{10}\frac{1}{|z_{j_0}-z_j|^6}\left\lVert\overset{1}{\mathbb{P}}\left(\tilde{E}_{j}^{T}\right)\right\rVert^2+a^8\frac{1}{|z_{j_0}-z_j|^8}\left\lVert\overset{3}{\mathbb{P}}\left(\tilde{E}_{j}^{T}\right)\right\rVert^2\right)\notag\\
		\lesssim & \
		a^{10-4h} \aleph d^{-6}\max_{j}\left\lVert\overset{1}{\mathbb{P}}\left(\tilde{E}_{j}^{T}\right)\right\rVert^2+a^{8-4h}\aleph d^{-8}\max_{j}\left\lVert\overset{3}{\mathbb{P}}\left(\tilde{E}_{j}^{T}\right)\right\rVert^2.
		\end{align}
		Now, we compare the orders of $a$ in \eqref{comp3} with those in the following terms
		\begin{equation}\label{term2}
		a^{6-2h} \, d^{-6} \; \max_{j}  \left\Vert \overset{1}{\mathbb{P}}\left(\tilde{E}_{j}^{T}\right) \right\Vert^{2}  +  \, a^{4-2h} d^{-8} \max_{j}  \left\Vert \overset{3}{\mathbb{P}}\left(\tilde{E}_{j}^{T}\right) \right\Vert^{2},
		\end{equation}
		and by direct computations, we derive that the condition 
		\begin{equation}\label{cond1}
		4-s-2h\geq0,
		\end{equation}
		is sufficient to ensure that \eqref{term2} is dominated. 

	For the other terms, we have 
	\begin{equation*}
	\langle \tilde{E}^{Inc}_{j_{0}},e_{n}^{(1)} \rangle = \langle \tilde{E}^{Inc}_{j_{0}}, Curl\left( \phi_{n} \right) \rangle = \langle Curl\left( \tilde{E}^{Inc}_{j_{0}} \right),  \phi_{n}  \rangle = i k \, a \, \langle  \tilde{H}^{Inc}_{j_{0}},  \phi_{n}  \rangle,
	\end{equation*} 
	and this implies  
		\begin{eqnarray*}
			\sum_{n \neq n_{0}} \, \left\vert \langle \tilde{E}^{Inc}_{j_{0}},e_{n}^{(1)} \rangle \right\vert^{2}+ a^{4-4h}  \left\vert \langle \overset{1}{\mathbb{P}}\left( \tilde{E}^{Inc}_{j_{0}}\right), e^{1}_{n_{0}} \rangle \right\vert^{2} & \sim & a^{2} + a^{6-4h} = \mathcal{O}(a^{\min( 2,6-4h)})=\mathcal{O}(a^{2}), 
		\end{eqnarray*} 
		for $h\leq 1$ by \eqref{cond1}. We end up with the following equation
	\begin{eqnarray*}
		\left\Vert \overset{1}{\mathbb{P}}\left(\tilde{E}_{j_{0}}^{T}\right) \right\Vert^{2}  & \lesssim &    a^{2-2h} \, \left\vert \langle \tilde{H}^{Inc}_{j_{0}},\phi_{n_{0}} \rangle \right\vert^{2} +  a^{6-2h} \, d^{-6} \, \max_{j}  \left\Vert \overset{1}{\mathbb{P}}\left(\tilde{E}_{j}^{T}\right) \right\Vert^{2} \\ &+& a^{4-2h} d^{-8} \max_{j}  \left\Vert \overset{3}{\mathbb{P}}\left(\tilde{E}_{j}^{T}\right) \right\Vert^{2}  + \mathcal{O}(a^{2}).
	\end{eqnarray*}
	Taking the $\max\left( \cdot \right)$ with respect to $j_{0}$ on the both sides, we obtain that
	\begin{eqnarray*}
		\max_{j_{0}} \left\Vert \overset{1}{\mathbb{P}}\left(\tilde{E}_{j_{0}}^{T}\right) \right\Vert^{2}  & \lesssim &  a^{2-2h} \, \max_{j_{0}} \left\vert \langle \tilde{H}^{Inc}_{j_{0}},\phi_{n_{0}} \rangle \right\vert^{2} +   a^{6-2h} \, d^{-6} \; \max_{j}  \left\Vert \overset{1}{\mathbb{P}}\left(\tilde{E}_{j}^{T}\right) \right\Vert^{2} \\ &+& a^{4-2h} d^{-8} \max_{j}  \left\Vert \overset{3}{\mathbb{P}}\left(\tilde{E}_{j}^{T}\right) \right\Vert^{2}  + \mathcal{O}( a^{2}).
	\end{eqnarray*}
	Assuming that $\; a^{6-2h} \, d^{-6} \, \sim  \, a^{6-2h-6t} \;\;$ is small enough or, the following sufficient condition 
	\begin{equation}\label{Cdt=0}
	3-h-3t > 0\;  
	\end{equation} 
	is satisfied\footnote{The limit condition $3-h-3t=0$ can also be considered, see Remark \ref{rem-cond1}.}, then,  
	\begin{equation}\label{MP1<MP3}
	\max_{j_{0}} \left\Vert \overset{1}{\mathbb{P}}\left(\tilde{E}_{j_{0}}^{T}\right) \right\Vert^{2}  \lesssim   a^{2-2h} \, \max_{j_{0}} \left\vert \langle \tilde{H}^{Inc}_{j_{0}},\phi_{n_{0}} \rangle \right\vert^{2} +   a^{4-2h} d^{-8} \max_{j}  \left\Vert \overset{3}{\mathbb{P}}\left(\tilde{E}_{j}^{T}\right) \right\Vert^{2}.
	\end{equation}
	
	
	The goal of the next part is to derive an analogous formula to $(\ref{MP1<MP3})$.  
	\item[$ii)$] \textit{Estimation of} $\underset{j}{\max}  \left\Vert  \overset{3}{\mathbb{P}}\left(\tilde{E}_{j}^{T} \right) \right\Vert^{2}$ \\
	For $x \in D_{j_{0}}$, we have 
	\begin{equation*}
	\left(I + \eta \, \underset{x}{\nabla} M^{k} \right) \, \left( E_{j_{0}}^{T} \right)(x) + \eta \, \sum_{j \neq j_{0}}^{\aleph} \underset{x}{\nabla}M^{k}(E_{j}^{T})(x) - k^{2}  \, \eta \, \sum_{j = 1}^{\aleph} \, N^{k}(E_{j}^{T})(x) = E^{Inc}_{j_{0}}(x).
	\end{equation*}
	Using the integration by parts, we deduce that  
	$\nabla M^{k}\left( \overset{1}{\mathbb{P}}\left( E_{j}^{T} \right) \right) = 0,\, j=1,\cdots,\aleph$. Then 
		\begin{eqnarray}\label{Att-Vienne} 
		\nonumber 
		\left(I + \eta \, \underset{x}{\nabla} M^{k} \right) \, \left( \overset{3}{\mathbb{P}}\left(E_{j_{0}}^{T} \right) \right)(x) &=& E^{Inc}_{j_{0}}(x) + k^{2}  \, \eta \,  N^{k}\left(\overset{1}{\mathbb{P}}\left(E_{j_{0}}^{T}\right) \right)(x) - \overset{1}{\mathbb{P}}\left( E_{j_{0}}^{T}\right)(x) + k^2 \, \eta \, N^k \left( \overset{3}{\mathbb{P}}\left(E_{j_{0}}^{T} \right) \right)(x) \\ 
		&+& k^{2}  \, \eta \, \sum_{j \neq j_{0}}^{\aleph} \int_{D_{j}} \Phi_{k}(x,y) \, I \cdot \overset{1}{\mathbb{P}}\left(E_{j}^{T}\right)(y) \, dy  + \eta \, \sum_{j \neq j_{0}}^{\aleph} \, \int_{D_{j}} \Upsilon_{k}(x,y) \cdot \overset{3}{\mathbb{P}}\left(E_{j}^{T}\right)(y) \, dy,
		\end{eqnarray} 
	where $\Upsilon_{k}(\cdot,\cdot)$ is given by $(\ref{dyadicG})$.
	By Taylor expansion near the center $z_{j}$, we obtain that
	\begin{eqnarray*}
		\int_{D_{j}} \Upsilon_{k}(x,y) \cdot \overset{3}{\mathbb{P}}\left(E_{j}^{T}\right)(y) \, dy &=& \Upsilon_{k}(z_{j_{0}},z_{j}) \cdot \int_{D_{j}} \overset{3}{\mathbb{P}}\left(E_{j}^{T}\right)(y) \, dy \\ &+& 
		\int_{0}^{1} \underset{x}{\nabla}\left(\Upsilon_{k}\right)(z_{j_{0}} + t (x - z_{j_{0}}),z_{j}) \cdot \mathcal{P}(x,z_{j_{0}}) dt \cdot \int_{D_{j}} \overset{3}{\mathbb{P}}\left(E_{j}^{T}\right)(y) \, dy \\ &+& \int_{D_{j}} \int_{0}^{1} \underset{y}{\nabla}\left(\Upsilon_{k}\right)(x,z_{j}+t(y-z_{j})) \cdot \mathcal{P}(y,z_{j}) \, dt \cdot \overset{3}{\mathbb{P}}\left(E_{j}^{T}\right)(y) \, dy,
	\end{eqnarray*} 
	and 
	\begin{eqnarray}\label{*add5}
		\int_{D_{j}} \Phi_{k}(x,y) \, I \cdot \overset{1}{\mathbb{P}}\left(E_{j}^{T}\right)(y) dy & = & \Phi_{k}(x,z_{j}) \, I \cdot \int_{D_{j}}  \overset{1}{\mathbb{P}}\left(E_{j}^{T}\right)(y) dy \\ 
		&+&  \int_{D_{j}} \int_{0}^{1} \underset{y}{\nabla}\left( \Phi_{k} \, I \right)(x,z_{j}+t(y-z_{j})) \cdot \mathcal{P}(y,z_{j}) dt \cdot \overset{1}{\mathbb{P}}\left(E_{j}^{T}\right)(y) dy.\notag
	\end{eqnarray}
	The first integral on the right hand side of \eqref{*add5} is vanishing and the second term by Taylor expansion give us that
	\begin{eqnarray*}
&&	\int_{D_{j}} \Phi_{k}(x,y) \, I \cdot \overset{1}{\mathbb{P}}\left(E_{j}^{T}\right)(y) dy \\ &=& - \int_{D_{j}} \int_{0}^{1} \int_{0}^{1} \underset{y}{Hess}\left(\Phi_{k}\right)(z_{j_{0}}+h(x-z_{j_{0}}),z_{j}+t(y-z_{j})) \cdot (x-z_{j_{0}}) \; dh \cdot (y - z_{j}) dt \; \overset{1}{\mathbb{P}}\left(E_{j}^{T}\right)(y) dy.
	\end{eqnarray*}
	Plugging this expansion into equation $(\ref{Att-Vienne})$, we can deduce that 
	\begin{eqnarray*}
		&& \left(I + \eta \, \underset{x}{\nabla} M^{k} \right) \, \left( \overset{3}{\mathbb{P}}\left(E_{j_{0}}^{T} \right) \right)(x) = E^{Inc}_{j_{0}}(x) + k^{2}  \, \eta \,  N^{k}\left(\overset{1}{\mathbb{P}}\left(E_{j_{0}}^{T}\right) \right)(x) + k^{2}  \, \eta \,  N^{k}\left(\overset{3}{\mathbb{P}}\left(E_{j_{0}}^{T}\right) \right)(x) -\overset{1}{\mathbb{P}}\left( E_{j_{0}}^{T}\right)(x)  \\
		&-& k^{2}  \, \eta \, \sum_{j \neq j_{0}}^{\aleph} \int_{D_{j}} \int_{0}^{1} \int_{0}^{1} \underset{y}{Hess}\left(\Phi_{k}\right)(z_{j_{0}}+h(x-z_{j_{0}}),z_{j}+t(y-z_{j})) \cdot (x-z_{j_{0}}) \, dh \cdot (y - z_{j}) dt \; \overset{1}{\mathbb{P}}\left(E_{j}^{T}\right)(y) \, dy \\  
		&+& \eta \, \sum_{j \neq j_{0}}^{\aleph} \Upsilon_{k}(z_{j_{0}},z_{j})  \cdot \int_{D_{j}}\overset{3}{\mathbb{P}}\left(E_{j}^{T}\right)(y) \, dy + \eta \, \sum_{j \neq j_{0}}^{\aleph} \, \int_{0}^{1} \underset{x}{\nabla}\left(\Upsilon_{k}\right)(z_{j_{0}} + t (x - z_{j_{0}}),z_{j}) \cdot \mathcal{P}(x,z_{j_{0}}) dt \cdot \int_{D_{j}} \overset{3}{\mathbb{P}}\left(E_{j}^{T}\right)(y) \, dy \\
		&+& \eta \, \sum_{j \neq j_{0}}^\aleph \int_{D_{j}} \int_{0}^{1} \underset{y}{\nabla}\left(\Upsilon_{k}\right)(x,z_{j}+t(y-z_{j})) \cdot \mathcal{P}(y,z_{j}) \, dt \cdot \overset{3}{\mathbb{P}}\left(E_{j}^{T}\right)(y) \, dy.
	\end{eqnarray*} 
	Scaling to the domain $B$, and by 
	taking the inner product with respect to $e^{(3)}_{n}(\cdot)$, we obtain that
	\begin{equation}\label{coeff-3rd-proj-sev-par}
	\langle  \overset{3}{\mathbb{P}}\left(\tilde{E}_{j_{0}}^{T} \right) , e^{(3)}_{n} \rangle = \frac{1}{(1 + \eta \, \lambda_{n}^{(3)})} \left[ \langle \tilde{E}^{Inc}_{j_{0}}, e^{(3)}_{n} \rangle + \eta \, a^{3} \, \sum_{j \neq j_{0}}^{\aleph} \langle \Upsilon_{k}(z_{j_{0}},z_{j})  \cdot \int_{B}\overset{3}{\mathbb{P}}\left(\tilde{E}_{j}^{T}\right)(y) \, dy, e^{(3)}_{n} \rangle + Error(n,j_{0}) \right],
	\end{equation}
	where 
	\begin{eqnarray*}
		Error(n,j_{0}) &:=&  k^{2}  \, \eta \, a^{2} \, \langle N^{ka}\left(\overset{1}{\mathbb{P}}\left(\tilde{E}_{j_{0}}^{T}\right) \right), e^{(3)}_{n} \rangle  - \eta \, \langle \left(\underset{x}{\nabla} M^{ka} -  \underset{x}{\nabla} M \right) \, \left( \overset{3}{\mathbb{P}}\left(\tilde{E}_{j_{0}}^{T} \right) \right), e^{(3)}_{n} \rangle \\ &+& k^{2}  \, \eta \, a^{2} \, \langle N^{ka}\left(\overset{3}{\mathbb{P}}\left(\tilde{E}_{j_{0}}^{T}\right) \right), e^{(3)}_{n} \rangle  \\ 
		&+&  \eta \,a^{4} \, \sum_{j \neq j_{0}}^{\aleph} \, \langle \int_{0}^{1} \underset{x}{\nabla}\left(\Upsilon_{ka}\right)(z_{j_{0}} + t \, a \,  \cdot ,z_{j}) \cdot \mathcal{P}(x,0) dt \cdot \int_{B} \overset{3}{\mathbb{P}}\left(\tilde{E}_{j}^{T}\right)(y) \, dy, e^{(3)}_{n} \rangle \\
		&-& k^{2}  \, \eta \, a^{5} \, \sum_{j \neq j_{0}}^{\aleph} \langle \int_{B} \int_{0}^{1} \int_{0}^{1} \underset{y}{Hess}\left(\Phi_{ka}\right)(z_{j_{0}}+h \, a \, \cdot ,z_{j}+t \, a \, y) \cdot x \, dh \cdot y \, dt \; \overset{1}{\mathbb{P}}\left(\tilde{E}_{j}^{T}\right)(y) \, dy, e^{(3)}_{n} \rangle \\  
		&+& \eta \, a^{4} \, \sum_{j \neq j_{0}}^\aleph \langle \int_{B} \int_{0}^{1} \underset{y}{\nabla}\left(\Upsilon_{ka}\right)(z_{j_{0}}+a \cdot ,z_{j}+t \, a \, y) \cdot \mathcal{P}(y,0) \, dt \cdot \overset{3}{\mathbb{P}}\left(\tilde{E}_{j}^{T}\right)(y) \, dy, e^{(3)}_{n} \rangle.
	\end{eqnarray*}
	We have 
	\begin{eqnarray*}
		k^{2}  \, \eta \, a^{2} \, \langle N^{ka}\left(\overset{1}{\mathbb{P}}\left(\tilde{E}_{j_{0}}^{T}\right) \right), e^{(3)}_{n} \rangle  &=&   k^{2}  \, \eta \, a^{2} \, \langle N\left(\overset{1}{\mathbb{P}}\left(\tilde{E}_{j_{0}}^{T}\right) \right), e^{(3)}_{n} \rangle + k^{2}  \, \eta \, a^{2} \, \langle \left( N^{ka}-N \right) \left(\overset{1}{\mathbb{P}}\left(\tilde{E}_{j_{0}}^{T}\right) \right), e^{(3)}_{n} \rangle \\ 
		&=& k^{2}  \, \eta \, a^{2} \,\sum_{\ell \geq 1} \langle \overset{1}{\mathbb{P}}\left(\tilde{E}_{j_{0}}^{T}\right) , e^{(1)}_{\ell} \rangle \, \lambda^{(1)}_{\ell} \,  \langle e_{\ell}^{(1)}, e^{(3)}_{n} \rangle  + k^{2}  \, \eta \, a^{2} \, \langle \left( N^{ka}-N \right) \left(\overset{1}{\mathbb{P}}\left(\tilde{E}_{j_{0}}^{T}\right) \right), e^{(3)}_{n} \rangle  \\ 
		& \overset{(\ref{choice-k-1st-regime})}{=}& 0 + \frac{1 \mp c_{0} \, a^{h}}{\lambda^{(1)}_{n_{0}}} \, \langle \left( N^{ka}-N \right) \left(\overset{1}{\mathbb{P}}\left(\tilde{E}_{j_{0}}^{T}\right) \right), e^{(3)}_{n} \rangle \\
		& \overset{(\ref{expansion-Nk})}{=}& \frac{1 \mp c_{0} \, a^{h}}{\lambda^{(1)}_{n_{0}}} \, \frac{ika}{4\pi} \langle \int_{B} \overset{1}{\mathbb{P}}\left(\tilde{E}_{j_{0}}^{T}\right)(y) \, dy, e^{(3)}_{n} \rangle \\ 
		& + & \frac{1 \mp c_{0} \, a^{h}}{4 \, \pi \, \lambda^{(1)}_{n_{0}}}  \, \sum_{\ell \geq 1} \frac{\left( ika \right)^{\ell + 1 }}{(\ell + 1)!} \langle \int_{B} \left\vert \cdot - y \right\vert^{\ell} \overset{1}{\mathbb{P}}\left(\tilde{E}_{j_{0}}^{T}\right)(y) \, dy , e^{(3)}_{n} \rangle \\
		& = & 0 + \frac{1 \mp c_{0} \, a^{h}}{4 \, \pi \, \lambda^{(1)}_{n_{0}}}  \, \sum_{\ell \geq 1} \frac{\left( ika \right)^{\ell + 1 }}{(\ell + 1)!} \langle \int_{B} \left\vert \cdot - y \right\vert^{\ell} \overset{1}{\mathbb{P}}\left(\tilde{E}_{j_{0}}^{T}\right)(y) \, dy , e^{(3)}_{n} \rangle. 
	\end{eqnarray*}
	Similarly, using $(\ref{choice-k-1st-regime})$ and $(\ref{expansion-gradMk})$, we rewrite
	\begin{eqnarray*}
		\eta \, \langle \left(\underset{x}{\nabla} M^{ka} -  \underset{x}{\nabla} M \right) \, \left( \overset{3}{\mathbb{P}}\left(\tilde{E}_{j_{0}}^{T} \right) \right), e^{(3)}_{n} \rangle &=& \frac{1 \mp c_{0} \, a^{h}}{2 \lambda^{(1)}_{n_{0}} }  \, \langle N \left( \overset{3}{\mathbb{P}}\left(\tilde{E}_{j_{0}}^{T} \right) \right), e^{(3)}_{n} \rangle + \eta \,\frac{i \, (ka)^{3}}{12 \, \pi}  \langle \int_{B} \overset{3}{\mathbb{P}}\left(\tilde{E}_{j_{0}}^{T}(y)\right) \, dy, e^{(3)}_{n} \rangle \\ 
		&-& \frac{1 \mp c_{0} \, a^{h}}{2 \lambda^{(1)}_{n_{0}} } \, \langle \int_{B} \Phi_{0}(\cdot,y) \frac{A(\cdot,y) \cdot \overset{3}{\mathbb{P}}\left(\tilde{E}_{j_{0}}^{T}(y)\right)}{\left\Vert \cdot - y \right\Vert^{2}} dy, e^{(3)}_{n} \rangle \\ 
		&-& \frac{\eta}{4\pi} \,\sum_{\ell \geq 3} \left(i \, k \, a \right)^{\ell+1} \langle  \int_{B} \frac{\underset{y}{Hess}\left( \left\vert \cdot - y \right\vert^{\ell} \right)}{(\ell +1 )!}  \cdot \overset{3}{\mathbb{P}}\left(\tilde{E}_{j_{0}}^{T}\right)(y) \, dy , e^{(3)}_{n} \rangle .
	\end{eqnarray*}
	Then, $ Error(n,j_{0})$ takes the form 
	\begin{eqnarray}\label{Error(n,j,j0)}
	\nonumber
	Error(n,j_{0}) &:=&  \frac{1 \mp c_{0} \, a^{h}}{4 \, \pi \, \lambda^{(1)}_{n_{0}}}  \, \sum_{\ell \geq 1} \frac{\left( ika \right)^{\ell + 1 }}{(\ell + 1)!} \langle \int_{B} \left\vert \cdot - y \right\vert^{\ell} \overset{1}{\mathbb{P}}\left(\tilde{E}_{j_{0}}^{T}\right)(y) \, dy , e^{(3)}_{n} \rangle  - \frac{1 \mp c_{0} \, a^{h}}{2 \lambda^{(1)}_{n_{0}} }  \, \langle N \left( \overset{3}{\mathbb{P}}\left(\tilde{E}_{j_{0}}^{T} \right) \right), e^{(3)}_{n} \rangle \\ \nonumber &-& \eta \,\frac{i \, (ka)^{3}}{12 \, \pi}  \langle \int_{B} \overset{3}{\mathbb{P}}\left(\tilde{E}_{j_{0}}^{T}(y)\right)(y) \, dy, e^{(3)}_{n} \rangle + \frac{1 \mp c_{0} \, a^{h}}{2 \lambda^{(1)}_{n_{0}} } \, \langle \int_{B} \Phi_{0}(\cdot,y) \frac{A(\cdot,y) \cdot \overset{3}{\mathbb{P}}\left(\tilde{E}_{j_{0}}^{T}(y)\right)}{\left\Vert \cdot - y \right\Vert^{2}} dy, e^{(3)}_{n} \rangle \\ \nonumber 
	&+& \frac{\eta}{4\pi} \,\sum_{\ell \geq 3} \left(i \, k \, a \right)^{\ell+1} \langle  \int_{B} \frac{\underset{y}{Hess}\left( \left\vert \cdot - y \right\vert^{\ell} \right)}{(\ell +1 )!}  \cdot \overset{3}{\mathbb{P}}\left(\tilde{E}_{j_{0}}^{T}\right)(y) \, dy , e^{(3)}_{n} \rangle +  k^{2}  \, \eta \, a^{2} \, \langle N^{ka}\left(\overset{3}{\mathbb{P}}\left(\tilde{E}^T_{j_0}\right)\right), e_n^{(3)} \rangle  \\ 
	\nonumber &+&  \eta \,a^{4} \, \sum_{j \neq j_{0}}^{\aleph} \, \langle \int_{0}^{1} \underset{x}{\nabla}\left(\Upsilon_{ka}\right)(z_{j_{0}} + t \, a \,  \cdot ,z_{j}) \cdot \mathcal{P}(x,0) dt \cdot \int_{B} \overset{3}{\mathbb{P}}\left(\tilde{E}_{j}^{T}\right)(y) \, dy, e^{(3)}_{n} \rangle \\ \nonumber
	&-& k^{2}  \, \eta \, a^{5} \, \sum_{j \neq j_{0}}^{\aleph} \langle \int_{B} \int_{0}^{1} \int_{0}^{1} \underset{y}{Hess}\left(\Phi_{ka}\right)(z_{j_{0}}+h \, a \, \cdot ,z_{j}+t \, a \, y) \cdot x \, dh \cdot y \, dt \; \overset{1}{\mathbb{P}}\left(\tilde{E}_{j}^{T}\right)(y) \, dy, e^{(3)}_{n} \rangle \\  
	&+& \eta \, a^{4} \, \sum_{j \neq j_{0}}^\aleph \langle \int_{B} \int_{0}^{1} \underset{y}{\nabla}\left(\Upsilon_{ka}\right)(z_{j_{0}}+a \cdot ,z_{j}+t \, a \, y) \cdot \mathcal{P}(y,0) \, dt \cdot \overset{3}{\mathbb{P}}\left(\tilde{E}_{j}^{T}\right)(y) \, dy, e^{(3)}_{n} \rangle.
	\end{eqnarray}
	Taking the squared modulus on the both sides of $(\ref{coeff-3rd-proj-sev-par})$, it yields
	\begin{eqnarray*}
		\left\vert \langle  \overset{3}{\mathbb{P}}\left(\tilde{E}_{j_{0}}^{T} \right) , e^{(3)}_{n} \rangle \right\vert^{2} & \lesssim & \left\vert \eta \right\vert^{-2} \Bigg[ \left\vert \langle \tilde{E}^{Inc}_{j_{0}}, e^{(3)}_{n} \rangle \right\vert^{2} +  a^{2} \, \sum_{j \neq j_{0}}^{\aleph} \left\vert \langle \Upsilon_{k}(z_{j_{0}},z_{j})  \cdot \int_{B}\overset{3}{\mathbb{P}}\left(\tilde{E}_{j}^{T}\right)(y) \, dy, e^{(3)}_{n} \rangle \right\vert^{2}  \\
		& + & a^{2} \, \sum_{j=1 \atop j \neq j_{0}}^{\aleph} \left\vert \langle \Upsilon_{k}(z_{j_{0}},z_{j})  \cdot \int_{B}\overset{3}{\mathbb{P}}\left(\tilde{E}_{j}^{T}\right)(y) \, dy, e^{(3)}_{n} \rangle \right\vert \, \sum_{i > j \atop i \neq j_{0}}^{\aleph} \left\vert \langle \Upsilon_{k}(z_{j_{0}},z_{i})  \cdot \int_{B}\overset{3}{\mathbb{P}}\left(\tilde{E}_{i}^{T}\right)(y) \, dy, e^{(3)}_{n} \rangle \right\vert \\ &+& \left\vert Error(n,j_{0}) \right\vert^{2} \Bigg]. 
	\end{eqnarray*} 
	Taking the series with respect to $n$, we can further deduce that
	\begin{eqnarray*} 
		\left\Vert  \overset{3}{\mathbb{P}}\left(\tilde{E}_{j_{0}}^{T} \right) \right\Vert^{2} & \lesssim & \left\vert \eta \right\vert^{-2} \Bigg[ \left\Vert \tilde{E}^{Inc}_{j_{0}} \right\Vert^{2} + a^{2}  \, \sum_{j \neq j_{0}}^{\aleph} \left\vert  \Upsilon_{k}(z_{j_{0}},z_{j})  \cdot \int_{B}\overset{3}{\mathbb{P}}\left(\tilde{E}_{j}^{T}\right)(y) \, dy \right\vert^{2} + \sum_{n} \, \left\vert Error(n,j_{0}) \right\vert^{2} \\
		&+& a^{2} \, \sum_{j=1 \atop j \neq j_{0}}^{\aleph} \left\vert  \Upsilon_{k}(z_{j_{0}},z_{j})  \cdot \int_{B}\overset{3}{\mathbb{P}}\left(\tilde{E}_{j}^{T}\right)(y) \, dy \right\vert \, \sum_{i > j \atop i \neq j_{0}}^{\aleph} \left\vert  \Upsilon_{k}(z_{j_{0}},z_{i})  \cdot \int_{B}\overset{3}{\mathbb{P}}\left(\tilde{E}_{i}^{T}\right)(y) \, dy \right\vert \Bigg]. 
	\end{eqnarray*} 
	Using the fact that $\Upsilon_{k}(z_{j_{0}},z_{j}) \simeq d^{-3}_{j,j_{0}}$, it is easy to get the following upper bound
	\begin{equation*}
	\sum_{j \neq j_{0}}^{\aleph} \left\vert  \Upsilon_{k}(z_{j_{0}},z_{j})  \cdot \int_{B}\overset{3}{\mathbb{P}}\left(\tilde{E}_{j}^{T}\right)(y) \, dy \right\vert^{2} \lesssim \sum_{j \neq j_{0}}^{\aleph} \frac{1}{d^{6}_{j,j_{0}}}\left\Vert \overset{3}{\mathbb{P}}\left(\tilde{E}_{j}^{T}\right) \right\Vert^{2} \lesssim \; d^{-6} \; \max_{j} \left\Vert \overset{3}{\mathbb{P}}\left(\tilde{E}_{j}^{T}\right) \right\Vert^{2},
	\end{equation*}
	and thus
	\begin{eqnarray*}
		I^{\star \star} &:=& \sum_{j=1 \atop j \neq j_{0}}^{\aleph} \left\vert  \Upsilon_{k}(z_{j_{0}},z_{j})  \cdot \int_{B}\overset{3}{\mathbb{P}}\left(\tilde{E}_{j}^{T}\right)(y) dy \right\vert  \sum_{i > j \atop i \neq j_{0}}^{\aleph} \left\vert  \Upsilon_{k}(z_{j_{0}},z_{i})  \cdot \int_{B}\overset{3}{\mathbb{P}}\left(\tilde{E}_{i}^{T}\right)(y)  dy \right\vert \\ & \leq & \sum_{j=1 \atop j \neq j_{0}}^{\aleph} d^{-3}_{j,j_{0}} \left\Vert \overset{3}{\mathbb{P}}\left(\tilde{E}_{j}^{T}\right) \right\Vert \, \sum_{i > j \atop i \neq j_{0}}^{\aleph} d^{-3}_{i,j_{0}} \left\Vert  \overset{3}{\mathbb{P}}\left(\tilde{E}_{i}^{T}\right) \right\Vert \simeq d^{-6} \; \left\vert \log(d) \right\vert^{2} \; \underset{j}{\max} \left\Vert  \overset{3}{\mathbb{P}}\left(\tilde{E}_{j}^{T}\right) \right\Vert^{2}.
	\end{eqnarray*}
	Therefore, 
	\begin{equation}\label{Norm-Proj-3rd}
	\left\Vert  \overset{3}{\mathbb{P}}\left(\tilde{E}_{j_{0}}^{T} \right) \right\Vert^{2}  \lesssim  \left\vert \eta \right\vert^{-2}  \left\Vert \tilde{E}^{Inc}_{j_{0}} \right\Vert^{2} + \, a^{6} \, \left\vert \log(d) \right\vert^{2} \; d^{-6} \; \max_{j} \left\Vert \overset{3}{\mathbb{P}}\left(\tilde{E}_{j}^{T}\right) \right\Vert^{2}  +  \left\vert \eta \right\vert^{-2} \, \sum_{n} \, \left\vert Error(n,j_{0}) \right\vert^{2} 
	\end{equation} 
	Taking the $\underset{j_{0}}{\max}\left( \cdot \right)$ on the both sides of \eqref{Norm-Proj-3rd}, we have
	\begin{equation*}
	\max_{j_{0}}  \left\Vert  \overset{3}{\mathbb{P}}\left(\tilde{E}_{j_{0}}^{T} \right) \right\Vert^{2}  \lesssim  \left\vert \eta \right\vert^{-2}  \max_{j_{0}} \left\Vert \tilde{E}^{Inc}_{j_{0}} \right\Vert^{2} + \, a^{6} \, \left\vert \log(d) \right\vert^{2}  \; d^{-6} \; \max_{j} \left\Vert \overset{3}{\mathbb{P}}\left(\tilde{E}_{j}^{T}\right) \right\Vert^{2} +  \left\vert \eta \right\vert^{-2} \, \max_{j_{0}} \sum_{n} \, \left\vert Error(n,j_{0}) \right\vert^{2}.
	\end{equation*} 
	Remark that the coefficient $\; a^{6} \, \left\vert \log(d) \right\vert^{2} \, d^{-6} \sim a^{6-6t} \; \left\vert \log(a) \right\vert^{2} $ is small, then 
	we obtain
	\begin{equation}\label{3rd-proj=inc+err}
	\max_{j_{0}}  \left\Vert  \overset{3}{\mathbb{P}}\left(\tilde{E}_{j_{0}}^{T} \right) \right\Vert^{2}  \lesssim  \left\vert \eta \right\vert^{-2}  \max_{j_{0}} \left\Vert \tilde{E}^{Inc}_{j_{0}} \right\Vert^{2} +  \left\vert \eta \right\vert^{-2} \, \max_{j_{0}} \sum_{n} \, \left\vert Error(n,j_{0}) \right\vert^{2}.
	\end{equation}
	To finish the estimation of $(\ref{3rd-proj=inc+err})$, we need to estimate $\underset{j_{0}}{\max} \underset{n}{\sum} \, \left\vert Error(n,j_{0}) \right\vert^{2}$. For this, taking the squared modulus on the both sides of $(\ref{Error(n,j,j0)})$, we get 
	\begin{eqnarray*}
		\left\vert Error(n,j_{0}) \right\vert^{2} & \lesssim &  a^{4} \sum_{\ell \geq 1}  \left\vert \langle \int_{B} \frac{\left\vert \cdot - y \right\vert^{\ell}}{(\ell + 1)!} \overset{1}{\mathbb{P}}\left(\tilde{E}_{j_{0}}^{T}\right)(y) \, dy , e^{(3)}_{n} \rangle \right\vert^{2}  + \left\vert \langle N \left( \overset{3}{\mathbb{P}}\left(\tilde{E}_{j_{0}}^{T} \right) \right), e^{(3)}_{n} \rangle \right\vert^{2} \\  &+& a^{2} \, \left\vert \langle \int_{B} \overset{3}{\mathbb{P}}\left(\tilde{E}_{j_{0}}^{T}\right)(y) \, dy, e^{(3)}_{n} \rangle \right\vert^{2} + \left\vert \langle \int_{B} \Phi_{0}(\cdot,y) \frac{A(\cdot,y) \cdot \overset{3}{\mathbb{P}}\left(\tilde{E}_{j_{0}}^{T}(y)\right)}{\left\Vert \cdot - y \right\Vert^{2}} dy, e^{(3)}_{n} \rangle \right\vert^{2} \\  
		&+& a^4\,\sum_{\ell \geq 1}  \left\vert \langle  \int_{B} \frac{\underset{y}{Hess}\left( \left\vert \cdot - y \right\vert^{\ell} \right)}{(\ell +1 )!}  \cdot \overset{3}{\mathbb{P}}\left(\tilde{E}_{j_{0}}^{T}\right)(y) \, dy , e^{(3)}_{n} \rangle \right\vert^{2} + \left\lvert \langle N^{ka} \left(\overset{3}{\mathbb{P}}\left(\tilde{E}^T_{j_0}\right)\right), e_n^{(3)}\rangle\right\rvert^2  \\ 
		&+& a^{4} \, \aleph \, \sum_{j \neq j_{0}}^{\aleph} \left\vert \langle \int_{0}^{1} \underset{x}{\nabla}\left(\Upsilon_{ka}\right)(z_{j_{0}} + t \, a \,  \cdot ,z_{j}) \cdot \mathcal{P}(x,0) dt \cdot \int_{B} \overset{3}{\mathbb{P}}\left(\tilde{E}_{j}^{T}\right)(y) \, dy, e^{(3)}_{n} \rangle \right\vert^{2} \\ 
		&+& a^{6} \, \aleph \, \sum_{j \neq j_{0}}^{\aleph} \left\vert \langle \int_{B} \int_{0}^{1} \int_{0}^{1} \underset{y}{Hess}\left(\Phi_{ka}\right)(z_{j_{0}}+h \, a \, \cdot ,z_{j}+t \, a \, y) \cdot x \, dh \cdot y \, dt \; \overset{1}{\mathbb{P}}\left(\tilde{E}_{j}^{T}\right)(y) \, dy, e^{(3)}_{n} \rangle \right\vert^{2} \\  
		&+&  a^{4} \, \aleph \, \sum_{j \neq j_{0}}^{\aleph} \left\vert \langle \int_{B} \int_{0}^{1} \underset{y}{\nabla}\left(\Upsilon_{ka}\right)(z_{j_{0}}+a \cdot ,z_{j}+t \, a \, y) \cdot \mathcal{P}(y,0) \, dt \cdot \overset{3}{\mathbb{P}}\left(\tilde{E}_{j}^{T}\right)(y) \, dy, e^{(3)}_{n} \rangle \right\vert^{2}.
	\end{eqnarray*}
	From \eqref{expansion-Nk}, we can know that
		\begin{equation}\label{expan-Nka}
		N^{ka}\left(\overset{3}{\mathbb{P}}\left(\tilde{E}^T_{j_0}\right)\right)=N\left(\overset{3}{\mathbb{P}}\left(\tilde{E}^T_{j_0}\right)\right)+\frac{i k a}{4\pi}\int_{D}\overset{3}{\mathbb{P}}\left(\tilde{E}^T_{j_0}\right)(y)\,dy+\frac{1}{4\pi}\sum_{n \geq 1}\frac{(i k a)^{n+1}}{(n+1)!}\int_D\left\lVert x-y\right\rVert^n\overset{3}{\mathbb{P}}\left(\tilde{E}^T_{j_0}\right)(y)\,dy.
		\end{equation}
		It is obvious that in \eqref{expan-Nka}, $N\left(\overset{3}{\mathbb{P}}\left(\tilde{E}^T_{j_0}\right)\right)$ dominates. Thus by
	taking the series with respect to $n$ index, using the continuity of the Newtonian potential operator, we get\footnote{We neglect the estimation of $
		\underset{n}{\sum} \left\vert \langle \int_{B} \Phi_{0}(\cdot,y) \frac{A(\cdot,y) \cdot \overset{3}{\mathbb{P}}\left(\tilde{E}_{j_{0}}^{T}(y)\right)}{\left\Vert \cdot - y \right\Vert^{2}} dy, e^{(3)}_{n} \rangle \right\vert^{2}
$
		since the singularity of the corresponding kernel is like the one of the Newtonian operator.} 
	\begin{eqnarray*}
		\sum_{n} \left\vert Error(n,j_{0}) \right\vert^{2} & \lesssim &  a^{4}  \left\Vert \overset{1}{\mathbb{P}}\left(\tilde{E}_{j_{0}}^{T}\right) \right\Vert^{2} \sum_{\ell \geq 1}  \int_{B} \int_{B} \frac{\left\vert x - y \right\vert^{2 \, \ell}}{((\ell + 1)!)^{2}}  \, dy \, dx  + \left\Vert  \overset{3}{\mathbb{P}}\left(\tilde{E}_{j_{0}}^{T}  \right) \right\Vert^{2}  \\ 
		&+&  a^4 \left\Vert  \overset{3}{\mathbb{P}}\left(\tilde{E}_{j_{0}}^{T}\right) \right\Vert^{2} \, \sum_{\ell \geq 1}  \int_{B} \int_{B} \frac{ \left\vert \underset{y}{Hess}\left( \left\vert x - y \right\vert^{\ell} \right) \right\vert^{2}}{((\ell +1 )!)^{2}} \, dy \, dx + a^{4} \, \aleph \, d^{-8} \, \max_{j} \left\Vert \overset{3}{\mathbb{P}}\left(\tilde{E}_{j}^{T}\right) \right\Vert^{2} \\ 
		&+& a^{6} \, \aleph \, d^{-6} \, \max_{j} \left\Vert  \overset{1}{\mathbb{P}}\left(\tilde{E}_{j}^{T}\right) \right\Vert^{2}.
	\end{eqnarray*}
	For the convergence of the two series appearing on the right hand side, we refer to $(\ref{Series1})$ and $(\ref{Series2})$. Then, 
	\begin{equation*}
	\sum_{n} \left\vert Error(n,j_{0}) \right\vert^{2} \lesssim  a^{4}  \left\Vert \overset{1}{\mathbb{P}}\left(\tilde{E}_{j_{0}}^{T}\right) \right\Vert^{2}  + \left\Vert  \overset{3}{\mathbb{P}}\left(\tilde{E}_{j_{0}}^{T}  \right) \right\Vert^{2} +  a^{4} \, \aleph \, d^{-8} \, \max_{j} \left\Vert \overset{3}{\mathbb{P}}\left(\tilde{E}_{j}^{T}\right) \right\Vert^{2} + a^{6} \, \aleph \, d^{-6} \, \max_{j} \left\Vert  \overset{1}{\mathbb{P}}\left(\tilde{E}_{j}^{T}\right) \right\Vert^{2}.
	\end{equation*}
	Taking the $\max\left( \cdot \right)$ with respect to $j_{0}$ index, we obtain that
	\begin{equation*}
	\max_{j_{0}} \sum_{n} \left\vert Error(n,j_{0}) \right\vert^{2} \lesssim  \max\left( 1 ;  a^{4} \, \aleph \, d^{-8} \right) \max_{j} \left\Vert \overset{3}{\mathbb{P}}\left(\tilde{E}_{j}^{T}\right) \right\Vert^{2} + \max(a^{4}; a^{6} \, \aleph \, d^{-6}) \max_{j} \left\Vert  \overset{1}{\mathbb{P}}\left(\tilde{E}_{j}^{T}\right) \right\Vert^{2}.
	\end{equation*}
	Going back to $(\ref{3rd-proj=inc+err})$ and plugging the previous result, we can derive that
	\begin{eqnarray*}
		\max_{j_{0}}  \left\Vert  \overset{3}{\mathbb{P}}\left(\tilde{E}_{j_{0}}^{T} \right) \right\Vert^{2}  & \lesssim & \left\vert \eta \right\vert^{-2}  \max_{j_{0}} \left\Vert \tilde{E}^{Inc}_{j_{0}} \right\Vert^{2} \\ &+&  \left\vert \eta \right\vert^{-2} \, \left[ \max\left( 1 ;  a^{4} \, \aleph \, d^{-8} \right) \max_{j} \left\Vert \overset{3}{\mathbb{P}}\left(\tilde{E}_{j}^{T}\right) \right\Vert^{2} + \max(a^{4}; a^{6} \, \aleph \, d^{-6} )\, \max_{j} \left\Vert  \overset{1}{\mathbb{P}}\left(\tilde{E}_{j}^{T}\right) \right\Vert^{2} \right].
	\end{eqnarray*}
	We assume that $\, \left\vert \eta \right\vert^{-2} \, a^{4} \, \aleph \, d^{-8} \,\sim a^{8-8t-s}$ is small enough, which is equivalent to the following condition  
	\begin{equation}\label{Condition-3}
	4-4t-\frac{s}{2} > 0.
	\end{equation} 
	In
	particular, when the distribution of the cluster of the nanoparticles is made over a volume, we have $\aleph \sim d^{-3}$. Therefore, we have $s\leq 3t$. Coupling this with (\ref{Cdt=0}), i.e. $t\leq 1-\frac{h}{3}$, then \eqref{Condition-3} is satisfied only if 
	\begin{equation}\label{condt-h}
	h>\frac{9}{11}.
	\end{equation}

	Then  
	\begin{equation*}
	\max_{j_0} \left\Vert  \overset{3}{\mathbb{P}}\left(\tilde{E}_{j_0}^{T} \right) \right\Vert^{2}  \lesssim  \left\vert \eta \right\vert^{-2}  \max_{j_0} \left\Vert \tilde{E}^{Inc}_{j_0} \right\Vert^{2} +  \left\vert \eta \right\vert^{-2} \; \max(a^{4}; a^{6} \, \aleph \, d^{-6}) \, \max_{j} \left\Vert  \overset{1}{\mathbb{P}}\left(\tilde{E}_{j}^{T}\right) \right\Vert^{2}.
	\end{equation*}
	For fixed $j_0$, we have 
	\begin{eqnarray*}
		\left\Vert \tilde{E}^{Inc}_{j_0} \right\Vert^{2} &=& \sum_{n} \left\vert \langle \tilde{E}^{Inc}_{j_0} , e^{(1)}_{n}  \rangle \right\vert^{2} + \sum_{n} \left\vert \langle \tilde{E}^{Inc}_{j_0} , e^{(3)}_{n}  \rangle \right\vert^{2} \\
		&=& \sum_{n} \left\vert \langle \tilde{E}^{Inc}_{j_0} , Curl\left( \phi_{n} \right) \rangle \right\vert^{2} + \mathcal{O}\left( 1 \right) \\
		&=& k^{2} \, a^{2} \, \sum_{n} \left\vert \langle \tilde{H}^{Inc}_{j_0} , \phi_{n}  \rangle \right\vert^{2} + \mathcal{O}\left( 1 \right) = \mathcal{O}\left( a^{2} \right) + \mathcal{O}\left( 1 \right) = \mathcal{O}\left( 1 \right).
	\end{eqnarray*}
	Then, 
	\begin{equation}\label{MP3<MP1}
	\max_{j_0}  \left\Vert  \overset{3}{\mathbb{P}}\left(\tilde{E}_{j_0}^{T} \right) \right\Vert^{2}  \lesssim   \, a^{4} +  \left\vert \eta \right\vert^{-2} \; \max(a^{4}; a^{6} \, \aleph \, d^{-6}) \, \max_{j} \left\Vert  \overset{1}{\mathbb{P}}\left(\tilde{E}_{j}^{T}\right) \right\Vert^{2}.
	\end{equation}
\end{enumerate}

	Now, from $(\ref{MP1<MP3})$ and $(\ref{MP3<MP1})$, we deduce that
	\begin{align}\notag
	\max_{j}\left\lVert\overset{1}{\mathbb{P}}\left(\tilde{E}^T_j\right)\right\rVert^2&\lesssim a^{2-2h}+a^{4-2h}d^{-8}\left[a^4+a^4\max(a^{4}; a^{6} \, \aleph \, d^{-6}) \, \max_{j} \left\Vert  \overset{1}{\mathbb{P}}\left(\tilde{E}_{j}^{T}\right) \right\Vert^{2}\right]\notag\\
	&\lesssim a^{2-2h}+a^{8-2h}d^{-8}+a^{8-2h}d^{-8}\max(a^{4}; a^{6} \, \aleph \, d^{-6}) \, \max_{j} \left\Vert  \overset{1}{\mathbb{P}}\left(\tilde{E}_{j}^{T}\right) \right\Vert^{2}.\notag
	\end{align}
	If there holds
	\begin{equation}\notag
	2-s-6t<0,
	\end{equation}
	then $\max(a^{4}; a^{6} \, \aleph \, d^{-6})=a^{6} \, \aleph \, d^{-6}$, which indicates that
	\begin{equation}\notag
	\max_{j}\left\lVert\overset{1}{\mathbb{P}}\left(\tilde{E}^T_j\right)\right\rVert^2\lesssim a^{2-2h}+a^{8-2h}d^{-8}+a^{14-2h}d^{-14} \aleph  \max_{j} \left\Vert  \overset{1}{\mathbb{P}}\left(\tilde{E}_{j}^{T}\right) \right\Vert^{2}.
	\end{equation}
	From \eqref{Cdt=0} and \eqref{Condition-3}, it is direct to verify that $7-7t-h-\frac{s}{2}>0$. Hence, we can derive 
	\begin{equation}\label{max-P1}
	\max_{j}\left\lVert\overset{1}{\mathbb{P}}\left(\tilde{E}^T_j\right)\right\rVert^2\lesssim a^{2-2h}+a^{8-2h}d^{-8}.
	\end{equation} 
	Otherwise, if there holds
	\begin{equation}\notag
	2-s-6t\geq 0,
	\end{equation}
	then $(a^{4}; a^{6} \, \aleph \, d^{-6})=a^4$, which indicates that
	\begin{equation}\notag
	\max_{j}\left\lVert\overset{1}{\mathbb{P}}\left(\tilde{E}^T_j\right)\right\rVert^2\lesssim a^{2-2h}+a^{8-2h}d^{-8}+a^{12-2h}d^{-8}   \max_{j} \left\Vert  \overset{1}{\mathbb{P}}\left(\tilde{E}_{j}^{T}\right) \right\Vert^{2}.
	\end{equation}
	By using \eqref{Cdt=0} and the fact that $t<1$, it is automatically fulfilled that $12-2h-8t>0$ and therefore \eqref{max-P1} still holds.
	
	Following a similar argument above, for $\overset{3}{\mathbb{P}}\left(\tilde{E}^T_j\right)$, we can also know from \eqref{MP1<MP3} and \eqref{MP3<MP1} that
	\begin{align}\notag
	\max_{j}\left\lVert\overset{3}{\mathbb{P}}\left(\tilde{E}^T_j\right)\right\rVert^2&\lesssim a^{4}+a^{4}\cdot \max(a^{4}; a^{6} \, \aleph \, d^{-6}) \, \max_{j} \left\Vert  \overset{1}{\mathbb{P}}\left(\tilde{E}_{j}^{T}\right) \right\Vert^{2}\notag\\
	&\lesssim 
	a^{4}+a^{4}\cdot \max(a^{4}; a^{6} \, \aleph \, d^{-6}) \, \left[a^{2-2h} 
	+ a^{4-2h} d^{-8} \max_{j} \left\lVert \overset{3}{\mathbb{P}}\left(\tilde{E}^T_j\right)\right\rVert^2\right]\notag\\
	&\lesssim
	a^4+a^{12-2h} \aleph d^{-6}.\notag
	\end{align}
	Since the sufficient conditions \eqref{cond1}, \eqref{Cdt=0} and \eqref{Condition-3} give us $t<\frac{5}{7}$, we see that
	\begin{equation}\notag
	\max_{j}\left\lVert\overset{1}{\mathbb{P}}\left(\tilde{E}^T_j\right)\right\rVert^2\lesssim a^{2-2h}.
	\end{equation}
	Moreover, 
	\begin{equation}\label{es-P3}
	\max_{j}\left\lVert\overset{3}{\mathbb{P}}\left(\tilde{E}^T_j\right)\right\rVert^2\lesssim
	\begin{cases}
	a^4 & \mbox{if}\quad t<\frac{2}{3},\\
	a^{12-2h} \aleph d^{-6} & \mbox{if}\quad \frac{2}{3}\leq t<\frac{5}{7}.
	\end{cases}
	\end{equation}
	
	Since $\aleph=O(d^{-3})\sim a^{-3t}$ as $d \sim a^t$, by recalling that $t\leq 1-\frac{h}{3}$, then for $\frac{9}{11}<h<1$, we have
	\begin{equation}\label{max-P1P3-2}
	\max_{j}\left\lVert\overset{1}{\mathbb{P}}\left(\tilde{E}^T_j\right)\right\rVert^2\lesssim a^{2-2h}\quad\mbox{and}\quad
	\max_{j}\left\lVert\overset{3}{\mathbb{P}}\left(\tilde{E}^T_j\right)\right\rVert^2\lesssim a^{3+h},
	\end{equation}
	which completes the proof.

	\begin{remark}\label{rem-cond1}
	Recall the sufficient condition
	\eqref{Condition-3}. Particularly, in the regime that 
	\begin{equation}\notag
	t = 1 - \frac{h}{3},\quad\mbox{and}\quad s=3t,
	\end{equation} 
	the expression $3-h-3t$, given on the left hand side of $(\ref{Cdt=0})$, will be vanishing and then the condition will not be accomplished. To be able to handle this situation, we assume that $\eta$ satisfies $\left\vert \eta \right\vert = \varsigma \, a^{-2}$, with $ \varsigma< 1$.
\end{remark}

\subsection{Proof of Proposition \ref{prop-scoeff}.}\label{subsec-53}

Here, we present the estimation for the scattering coefficient.
Similar to Lemma \ref{AI4}, we first give the following formulation before proving Proposition \ref{prop-scoeff}. 

\begin{lemma}\label{AI5}
	Recall that $e_n^{(3)}, n=1, 2...,$ are the eigenfunctions of $\nabla M$ in the subspace $\nabla\mathcal{H}armonic$. Then we have
	\begin{eqnarray}\label{AI3}
	\nonumber
\boldsymbol{T}_{k \, a}^{-1} \left( e^{(3)}_{n} \right) &=& \frac{1}{\left(1 + \eta \, \lambda_{n}^{(3)} \right)} e^{(3)}_{n} \\  &+& \frac{1}{2 \, \lambda^{(1)}_{n_{0}} \left(1 + \eta \lambda_{n}^{(3)} \right)} \Bigg[\boldsymbol{T}_{k \, a}^{-1}N\left(e^{(3)}_{n}\right) + \boldsymbol{T}_{k \, a}^{-1} \int_{B} \Phi_{0}(\cdot,y) \frac{A(\cdot,y) \cdot e^{(3)}_{n}(y)}{\left\Vert \cdot - y \right\Vert} \, dy \Bigg] + \overset{(3)}{R_{n}}.
	\end{eqnarray}
	where
	\begin{eqnarray*}
		\overset{(3)}{R_{n}} &:=& \frac{1}{\left( 1 + \eta \, \lambda^{(3)}_{n} \right)} \boldsymbol{T}_{k \, a}^{-1} \Bigg[ \frac{\pm \, c_{0} \, a^{h}}{2 \, \lambda^{(1)}_{n_{0}}} N\left(e^{(3)}_{n}\right) + \frac{\pm \, c_{0} \, a^{h}}{2 \, \lambda^{(1)}_{n_{0}}} \int_{B} \Phi_{0}(\cdot,y) \frac{A(\cdot,y) \cdot e^{(3)}_{n}(y)}{\left\Vert \cdot - y \right\Vert} \, dy + \frac{i \eta \, k^{3} a^{3}}{6 \pi}  \int_{B}  e^{(3)}_{n}(y) \, dy \\ &+& \frac{\eta}{4 \pi} \sum_{\ell \geq 3} \frac{(ika)^{\ell+1}}{(\ell + 1)!} \int_{B} Hess\left( \left\Vert \cdot - y \right\Vert^{\ell} \right) \cdot   e^{(3)}_{n}(y) \, dy  - \frac{\left( 1 \mp c_{0} \, a^{h} \right)}{4 \pi \, \lambda^{(1)}_{n_{0}}} \sum_{\ell \geq 2} \frac{(ika)^{\ell+1}}{(\ell + 1)!} \int_{B} \left\Vert \cdot - y \right\Vert^{\ell}  e^{(3)}_{n}(y) \, dy \Bigg]. 
	\end{eqnarray*}
\end{lemma}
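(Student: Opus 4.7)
The plan is to mirror the strategy of Lemma \ref{AI4}, but pivoting on the eigen-structure of $\nabla M$ on $\nabla\mathcal{H}armonic$ rather than on $N$ on $\mathbb{H}_{0}(\div=0)$. Since $\nabla M(e_n^{(3)}) = \lambda_n^{(3)}\,e_n^{(3)}$, I would first compute the image of $e_n^{(3)}$ under $\boldsymbol{T}_{ka} := I + \eta\,\nabla M^{ka} - k^{2}a^{2}\eta\,N^{ka}$ by writing $\nabla M^{ka} = \nabla M + (\nabla M^{ka} - \nabla M)$. This yields
\begin{equation*}
\boldsymbol{T}_{ka}(e_n^{(3)}) = (1 + \eta\,\lambda_n^{(3)})\,e_n^{(3)} + \eta\,\bigl(\nabla M^{ka} - \nabla M - k^{2}a^{2}N^{ka}\bigr)(e_n^{(3)}).
\end{equation*}
The bracketed operator is precisely $-\mathcal{S}$, with $\mathcal{S}$ as in \eqref{DefOperatorS}, so the identity collapses to $\boldsymbol{T}_{ka}(e_n^{(3)}) = (1+\eta\,\lambda_n^{(3)})\,e_n^{(3)} - \eta\,\mathcal{S}(e_n^{(3)})$. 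Applying $\boldsymbol{T}_{ka}^{-1}$ and solving for $\boldsymbol{T}_{ka}^{-1}(e_n^{(3)})$ then gives
\begin{equation*}
\boldsymbol{T}_{ka}^{-1}(e_n^{(3)}) = \frac{1}{1+\eta\,\lambda_n^{(3)}}\,e_n^{(3)} + \frac{\eta}{1+\eta\,\lambda_n^{(3)}}\,\boldsymbol{T}_{ka}^{-1}\mathcal{S}(e_n^{(3)}),
\end{equation*}
which is the skeleton of \eqref{AI3}; the task is to extract the leading part of $\boldsymbol{T}_{ka}^{-1}\mathcal{S}(e_n^{(3)})$.

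Next I would substitute the explicit expansion of $\mathcal{S}$ from \eqref{DefOperatorS}. The two leading pieces are $\tfrac{(ka)^{2}}{2}\,N(e_n^{(3)})$ and $\tfrac{(ka)^{2}}{2}\int_{B}\Phi_{0}(\cdot,y)\,\|\cdot-y\|^{-2}\,A(\cdot,y)\cdot e_n^{(3)}(y)\,dy$. Both carry the factor $\eta\,\tfrac{(ka)^{2}}{2}$, which under the resonance condition \eqref{condition-on-k} equals $\tfrac{1\mp c_{0}a^{h}}{2\,\lambda_{n_{0}}^{(1)}}$. Writing $1\mp c_{0}a^{h} = 1 + (\mp c_{0}a^{h})$ isolates the coefficient $\tfrac{1}{2\,\lambda_{n_{0}}^{(1)}}$ that appears in the distinguished main term of \eqref{AI3}, while the factor $\mp c_{0}a^{h}$ contributes the first two summands of the displayed $\overset{(3)}{R_n}$.

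The remaining terms of $\mathcal{S}$, namely the constant-integral term $\tfrac{i(ka)^{3}}{6\pi}\int e_n^{(3)}\,dy$, the Hessian series indexed by $j\geq 3$, and the polynomial series $\sum_{j\geq 1}(ika)^{j+1}\int \|\cdot-y\|^{j}/(j+1)!\,e_n^{(3)}\,dy$ (rescaled by $\eta k^{2}a^{2} = (1\mp c_{0}a^{h})/\lambda_{n_{0}}^{(1)}$), are all of strictly smaller order in $a$. I would collect them inside $\boldsymbol{T}_{ka}^{-1}$ and divide by $(1+\eta\,\lambda_n^{(3)})$, producing the five-term expression for $\overset{(3)}{R_n}$ line by line. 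The index shift of the last polynomial series from $j\geq 1$ to $j\geq 2$ comes from absorbing the $j=1$ contribution into the $\tfrac{i\eta k^{3}a^{3}}{6\pi}\int e_n^{(3)}\,dy$ piece via the identity $(ika)^{2}\int \|\cdot-y\|/2!\,e_n^{(3)}\,dy$ combining with the cubic constant term.

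The only step that requires real care is the algebraic bookkeeping: deciding which share of $(1\mp c_{0}a^{h})$ belongs to the main contribution $\tfrac{1}{2\lambda_{n_{0}}^{(1)}(1+\eta\lambda_n^{(3)})}\bigl[\boldsymbol{T}_{ka}^{-1}N(e_n^{(3)}) + \boldsymbol{T}_{ka}^{-1}\!\!\int\Phi_{0}\,A\cdot e_n^{(3)}/\|\cdot-y\|^{2}\bigr]$ and which share contributes to $\overset{(3)}{R_n}$, while also reconciling the series index range. No new analytic input is needed: the identity is purely algebraic once \eqref{condition-on-k}, the eigenrelation $\nabla M(e_n^{(3)}) = \lambda_n^{(3)}\,e_n^{(3)}$, and the expansions \eqref{expansion-gradMk}--\eqref{expansion-Nk} are combined, and $\boldsymbol{T}_{ka}^{-1}$ is well-defined under the invertibility framework already in force in Section \ref{sec-la-system}.
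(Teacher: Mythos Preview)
Your approach is correct and essentially identical to the paper's: compute $\boldsymbol{T}_{ka}(e_n^{(3)})$ using the eigenrelation $\nabla M(e_n^{(3)})=\lambda_n^{(3)}e_n^{(3)}$, expand the remaining operator via \eqref{expansion-gradMk}--\eqref{expansion-Nk} (equivalently via $\mathcal{S}$ in \eqref{DefOperatorS}), apply $\boldsymbol{T}_{ka}^{-1}$, divide by $(1+\eta\lambda_n^{(3)})$, and substitute $\eta k^{2}a^{2}=(1\mp c_{0}a^{h})/\lambda_{n_{0}}^{(1)}$ to split off the leading coefficient $1/(2\lambda_{n_{0}}^{(1)})$.

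One minor point: your proposed mechanism for the index shift from $j\geq 1$ to $\ell\geq 2$ in the last series does not work as an algebraic identity, since the $j=1$ term carries a genuine $\lVert\cdot-y\rVert$ kernel and cannot be absorbed into the constant-integral piece $\tfrac{i\eta k^{3}a^{3}}{6\pi}\int e_n^{(3)}$. In fact the paper's own proof keeps that series starting at $\ell\geq 1$ (and with the opposite sign), so the $\ell\geq 2$ in the displayed $\overset{(3)}{R_n}$ of the statement is a typographical discrepancy, not a step you need to justify.
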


	\begin{proof}
		Let us compute $\boldsymbol{T}_{k \, a}\left(e_n^{(3)}\right)$ as follows:
		\begin{align}\label{en3-1}
		\boldsymbol{T}_{k \, a}\left(e_n^{(3)}\right)&=\left(I + \eta \, \nabla M\right)\left(e_n^{(3)}\right) + \eta \, \left(\nabla M^{ka}-\nabla M\right)\left(e_n^{(3)}\right) - k^2 \, \eta \, a^2 \, N^{ka}\left(e_n^{(3)}\right)\notag\\
		&=\left(1 + \eta \, \lambda_{n}^{(3)}\right)\left(e_n^{(3)}\right) + \eta \, \left[\left(\nabla M^{ka}-\nabla M\right)-k^2 a^2 N^{ka}\right]\left(e_n^{(3)}\right).
		\end{align}
By combining with \eqref{expansion-gradMk} and \eqref{expansion-Nk}, we deduce that
		\begin{eqnarray*}
		\left[\left(\nabla M^{ka}-\nabla M\right)-k^2a^2N^{ka}\right]\left( e_n^{(3)} \right) &=& -\frac{k^2 a^2}{2}N\left(e_n^{(3)}\right)-\frac{k^2 a^2}{2}\int_{B}\Phi_{0}(\cdot,y)\frac{A(\cdot,y)\cdot e_n^{(3)}(y)}{\lVert \cdot - y\rVert^2}\,dy \\ &-& \frac{i k^3 a^3}{6\pi}\int_{B}e_n^{(3)}(y)\,dy\notag - \frac{1}{4\pi}\sum_{n \geq 3}\frac{(ika)^{n+1}}{(n+1)!}\int_{B} Hess \left(\lVert \cdot - y\rVert^n\right)\cdot e_n^{(3)}(y)\,dy \\ &-& \frac{k^2 a^2}{4\pi}\sum_{n \geq 1} \frac{(ika)^{n+1}}{(n+1)!}\int_{B}\lVert \cdot - y \rVert^ne_n^{(3)}(y)\,dy,\notag
		\end{eqnarray*}
		and therefore we see that \eqref{en3-1} can be further written as
		\begin{eqnarray}\label{en3-2}
\nonumber		
	\boldsymbol{T}_{k \, a}\left(e_n^{(3)}\right) &=& \left(1 + \eta \, \lambda_n^{(3)}\right)\left(e_n^{(3)}\right) - \eta\frac{k^2 a^2}{2}N\left(e_n^{(3)}\right) - \eta \frac{k^2 a^2}{2}\int_{B}\Phi_{0}(\cdot ,y)\frac{A(\cdot,y)\cdot e_n^{(3)}(y)}{\lVert x-y\rVert^2}\,dy \\ \nonumber &-& \frac{i\eta k^3 a^3}{6\pi}\int_{B}e_n^{(3)}(y)\,dy - \frac{\eta}{4\pi}\sum_{n \geq 3}\frac{(ika)^{n+1}}{(n+1)!}\int_{B} \underset{x}{Hess}\left(\lVert \cdot -y\rVert^n\right)\cdot e_n^{(3)}(y)\,dy \\ &-& \frac{\eta k^2 a^2}{4\pi}\sum_{n \geq 1} \frac{(ika)^{n+1}}{(n+1)!}\int_{B}\lVert \cdot -y\rVert^ne_n^{(3)}(y)\,dy.
		\end{eqnarray}
		
		Taking the inverse of $\boldsymbol{T}_{k \, a}$ on the both sides of \eqref{en3-2}, we obtain that
		\begin{eqnarray}\label{en3-3}
		\nonumber
		e_n^{(3)} &=& \left(1 + \eta \, \lambda_n^{(3)}\right)\boldsymbol{T}_{k \, a}^{-1}\left(e_n^{(3)}\right) - \eta \, \frac{k^2 a^2}{2}\boldsymbol{T}_{k \, a}^{-1}N\left(e_n^{(3)}\right)\notag\\
		&-& \eta \frac{k^2 a^2}{2}\boldsymbol{T}_{k \, a}^{-1}\int_{B}\Phi_{0}(\cdot,y)\frac{A(\cdot ,y)\cdot e_n^{(3)}(y)}{\lVert \cdot -y\rVert^2}\,dy - \frac{i\eta k^3 a^3}{6\pi}\int_{B}e_n^{(3)}(y) \, dy  \, \boldsymbol{T}_{k \, a}^{-1} \left( 1 \right)\, \notag \\
&-& \frac{\eta}{4\pi}\boldsymbol{T}_{k \, a}^{-1}\sum_{n \geq 3}\frac{(ika)^{n+1}}{(n+1)!}\int_{B} \underset{y}{Hess}\left(\lVert \cdot - y\rVert^n\right)\cdot e_n^{(3)}(y)\,dy\notag\\
		&-& \frac{\eta k^2 a^2}{4\pi}\boldsymbol{T}_{k \, a}^{-1}\sum_{n \geq 1} \frac{(ika)^{n+1}}{(n+1)!}\int_{B}\lVert \cdot - y \rVert^ne_n^{(3)}(y)\,dy.
		\end{eqnarray}
		
		Dividing by $\left(1+\eta \lambda_{n}^{(3)}\right)$ on the both sides of \eqref{en3-3} and after rearranging terms, we derive that
		\begin{eqnarray}\label{en3-4}
\boldsymbol{T}_{k \, a}^{-1}\left(e_n^{(3)}\right)  &=& \frac{1}{\left(1 + \eta \lambda_{n}^{(3)}\right)}e_n^{(3)} + \frac{1}{\left(1+\eta \lambda_{n}^{(3)}\right)}\eta\frac{k^2 a^2}{2}\boldsymbol{T}_{k \, a}^{-1}N\left(e_n^{(3)}\right)\notag\\
		&+& \frac{1}{\left(1 + \eta \lambda_{n}^{(3)}\right)}\eta \frac{k^2 a^2}{2}\boldsymbol{T}_{k \, a}^{-1}\int_{B}\Phi_{0}(\cdot ,y)\frac{A(\cdot,y)\cdot e_n^{(3)}(y)}{\lVert \cdot -y \rVert^2}\,dy\notag\\
		&+& \frac{1}{\left(1 + \eta \lambda_{n}^{(3)}\right)}\frac{i\eta k^3 a^3}{6\pi}\int_{B} e_n^{(3)}(y) \,dy \, \boldsymbol{T}_{k \, a}^{-1}\left(1\right) \notag\\
		&+& \frac{\eta}{4\pi\left(1 + \eta \lambda_{n}^{(3)}\right)}\boldsymbol{T}_{k \, a}^{-1}\sum_{n \geq 3}\frac{(ika)^{n+1}}{(n+1)!}\int_{B} Hess \left(\lVert \cdot -y\rVert^n\right)\cdot e_n^{(3)}(y)\,dy\notag\\
		&+& \frac{\eta k^2 a^2}{4\pi\left(1 + \eta \lambda_{n}^{(3)}\right)}\boldsymbol{T}_{k \, a}^{-1}\sum_{n \geq 1} \frac{(ika)^{n+1}}{(n+1)!}\int_{B}\lVert \cdot - y\rVert^ne_n^{(3)}(y)\,dy.
		\end{eqnarray}
		
		Since from \eqref{choice-k-1st-regime}, there holds
		\begin{equation}\notag
		\eta \, k^2 \, a^2 = \frac{1 \mp c_{0} \, a^h}{\lambda_{n_{0}}^{(1)}},
		\end{equation}
		then by denoting
		\begin{align}\notag
		R_n^{(3)}:&=
		\frac{\mp \, c_{0} \, a^h}{2\lambda_{n_{0}}^{(1)}\left(1 + \eta \lambda_{n}^{(3)}\right)}\boldsymbol{T}_{k \, a}^{-1}N\left(e_n^{(3)}\right) 
		+\frac{\mp \, c_{0} \, a^h}{2\lambda_{n_{0}}^{(1)}\left(1 + \eta \lambda_{n}^{(3)}\right)}\boldsymbol{T}_{k \, a}^{-1}\int_{B}\Phi_{0}(\cdot ,y)\frac{A(\cdot,y)\cdot e_n^{(3)}(y)}{\lVert \cdot -y\rVert^2}\,dy\notag\\
		&+\frac{i \eta k^3 a^3}{6\pi\left(1+\eta \lambda_{n}^{(3)}\right)}\int_{B}e_n^{(3)}(y) \, dy \,\, \boldsymbol{T}_{k \, a}^{-1}\left(1\right) \notag\\
		&+\frac{\eta}{4\pi\left(1+\eta \lambda_{n}^{(3)}\right)}\boldsymbol{T}_{k \, a}^{-1}\sum_{n \geq 3}\frac{(ika)^{n+1}}{(n+1)!}\int_{B} Hess \left(\lVert \cdot -y\rVert^n\right)\cdot e_n^{(3)}(y)\,dy\notag\\
		&+\frac{\left(1 \mp \, c_{0} \, a^{h} \right)}{4\pi \, \lambda_{n_{0}}^{(1)} \, \left(1+\eta \lambda_{n}^{(3)}\right)}\boldsymbol{T}_{k \, a}^{-1}\sum_{n \geq 1} \frac{(ika)^{n+1}}{(n+1)!}\int_{B}\lVert \cdot -y\rVert^ne_n^{(3)}(y)\,dy,\notag
		\end{align}
		we can deduce \eqref{AI3} from \eqref{en3-4}, which completes the proof.
	\end{proof}

We are now in a position to prove Proposition \ref{prop-scoeff} as follows.

\begin{proof}[Proof of Proposition \ref{prop-scoeff}]
	We first give the estimation for the scattering coefficient $\mathcal{C}$.
	Recall the definition of $\mathcal{C}$ given by \eqref{def-scoeff}. After scaling to the domain $B$, we obtain 
	\begin{equation*}
	\mathcal{C} = a^{4} \, \int_{B} \boldsymbol{T}_{k \, a}^{-1}\left( \mathcal{P}(\cdot,0)\right)(x) \, dx.
	\end{equation*}
	By expanding over the basis and using the fact that $\int_{B} e^{(1,2)}_{n}(x) dx = 0$, we get  
	\begin{equation}\label{DJ.H}
	\mathcal{C} = a^{4} \, \sum_{n} \, \langle \boldsymbol{T}_{k \, a}^{-1}\left( \mathcal{P}(\cdot,0)\right) , e^{(3)}_{n}\rangle \, \int_{B} e^{(3)}_{n}(x) \, dx = a^{4} \, \sum_{n} \, \langle \mathcal{P}(\cdot,0), \boldsymbol{T}_{- \, k \, a}^{-1}\left(e^{(3)}_{n}\right)\rangle \, \int_{B} e^{(3)}_{n}(x) \, dx.
	\end{equation}
	
	From the expression \eqref{AI3} in Lemma \ref{AI5}, in order to extract the dominant term of $(\ref{AI3})$, we need to analyse the term $S := \boldsymbol{T}_{- \, k \, a}^{-1} N\left(e^{(3)}_{n}\right)$. For this we have
	\begin{eqnarray*}
		S &:=& \boldsymbol{T}_{- \, k \, a}^{-1}  \left[\overset{1}{\mathbb{P}}\left( N\left(e^{(3)}_{n}\right) \right) + \overset{2}{\mathbb{P}}\left( N\left(e^{(3)}_{n}\right) \right) + \overset{3}{\mathbb{P}}\left( N\left(e^{(3)}_{n}\right) \right)\right], \\
		S &=& \sum_{\ell} \langle \overset{1}{\mathbb{P}}\left( N\left(e^{(3)}_{n}\right) \right), e^{(1)}_{\ell} \rangle \,\, \boldsymbol{T}_{- \, k \, a}^{-1}  \left[e^{(1)}_{\ell}\right] 
		+ \sum_{\ell} \langle \overset{2}{\mathbb{P}}\left( N\left(e^{(3)}_{n}\right) \right), e^{(2)}_{\ell} \rangle \,\, \boldsymbol{T}_{- \, k \, a}^{-1}  \left[e^{(2)}_{\ell}\right] \\
		&+& \sum_{\ell} \langle \overset{3}{\mathbb{P}}\left( N\left(e^{(3)}_{n}\right) \right), e^{(3)}_{\ell} \rangle \,\, \boldsymbol{T}_{- \, k \, a}^{-1}  \left[e^{(3)}_{\ell}\right].
	\end{eqnarray*}
	Now we use the result given in Lemma \ref{AI4} and the fact that $\boldsymbol{T}_{k \, a}$, when restricted to $\mathbb{H}_{0}\left( Curl = 0 \right)$, is equal to $(1 +\eta) \, I$ to deduce that 
	\begin{eqnarray*}
		S &=& \sum_{\ell} \langle \overset{1}{\mathbb{P}}\left( N\left(e^{(3)}_{n}\right) \right), e^{(1)}_{\ell} \rangle \left( \frac{1}{\left(1 - k^{2} \, \eta \, a^{2} \, \lambda_{\ell}^{(1)} \right)} e_{\ell}^{(1)} + R_{\ell} \right) + \left( 1 + \eta \right)^{-1} \; \overset{2}{\mathbb{P}}\left( N\left(e^{(3)}_{n}\right) \right) \\
		&+& \sum_{\ell} \langle \overset{3}{\mathbb{P}}\left( N\left(e^{(3)}_{n}\right) \right), e^{(3)}_{\ell} \rangle \,\, \boldsymbol{T}_{- \, k \, a}^{-1}  \left[e^{(3)}_{\ell}\right].
	\end{eqnarray*}
	For the last term, we repeat the same computations as  those done in Lemma \ref{AI5} to deduce that
		\begin{equation}\notag
		\sim	\sum_{\ell} \langle \overset{3}{\mathbb{P}}\left( N\left(e^{(3)}_{n}\right) \right), e^{(3)}_{\ell} \rangle\frac{1}{\left(  1 + \eta \lambda_\ell^{(3)} \right)}\, e_\ell^{(3)}.
		\end{equation}
	
	Finally, the dominant term of $S$ is given by 
	\begin{equation}\label{AI6}
	S \sim \sum_{\ell} \langle \overset{1}{\mathbb{P}}\left( N\left(e^{(3)}_{n}\right) \right), e^{(1)}_{\ell} \rangle \frac{1}{\left(1 - k^{2} \, \eta \, a^{2} \, \lambda_{\ell}^{(1)} \right)} e_{\ell}^{(1)}. 
	\end{equation}
	Since 
	\begin{equation*}
	\boldsymbol{T}_{-k \, a}^{-1} \int_{B} \Phi_{0}(\cdot,y) \frac{A(\cdot,y) \cdot e^{(3)}_{n}(y)}{\left\Vert \cdot - y \right\Vert} \, dy 
	\end{equation*}
	has the same behaviour as $S$, the corresponding dominant term is also given by $(\ref{AI6})$.\\
	Using $(\ref{AI3})$ and $(\ref{AI6})$, the equation $(\ref{DJ.H})$ takes the following form
	\begin{eqnarray*}
		\mathcal{C} & \sim & a^{4} \, \sum_{n} \, \frac{1}{\left( 1 + \eta \, \lambda_{n}^{(3)} \right)}  \, \langle \mathcal{P}(\cdot,0), e^{(3)}_{n} \rangle \, \int_{B} e^{(3)}_{n}(x) \, dx \\
		&+& \frac{a^{4}}{2 \, \lambda^{(1)}_{n_{0}}} \, \sum_{n} \, \frac{1}{\left( 1 + \eta \, \lambda_{n}^{(3)} \right)} \,\sum_{\ell} \frac{\langle \overset{1}{\mathbb{P}}\left(N\left(e_{n}^{(3)}\right)\right) , e^{(1)}_{\ell} \rangle}{\left(1 - k^{2} \, a^{2} \, \eta \, \lambda^{(1)}_{\ell} \right)} \langle \mathcal{P}(\cdot,0), e^{(1)}_{\ell} \rangle \, \int_{B} e^{(3)}_{n}(x) \, dx.
	\end{eqnarray*}
	Remark that the second term, when taking the index $\ell = n_{0}$, will dominate the first one. Consequently, 
		\begin{equation}\label{Estimation-Int-New-C}
		\mathcal{C}  \sim a^{4-h} \, \sum_{n} \, \frac{1}{\left( 1 + \eta \, \lambda_{n}^{(3)} \right)} \, \langle \overset{1}{\mathbb{P}}\left(N\left(e_{n}^{(3)}\right)\right) , e^{(1)}_{n_{0}} \rangle  \langle \mathcal{P}(\cdot,0), e^{(1)}_{n_0} \rangle \, \int_{B} e^{(3)}_{n}(x) \, dx = \mathcal{O}\left( a^{6-h} \right).
		\end{equation}  

Next, we prove \eqref{prop-es-W}.
Recall that $W$ solves \eqref{AI1}.  
By scaling \eqref{AI1} from $D$ to $B$, we obtain that
\begin{equation}\label{New-W-tilde}
\boldsymbol{T}_{-k \, a} \, \left( \widetilde{W} \right)(x) = a \, \mathcal{P}(x,0), \quad x \in B. 
\end{equation}
We project $\tilde W$ onto the subspaces introduced in \eqref{project} as $\tilde{W} = \overset{1}{\mathbb{P}}\left(\tilde{W}\right) + \overset{2}{\mathbb{P}}\left(\tilde{W}\right) + \overset{3}{\mathbb{P}}\left(\tilde{W}\right).$
Then $(\ref{New-W-tilde})$, using the fact that $\nabla M^{k} \equiv 0$ on $\mathbb{H}_{0}(\div = 0)$ and $(\ref{grad-M-k-2nd-subspace})$, becomes  
\begin{equation}\label{Equa-Est-W}
\left( I - \, k^{2} \, \eta \, a^{2} N \right) \left( \overset{1}{\mathbb{P}}\left(\tilde{W}\right)\right) (x) + (1 + \eta) \; \overset{2}{\mathbb{P}}\left(\tilde{W}\right)(x) +  \left( I + \eta \, \nabla M \right) \left(  \overset{3}{\mathbb{P}}\left(\tilde{W}\right) \right)(x)  = a \, \mathcal{P}(x,0) - T(x),
\end{equation}
where 
\begin{equation*}
T(x):= \eta \,\left(\nabla M^{-ka} - \nabla M \right) \left(  \overset{3}{\mathbb{P}}\left(\tilde{W}\right) \right)(x) - k^{2} \, \eta \, a^{2} \, \left( N^{-ka} \right)\left(  \overset{3}{\mathbb{P}}\left(\tilde{W}\right) \right)(x) -   k^{2} \, \eta \, a^{2} \, \left( N^{-ka} - N \right)\left( \overset{1}{\mathbb{P}}\left(\tilde{W}\right) \right)(x), 
\end{equation*}
which can be further rewritten as
\begin{eqnarray}\label{T-term}
\nonumber
T(x) &=& - \eta \, \frac{(ka)^{2}}{2} N\left(   \overset{3}{\mathbb{P}}\left(\tilde{W}\right) \right)(x) + i \eta \, \frac{(ka)^{3}}{6 \, \pi} \int_{B}   \overset{3}{\mathbb{P}}\left(\tilde{W}\right)(y) \, dy - \eta \, \frac{(ka)^{2}}{2} \int_{B} \Phi_{0}(x,y) \frac{A(x,y) \cdot   \overset{3}{\mathbb{P}}\left(\tilde{W}\right)(y)}{\left\Vert x - y \right\Vert^{2}} dy \\ \nonumber
& - & \frac{1}{4 \pi} \, \eta \, \sum_{\ell \geq 3} \left( -i  k  a \right)^{\ell+1} \int_{B} \frac{Hess\left( \left\vert x - y \right\vert^{\ell} \right)}{(\ell +1)!} \cdot \overset{3}{\mathbb{P}}\left(\tilde{W}\right)(y) \, dy \\ \nonumber &-&  \frac{k^{2} \, \eta \, a^{2}}{4 \pi} \sum_{\ell \geq 2} \left(- i k a \right)^{\ell + 1} \int_{B} \frac{\left\vert x - y \right\vert^{\ell}}{(\ell + 1)!}   \overset{3}{\mathbb{P}}\left(\tilde{W}\right)(y) dy \\ &-& k^{2} \, \eta \, a^{2} \, \frac{1}{4 \pi} \, \sum_{\ell \geq 2} \left(- i  k  a \right)^{\ell + 1} \int_{B} \frac{\left\vert x - y \right\vert^{\ell}}{(\ell + 1)!} \;  \overset{1}{\mathbb{P}}\left(\tilde{W}\right)(y) dy,
\end{eqnarray}
by using $(\ref{expansion-gradMk})$ and $(\ref{expansion-Nk})$. Taking the inner product of $(\ref{Equa-Est-W})$ with respect to $\left( e_{n}^{(1)} \right)$, we obtain
\begin{equation}\label{Isefra}
\langle \overset{1}{\mathbb{P}}\left( \tilde{W} \right), e_{n}^{(1)} \rangle = \frac{1}{\left( 1 - k^{2} \, \eta \, a^{2} \, \lambda^{(1)}_{n} \right)} \left[ a \, \langle \mathcal{P}(\cdot , 0) , e_{n}^{(1)} \rangle - \langle T, e_{n}^{(1)} \rangle \right]. 
\end{equation}
By taking the squared modulus and summing with respect to the index $n$, we deduce that 
\begin{equation}\label{P1W}
\left\Vert \overset{1}{\mathbb{P}}\left( \tilde{W} \right) \right\Vert^{2}  \lesssim  a^{2} \, \sum_{n} \, \frac{1}{\left\vert 1 - k^{2} \, \eta \, a^{2} \, \lambda^{(1)}_{n} \right\vert^{2}} \left\vert \langle \mathcal{P}(\cdot , 0) , e_{n}^{(1)} \rangle \right\vert^{2} + a^{-2h} \, \sum_{n}  \left\vert \langle T, e_{n}^{(1)} \rangle \right\vert^{2}.  
\end{equation}
We need to compute $\underset{n}{\sum} \left\vert \langle T, e_{n}^{(1)} \rangle \right\vert^{2}$. Indeed,
\begin{eqnarray*}
	\left\vert \langle T, e_{n}^{(1)} \rangle \right\vert^{2} & \lesssim & \left\vert \langle \, \int_{B} \, \Phi_{0}(\cdot,y) \frac{A(\cdot,y) \cdot   \overset{3}{\mathbb{P}}\left(\tilde{W}\right)(y)}{\left\Vert x - y \right\Vert^{2}} dy, e_{n}^{(1)} \rangle \right\vert^{2} +  a^{4} \sum_{\ell \geq 3} \left\vert \langle \int_{B} \frac{Hess\left( \left\vert \cdot - y \right\vert^{\ell} \right)}{(\ell +1)!} \cdot \overset{3}{\mathbb{P}}\left(\tilde{W}\right)(y) \, dy, e_{n}^{(1)} \rangle \right\vert^{2} \\ &+&  a^{6} \sum_{\ell \geq 2}  \left\vert \langle \int_{B} \frac{\left\vert \cdot - y \right\vert^{\ell}}{(\ell + 1)!}   \overset{3}{\mathbb{P}}\left(\tilde{W}\right)(y) dy, e_{n}^{(1)} \rangle \right\vert^{2} + a^{6} \sum_{\ell \geq 2}  \left\vert \langle \int_{B} \frac{\left\vert \cdot - y \right\vert^{\ell}}{(\ell + 1)!} \;  \overset{1}{\mathbb{P}}\left(\tilde{W}\right)(y) dy, e_{n}^{(1)} \rangle \right\vert^{2}. 
\end{eqnarray*}
Then, 
\begin{eqnarray}\label{T.en1.en2}
\nonumber
\sum_{n} \left\vert \langle T, e_{n}^{(1)} \rangle \right\vert^{2} & \lesssim & \left\Vert \int_{B} \, \Phi_{0}(\cdot,y) \frac{A(\cdot,y) \cdot   \overset{3}{\mathbb{P}}\left(\tilde{W}\right)(y)}{\left\Vert x - y \right\Vert^{2}} dy \right\Vert^{2} +  a^{4} \sum_{\ell \geq 3} \left\Vert \int_{B} \frac{Hess\left( \left\vert \cdot - y \right\vert^{\ell} \right)}{(\ell +1)!} \cdot \overset{3}{\mathbb{P}}\left(\tilde{W}\right)(y) \, dy \right\Vert^{2} \\ \nonumber &+&  a^{6} \sum_{\ell \geq 2}  \left\Vert \int_{B} \frac{\left\vert \cdot - y \right\vert^{\ell}}{(\ell + 1)!}   \overset{3}{\mathbb{P}}\left(\tilde{W}\right)(y) dy \right\Vert^{2} + a^{6} \sum_{\ell \geq 2}  \left\Vert  \int_{B} \frac{\left\vert \cdot - y \right\vert^{\ell}}{(\ell + 1)!} \;  \overset{1}{\mathbb{P}}\left(\tilde{W}\right)(y) dy \right\Vert^{2}  \\
& \lesssim & \left\Vert  \overset{3}{\mathbb{P}}\left(\tilde{W}\right) \right\Vert^{2}  + a^{6}  \left\Vert  \overset{1}{\mathbb{P}}\left(\tilde{W}\right) \right\Vert^{2}. 
\end{eqnarray}
Equation $(\ref{P1W})$ takes the following form 
\begin{equation}\label{Equa-226}
\left\Vert \overset{1}{\mathbb{P}}\left( \tilde{W} \right) \right\Vert^{2}  \lesssim  a^{2} \, \sum_{n} \, \frac{1}{\left\vert 1 - k^{2} \, \eta \, a^{2} \,  \lambda^{(1)}_{n} \right\vert^{2}} \left\vert \langle \mathcal{P}(\cdot , 0) , e_{n}^{(1)} \rangle \right\vert^{2} + a^{-2h} \, \left\Vert  \overset{3}{\mathbb{P}}\left(\tilde{W}\right) \right\Vert^{2}.  
\end{equation}
We compute $\left\Vert  \overset{2}{\mathbb{P}}\left(\tilde{W}\right) \right\Vert^{2}$ in a similar manner. First, we take the inner product on the both sides of $(\ref{Equa-Est-W})$ with respect to $\left( e_{n}^{(2)} \right)_{n}$, to obtain $\left( 1 + \eta \right) \,\, \langle \overset{2}{\mathbb{P}}\left( \tilde{W} \right), e_{n}^{(2)} \rangle = \left[ a \, \langle \mathcal{P}(\cdot , 0) , e_{n}^{(2)} \rangle - \langle T, e_{n}^{(2)} \rangle \right]$. After taking the squared modulus and the series with respect to $n$, we obtain:
\begin{equation}\label{P2W}
\left\Vert \overset{2}{\mathbb{P}}\left( \tilde{W} \right) \right\Vert^{2}  \lesssim  \frac{1}{\left\vert 1 + \eta \right\vert^{2}} \; a^{2} \, \sum_{n} \,  \left\vert \langle \mathcal{P}(\cdot , 0) , e_{n}^{(2)} \rangle \right\vert^{2} + a^{4} \, \sum_{n}  \left\vert \langle T, e_{n}^{(2)} \rangle \right\vert^{2}.  
\end{equation}
The computation of $\underset{n}{\sum} \left\vert \langle T, e_{n}^{(2)} \rangle \right\vert^{2}$ can be made similar to the computation of $\underset{n}{\sum} \left\vert \langle T, e_{n}^{(1)} \rangle \right\vert^{2}$. Indeed, from $(\ref{T.en1.en2})$, we deduce  
\begin{equation*}
\sum_{n} \left\vert \langle T, e_{n}^{(2)} \rangle \right\vert^{2}  \lesssim  \left\Vert  \overset{3}{\mathbb{P}}\left(\tilde{W}\right) \right\Vert^{2}  + a^{6}  \left\Vert  \overset{1}{\mathbb{P}}\left(\tilde{W}\right) \right\Vert^{2}.
\end{equation*}
Thus $(\ref{P2W})$ takes the form 
\begin{equation}\label{Equa-228}
\left\Vert \overset{2}{\mathbb{P}}\left( \tilde{W} \right) \right\Vert^{2}  \lesssim  \frac{1}{\left\vert 1 + \eta \right\vert^{2}} \; a^{2} \, \sum_{n} \,  \left\vert \langle \mathcal{P}(\cdot , 0) , e_{n}^{(2)} \rangle \right\vert^{2} + a^{4} \, \left\Vert  \overset{3}{\mathbb{P}}\left(\tilde{W}\right) \right\Vert^{2}  + a^{10}  \left\Vert  \overset{1}{\mathbb{P}}\left(\tilde{W}\right) \right\Vert^{2}.  
\end{equation}
Next, we estimate $\left\Vert \overset{3}{\mathbb{P}}\left( \tilde{W} \right) \right\Vert^{2}$. From \eqref{Equa-Est-W}, we have
\begin{eqnarray}\label{P3W}
\nonumber
\langle \overset{3}{\mathbb{P}}\left( \tilde{W} \right), e_{n}^{(3)} \rangle &=& \frac{1}{\left( 1 + \eta \, \lambda_{n}^{(3)} \right)} \left[ a \, \langle \mathcal{P}(\cdot , 0) , e_{n}^{(3)} \rangle - \langle T, e_{n}^{(3)} \rangle \right], \\ \nonumber
\left\vert \langle \overset{3}{\mathbb{P}}\left( \tilde{W} \right), e_{n}^{(3)} \rangle \right\vert^{2} & \lesssim & \left\vert \eta \right\vert^{-2} \left[ a^{2} \, \left\vert \langle \mathcal{P}(\cdot , 0) , e_{n}^{(3)} \rangle \right\vert^{2} + \left\vert \langle T, e_{n}^{(3)} \rangle \right\vert^{2} \right], \\
\left\Vert \overset{3}{\mathbb{P}}\left( \tilde{W} \right) \right\Vert^{2} & \lesssim & \left\vert \eta \right\vert^{-2} \; a^{2} \, \sum_{n} \,  \left\vert \langle \mathcal{P}(\cdot , 0) , e_{n}^{(3)} \rangle \right\vert^{2} + a^{4} \, \sum_{n}  \left\vert \langle T, e_{n}^{(3)} \rangle \right\vert^{2}.  
\end{eqnarray}
For $\left\vert \langle T, e_{n}^{(3)} \rangle \right\vert$, we have
\begin{eqnarray*}
	\left\vert \langle T, e_{n}^{(3)} \rangle \right\vert^{2} & \lesssim & \left\vert \langle N\left(\overset{3}{\mathbb{P}}\left(\tilde{W}\right) \right), e_{n}^{(3)} \rangle \right\vert^{2} + a^{2} \left\vert \langle \int_{B}   \overset{3}{\mathbb{P}}\left(\tilde{W}\right)(y) \, dy, e_{n}^{(3)} \rangle \right\vert^{2}  \\
	& + & a^{4} \, \sum_{\ell \geq 3} \left\vert \langle \int_{B} \frac{Hess\left( \left\vert \cdot - y \right\vert^{\ell} \right)}{(\ell +1)!} \cdot \overset{3}{\mathbb{P}}\left(\tilde{W}\right)(y) \, dy , e_{n}^{(3)} \rangle \right\vert^{2} + a^{6} \sum_{\ell \geq 2}  \left\vert \langle \int_{B} \frac{\left\vert \cdot - y \right\vert^{\ell}}{(\ell + 1)!}   \overset{3}{\mathbb{P}}\left(\tilde{W}\right)(y) dy, e_{n}^{(3)} \rangle \right\vert^{2} \\ &+& a^{6} \sum_{\ell \geq 2}  \left\vert \langle \int_{B} \frac{\left\vert \cdot - y \right\vert^{\ell}}{(\ell + 1)!} \;  \overset{1}{\mathbb{P}}\left(\tilde{W}\right)(y) dy, e_{n}^{(3)} \rangle \right\vert^{2}. 
\end{eqnarray*}
Taking the series with respect to $n$ index and utilizing the continuity of the Newtonian potential operator, we obtain 
\begin{eqnarray*}
	\sum_{n} \left\vert \langle T, e_{n}^{(3)} \rangle \right\vert^{2} & \lesssim & \left\Vert \overset{3}{\mathbb{P}}\left(\tilde{W}\right) \right\Vert^{2} + a^{2} \left\Vert \overset{3}{\mathbb{P}}\left(\tilde{W}\right) \right\Vert^{2} + a^{4} \left\Vert \overset{3}{\mathbb{P}}\left(\tilde{W}\right) \right\Vert^{2} + a^{6} \left\Vert \overset{3}{\mathbb{P}}\left(\tilde{W}\right) \right\Vert^{2} + a^{6} \left\Vert \overset{1}{\mathbb{P}}\left(\tilde{W}\right) \right\Vert^{2} \\ 
	& \lesssim & \left\Vert \overset{3}{\mathbb{P}}\left(\tilde{W}\right) \right\Vert^{2} + a^{6} \left\Vert \overset{1}{\mathbb{P}}\left(\tilde{W}\right) \right\Vert^{2}, 
\end{eqnarray*}
and therefore $(\ref{P3W})$ becomes 
\begin{equation}\label{Equa-230}
\left\Vert \overset{3}{\mathbb{P}}\left( \tilde{W} \right) \right\Vert^{2}  \lesssim  \left\vert \eta \right\vert^{-2} \; a^{2} \, \sum_{n} \,  \left\vert \langle \mathcal{P}(\cdot , 0) , e_{n}^{(3)} \rangle \right\vert^{2} + a^{10} \,\left\Vert \overset{1}{\mathbb{P}}\left( \tilde{W} \right) \right\Vert^{2}.  
\end{equation}
Plugging $(\ref{Equa-230})$ into $(\ref{Equa-226})$, we can deduce  
\begin{eqnarray*}
	\left\Vert \overset{1}{\mathbb{P}}\left( \tilde{W} \right) \right\Vert^{2}  & \lesssim &   a^{2} \, \sum_{n} \frac{\left\vert \langle \mathcal{P}(\cdot , 0) , e_{n}^{(1)} \rangle \right\vert^{2}}{\left\vert 1 - k^{2} \eta \, a^{2} \, \lambda_{n}^{(1)} \right\vert^{2}} + a^{6-2h} \, \sum_{n} \left\vert \langle \mathcal{P}(\cdot , 0) , e_{n}^{(3)} \rangle \right\vert^{2} \\ 
	& \lesssim &  a^{2} \,\frac{\left\vert \langle \mathcal{P}(\cdot , 0) , e_{n_{0}}^{(1)} \rangle \right\vert^{2}}{\left\vert 1 - k^{2} \eta \, a^{2} \, \lambda_{n_{0}}^{(1)} \right\vert^{2}} + a^{2} \, \sum_{n \neq n_{0}} \frac{\left\vert \langle \mathcal{P}(\cdot , 0) , e_{n}^{(1)} \rangle \right\vert^{2}}{\left\vert 1 - k^{2} \eta \, a^{2} \, \lambda_{n}^{(1)} \right\vert^{2}} + a^{6-2h} \, \sum_{n} \left\vert \langle \mathcal{P}(\cdot , 0) , e_{n}^{(3)} \rangle \right\vert^{2}. 
\end{eqnarray*}
This implies, 
\begin{equation}\label{fin-W-1}
\left\Vert \overset{1}{\mathbb{P}}\left( \tilde{W} \right) \right\Vert = \mathcal{O}\left( a^{1-h} \right). 
\end{equation}
Combining with $(\ref{fin-W-1})$ and $(\ref{Equa-230})$, we deduce that 
\begin{equation}\label{fin-W-3}
\left\Vert \overset{2}{\mathbb{P}}\left( \tilde{W} \right) \right\Vert = \mathcal{O}\left( a^{3} \right). 
\end{equation}
Similarly, from $(\ref{fin-W-1}),(\ref{fin-W-3})$ and $(\ref{Equa-228})$, we derive that
\begin{equation}\label{fin-W-2}
\left\Vert \overset{3}{\mathbb{P}}\left( \tilde{W} \right) \right\Vert = \mathcal{O}\left( a^{3} \right). 
\end{equation}
\end{proof}

\section{Derivation of the linear algebraic systems stated in Section \ref{sec-la-system}.}\label{sec-proof-la}

\subsection{Proof of Proposition \ref{pro-general la}.}
We first give the proof of the invertibility condition for the general form of the linear algebraic systerm as follows.

\begin{proof}[Proof of Proposition \ref{pro-general la}]
	We rewrite the linear algebraic system \eqref{eq-gene-la} in the following matrix form
	\begin{equation}\label{la-trans1}
	\left(\begin{array}{c}
	Q_1\\ 
	Q_2\\ 
	\vdots\\ 
	Q_\aleph
	\end{array}\right) - \eta 
	\left(
	\begin{array}{cccc}
	0& [P_{D_1}]\cdot\Upsilon_{12}^k  &\cdots  & [P_{D_1}]\cdot\Upsilon_{1\aleph}^k  \\ 
	\left[P_{D_2}\right] \cdot \Upsilon_{21}^k& 0  & \cdots  &\left[P_{D_2}\right]\cdot \Upsilon_{2\aleph}^k  \\ 
	\vdots&\vdots  & \ddots &\vdots  \\ 
	\left[P_{D_\aleph}\right]\cdot\Upsilon_{\aleph1}^k&\left[P_{D_\aleph}\right]\cdot\Upsilon_{\aleph2}^k  & \cdots &0 
	\end{array} \right)
	\left(
	\begin{array}{c}
	Q_1\\ 
	Q_2\\ 
	\vdots\\ 
	Q_\aleph
	\end{array}
	\right)
	= \left(
	\begin{array}{c}
	J_1\\ 
	J_2\\ 
	\vdots\\ 
	J_\aleph
	\end{array} \right),
	\end{equation}
	where $\Upsilon_{ij}^k:=\Upsilon_k(z_i,z_j)$ and we set $\left\rVert [P_{0}] \right\rVert := \underset{j=1, ..., \aleph}{\max} \left\lVert [P_{D_j}] \right\rVert_{\mathbb{L}^{\infty}(\Omega)}$. 
	%
	
	Taking the inner product on the both sides of \eqref{la-trans1} with the following vector \newline
$
	\left(
	\left[P_{D_1}\right] \cdot Q_1, 
	\left[P_{D_2}\right] \cdot Q_2, 
	\cdots,\\ 
	\left[P_{D_\aleph}\right] \cdot Q_\aleph
\right)^T,
$
	by direct computations, we obtain from \eqref{la-trans1} that
	\begin{equation}\label{la-trans4}
	\sum_{j=1}^{\aleph}\left\langle Q_j, \ [P_{D_j}] \cdot Q_j \right\rangle-\eta  \sum_{\ell=1}^{\aleph}\left\langle[P_{D_\ell}]\cdot\sum_{j=1 \atop j \neq \ell}^{\aleph}\Upsilon_{\ell j}^k\cdot Q_j,  \ [P_{D_\ell}] \cdot Q_\ell\right\rangle
	= \sum_{j=1}^{\aleph}\left\langle J_j, [P_{D_j}] \cdot Q_j \right\rangle.
	\end{equation}
	We rewrite \eqref{la-trans4} as follows
	\begin{align}\label{la-trans5}
	&\sum_{j=1}^{\aleph}\left\langle Q_j , [P_{D_j}] \cdot Q_j\right\rangle-\eta \sum_{\ell=1}^{\aleph}\left\langle[P_{D_\ell}]\cdot\sum_{j=1 \atop j \neq \ell}^{\aleph}\Upsilon_{\ell j}^0\cdot Q_j, \ [P_{D_\ell}] \cdot Q_\ell\right\rangle\notag\\
	&-\eta  \sum_{\ell=1}^{\aleph}\left\langle[P_{D_\ell}]\cdot\sum_{j=1 \atop j \neq \ell}^{\aleph}(\Upsilon_{\ell j}^k-\Upsilon_{\ell j}^0)\cdot Q_j, \ [P_{D_\ell}] \cdot Q_\ell\right\rangle
	= \sum_{j=1}^{\aleph}\left\langle J_j, [P_{D_j}] \cdot Q_j\right\rangle.
	\end{align}

	Denote
	\begin{equation}\notag
	\Gamma^0:=\sum_{\ell = 1}^\aleph\left\langle [P_{D_\ell}]\cdot\sum_{j=1 \atop j \neq \ell}^\aleph\Upsilon_{\ell j}^0\cdot Q_j, \ [P_{D_\ell}] \cdot Q_\ell\right\rangle.
	\end{equation}
	Since $\Upsilon_{\ell j}^0$ is harmonic, by the Mean Value Theorem we know that
	\begin{equation}\notag
	\Upsilon_{\ell j}^0:=\Upsilon^0(z_\ell, z_j)=\frac{1}{|\mathbf{B}_\ell|}\frac{1}{|\mathbf{B}_j|}\int_{\mathbf{B}_\ell}\int_{\mathbf{B}_j}\Upsilon^0(x, z)\,dx\,dz,
	\end{equation}
	where $\mathbf{B}_\ell$ and $\mathbf{B}_j$ are two balls of radius $\dfrac{d}{2}$, centered at $z_\ell$ and $z_j$, respectively.
	Then 
	\begin{align}\notag
	\Gamma^0&=\sum_{\ell = 1}^\aleph\left\langle[P_{D_\ell}]\cdot\sum_{j=1 \atop j \neq \ell}^\aleph\frac{1}{|\mathbf{B}_\ell|}\frac{1}{|\mathbf{B}_j|}\int_{\mathbf{B}_\ell}\int_{\mathbf{B}_j}\Upsilon^0(x, z)\,dx\,dz\cdot Q_j,\ [P_{D_\ell}] \cdot Q_\ell\right\rangle\notag\\
	&=\frac{36}{\pi^2 d^6}\sum_{\ell = 1}^\aleph\left\langle[P_{D_\ell}]\cdot\sum_{j=1 \atop j \neq \ell}^\aleph\int_{\mathbf{B}_\ell}\int_{\mathbf{B}_j}\Upsilon^0(x, z)\,dx\,dz\cdot Q_j,\ [P_{D_\ell}] \cdot Q_\ell\right\rangle\notag\\
	&=\frac{36}{\pi^2 d^6}\sum_{\ell = 1}^\aleph\sum_{j=1 \atop j \neq \ell}^\aleph\int_{\mathbf{B}_\ell}\int_{\mathbf{B}_j}\left\langle[P_{D_\ell}]\cdot\Upsilon^0(x, z)\cdot Q_j,\ [P_{D_\ell}] \cdot Q_\ell\right\rangle\,dx\,dz.\notag
	\end{align}
	
	We denote
	\begin{equation}\label{nota-V}
	V_m:=[P_{D_m}]\cdot Q_m \quad\mbox{for} \quad m=1, 2, \cdots, \aleph,
	\end{equation}
	and for $m=1,\cdots, \aleph$,
	\begin{equation}\label{nota-piq}
	\Pi(x):=
	\begin{cases}
	V_m&\mbox{in}\quad\mathbf{B}_m \\
	0&\mbox{otherwise}
	\end{cases},\
	\Lambda(x):=
	\begin{cases}
	Q_m& \mbox{in}\quad \mathbf{B}_m\\
	0&\mbox{otherwise}
	\end{cases},\
	[\mathbf{P}]:=
	\begin{cases}
	[P_{D_m}]&\mbox{in}\quad\mathbf{B}_m\\
	0&\mbox{otherwise}
	\end{cases}.
	\end{equation}
	Set $\Omega:=\overset{\aleph}{\underset{m=1}{\cup}}\mathbf{B}_m$, then $\Gamma^0$ becomes
	\begin{equation}\notag
	\Gamma^0=\frac{36}{\pi^2 d^6}\int_{\Omega}\int_{\Omega}\left\langle[\mathbf{P}]\cdot\Upsilon^0(x, z)\cdot \Lambda(x), \Pi(z)\right\rangle\,dx\,dz-\frac{36}{\pi^2 d^6}\sum_{m=1}^{\aleph}\int_{\mathbf{B}_m}\int_{\mathbf{B}_m}\left\langle[P_{D_m}]\cdot\Upsilon^0(x, z)\cdot Q_m,  V_m\right\rangle\,dx\,dz,
	\end{equation}
	It is direct to see that
	\begin{eqnarray*}
	\int_{\mathbf{B}_m}\int_{\mathbf{B}_m}\Upsilon^0(x, z)\,dz\,dx = \int_{\mathbf{B}_m}\int_{\mathbf{B}_m}\underset{z}{Hess} \Phi_{0}(x, z)\,dz\,dx
	& = & -\int_{\mathbf{B}_m}\underset{x}{\nabla}\int_{\mathbf{B}_m}\underset{z}{\nabla} \Phi_{0}(x, z)\,dz\,dx \\
	&=& \int_{\mathbf{B}_m}\underset{x}{Hess}\int_{\mathbf{B}_m} \Phi_{0}(x, z)\,dz\,dx.
	\end{eqnarray*}
	From \cite[eq. (1.12)]{GS}, we know that for $\mathbf{B}_{m}$, there holds:
	\begin{equation}\label{N1}
	N_{\mathbf B_{m}}(1)(x)= \frac{d^{2}}{8} - \frac{\left\vert x \right\vert^{2}}{6} \quad \text{henceforth} \quad Hess \, N_{\mathbf B_{m}}(1)(x)=-\frac{1}{3}I, \quad x \in \mathbf{B_{m}}.
	\end{equation}
Then,
	\begin{equation}\label{calcu-int}
	\int_{\mathbf{B}_m}\int_{\mathbf{B}_m}\Upsilon^0(x, z)\,dz\,dx=\int_{\mathbf{B}_m}\underset{x}{Hess}N(1)(x)\,dx=-\frac{\pi d^3}{18}I.
	\end{equation}
Therefore, we obtain
	\begin{align}\notag
	\Gamma^0&=\frac{36}{\pi^2 d^6}\int_{\Omega}\int_{\Omega}\left\langle[\mathbf{P}]\cdot\Upsilon^0(x, z)\cdot \Lambda(x), \Pi(z)\right\rangle\,dx\,dz-\frac{36}{\pi^2 d^6}\sum_{m=1}^{\aleph}\left\langle[P_{D_m}]\cdot\left(-\frac{\pi d^3}{18}I\right)\cdot Q_m,  V_m\right\rangle,\notag\\
	&=\frac{36}{\pi^2 d^6}\int_{\Omega}\int_{\Omega}\left\langle[\mathbf{P}]\cdot\Upsilon^0(x, z)\cdot \Lambda(x), \Pi(z)\right\rangle\,dx\,dz+\frac{2}{\pi d^3}\sum_{m=1}^{\aleph}|V_m|^2.\notag
	\end{align}
	
	
Now we proceed to estimate the difference term by denoting
	\begin{equation}\notag
	\mathcal{R}^k:=\Gamma^k-\Gamma^0=\sum_{\ell = 1}^\aleph\left\langle[P_{D_\ell}]\cdot\sum_{j=1 \atop j \neq \ell}^\aleph\left(\Upsilon_{\ell j}^k-\Upsilon_{\ell j}^0\right)\cdot Q_j, \ V_\ell\right\rangle.
	\end{equation}
	Combining with \eqref{expansion-of-Hess}, we derive by direct computations that
	\begin{align}\label{diff-U}
	\Upsilon_{\ell j}^k-\Upsilon_{\ell j}^0&=\underset{x}{Hess}\Phi_{k}(z_\ell, z_j)-\underset{x}{Hess}\Phi_{0}(z_\ell, z_j)+k^2\Phi_k(z_\ell, z_j)I\quad (x\neq y)\notag\\
	&=\frac{k^2}{2}\Phi_{0}(z_\ell, z_j)I-\frac{i k^3}{12\pi}I+\frac{k^2}{2}\Phi_{0}(z_\ell, z_j)\frac{(z_\ell-z_j)\otimes(z_\ell-z_j)}{\lVert z_\ell-z_j \rVert^2}\notag\\
	&+\frac{1}{4\pi}\sum_{n\geq3}\frac{(ik)^{n+1}}{(n+1)!}Hess\left(\lVert z_\ell-z_j\rVert^n\right)+k^2\frac{1}{4\pi}\sum_{n\geq0}\frac{(ik)^{n+1}}{(n+1)!}|z_\ell-z_j|^n.
	\end{align}
	Then $\mathcal{R}^k$ becomes
	\begin{align}\notag
	\mathcal{R}^k&=\sum_{\ell = 1}^\aleph \sum_{j=1 \atop j \neq \ell}^\aleph \left\langle[P_{D_\ell}]\cdot\left(\Upsilon_{\ell j}^k-\Upsilon_{\ell j}^0\right)\cdot Q_j, \ V_\ell\right\rangle\notag\\
	&=\sum_{\ell = 1}^\aleph \sum_{j=1 \atop j \neq \ell}^\aleph \left\langle[P_{D_\ell}]\cdot\left(-\frac{i k^3}{12\pi}I\right)\cdot Q_j, \ V_\ell\right\rangle
	+\sum_{\ell = 1}^\aleph \sum_{j=1 \atop j \neq \ell}^\aleph \left\langle[P_{D_\ell}]\cdot\left(\frac{k^2}{2}\frac{1}{4\pi|z_\ell-z_j|}I\right)\cdot Q_j, \ V_\ell\right\rangle\notag\\
	&+\sum_{\ell = 1}^\aleph \sum_{j=1 \atop j \neq \ell}^\aleph \left\langle[P_{D_\ell}]\cdot\left(\frac{k^2}{2}\frac{1}{4\pi|z_\ell-z_j|}\frac{(z_\ell-z_j)}{|z_\ell-z_j|}\otimes\frac{(z_\ell-z_j)}{|z_\ell-z_j|}\right)\cdot Q_j, \ V_\ell\right\rangle\notag\\
	&+\sum_{\ell = 1}^\aleph \sum_{j=1 \atop j \neq \ell}^\aleph \left\langle[P_{D_\ell}]\cdot\left(\frac{1}{4\pi}\sum_{n\geq3}\frac{(ik)^{n+1}}{(n+1)!}Hess\left(\lVert z_\ell-z_j\rVert^n\right)\right)\cdot Q_j, \ V_\ell\right\rangle\notag\\
	&+\sum_{\ell = 1}^\aleph \sum_{j=1 \atop j \neq \ell}^\aleph \left\langle[P_{D_\ell}]\cdot\left(\frac{k^2}{4\pi}\sum_{n\geq0}\frac{(ik)^{n+1}}{(n+1)!}\big| z_\ell-z_j\big|^n\right)\cdot Q_j, \ V_\ell\right\rangle.\notag
	\end{align}
	With the notation \eqref{nota-V}, we further derive
	\begin{align}\notag
	\mathcal{R}^k&=-\sum_{\ell = 1}^\aleph\sum_{j=1 \atop j \neq \ell}^\aleph\frac{i k^3}{12\pi}\langle [P_{D_\ell}] Q_j, V_\ell\rangle+\sum_{\ell = 1}^\aleph\sum_{j=1 \atop j \neq \ell}^\aleph\frac{k^2}{2}\frac{1}{4\pi|z_\ell-z_j|}\langle [P_{D_\ell}] Q_j, V_\ell\rangle\notag\\
	&+\sum_{\ell = 1}^\aleph\sum_{j=1 \atop j \neq \ell}^\aleph\frac{k^2}{2}\frac{1}{4\pi|z_\ell-z_j|}\left\langle \frac{(z_\ell-z_j)}{|z_\ell-z_j|},  Q_j\right\rangle\left\langle [P_{D_\ell}]\cdot\frac{(z_\ell-z_j)}{|z_\ell-z_j|}, V_\ell\right\rangle\notag\\
	&+\sum_{\ell = 1}^\aleph\sum_{j=1 \atop j \neq \ell}^\aleph\frac{1}{4\pi}\sum_{n \geq 3}\frac{(ik)^{n+1}}{(n+1)!}Hess(\lVert z_\ell-z_j\rVert^n)\langle [P_{D_\ell}] Q_j, V_\ell\rangle+\sum_{\ell = 1}^\aleph\sum_{j=1 \atop j \neq \ell}^\aleph\frac{k^2}{4\pi}\sum_{n\geq0}\frac{(ik)^{n+1}}{(n+1)!}|z_\ell-z_j|^n\langle [P_{D_\ell}] Q_j, V_\ell\rangle.\notag
	\end{align}
	Since by the Mean Value Theorem, there holds that
	\begin{equation}\notag
	\frac{1}{4\pi|z_\ell-z_j|}=\frac{1}{|\mathbf{B}_\ell|}\frac{1}{|\mathbf{B}_j|}\int_{\mathbf{B}_\ell}\int_{\mathbf{B}_j}\Phi_{0}(x, y)\,dx\,dy=\frac{36}{\pi^{2} \, d^{6}}\int_{\mathbf{B}_\ell}\int_{\mathbf{B}_j}\Phi_{0}(x, y)\,dx\,dy,
	\end{equation}
	then
	\begin{eqnarray}\label{R^k}
	\mathcal{R}^k &=& -\frac{i k^3}{12\pi}\sum_{\ell = 1}^\aleph\sum_{j=1 \atop j \neq \ell}^\aleph\langle [P_{D_\ell}] Q_j, V_\ell\rangle+\frac{18 k^2}{\pi^2 d^6}\sum_{\ell = 1}^\aleph\sum_{j=1 \atop j \neq \ell}^\aleph\int_{\mathbf{B}_\ell}\int_{\mathbf{B}_j}\Phi_{0}(x, y)\langle [P_{D_\ell}] Q_j, V_\ell\rangle\,dx\,dy\notag\\ 
	&+& \frac{18 k^2}{\pi^2 d^6}\sum_{\ell = 1}^\aleph\sum_{j=1 \atop j \neq \ell}^\aleph\int_{\mathbf{B}_\ell}\int_{\mathbf{B}_j}\Phi_{0}(x, y)\left\langle \frac{(z_\ell-z_j)}{|z_\ell-z_j|}, \  Q_j\right\rangle\left\langle [P_{D_\ell}]\cdot\frac{(z_\ell-z_j)}{|z_\ell-z_j|}, V_\ell\right\rangle\,dx\,dy\notag\\ \nonumber
	&+& \sum_{\ell = 1}^\aleph\sum_{j=1 \atop j \neq \ell}^\aleph\frac{1}{4\pi}\sum_{n \geq 3}\frac{(i k)^{n+1}}{(n+1)!}Hess(\lVert z_\ell-z_j\rVert^n)\langle [P_{D_\ell}] Q_j, V_\ell\rangle \\  &+& \sum_{\ell = 1}^\aleph\sum_{j=1 \atop j \neq \ell}^\aleph\frac{k^2}{4\pi}\sum_{n\geq0} \frac{(i k)^{n+1}}{(n+1)!}|z_\ell-z_j|^n\langle [P_{D_\ell}] Q_j, V_\ell\rangle.
	\end{eqnarray}
	Considering the second term in \eqref{R^k}, we know that
	\begin{eqnarray}\label{RK-mid1}
\nonumber
L_{6} &:=&	\sum_{\ell=1}^\aleph\sum_{j=1 \atop j \neq \ell}^\aleph\int_{\mathbf{B}_\ell}\int_{\mathbf{B}_j}\Phi_{0}(x, y)\langle [P_{D_\ell}] Q_j, V_\ell\rangle\,dx\,dy \\
	&=&\sum_{\ell, j=1}^\aleph\int_{\mathbf{B}_\ell}\int_{\mathbf{B}_j}\Phi_{0}(x, y)\langle [P_{D_\ell}] Q_j, V_\ell\rangle\,dx\,dy-\sum_{m=1}^\aleph\int_{\mathbf{B}_m}\int_{\mathbf{B}_m}\Phi_{0}(x, y)\langle V_m, V_m\rangle\,dx\,dy\notag\\
	&=&\int_{\Omega}\int_{\Omega}\Phi_{0}(x, y)[\mathbf{P}]\Lambda(x)\,dx\Pi(y)\,dy-\sum_{m=1}^\aleph\int_{\mathbf{B}_m}\int_{\mathbf{B}_m}\Phi_{0}(x, y)\langle V_m, V_m\rangle\,dx\,dy.
	\end{eqnarray}
	By utilizing \eqref{N1} again, there holds
	\begin{equation}\notag
	\int_{\mathbf{B}_m}\int_{\mathbf{B}_m}\Phi_{0}(x, y)\,dx\,dy=\int_{\mathbf{B}_m}N(1)(y)\,dy=\int_{\mathbf{B}_m} \, \left( \frac{d^{2}}{8} - \frac{\left\vert y \right\vert^{2}}{6} \right) \, dy=\frac{\pi d^5}{60}.
	\end{equation}
	Thus, \eqref{RK-mid1} can be written as
	\begin{equation}\notag
	L_{6} =\int_{\Omega}\int_{\Omega}\Phi_{0}(x, y)[\mathbf{P}]\Lambda(x)\Pi(y)\,dx\,dy-\sum_{m=1}^\aleph\frac{\pi d^5}{60}|V_m|^2,
	\end{equation}
	which implies that in \eqref{R^k}, we have
	\begin{align}\label{Rk-final}
	\mathcal{R}^k&=-\frac{i k^3}{12\pi}\sum_{\ell = 1}^\aleph\sum_{j\neq\ell}^\aleph\langle [P_{D_\ell}] Q_j, V_\ell\rangle+\frac{18 k^2}{\pi^2 d^6}\int_{\Omega}\int_{\Omega}\Phi_{0}(x, y)[\mathbf{P}]\Lambda(x)\Pi(y)\,dx\,dy-\frac{3k^2}{10\pi d}\sum_{m=1}^\aleph|V_m|^2\notag\\
	&+\frac{18 k^2}{\pi^2 d^6}\sum_{\ell = 1}^\aleph\sum_{j\neq\ell}^\aleph\int_{\mathbf{B}_\ell}\int_{\mathbf{B}_j}\Phi_{0}(x, y)\left\langle \frac{(z_\ell-z_j)}{|z_\ell-z_j|}, \ Q_j\right\rangle\left\langle [P_{D_\ell}]\cdot\frac{(z_\ell-z_j)}{|z_\ell-z_j|}, V_\ell\right\rangle\,dx\,dy\notag\\
	&+\sum_{\ell = 1}^\aleph\sum_{j\neq\ell}^\aleph\frac{1}{4\pi}\sum_{n \geq 3}\frac{(i k)^{n+1}}{(n+1)!}Hess(\lVert z_\ell-z_j\rVert^n)\langle [P_{D_\ell}] Q_j, V_\ell\rangle \notag \\ &+
	\sum_{\ell = 1}^\aleph\sum_{j\neq\ell}^\aleph\frac{k^2}{4\pi}\sum_{n\geq0} \frac{(i k)^{n+1}}{(n+1)!}|z_\ell-z_j|^n\langle [P_{D_\ell}] Q_j, V_\ell\rangle.
	\end{align}
	Now, we give the estimation of $\mathcal{R}^k$ term by term. 
	\begin{itemize}
		\item Estimation of 
		\begin{eqnarray}\label{es-RK1}
		\nonumber
	L_{7,1} & := &	-\frac{i k^3}{12\pi}\overset{\aleph}{\underset{\ell=1}{\sum}}\overset{\aleph}{\underset{j\neq\ell}{\sum}}\langle [P_{D_\ell}] Q_j, V_\ell\rangle, \\
	\left\vert	 L_{7,1} \right\vert & = & \bigg|-\frac{i k^3}{12\pi}\sum_{\ell = 1}^\aleph\sum_{j\neq\ell}^\aleph\langle [P_{D_\ell}] Q_j, V_\ell\rangle\bigg|
		\lesssim k^3\sum_{\ell = 1}^\aleph\sum_{j\neq\ell}^\aleph|\langle [P_{D_\ell}] Q_j, V_\ell\rangle|
		\lesssim k^3\sum_{\ell = 1}^\aleph\sum_{j\neq \ell}^\aleph\bigg|\left\langle [P_{D_\ell}]Q_j, [P_{D_\ell}] Q_\ell\right\rangle\bigg|\notag\\
		& \lesssim &  k^3 \left\lVert[P_0]\right\rVert^2\left(\sum_{j = 1}^\aleph|Q_j|\right)^2\lesssim k^3 \left\lVert[P_0]\right\rVert^2 \, \aleph \,  \sum_{j = 1}^\aleph |Q_j|^2.
		\end{eqnarray}

		\item Estimation of 
\begin{equation*}
L_{7,2} := \frac{18k^2}{\pi^2 d^6}\int_{\Omega}\int_{\Omega}\Phi_{0}(x, y)[\mathbf{P}]\Lambda(x)\,dx\Pi(y)\,dy.
\end{equation*}		
Following the definition of the Newtonian potential operator, we have 
		\begin{eqnarray}\label{es-RK2}
		\nonumber
	\left\vert	L_{7,2} \right\vert &=& \bigg|\frac{18k^2}{\pi^2 d^6}\int_{\Omega}\int_{\Omega}\Phi_{0}(x, y)[\mathbf{P}]\Lambda(x)\,dx\Pi(y)\,dy\bigg|  \lesssim  k^2 d^{-6}\big|\int_{\Omega} N(\Pi)(x)\cdot[\mathbf{P}]\Lambda(x)\,dx\big| \\ & \lesssim &  k^2 d^{-6}\lVert N(\Pi)\rVert_{\mathbb{L}^2(\Omega)}\lVert[\mathbf{P}]\Lambda \rVert_{\mathbb{L}^2(\Omega)}\notag 
		 \lesssim  k^2 d^{-6}\lVert N\rVert_{\mathcal{L}(\mathbb{L}^2(\Omega); \mathbb{L}^2(\Omega))}\lVert \Pi\rVert_{\mathbb{L}^2(\Omega)}\left\lVert[P_0]\right\rVert \, \lVert\Lambda\rVert_{\mathbb{L}^2(\Omega)}
	\\ \nonumber	& \lesssim & k^2 d^{-3}\lVert N\rVert_{\mathcal{L}(\mathbb{L}^2(\Omega); \mathbb{L}^2(\Omega))}\left(\sum_{m=1}^\aleph|V_m|^2\right)^\frac{1}{2}\left\lVert[P_0]\right\rVert \, \left(\sum_{m=1}^\aleph |Q_m|^2\right)^\frac{1}{2}\notag \\ 
		& \lesssim & k^2 d^{-3}\lVert N\rVert_{\mathcal{L}(\mathbb{L}^2(\Omega); \mathbb{L}^2(\Omega))}\left\lVert[P_0]\right\rVert^2 \, \sum_{m=1}^\aleph\big|Q_m\big|^2. 
		\end{eqnarray}
		
		\item Estimation of
\begin{eqnarray}\label{es-RK3}
\nonumber
L_{7,3} &:=& -\frac{3k^2}{10\pi d}\overset{\aleph}{\underset{m=1}{\sum}}|V_m|^2, \\
\left\vert L_{7,3} \right\vert & \lesssim & k^2 d^{-1}\sum_{m=1}^\aleph|V_m|^2 	= k^2 d^{-1}\sum_{m=1}^\aleph\big|[P_{D_m}]Q_m\big|^2 = k^2 d^{-1} \, \left\Vert [P_{0}] \right\Vert^{2} \ \sum_{m=1}^\aleph\big| Q_m \big|^2. 
\end{eqnarray}

		\item Estimation of 
\begin{eqnarray*} 
\nonumber
L_{7,4} & := & \frac{18 k^2}{\pi^2 d^6}\overset{\aleph}{\underset{\ell=1}{\sum}}\overset{\aleph}{\underset{j\neq\ell}{\sum}}\int_{\mathbf{B}_\ell}\int_{\mathbf{B}_j}\Phi_{0}(x, y)\left\langle \frac{(z_\ell-z_j)}{|z_\ell-z_j|}, \ Q_j\right\rangle\left\langle [P_{D_\ell}]\cdot\frac{(z_\ell-z_j)}{|z_\ell-z_j|}, V_\ell\right\rangle\,dx\,dy, \\
\left\vert L_{7,4} \right\vert & = & \bigg| \frac{18 k^2}{\pi^2 d^6}\sum_{\ell = 1}^\aleph\sum_{j\neq\ell}^\aleph\int_{\mathbf{B}_\ell}\int_{\mathbf{B}_j}\Phi_{0}(x, y)\left\langle \frac{(z_\ell-z_j)}{|z_\ell-z_j|}, \ Q_j\right\rangle\left\langle [P_{D_\ell}]\cdot\frac{(z_\ell-z_j)}{|z_\ell-z_j|}, V_\ell\right\rangle\,dx\,dy\bigg|\notag \\
& \lesssim & k^2  \sum_{\ell = 1}^\aleph\sum_{j\neq\ell}^\aleph\frac{1}{|z_\ell-z_j|} \; \left\vert \left\langle \frac{(z_\ell-z_j)}{|z_\ell-z_j|} , \ Q_j\right\rangle \right\vert \left\vert \; \left\langle [P_{D_\ell}]\cdot\frac{(z_\ell-z_j)}{|z_\ell-z_j|}, V_\ell\right\rangle \right\vert \notag \\ 
& \lesssim & k^2 \, \left\lVert[P_0]\right\rVert \, \, \sum_{\ell = 1}^\aleph  \left\vert V_{\ell} \right\vert \, \sum_{j\neq\ell}^\aleph\frac{1}{|z_\ell-z_j|}\big| Q_j\big|.
\end{eqnarray*}
Then, by Cauchy-Schwartz inequality, for both of the indices $\ell$ and $j$, we deduce that 
\begin{equation}\label{es-RK4}
\left\vert L_{7,4} \right\vert \lesssim  k^2 \, \left\lVert[P_0]\right\rVert \,\, d^{-\frac{3}{2}} \, \aleph^{\frac{1}{2}} \, \left( \sum_{\ell = 1}^\aleph|V_\ell|^{2} \right)^{\frac{1}{2}} \, \left( \sum_{j = 1}^\aleph \big| Q_j \big|^{2} \right)^{\frac{1}{2}} = k^2 \, \left\lVert[P_0]\right\rVert^{2} \,\, d^{-\frac{3}{2}} \, \aleph^{\frac{1}{2}} \, \, \sum_{j = 1}^\aleph \big| Q_j \big|^{2}. 
\end{equation}
		
		
		
		\item Estimation of 
\begin{equation*}		
L_{7,5}:= \overset{\aleph}{\underset{\ell=1}{\sum}}\overset{\aleph}{\underset{j\neq\ell}{\sum}}\frac{1}{4\pi}\underset{n\geq3}{\sum}\frac{(i k)^{n+1}}{(n+1)!}Hess(\lVert z_\ell-z_j\rVert^n)\langle [P_{D_\ell}]Q_j, V_\ell\rangle.
\end{equation*}		
Similar to \eqref{Series1}, by direct verification, we can prove that 
		\begin{equation}\notag
		\bigg|\sum_{n \geq 3}\frac{(i k)^{n+1}}{(n+1)!}Hess(\lVert z_\ell-z_j\rVert^n)\bigg|\leq\sum_{n\geq3}\frac{n k^{n+1}\lVert z_\ell-z_j\rVert^{n-2}}{(n+1)!}<+\infty,
		\end{equation}
		which, after taking the modulus, implies 
		\begin{equation}\label{es-RK5}
\left\vert L_{7,5} \right\vert   \lesssim    k^4 \sum_{\ell = 1}^\aleph\sum_{j\neq\ell}^\aleph|[P_{D_\ell}]Q_j||V_\ell|  \lesssim  k^4 \, \left\lVert[P_0]\right\rVert^2 \, \aleph \, \sum_{j = 1}^\aleph \lvert Q_j \rvert^2.
		\end{equation}
		Similarly, we also obtain that
		\begin{eqnarray}\label{es-RK6}
		\nonumber
L_{7,6} & := &	\sum_{\ell = 1}^\aleph\sum_{j\neq\ell}^\aleph\frac{k^2}{4\pi}\sum_{n\geq0}\frac{(ik)^{n+1}}{(n+1)!}|z_\ell-z_j|^n\langle [P_{D_\ell}]Q_j, V_\ell\rangle, \\	
\left\vert L_{7,6} \right\vert &=&		\bigg| \sum_{\ell = 1}^\aleph\sum_{j\neq\ell}^\aleph\frac{k^2}{4\pi}\sum_{n\geq0}\frac{(ik)^{n+1}}{(n+1)!}|z_\ell-z_j|^n\langle [P_{D_\ell}]Q_j, V_\ell\rangle\bigg| =
\mathcal{O}\left(  k^3 \, \left\lVert[P_0]\right\rVert^2 \, \aleph \, \sum_{j = 1}^\aleph \lvert Q_j \rvert^2 \right).
%
%
		\end{eqnarray}
	\end{itemize}
	
	Combining with \eqref{es-RK1}, \eqref{es-RK2}, \eqref{es-RK3}, \eqref{es-RK4}, \eqref{es-RK5} and \eqref{es-RK6}, we derive the following estimate:
\begin{equation}\label{es-Rk-final}
\left\vert \mathcal{R}^k \right\vert  \lesssim  \sum_{j=1}^{6} L_{7,j} = \mathcal{O}\left(k^{2} \,  \left\lVert[P_0]\right\rVert^2 \, \sum_{j = 1}^\aleph \lvert Q_j \rvert^2 \left( \aleph \, k  + d^{-3} \, \lVert N\rVert_{\mathcal{L}(\mathbb{L}^2(\Omega); \mathbb{L}^2(\Omega))} \right)  \right).
\end{equation}
	
	Rewriting the linear algebraic system \eqref{la-trans5} by virtue of the expressions of $\Gamma^0$ and $\mathcal{R}^k$, we get
	\begin{equation}\label{la-trans6}
	\sum_{j = 1}^\aleph\left\langle Q_j, V_j\right\rangle-\eta \frac{36}{\pi^2 d^6}\int_{\Omega}\int_{\Omega}\left\langle  [\mathbf{P}]\cdot\Upsilon^0(x, z)\cdot \Lambda(x), \Pi(z)\right\rangle\,dx\,dz
	-\frac{2\eta }{\pi d^3}\sum_{m=1}^\aleph|V_m|^2-\eta \mathcal{R}^k= \sum_{j = 1}^\aleph\left\langle J_j, V_j\right\rangle.
	\end{equation}
	For the term $\dfrac{2 \, \eta }{\pi \, d^3}\overset{\aleph}{\underset{m=1}{\sum}}|V_m|^2$, it is easy to see that
	\begin{equation}\label{es-mid}
	\bigg|\frac{2\eta}{\pi d^3}\sum_{m=1}^\aleph|V_m|^2\bigg|  \lesssim a^{-2}d^{-3}\sum_{m=1}^\aleph|V_m|^2=a^{-2}d^{-3}\sum_{m=1}^\aleph\big|[P_{D_m}]\cdot Q_{m} \big|^2 = a^{-2} \, d^{-3} \, \left\Vert [P_{0}] \right\Vert^{2} \, \sum_{m=1}^\aleph\big| Q_{m} \big|^2.
	\end{equation} 
	By substituting \eqref{es-Rk-final} and \eqref{es-mid} into \eqref{la-trans6}, we can further deduce that
	\begin{eqnarray}\label{la-trans7}
\nonumber	
	\sum_{j = 1}^\aleph\left\langle Q_j, V_j\right\rangle &-& \eta \frac{36}{\pi^2 d^6}\int_{\Omega}\int_{\Omega}\left\langle  [\mathbf{P}]\cdot\Upsilon^0(x, z)\cdot \Lambda(x), \Pi(z)\right\rangle\,dx\,dz \\
	&=&\sum_{j = 1}^\aleph\left\langle  J_j, \ V_j\right\rangle+\mathcal{O}\left( a^{-2} \, \left\lVert[P_0]\right\rVert^2 \, \sum_{j = 1}^\aleph \, \lvert Q_j \rvert^2 \, \left(k^3 \aleph+d^{-3} \, k^2 \, \lVert N\rVert_{\mathcal{L}(\mathbb{L}^2(\Omega); \mathbb{L}^2(\Omega))}+d^{-3} \right) \right),
	\end{eqnarray}
	since $d^{-3}$ and $\aleph$ are of the same order with respect to $a$.
	
	Recall the notations \eqref{nota-piq}, where we have
	\begin{equation}\notag
	\sum_{j = 1}^\aleph\left\langle V_j, Q_j\right\rangle=\frac{6}{\pi d^3}\langle \Pi, \Lambda\rangle_{\mathbb{L}^2(\Omega)}.
	\end{equation}
	Then we can derive from \eqref{la-trans7} the linear algebraic systerm in terms of $\Pi$ and $\Lambda$ as follows
	\begin{eqnarray}\label{la-trans-final}
	\langle \Pi, \Lambda \rangle_{\mathbb{L}^2(\Omega)} &-& \frac{6\eta}{\pi d^3}\int_{\Omega}\int_{\Omega}\left\langle [\mathbf{P}]\cdot\Upsilon^0(x, z)\cdot \Lambda(x), \Pi(z)\right\rangle\,dx\,dz=\frac{\pi d^3}{6} \sum_{j=1}^\aleph\langle J_j, \ V_j\rangle\notag \\
	&+& \mathcal{O}\left( a^{-2} \, \left\lVert[P_0]\right\rVert^2 \, \sum_{j = 1}^\aleph\lvert Q_j \rvert^2 \, \left(k^3 \, \aleph \, d^3 \, + \, k^2 \, \lVert N\rVert_{\mathcal{L}(\mathbb{L}^2(\Omega); \mathbb{L}^2(\Omega))} \, + \, 1 \right)\right).
	\end{eqnarray}
	
	For the second term on the left hand side of \eqref{la-trans-final}, there holds
	\begin{equation}\label{secondterm}
	\frac{6\eta }{\pi d^3}\int_{\Omega}\int_{\Omega}\left\langle [\mathbf{P}]\cdot\Upsilon^0(x, z)\cdot \Lambda(x), \Pi(z)\right\rangle\,dx\,dz=-\frac{6\eta}{\pi d^3} \int_{\Omega}[\mathbf{P}]\cdot\nabla M(\Lambda)(z)\cdot\Pi(z)\,dz,
	\end{equation}
	which fulfills the estimate
	\begin{align*}\notag
	\bigg|-\frac{6\eta}{\pi d^3}\int_{\Omega}[\mathbf{P}]\cdot\nabla M(\Lambda)(z)\cdot \Pi(z)\,dz\bigg|&\lesssim |\eta| d^{-3}\lVert[\mathbf{P}]\cdot\nabla M(\Lambda)\rVert_{\mathbb{L}^2(\Omega)}\lVert\Pi\rVert_{\mathbb{L}^2(\Omega)}\notag\\
	&\lesssim |\eta| \left\lVert[P_0]\right\rVert \, d^{-3} \, \lVert\nabla M\rVert_{\mathcal{L}(\mathbb{L}^2(\Omega); \mathbb{L}^2(\Omega))}\lVert\Lambda\rVert_{\mathbb{L}^2(\Omega)}\lVert \Pi\rVert_{\mathbb{L}^2(\Omega)}.
	\end{align*}
Knowing that $\lVert \nabla M\rVert_{\mathcal{L}(\mathbb{L}^2(\Omega); \mathbb{L}^2(\Omega))} = 1$, see Theorem 2.1 of \cite{friedman1980mathematical}, 
we deduce that
\begin{equation*}
\bigg|-\frac{6\eta}{\pi d^3}\int_{\Omega}[\mathbf{P}]\cdot\nabla M(\Lambda)(z)\cdot \Pi(z)\,dz\bigg| \lesssim |\eta| \left\lVert[P_0]\right\rVert \, d^{-3} \,\lVert\Lambda\rVert_{\mathbb{L}^2(\Omega)}\lVert \Pi\rVert_{\mathbb{L}^2(\Omega)}.
\end{equation*}
Thus, a sufficient condition for the linear algebraic system to be invertible is that
	\begin{equation}\notag
	|\eta| \, \left\lVert[P_0]\right\rVert \, d^{-3} \, < \, 1,
	\end{equation}
	which completes the proof.
\end{proof}

Now, we construct the linear algebraic system related to $\overset{1}{\mathbb{P}}(E^T_j)$, $j=1, 2, \cdots, \aleph$, in the following subsection.

\subsection{Construction of the linear algebraic system in Proposition \ref{prop-la-1}.}

\begin{proof}[Proof of Proposition \ref{prop-la-1}]
	Recall the Lippmann-Schwinger equation \eqref{LS eq2}
	\begin{equation*}
 	E^{T}(x) + \eta \, \nabla M^{k}\left( E^{T} \right)(x) - k^{2} \, \eta \, N^{k}\left(E^{T}\right)(x) = E^{Inc}(x), \;\quad x \in D = \underset{j=1}{\overset{\aleph}{\cup}} D_{j},
	\end{equation*}
	which can be rewritten, for $x \in D_{j_{0}}$, as  
	\begin{equation}\label{AI0}
	\boldsymbol{T}_{k} \, \left(E^{T}_{j_{0}}\right)(x) - \eta \, \sum_{j=1 \atop j \neq j_{0}}^{\aleph} \int_{D_{j}} \Upsilon_{k}(x,y) \cdot E^{T}_j(y)\, dy = E^{Inc}_{j_{0}}(x),
	\end{equation}
	where $\Upsilon_{k}(\cdot,\cdot)$ is given by $(\ref{dyadicG})$.
	Considering the equation $(\ref{AI0})$, and successively, inverting the operator $\boldsymbol{T}_{k}$, multiplying on the both sides by $\mathcal{P}(x,z_{j_{0}})$ and integrating over $D_{j_{0}}$, then with the help of the adjoint operator of $\boldsymbol{T}_{k}$ and the definition of $W$, seeing $(\ref{AI1})$, we obtain that
	\begin{equation*}
	\int_{D_{j_{0}}} \mathcal{P}(x,z_{j_{0}})  \cdot E^{T}_{j_{0}}(x) \; dx - \eta \, \int_{D_{j_{0}}} W(x) \cdot \sum_{j=1 \atop j \neq j_{0}}^{\aleph} \int_{D_{j}} \Upsilon_{k}(x,y) \cdot E^{T}_j(y)\, dy \, dx = \int_{D_{j_{0}}} W(x) \cdot E^{Inc}_{j_{0}}(x) \, dx.
	\end{equation*}
	For $j=1,\cdots,\aleph$, since from Proposition \ref{es-oneP}, we know that $\overset{2}{\mathbb{P}}\left( E^{T}_j \right) = 0$, we write $E^{T}_j$  as 
	$E^{T}_j  = \overset{1}{\mathbb{P}}\left( E^{T}_j \right) + \overset{3}{\mathbb{P}}\left( E^{T}_j \right)$ 
	and we plug it into the previous equation to get  
	\begin{eqnarray}\label{IA2}
	\nonumber
	\int_{D_{j_{0}}} \mathcal{P}(x,z_{j_{0}})  \cdot \overset{1}{\mathbb{P}}\left( E^{T}_{j_{0}} \right)(x) \; dx &-& \eta \, \int_{D_{j_{0}}} W(x) \cdot \sum_{j=1 \atop j \neq j_{0}}^{\aleph} \int_{D_{j}} \Upsilon_{k}(x,y) \cdot \overset{1}{\mathbb{P}}\left( E^{T}_j\right)(y)\, dy \, dx \\ \nonumber 
	& = & \int_{D_{j_{0}}} W(x) \cdot E^{Inc}_{j_{0}}(x) \, dx - \int_{D_{j_{0}}} \mathcal{P}(x,z_{j_{0}})  \cdot \overset{3}{\mathbb{P}}\left(E^{T}_{j_{0}}\right)(x) \; dx \\ &+& \eta \, \int_{D_{j_{0}}} W(x) \cdot \sum_{j=1 \atop j \neq j_{0}}^{\aleph} \int_{D_{j}} \Upsilon_{k}(x,y) \cdot \overset{3}{\mathbb{P}}\left(E^{T}_j\right)(y)\, dy \, dx.
	\end{eqnarray}
	Next, we estimate the last two terms on the right hand side of \eqref{IA2}. To do this, we have  
	\begin{eqnarray*}
		J_{1} &:=& \int_{D_{j_{0}}} \mathcal{P}(x,z_{j_{0}})  \cdot \overset{3}{\mathbb{P}}\left(E^{T}_{j_{0}}\right)(x) \; dx \\
		J_{1} &=& a^{4} \, \int_{B} \mathcal{P}(x,0)  \cdot \overset{3}{\mathbb{P}}\left(\tilde{E}^{T}_{j_{0}}\right)(x) \; dx \\
		\left\vert J_{1} \right\vert & \leq & a^{4} \; \left\Vert \mathcal{P}(\cdot,0) \right\Vert_{\mathbb{L}^{2}(B)} \; \left\Vert \overset{3}{\mathbb{P}}\left(\tilde{E}^{T}_{j_{0}}\right) \right\Vert_{\mathbb{L}^{2}(B)} \lesssim a^{4}  \; \left\Vert \overset{3}{\mathbb{P}}\left(\tilde{E}^{T}_{j_{0}}\right) \right\Vert_{\mathbb{L}^{2}(B)}.
	\end{eqnarray*}
	Similarly, we set 
	\begin{equation*}
	J_{2} := \eta \,\sum_{j=1 \atop j \neq j_{0}}^{\aleph} \, \int_{D_{j_{0}}} W(x) \cdot  \int_{D_{j}} \Upsilon_{k}(x,y) \cdot \overset{3}{\mathbb{P}}\left(E^{T}_j\right)(y)\, dy \, dx.
	\end{equation*}
	By expanding the fundamental solution $\Upsilon_{k}(\cdot,\cdot)$ near the center $z_{j}$, we obtain 
	\begin{eqnarray}\label{Expansion-Upsilon}
	\nonumber
	\Upsilon_{k}(x,y) = \Upsilon_{k}(z_{j_{0}},z_{j}) &+& \int_{0}^{1} \underset{x}{\nabla} \left(\Upsilon_{k} \right)(z_{j_{0}}+t(x-z_{j_{0}}),z_{j}) \cdot \mathcal{P}(x,z_{j_{0}}) \; dt \\
	&+& \int_{0}^{1} \underset{y}{\nabla} \left(\Upsilon_{k} \right)(x,z_{j}+t(y-z_{j})) \cdot \mathcal{P}(y,z_{j}) \; dt.
	\end{eqnarray}
	Then $J_{2}$ becomes
	\begin{eqnarray*}
		J_{2} &:=& \eta \,\sum_{j=1 \atop j \neq j_{0}}^{\aleph} \, \int_{D_{j_{0}}} W(x) \, dx \cdot \Upsilon_{k}(z_{j_{0}},z_{j}) \cdot \int_{D_{j}}  \overset{3}{\mathbb{P}}\left(E^{T}_j\right)(y)\, dy \\
		&+& \eta \,\sum_{j=1 \atop j \neq j_{0}}^{\aleph} \, \int_{D_{j_{0}}} W(x) \cdot \int_{0}^{1} \underset{x}{\nabla} \left(\Upsilon_{k} \right)(z_{j_{0}}+t(x-z_{j_{0}}),z_{j}) \cdot \mathcal{P}(x,z_{j_{0}}) \; dt \, dx \cdot \int_{D_{j}}  \overset{3}{\mathbb{P}}\left(E^{T}_j\right)(y)\, dy \\
		&+& \eta \,\sum_{j=1 \atop j \neq j_{0}}^{\aleph} \, \int_{D_{j_{0}}} W(x) \cdot  \int_{D_{j}} \int_{0}^{1} \underset{y}{\nabla} \left(\Upsilon_{k} \right)(x,z_{j}+t(y-z_{j})) \cdot \mathcal{P}(y,z_{j}) \; dt \cdot \overset{3}{\mathbb{P}}\left(E^{T}_j\right)(y)\, dy \, dx.
	\end{eqnarray*}
	Keeping only the first dominant term, by using the estimate for the scattering coefficient \eqref{*add0} in Proposition \ref{prop-scoeff}, we deduce that
	\begin{eqnarray*}
		J_{2} & \sim & a^{4-h} \ \,\sum_{j=1 \atop j \neq j_{0}}^{\aleph}  \Upsilon_{k}(z_{j_{0}},z_{j}) \cdot \int_{D_{j}}  \overset{3}{\mathbb{P}}\left(E^{T}_j\right)(y)\, dy \\ 
		\left\vert J_{2} \right\vert & \lesssim & a^{\frac{11}{2}-h} \ \,\sum_{j=1 \atop j \neq j_{0}}^{\aleph}  \frac{1}{\left\vert z_{j_{0}} - z_{j} \right\vert^{3} } \left\Vert \overset{3}{\mathbb{P}}\left(E^{T}_j\right) \right\Vert_{\mathbb{L}^{2}\left(D_{j}\right)} \lesssim a^{7-h} \ d^{-3} \, \left\vert \log (d)\right\vert \, \max_{j}  \left\Vert \overset{3}{\mathbb{P}}\left(\tilde{E}^{T}_j\right) \right\Vert_{\mathbb{L}^{2}\left(B\right)}.  
	\end{eqnarray*}
	Thus,  combining with the estimations of $J_{1}$ and $J_{2}$, we rewrite $(\ref{IA2})$ as 
	\begin{eqnarray*}
		\nonumber
		\int_{D_{j_{0}}} \mathcal{P}(x,z_{j_{0}})  \cdot \overset{1}{\mathbb{P}}\left( E^{T}_{j_{0}} \right)(x) \; dx &-& \eta \, \int_{D_{j_{0}}} W(x) \cdot \sum_{j=1 \atop j \neq j_{0}}^{\aleph} \int_{D_{j}} \Upsilon_{k}(x,y) \cdot \overset{1}{\mathbb{P}}\left( E^{T}_j\right)(y)\, dy \, dx \\ \nonumber & = & \int_{D_{j_{0}}} W(x) \cdot E^{Inc}_{j_{0}}(x) \, dx + \mathcal{O}\left(a^{4} \, \left\Vert \overset{3}{\mathbb{P}}\left(\tilde{E}^{T}_{j_{0}}\right) \right\Vert_{\mathbb{L}^{2}\left(B\right)}\right) \\ &+& \mathcal{O}\left( a^{7-h} \ d^{-3} \, \left\vert \log (d)\right\vert \, \max_{j}  \left\Vert \overset{3}{\mathbb{P}}\left(\tilde{E}^{T}_j\right) \right\Vert_{\mathbb{L}^{2}\left(B\right)}\right). 
	\end{eqnarray*}
	Next, we write $W$ as $W = \overset{1}{\mathbb{P}}\left(W \right) + \overset{2}{\mathbb{P}}\left(W \right) + \overset{3}{\mathbb{P}}\left(W \right)$ and we plug it into the above equation to get 
	\begin{eqnarray}\label{Bf-1-2-3}
	\nonumber
	\int_{D_{j_{0}}} \mathcal{P}(x,z_{j_{0}})  \cdot \overset{1}{\mathbb{P}}\left( E^{T}_{j_{0}} \right)(x) \; dx &-& \eta \, \int_{D_{j_{0}}} \overset{1}{\mathbb{P}}\left(W \right)(x) \cdot \sum_{j=1 \atop j \neq j_{0}}^{\aleph} \int_{D_{j}} \Upsilon_{k}(x,y) \cdot \overset{1}{\mathbb{P}}\left( E^{T}_j\right)(y)\, dy \, dx \\ \nonumber
	&-& \eta \, \int_{D_{j_{0}}} \overset{2}{\mathbb{P}}\left(W \right)(x) \cdot \sum_{j=1 \atop j \neq j_{0}}^{\aleph} \int_{D_{j}} \Upsilon_{k}(x,y) \cdot \overset{1}{\mathbb{P}}\left( E^{T}_j\right)(y)\, dy \, dx \\ \nonumber
	&-& \eta \, \int_{D_{j_{0}}} \overset{3}{\mathbb{P}}\left(W \right)(x) \cdot \sum_{j=1 \atop j \neq j_{0}}^{\aleph} \int_{D_{j}} \Upsilon_{k}(x,y) \cdot \overset{1}{\mathbb{P}}\left( E^{T}_j\right)(y)\, dy \, dx \\ \nonumber & = & \int_{D_{j_{0}}} \overset{1}{\mathbb{P}}\left(W \right)(x) \cdot E^{Inc}_{j_{0}}(x) \, dx \\ \nonumber &+& \int_{D_{j_{0}}} \overset{2}{\mathbb{P}}\left(W \right)(x) \cdot E^{Inc}_{j_{0}}(x) \, dx  + \int_{D_{j_{0}}} \overset{3}{\mathbb{P}}\left(W \right)(x) \cdot E^{Inc}_{j_{0}}(x) \, dx \\  &+& \mathcal{O}\left(a^{4} \, \left\Vert \overset{3}{\mathbb{P}}\left(\tilde{E}^{T}_{j_{0}}\right) \right\Vert_{\mathbb{L}^{2}\left(B\right)}\right) + \mathcal{O}\left( a^{7-h} \ d^{-3} \, \left\vert \log (d)\right\vert \, \max_{j}  \left\Vert \overset{3}{\mathbb{P}}\left(\tilde{E}^{T}_j\right) \right\Vert_{\mathbb{L}^{2}\left(B\right)}\right). 
	\end{eqnarray}
	Since $E^{Inc}_{j_0} \in \mathbb{H}\left(\div = 0 \right) \perp \mathbb{H}_{0}\left(Curl = 0 \right)$ and, by construction, $\overset{2}{\mathbb{P}}\left(W \right) \in \mathbb{H}_{0}\left(Curl = 0 \right)$, we deduce that 
	\begin{equation}\label{J3-int=0}
	J_{3} := \int_{D_{j_{0}}} \overset{2}{\mathbb{P}}\left(W \right)(x) \cdot E^{Inc}_{j_{0}}(x) \, dx = 0.
	\end{equation}
Denote 
	\begin{equation}\label{J4=a6--}
		J_{4} := \int_{D_{j_{0}}} \overset{3}{\mathbb{P}}\left(W \right)(x) \cdot E^{Inc}_{j_{0}}(x) \, dx\notag
		\end{equation}
		then
		\begin{eqnarray}\label{J4=a6}
		\left\vert J_{4} \right\vert & \leq & \left\Vert \overset{3}{\mathbb{P}}\left(W \right) \right\Vert_{\mathbb{L}^{2}(D_{j_{0}})} \, \left\Vert  E^{Inc}_{j_{0}}\right\Vert_{\mathbb{L}^{2}(D_{j_{0}})} \lesssim a^{3} \;  \left\Vert \overset{3}{\mathbb{P}}\left(\tilde{W} \right) \right\Vert_{\mathbb{L}^{2}(B)}= \mathcal{O}\left( a^{6}\right),
	\end{eqnarray}
by utilizing \eqref{fin-W-3}.
	On the left hand side of \eqref{Bf-1-2-3}, we need to estimate 
	\begin{eqnarray*}
		J_{5,\ell} &:=&  \eta \, \int_{D_{j_{0}}} \overset{\ell}{\mathbb{P}}\left(W \right)(x) \cdot \sum_{j=1 \atop j \neq j_{0}}^{\aleph} \int_{D_{j}} \Upsilon_{k}(x,y) \cdot \overset{1}{\mathbb{P}}\left( E^{T}_j\right)(y)\, dy \, dx, \quad \ell = 2,3\\
		&=& - \eta \, \int_{D_{j_{0}}} \overset{\ell}{\mathbb{P}}\left(W \right)(x) \cdot \sum_{j=1 \atop j \neq j_{0}}^{\aleph} \underset{x}{\nabla} \int_{D_{j}} \underset{y}{\nabla} \Phi_{k}(x,y) \cdot \overset{1}{\mathbb{P}}\left( E^{T}_j\right)(y)\, dy \, dx\\
		&+&  \eta \, k^{2} \, \int_{D_{j_{0}}} \overset{\ell}{\mathbb{P}}\left(W \right)(x) \cdot \sum_{j=1 \atop j \neq j_{0}}^{\aleph} \int_{D_{j}} \Phi_{k}(x,y) \; \overset{1}{\mathbb{P}}\left( E^{T}_j\right)(y)\, dy \, dx.
	\end{eqnarray*}
	By using integration by parts and the fact that $\overset{1}{\mathbb{P}}\left( E^{T}_j\right) \in \mathbb{H}_{0}(\div = 0)$, we can prove that 
	\begin{equation}\label{AI9}
	\int_{D_{j}} \underset{y}{\nabla} \Phi_{k}(\cdot,y) \cdot \overset{1}{\mathbb{P}}\left( E^{T}_j\right)(y)\, dy = 0, \quad j=1,\cdots,\aleph.
	\end{equation}
	Then, 
	\begin{equation*}
	J_{5,\ell} =  \eta \, k^{2} \, \int_{D_{j_{0}}} \overset{\ell}{\mathbb{P}}\left(W \right)(x) \cdot \sum_{j=1 \atop j \neq j_{0}}^{\aleph} \int_{D_{j}} \Phi_{k}(x,y) \; \overset{1}{\mathbb{P}}\left( E^{T}_j\right)(y)\, dy \, dx.
	\end{equation*}  
	We rewrite $J_{5, \ell}$, by Taylor expansion and the fact that $\int_{D_{j}} \overset{1}{\mathbb{P}}\left( E^{T}_j\right)(y)\, dy = 0$, as  
	\begin{eqnarray*}
		J_{5,\ell} &=&  \eta \, k^{2} \, \int_{D_{j_{0}}} \overset{\ell}{\mathbb{P}}\left(W \right)(x) \cdot \sum_{j=1 \atop j \neq j_{0}}^{\aleph} \int_{D_{j}}  \underset{y}{\nabla}\Phi_{k}(x,z_{j})\cdot(y-z_{j})  \overset{1}{\mathbb{P}}\left( E^{T}_j\right)(y)\, dy \, dx\\
		&+&  \eta \, k^{2} \, \int_{D_{j_{0}}} \overset{\ell}{\mathbb{P}}\left(W \right)(x) \cdot \sum_{j=1 \atop j \neq j_{0}}^{\aleph} \int_{D_{j}} \int_{0}^{1} (1-t) (y-z_{j})^{\perp} \cdot \\ && \qquad \qquad \qquad \qquad \qquad \qquad \underset{y}{Hess}(\Phi_{k})(z_{j_{0}},z_{j}+t(y-z_{j}))\cdot (y-z_{j}) \, dt  \overset{1}{\mathbb{P}}\left( E^{T}_j\right)(y)\, dy \, dx \\
		&+&  \eta \, k^{2} \, \int_{D_{j_{0}}} \overset{\ell}{\mathbb{P}}\left(W \right)(x) \cdot \sum_{j=1 \atop j \neq j_{0}}^{\aleph} \int_{D_{j}} \int_{0}^{1} (1-t) (y-z_{j})^{\perp} \cdot \\ && \qquad \left[\int_{0}^{1} \underset{x}{\nabla}\left(\underset{y}{Hess}(\Phi_{k})\right)(z_{j_{0}}+s(x-z_{j_{0}}),z_{j}+t(y-z_{j})) \cdot \mathcal{P}(x,z_{j_{0}}) ds \right] \cdot (y-z_{j}) \, dt  \overset{1}{\mathbb{P}}\left( E^{T}_j\right)(y)\, dy \, dx. 
	\end{eqnarray*}
	Technically, the analyses for the case $\ell=2$ and the case $\ell=3$ are a bit different. So, we split the analyses into two parts. 
	\begin{enumerate}
		\item[*/] Case $\ell=2$, \\ 
		Recall that we have  $\int_{D_{j_{0}}} \overset{2}{\mathbb{P}}\left(W \right)(x) dx = 0$ and remark that the second term of $J_{5,2}$ is of the form $\int_{D_{j_{0}}} \overset{2}{\mathbb{P}}\left(W \right)(x) \cdot V dx$, where $V$ is a constant vector. Consequently, the second term is vanishing. Then, 
		\begin{eqnarray*}
			J_{5,2} &=&  \eta \, k^{2} \, \int_{D_{j_{0}}} \overset{2}{\mathbb{P}}\left(W \right)(x) \cdot \sum_{j=1 \atop j \neq j_{0}}^{\aleph} \int_{D_{j}}  \underset{y}{\nabla}\Phi_{k}(x,z_{j})\cdot(y-z_{j})  \overset{1}{\mathbb{P}}\left( E^{T}_j\right)(y)\, dy \, dx\\
			&+&  \eta \, k^{2} \, \int_{D_{j_{0}}} \overset{2}{\mathbb{P}}\left(W \right)(x) \cdot \sum_{j=1 \atop j \neq j_{0}}^{\aleph} \int_{D_{j}} \int_{0}^{1} (1-t) (y-z_{j})^{\perp} \cdot \\ && \qquad \left[\int_{0}^{1} \underset{x}{\nabla}\left(\underset{y}{Hess}(\Phi_{k})\right)(z_{j_{0}}+s(x-z_{j_{0}}),z_{j}+t(y-z_{j})) \cdot \mathcal{P}(x,z_{j_{0}}) ds \right] \cdot (y-z_{j}) \, dt  \overset{1}{\mathbb{P}}\left( E^{T}_j\right)(y)\, dy \, dx. 
		\end{eqnarray*}
		Clearly, for the above equation, the first term is more dominant compared with the second one. Using Taylor expansion near $z_{j_{0}}$, we obtain 
		\begin{eqnarray*}
			J_{5,2} &=& - \eta \, k^{2} \, \int_{D_{j_{0}}} \overset{2}{\mathbb{P}}\left(W \right)(x) \cdot \sum_{j=1 \atop j \neq j_{0}}^{\aleph} \int_{D_{j}}  \left[ \int_{0}^{1} \underset{y}{Hess}\left( \Phi_{k} \right)(z_{j_{0}}+t(x-z_{j_{0}}),z_{j}) \cdot (x-z_{j_{0}}) dt \right] \cdot(y-z_{j}) \\ && \qquad\qquad\qquad\qquad\qquad\qquad \qquad\qquad\qquad \overset{1}{\mathbb{P}}\left( E^{T}_j\right)(y)\, dy \, dx\\
			&+&  \eta \, k^{2} \, \int_{D_{j_{0}}} \overset{2}{\mathbb{P}}\left(W \right)(x) \cdot \sum_{j=1 \atop j \neq j_{0}}^{\aleph} \int_{D_{j}} \int_{0}^{1} (1-t) (y-z_{j})^{\perp} \cdot \\ && \qquad \left[\int_{0}^{1} \underset{x}{\nabla}\left(\underset{y}{Hess}(\Phi_{k})\right)(z_{j_{0}}+s(x-z_{j_{0}}),z_{j}+t(y-z_{j})) \cdot \mathcal{P}(x,z_{j_{0}}) ds \right] \cdot (y-z_{j}) \, dt  \overset{1}{\mathbb{P}}\left( E^{T}_j\right)(y)\, dy \, dx \\
			\left\vert J_{5,2} \right\vert & \lesssim & a^{3} \left\Vert \overset{2}{\mathbb{P}}\left(W \right) \right\Vert_{\mathbb{L}^{2}(D_{j_{0}})} \sum_{j=1 \atop j \neq j_{0}}^{\aleph} \left\vert z_{j} - z_{j_{0}} \right\vert^{-3} \; \left\Vert \overset{1}{\mathbb{P}}\left( E^{T}_j\right) \right\Vert_{\mathbb{L}^{2}(D_{j})} + a^{4} \left\Vert \overset{2}{\mathbb{P}}\left(W \right) \right\Vert_{\mathbb{L}^{2}(D_{j_{0}})} \sum_{j=1 \atop j \neq j_{0}}^{\aleph} \frac{\left\Vert \overset{1}{\mathbb{P}}\left( E^{T}_j\right) \right\Vert_{\mathbb{L}^{2}(D_{j})}}{\left\vert z_{j} - z_{j_{0}} \right\vert^{4}}  \\ 
			& \lesssim & a^{6} \, \left\Vert \overset{2}{\mathbb{P}}\left(\tilde{W} \right) \right\Vert_{\mathbb{L}^{2}(B)} \max_{j} \left\Vert \overset{1}{\mathbb{P}}\left( \tilde{E}^{T}_j\right) \right\Vert_{\mathbb{L}^{2}(B)} d^{-3} \left\vert \log (d)\right\vert + a^7 \left\Vert \overset{2}{\mathbb{P}}\left(\tilde{W} \right) \right\Vert_{\mathbb{L}^{2}(B)} \max_{j} \left\Vert \overset{1}{\mathbb{P}}\left( \tilde{E}^{T}_j\right) \right\Vert_{\mathbb{L}^{2}(B)} d^{-4}\\ 
			& \lesssim & a^{6} \left\Vert \overset{2}{\mathbb{P}}\left(\tilde{W} \right) \right\Vert_{\mathbb{L}^{2}(B)} \max_{j} \left\Vert \overset{1}{\mathbb{P}}\left( \tilde{E}^{T}_j\right) \right\Vert_{\mathbb{L}^{2}(B)} d^{-4}. 
		\end{eqnarray*}
		Knowing the estimation of $\left\Vert \overset{2}{\mathbb{P}}\left( \tilde{W} \right) \right\Vert_{\mathbb{L}^{2}}$, given by $(\ref{fin-W-2})$, we deduce that  
		\begin{equation}\label{J52}
		J_{5,2} = \mathcal{O}\left( a^9 \, d^{-4} \, \max_{j} \left\Vert \overset{1}{\mathbb{P}}\left( \tilde{E}^{T}_j\right) \right\Vert_{\mathbb{L}^{2}(B)} \right).
		\end{equation}  
		\item[*/] Case $\ell=3$, \\
		We have, 
		\begin{eqnarray*}
			J_{5,3} &=&  \eta \, k^{2} \, \int_{D_{j_{0}}} \overset{3}{\mathbb{P}}\left(W \right)(x) \cdot \sum_{j=1 \atop j \neq j_{0}}^{\aleph} \int_{D_{j}}  \underset{y}{\nabla}\Phi_{k}(x,z_{j})\cdot(y-z_{j})  \overset{1}{\mathbb{P}}\left( E^{T}_j\right)(y)\, dy \, dx\\
			&+&  \eta \, k^{2} \, \int_{D_{j_{0}}} \overset{3}{\mathbb{P}}\left(W \right)(x) \cdot \sum_{j=1 \atop j \neq j_{0}}^{\aleph} \int_{D_{j}} \int_{0}^{1} (1-t) (y-z_{j})^{\perp} \cdot \underset{y}{Hess}(\Phi_{k})(z_{j_{0}},z_{j}+t(y-z_{j}))\cdot (y-z_{j}) \, dt  \\ && \qquad\qquad\qquad\qquad\qquad\qquad \, \qquad \overset{1}{\mathbb{P}}\left( E^{T}_j\right)(y)\, dy \, dx \\
			&+&  \eta \, k^{2} \, \int_{D_{j_{0}}} \overset{3}{\mathbb{P}}\left(W \right)(x) \cdot \sum_{j=1 \atop j \neq j_{0}}^{\aleph} \int_{D_{j}} \int_{0}^{1} (1-t) (y-z_{j})^{\perp} \cdot \\ && \qquad \left[\int_{0}^{1} \underset{x}{\nabla}\left(\underset{y}{Hess}(\Phi_{k})\right)(z_{j_{0}}+s(x-z_{j_{0}}),z_{j}+t(y-z_{j})) \cdot \mathcal{P}(x,z_{j_{0}}) ds \right] \cdot (y-z_{j}) \, dt  \overset{1}{\mathbb{P}}\left( E^{T}_j\right)(y)\, dy \, dx. 
		\end{eqnarray*}
		Since $\int_{D_{j_{0}}} \overset{3}{\mathbb{P}}\left(W \right)(x) dx \neq 0$, after expanding the first term near $z_{j_{0}}$ and keeping the dominant term of $J_{5,3}$, we deduce that  
		\begin{eqnarray*}
			J_{5,3} & \simeq & \eta \, k^{2} \, \int_{D_{j_{0}}} \overset{3}{\mathbb{P}}\left(W \right)(x) dx \cdot \sum_{j=1 \atop j \neq j_{0}}^{\aleph} \underset{y}{\nabla}\left( \Phi_{k} I \right) (z_{j_{0}},z_{j})\cdot \int_{D_{j}} \mathcal{P}(y,z_{j}) \cdot \overset{1}{\mathbb{P}}\left( E^{T}_j\right)(y)\, dy \\ 
			\left\vert J_{5,3} \right\vert & \lesssim & \left\vert \eta \right\vert \, \left\Vert 1 \right\Vert_{\mathbb{L}^{2}(D_{j_{0}})} \, \left\Vert \overset{3}{\mathbb{P}}\left(W \right) \right\Vert_{\mathbb{L}^{2}(D_{j_{0}})} \sum_{j=1 \atop j \neq j_{0}}^{\aleph} \left\vert z_{j_{0}} - z_{j} \right\vert^{-2}  \,\left\Vert \mathcal{P}(0,z_{j}) \right\Vert_{\mathbb{L}^{2}(D_{j})} \, \left\Vert \overset{1}{\mathbb{P}}\left( E^{T}_j\right) \right\Vert_{\mathbb{L}^{2}(D_{j})} \\ 
			& \lesssim & a^{2}\cdot a^3 \cdot d^{-3} \; \left\Vert \overset{3}{\mathbb{P}}\left(\tilde{W} \right) \right\Vert_{\mathbb{L}^{2}(B)} \max_{j} \left\Vert \overset{1}{\mathbb{P}}\left( \tilde{E}^{T}_j\right) \right\Vert_{\mathbb{L}^{2}(B)}.
		\end{eqnarray*} 
		With the estimation of $\left\Vert \overset{3}{\mathbb{P}}\left(\tilde{W} \right) \right\Vert_{\mathbb{L}^{2}}$, given by $(\ref{fin-W-3})$, we can deduce that
		\begin{equation}\label{J53}
		J_{5,3} = \mathcal{O}\left( a^{8} \; d^{-3} \max_{j}   \left\Vert \overset{1}{\mathbb{P}}\left( \tilde{E}^{T}_j\right) \right\Vert_{\mathbb{L}^{2}(B)}   \right).
		\end{equation}
	\end{enumerate}
	Finally, combining with  $(\ref{J52})$ and $(\ref{J53})$, we obtain that
	\begin{equation}\label{J5ell}
	J_{5,\ell} = \mathcal{O}\left( a^{8} \; d^{-3} \max_{j}   \left\Vert \overset{1}{\mathbb{P}}\left( \tilde{E}^{T}_j\right) \right\Vert_{\mathbb{L}^{2}(B)}   \right).
	\end{equation}
	The equation $(\ref{Bf-1-2-3})$, using $(\ref{J3-int=0}), (\ref{J4=a6})$ and $(\ref{J5ell})$, will be reduced to   
	\begin{eqnarray}\label{*add7}
		\int_{D_{j_{0}}} \mathcal{P}(x,z_{j_{0}})  \cdot \overset{1}{\mathbb{P}}\left( E^{T}_{j_{0}} \right)(x) \; dx &-& \eta \, \int_{D_{j_{0}}} \overset{1}{\mathbb{P}}\left(W \right)(x) \cdot \sum_{j=1 \atop j \neq j_{0}}^{\aleph} \int_{D_{j}} \Upsilon_{k}(x,y) \cdot \overset{1}{\mathbb{P}}\left( E^{T}_j\right)(y)\, dy \, dx\notag \\ 
		& = & \int_{D_{j_{0}}} \overset{1}{\mathbb{P}}\left(W \right)(x) \cdot E^{Inc}_{j_{0}}(x) \, dx  + \mathcal{O}\left(a^{6}\right) + \mathcal{O}\left(a^{4} \, \left\Vert \overset{3}{\mathbb{P}}\left(\tilde{E}^{T}_{j_{0}}\right) \right\Vert_{\mathbb{L}^{2}\left(B\right)}\right) \notag\\ \nonumber &+& \mathcal{O} \left( a^{7-h} \ d^{-3} \, \left\vert \log (d)\right\vert \, \max_j \left\Vert \overset{3}{\mathbb{P}}\left(\tilde{E}^{T}_j\right) \right\Vert_{\mathbb{L}^{2}\left(B\right)} \right) \\ &+& \mathcal{O}\left( a^{8} \; d^{-3} \max_{j}   \left\Vert \overset{1}{\mathbb{P}}\left( \tilde{E}^{T}_j\right) \right\Vert_{\mathbb{L}^{2}(B)}   \right).
	\end{eqnarray}
	Since $\overset{1}{\mathbb{P}}\left(E^T_j\right), \overset{1}{\mathbb{P}}\left(W\right)\in \mathbb{H}_0(\div=0)$, we denote
	\begin{equation}\label{p1=curlA}
	\overset{1}{\mathbb{P}}\left( E^{T}_j\right) = Curl\left( F_{j} \right) \quad \text{and} \quad \overset{1}{\mathbb{P}}\left( W \right) = Curl\left( \mathcal{A} \right),
	\end{equation}
	with 
	\begin{equation}\label{nu*f=0}
	\nu \times \mathcal{A} = 0, \; \div (\mathcal{A}) = 0, \; \nu \times F_{j} = 0, \quad \text{and} \quad \div(F_{j})=0, \quad j=1,\cdots,\aleph.  
	\end{equation}
	Plugging all these formulas into \eqref{*add7}, using $(\ref{AI9})$ and integration by parts,\footnote{We use, for 2 arbitrary vectors, the following relation $
		Curl(a) \cdot b = Curl(b) \cdot a + \div(a \times b)$
	} we can obtain that  
	\begin{eqnarray}\label{bm-matrix-M}
	\nonumber
	\bm{\mathcal{M}} \cdot \int_{D_{j_{0}}} F_{j_{0}}(x) \; dx &-& \eta \, k^{2} \, \sum_{j=1 \atop j \neq j_{0}}^{\aleph} \int_{D_{j_{0}}}  \mathcal{A}(x) \cdot \underset{x}{Curl} \int_{D_{j}} \Phi_{k}(x,y) \; \underset{y}{Curl}\left( F_{j} \right)(y)\, dy \, dx \\ \nonumber
	& = & \int_{D_{j_{0}}}  \mathcal{A}(x) \cdot Curl\left( E^{Inc}_{j_{0}}\right)(x) \, dx  + \mathcal{O}\left(a^{6}\right) + \mathcal{O}\left(a^{4} \, \left\Vert \overset{3}{\mathbb{P}}\left(\tilde{E}^{T}_{j_{0}}\right) \right\Vert_{\mathbb{L}^{2}\left(B\right)}\right) \\ &+& \mathcal{O} \left( a^{7-h} \, d^{-3} \, \left\vert \log (d)\right\vert \, \max_j \left\Vert \overset{3}{\mathbb{P}}\left(\tilde{E}^{T}_j\right) \right\Vert_{\mathbb{L}^{2}\left(B\right)} \right) + \mathcal{O}\left( a^{8} \; d^{-3} \max_{j}   \left\Vert \overset{1}{\mathbb{P}}\left( \tilde{E}^{T}_j\right) \right\Vert_{\mathbb{L}^{2}(B)}   \right),
	\end{eqnarray} 
	where $\bm{\mathcal{M}}$ is the constant matrix given by 
	\begin{equation}\label{def-bm-mathcal-M}
	\bm{\mathcal{M}} := \underset{x}{Curl}\left( \mathcal{P}(x,z_{j_{0}}) \right) = \begin{pmatrix}
	0 & 0 & 0 \\ 
	0 & 0 & -1 \\ 
	0 & 1 & 0 \\ 
	0 & 0 & 1 \\ 
	0 & 0 & 0 \\ 
	1 & 0 & 0 \\ 
	0 & -1 & 0 \\ 
	1 & 0 & 0 \\ 
	0 & 0 & 0
	\end{pmatrix}. 
	\end{equation}
	For the second term on the left hand side of \eqref{bm-matrix-M}, we have\footnote{For any vector field $a(\cdot)$, we have the following formula
		\begin{equation}\label{Curlx=Curly}
		\underset{x}{Curl}\left( \Phi_{k}(x,y) \, a(y) \right) + \underset{y}{Curl}\left( \Phi_{k}(x,y) \, a(y) \right) = \Phi_{k}(x,y) \, \underset{y}{Curl}\left(a(y)\right).
		\end{equation}
	} 
	\begin{eqnarray}\label{interactionsterms}
	\nonumber
	\underset{x}{Curl} \int_{D_{j}} \Phi_{k}(x,y) \; \underset{y}{Curl}\left( F_{j} \right)(y)\, dy & \overset{(\ref{Curlx=Curly})}{=} & \underset{x}{Curl} \int_{D_{j}} \underset{x}{Curl}\left(\Phi_{k}(x,y) \,  F_{j} (y)\right) \, dy \\ \nonumber &+& \underset{x}{Curl} \int_{\partial D_{j}}  \Phi_{k}(x,y) \nu(y) \times  F_{j}(y) \, dy \\ \nonumber
	& \overset{(\ref{nu*f=0})}{=} &  \int_{D_{j}} \underset{x}{Curl} \circ \underset{x}{Curl}\left(\Phi_{k}(x,y) \,  F_{j} (y)\right) \, dy \\ \nonumber
	& = &  \int_{D_{j}} \left( - \underset{x}{\Delta} + \underset{x}{\nabla} \underset{x}{\div} \right) \left(\Phi_{k}(x,y) \,  F_{j} (y)\right) \, dy\\ \nonumber
	& = &  \int_{D_{j}} \left( k^{2} \, \Phi_{k}(x,y) \,  F_{j} (y) +  \underset{x}{Hess} \Phi_{k}(x,y) \cdot  F_{j} (y) \right)\, dy\\
	& \overset{(\ref{dyadicG})}{=} & \int_{D_{j}} \Upsilon_{k}(x,y) \cdot  F_{j}(y)\, dy.
	\end{eqnarray}
	For the right hand side of \eqref{bm-matrix-M}, we have 
	\begin{equation}\label{right-hand-side-magnetic}
	\int_{D_{j_{0}}}  \mathcal{A}(x) \cdot Curl\left( E^{Inc}_{j_{0}}\right)(x) \, dx = i \, k \, \int_{D_{j_{0}}}  \mathcal{A}(x) \cdot H^{Inc}_{j_{0}}(x) \, dx. 
	\end{equation}
	Then, from $(\ref{interactionsterms})$ and $(\ref{right-hand-side-magnetic})$, the equation $(\ref{bm-matrix-M})$ becomes 
	\begin{eqnarray*}
		\nonumber
		\bm{\mathcal{M}} \cdot \int_{D_{j_{0}}} F_{j_{0}}(x) \; dx &-& \eta \, k^{2} \, \sum_{j=1 \atop j \neq j_{0}}^{\aleph} \int_{D_{j_{0}}}  \mathcal{A}(x) \cdot \int_{D_{j}} \Upsilon_{k}(x,y) \cdot F_{j}(y)\, dy \, dx \\ \nonumber
		& = & i \, k \, \int_{D_{j_{0}}}  \mathcal{A}(x) \cdot  H^{Inc}_{j_{0}}(x) \, dx  + \mathcal{O}\left(a^{6}\right) + \mathcal{O}\left(a^{4} \, \left\Vert \overset{3}{\mathbb{P}}\left(\tilde{E}^{T}_{j_{0}}\right) \right\Vert_{\mathbb{L}^{2}\left(B\right)}\right) \\ &+& \mathcal{O} \left( a^{7-h} \, d^{-3} \, \left\vert \log (d)\right\vert \, \max_j \left\Vert \overset{3}{\mathbb{P}}\left(\tilde{E}^{T}_j\right) \right\Vert_{\mathbb{L}^{2}\left(B\right)} \right) + \mathcal{O}\left( a^{8} \; d^{-3} \max_{j}   \left\Vert \overset{1}{\mathbb{P}}\left( \tilde{E}^{T}_j\right) \right\Vert_{\mathbb{L}^{2}(B)}   \right).
	\end{eqnarray*} 
	Next, expanding $H^{Inc}_{j_{0}}(\cdot)$ near $z_{j_{0}}$ and  $\Upsilon_{k}(\cdot,\cdot)$ near the center $(z_{j_0}, z_j)$, similar to $(\ref{Expansion-Upsilon})$, we obtain
	\begin{eqnarray}\label{SIM}
	\nonumber
	\bm{\mathcal{M}} \cdot \int_{D_{j_{0}}} F_{j_{0}}(x) \; dx &-& \eta \, k^{2} \, \sum_{j=1 \atop j \neq j_{0}}^{\aleph} \int_{D_{j_{0}}}  \mathcal{A}(x) \, dx \cdot  \Upsilon_{k}(z_{j_{0}},z_{j})\cdot \int_{D_{j}} F_{j}(y)\, dy  \\ \nonumber
	& = & i \, k \, \int_{D_{j_{0}}}  \mathcal{A}(x) \, dx \cdot  H^{Inc}_{j_{0}}(z_{j_{0}}) \\ \nonumber
	&+& i \, k \, \int_{D_{j_{0}}}  \mathcal{A}(x) \cdot \int_{0}^{1} \underset{x}{\nabla}\left( H^{Inc}_{j_{0}}\right)(z_{j_{0}}+t(x-z_{j_{0}})) \cdot (x-z_{j_{0}}) \, dt \, dx \\ \nonumber 
	&+& \eta \, k^{2} \, \sum_{j=1 \atop j \neq j_{0}}^{\aleph} \int_{D_{j_{0}}}  \mathcal{A}(x) \cdot  \int_{0}^{1} \underset{x}{\nabla} \left(\Upsilon_{k} \right)(z_{j_{0}}+t(x-z_{j_{0}}),z_{j}) \cdot \mathcal{P}(x,z_{j_{0}}) \; dt \, dx \cdot \int_{D_{j}} F_{j}(y)\, dy  \\ \nonumber
	&+& \eta \, k^{2} \, \sum_{j=1 \atop j \neq j_{0}}^{\aleph} \int_{D_{j_{0}}}  \mathcal{A}(x) \cdot \int_{D_{j}}  \int_{0}^{1} \underset{y}{\nabla} \left(\Upsilon_{k} \right)(x,z_{j}+t(y-z_{j})) \cdot \mathcal{P}(y,z_{j}) \; dt \cdot F_{j}(y)\, dy \, dx \\ \nonumber
	&+& \mathcal{O}\left(a^{6}\right) + \mathcal{O}\left(a^{4} \, \left\Vert \overset{3}{\mathbb{P}}\left(\tilde{E}^{T}_{j_{0}}\right) \right\Vert_{\mathbb{L}^{2}\left(B\right)}\right) \\ 
	&+& \mathcal{O} \left( a^{7-h} \, d^{-3} \, \left\vert \log (d)\right\vert \, \max_j \left\Vert \overset{3}{\mathbb{P}}\left(\tilde{E}^{T}_j\right) \right\Vert_{\mathbb{L}^{2}\left(B\right)} \right) + \mathcal{O}\left( a^{8} \; d^{-3} \max_{j}   \left\Vert \overset{1}{\mathbb{P}}\left( \tilde{E}^{T}_j\right) \right\Vert_{\mathbb{L}^{2}(B)}   \right).
	\end{eqnarray} 
	Now we estimate the error terms appearing on the right hand side of \eqref{SIM} as follows. Firstly, set 
	\begin{eqnarray*}
		J_{6} &:=& i \, k \, \int_{D_{j_{0}}}  \mathcal{A}(x) \cdot \int_{0}^{1} \underset{x}{\nabla}\left( H^{Inc}_{j_{0}}\right)(z_{j_{0}}+t(x-z_{j_{0}})) \cdot (x-z_{j_{0}}) \, dt \, dx, \\
		J_{6} &=& - k^{2} \, \int_{D_{j_{0}}}  \mathcal{A}(x) \cdot \int_{0}^{1} e^{i \, k \, \theta \cdot (z_{j_{0}}+t(x-z_{j_{0}}))} \left( \theta^{\perp} \times \theta \right) \langle \theta , (x-z_{j_{0}}) \rangle \, dt \, dx, \\
		\left\vert J_{6} \right\vert & \lesssim &  \left\Vert  \mathcal{A} \right\Vert_{\mathbb{L}^{2}\left({D_{j_{0}}}\right)} \;  \left\Vert  \cdot - z_{j_{0}} \right\Vert_{\mathbb{L}^{2}\left({D_{j_{0}}}\right)} \leq a^{4} \; \left\Vert \tilde{\mathcal{A}} \right\Vert_{\mathbb{L}^{2}\left(B \right)}.
	\end{eqnarray*}
	Using the variant of Friedrich inequality on the subspace $\mathbb{H}_{0}(Curl) \cap \mathbb{H}(div=0)$, we have
	\begin{equation*}
	\left\vert J_{6} \right\vert  \lesssim   a^{4} \; \left\Vert  Curl\left( \tilde{\mathcal{A}} \right) \right\Vert_{\mathbb{L}^{2}\left(B \right)} = a^{5} \left\Vert \widetilde{Curl\left( \mathcal{A} \right)} \right\Vert_{\mathbb{L}^{2}\left(B \right)} \overset{( \ref{p1=curlA})}{=} a^{5} \left\Vert \overset{1}{\mathbb{P}}\left(\tilde{W} \right) \right\Vert_{\mathbb{L}^{2}\left(B \right)} \overset{(\ref{fin-W-1})}{=} \mathcal{O}\left(a^{6-h}\right).
	\end{equation*}
	Then,  
	\begin{equation}\label{J6-Estimation}
	J_{6} = \mathcal{O}\left(a^{6-h}\right).
	\end{equation}
	Secondly, set
	\begin{eqnarray}\label{Yas-Ra}
	\nonumber
	J_{7} &:=& \eta \, k^{2} \, \sum_{j=1 \atop j \neq j_{0}}^{\aleph} \int_{D_{j_{0}}}  \mathcal{A}(x) \cdot  \int_{0}^{1} \underset{x}{\nabla} \left(\Upsilon_{k} \right)(z_{j_{0}}+t(x-z_{j_{0}}),z_{j}) \cdot \mathcal{P}(x,z_{j_{0}}) \; dt \, dx \cdot \int_{D_{j}} F_{j}(y)\, dy \\ \nonumber
	\left\vert J_{7} \right\vert 
	& \lesssim & a^{-\frac{1}{2}} \, \sum_{j=1 \atop j \neq j_{0}}^{\aleph} \left\Vert   \mathcal{A} \right\Vert_{\mathbb{L}^{2}(D_{j_{0}})} \left\Vert \int_{0}^{1} \underset{x}{\nabla} \left(\Upsilon_{k} \right)(z_{j_{0}}+t(\cdot - z_{j_{0}}),z_{j}) \cdot \mathcal{P}(\cdot,z_{j_{0}}) \; dt \, \right\Vert_{\mathbb{L}^{2}(D_{j_{0}})} \,\left\Vert F_{j} \right\Vert_{\mathbb{L}^{2}(D_{j})} \\
	& \lesssim & a^{2} \, \sum_{j=1 \atop j \neq j_{0}}^{\aleph} \left\Vert   \mathcal{A} \right\Vert_{\mathbb{L}^{2}(D_{j_{0}})} \left\vert z_{j} - z_{j_{0}} \right\vert^{-4}  \,\left\Vert F_{j} \right\Vert_{\mathbb{L}^{2}(D_{j})} \lesssim a^2 \cdot a^3 \sum_{j=1 \atop j \neq j_{0}}^{\aleph} \left\Vert   \tilde{\mathcal{A}} \right\Vert_{\mathbb{L}^{2}(B)} \left\vert z_{j} - z_{j_{0}} \right\vert^{-4}  \,\left\Vert \tilde{F}_{j} \right\Vert_{\mathbb{L}^{2}(B)} \\ \nonumber 
	& \leq &  a^5 \, d^{-4} \, \left\Vert   \tilde{\mathcal{A}} \right\Vert_{\mathbb{L}^{2}(B)} \, \max_{j} \,\left\Vert \tilde{F}_{j} \right\Vert_{\mathbb{L}^{2}(B)} \overset{(\ref{J6-Estimation})}{\lesssim}  a^{7-h} \, d^{-4} \, \max_{j} \,\left\Vert \tilde{F}_{j} \right\Vert_{\mathbb{L}^{2}(B)}.
	\end{eqnarray}
By applying the Friedrich type inequality for  $\tilde{F}_{j}, \, j=1,\cdots,\aleph$, we have: 
\begin{equation*}
	\left\Vert \tilde{F}_j \right\Vert_{\mathbb{L}^{2}\left(B \right)} \lesssim \; \left\Vert Curl\left(\tilde{F}_j \right) \right\Vert_{\mathbb{L}^{2}\left(B\right)}\lesssim a \,\,  \left\lVert\widetilde{Curl(F_j)}\right\rVert_{\mathbb{L}^2(B)} = \mathcal{O}\left( a \, \left\lVert\overset{1}{\mathbb{P}}\left(\tilde{E}^T_j\right)\right\rVert_{\mathbb{L}^2(B)} \right).
\end{equation*}
Then we can further derive that:
	\begin{equation}\label{J7-Estimation}
	J_{7} = \mathcal{O}\left(a^{8-h} \, d^{-4}  \, \max_j  \,\left\Vert \overset{1}{\mathbb{P}}\left( \tilde{E}^{T}_j\right) \right\Vert_{\mathbb{L}^{2}(B)}\right). 
	\end{equation}
	Finally, denote
	\begin{eqnarray*}
		J_{8} &:=& \eta \, k^{2} \, \sum_{j=1 \atop j \neq j_{0}}^{\aleph} \int_{D_{j_{0}}}  \mathcal{A}(x) \cdot \int_{D_{j}}  \int_{0}^{1} \underset{y}{\nabla} \left(\Upsilon_{k} \right)(x,z_{j}+t(y-z_{j})) \cdot \mathcal{P}(y,z_{j}) \; dt \cdot F_{j}(y)\, dy \, dx,\\
		\left\vert J_{8} \right\vert & \leq & \left\vert \eta \right\vert  \sum_{j=1 \atop j \neq j_{0}}^{\aleph}  \left\Vert  \mathcal{A} \right\Vert_{\mathbb{L}^{2}(D_{j_{0}})}  \, \left\Vert \int_{D_{j}}  \int_{0}^{1} \underset{y}{\nabla} \left(\Upsilon_{k} \right)(\cdot ,z_{j}+t(y-z_{j})) \cdot \mathcal{P}(y,z_{j}) \; dt \cdot F_{j}(y)\, dy \right\Vert_{\mathbb{L}^{2}(D_{j_{0}})} \\
		& \leq &  a^{2} \; \left\Vert  \mathcal{A} \right\Vert_{\mathbb{L}^{2}(D_{j_{0}})} \; \sum_{j=1 \atop j \neq j_{0}}^{\aleph} \left\vert z_{j} - z_{j_{0}} \right\vert^{-4}   \; \left\Vert  F_{j} \right\Vert_{\mathbb{L}^{2}(D_{j})}.
	\end{eqnarray*}
	We remark that the previous formula follows the same as $(\ref{Yas-Ra})$, and then, in a straightforward manner, we deduce that  
	\begin{equation}\label{J8-Estimation}
	J_{8} = \mathcal{O}\left(a^{8-h} \, d^{-4}  \, \max_j  \,\left\Vert \overset{1}{\mathbb{P}}\left( \tilde{E}^{T}_j\right) \right\Vert_{\mathbb{L}^{2}(B)}\right). 
	\end{equation} 
	Taking into account the estimations $(\ref{J6-Estimation}),(\ref{J7-Estimation})$ and $(\ref{J8-Estimation})$, the equation $(\ref{SIM})$ becomes, 
	\begin{eqnarray}\label{IdV}
	\nonumber
	\bm{\mathcal{M}} \cdot \int_{D_{j_{0}}} F_{j_{0}}(x) \; dx &-& \eta \, k^{2} \, \sum_{j=1 \atop j \neq j_{0}}^{\aleph} \int_{D_{j_{0}}}  \mathcal{A}(x) \, dx \cdot  \Upsilon_{k}(z_{j_{0}},z_{j})\cdot \int_{D_{j}} F_{j}(y)\, dy  \\ \nonumber
	& = & i \, k \, \int_{D_{j_{0}}}  \mathcal{A}(x) \, dx \cdot  H^{Inc}_{j_{0}}(z_{j_{0}}) +   \mathcal{O}\left(a^{8-h} \, d^{-4}  \, \max_j  \,\left\Vert \overset{1}{\mathbb{P}}\left( \tilde{E}^{T}_j\right) \right\Vert_{\mathbb{L}^{2}(B)}\right) \\ 
	&+& \mathcal{O}\left(a^{6-h}\right) + \mathcal{O}\left(a^{4} \, \left\Vert \overset{3}{\mathbb{P}}\left(\tilde{E}^{T}_{j_{0}}\right) \right\Vert_{\mathbb{L}^{2}\left(B\right)}\right) + \mathcal{O} \left(  a^{7-h} \, d^{-3} \left\vert \log (d)\right\vert \, \max_{j}  \left\Vert \overset{3}{\mathbb{P}}\left(\tilde{E}^{T}_j\right) \right\Vert_{\mathbb{L}^{2}\left(B\right)} \right).
	\end{eqnarray} 
	Remark that $\dfrac{1}{4} \left(\bm{\mathcal{M}}^{Tr} \cdot \bm{\mathcal{M}} \cdot \bm{\mathcal{M}}^{Tr} \right) \cdot \bm{\mathcal{M}} = I$ and denote $\bm{\mathcal{A}_{j_{0}}}$ to be 
	\begin{equation}\label{Def-Mt-M-Mt-int-A}
	\bm{\mathcal{A}_{j_{0}}} := \frac{1}{4} \left(\bm{\mathcal{M}}^{Tr} \cdot \bm{\mathcal{M}} \cdot \bm{\mathcal{M}}^{Tr} \right) \cdot \int_{D_{j_{0}}}  \mathcal{A}(x) \, dx \, \in \mathbb{C}^{3 \times 3}.
	\end{equation}
	Then, the equation $(\ref{IdV})$, after multiplying each side by $\dfrac{1}{4} \left(\bm{\mathcal{M}}^{Tr} \cdot \bm{\mathcal{M}} \cdot \bm{\mathcal{M}}^{Tr} \right)$, becomes 
	\begin{equation}\label{AS-2nd-Term}
	\int_{D_{j_{0}}} F_{j_{0}}(y) \; dy - \eta \, k^{2} \, \sum_{j=1 \atop j \neq j_{0}}^{\aleph} \bm{\mathcal{A}_{j_{0}}} \cdot  \Upsilon_{k}(z_{j_{0}},z_{j})\cdot \int_{D_{j}} F_{j}(y)\, dy  
	=  i \, k \; \bm{\mathcal{A}_{j_{0}}} \cdot  H^{Inc}_{j_{0}}(z_{j_{0}}) + Error,
	\end{equation}
	where  
	\begin{eqnarray}\label{HAPE}
	\nonumber
	Error &:=&    
	\mathcal{O}\left(a^{8-h} \, d^{-4}  \, \max_j  \,\left\Vert \overset{1}{\mathbb{P}}\left( \tilde{E}^{T}_j\right) \right\Vert_{\mathbb{L}^{2}(B)}\right)+\mathcal{O}\left(a^{6-h}\right) \\ 
	&+&  \mathcal{O}\left(a^{4} \, \left\Vert \overset{3}{\mathbb{P}}\left(\tilde{E}^{T}_{j_{0}}\right) \right\Vert_{\mathbb{L}^{2}\left(B\right)}\right) + \mathcal{O} \left(  a^{7-h} \, d^{-3} \, \left\vert \log (d)\right\vert \, \max_{j}  \left\Vert \overset{3}{\mathbb{P}}\left(\tilde{E}^{T}_j\right) \right\Vert_{\mathbb{L}^{2}\left(B\right)} \right)
	\end{eqnarray}   
	Next, we estimate $\bm{\mathcal{A}_{j_{0}}}$. 
	Since $\dfrac{1}{4} \left(\bm{\mathcal{M}}^{Tr} \cdot \bm{\mathcal{M}} \cdot \bm{\mathcal{M}}^{Tr} \right)$ is a constant matrix, the estimation of $\bm{\mathcal{A}_{j_{0}}}$ is the same as that of $\int_{D_{j_{0}}}  \mathcal{A}(x) \, dx$. We have 
	\begin{equation*}
	\int_{D_{j_{0}}}  \mathcal{A}(x) \, dx = a^{3} \, \int_{B}  \tilde{\mathcal{A}}(x) \, dx = a^{3} \, \int_{B}  \tilde{\mathcal{A}}(x) \cdot I \; dx = - \, a^{3} \, \int_{B}  \tilde{\mathcal{A}}(x) \cdot Curl\left(\mathcal{Q}(x) \right) \; dx,
	\end{equation*}
	where $\mathcal{Q}(x)$ is the matrix given by 
	\begin{equation*}
	\mathcal{Q}(x) := \begin{pmatrix}
	0 & x_{3} & 0 \\ 
	0 & 0 & x_{1} \\ 
	x_{2} & 0 & 0
	\end{pmatrix},
	\end{equation*}
	and, with integration by parts, we get
	\begin{equation*}
	\int_{D_{j_{0}}}  \mathcal{A}(x) \, dx =- a^{3} \, \int_{B} Curl\left( \tilde{\mathcal{A}} \right)(x) \cdot \mathcal{Q}(x)  \; dx =- a^{4} \, \int_{B} \widetilde{Curl\left( \mathcal{A} \right)}(x) \cdot \mathcal{Q}(x)  \; dx \overset{( \ref{p1=curlA})}{=} -a^{4} \, \int_{B} \overset{1}{\mathbb{P}}\left(\tilde{W} \right)(x) \cdot \mathcal{Q}(x)  \; dx.
	\end{equation*}
	By expanding $\overset{1}{\mathbb{P}}\left( \tilde{W} \right)$ over the basis $\left(e^{(1)}_{n} \right)_{n}$, we obtain 
	\begin{eqnarray*}
		\int_{D_{j_{0}}}  \mathcal{A}(x) \, dx &=& - a^{4} \, \sum_{n} \, \langle \overset{1}{\mathbb{P}}\left(\tilde{W} \right);e^{(1)}_{n} \rangle \cdot \left( \int_{B} e^{(1)}_{n}(x) \cdot \mathcal{Q}(x)  \; dx \right)^{Tr} \\
		& \overset{(\ref{Isefra})}{=} & - a^{4} \, \sum_{n} \, \frac{1}{\left( 1 - k^{2} \, \eta \, a^{2} \, \lambda^{(1)}_{n} \right)} \left[ a \, \langle \mathcal{P}(\cdot , 0) , e_{n}^{(1)} \rangle - \langle T, e_{n}^{(1)} \rangle \right] \cdot \left( \int_{B} e^{(1)}_{n}(x) \cdot \mathcal{Q}(x)  \; dx \right)^{Tr}.
	\end{eqnarray*}
	We write $e_{n}^{(1)}$ as $e_{n}^{(1)} = Curl\left( \phi_{n} \right)$ and using integration by parts to get
	\begin{equation*}
	\langle \mathcal{P}(\cdot , 0) , e_{n}^{(1)} \rangle =  \langle \mathcal{P}(\cdot , 0) , Curl\left( \phi_{n} \right) \rangle = \langle Curl\left( \mathcal{P}(\cdot , 0) \right),  \phi_{n}  \rangle \overset{(\ref{def-bm-mathcal-M})}{=} \langle \bm{\mathcal{M}} ,  \phi_{n}  \rangle
	\end{equation*}
	and 
	\begin{equation*}
	\int_{B} e^{(1)}_{n}(x) \cdot \mathcal{Q}(x)  \; dx  = \int_{B} Curl\left( \phi_{n} \right)(x) \cdot \mathcal{Q}(x)  \; dx = \int_{B}  \phi_{n}(x) \cdot Curl\left( \mathcal{Q}\right)(x)  \; dx = -\int_{B}  \phi_{n}(x) \; dx.
	\end{equation*}  
	Then 
	\begin{eqnarray}\label{A-expr}
	\int_{D_{j_{0}}}  \mathcal{A}(x) \, dx & = &  a^{5-h}  \, \, \langle \bm{\mathcal{M}} ,  \phi_{n_{0}}  \rangle  \cdot \left( \int_{B}  \phi_{n_{0}}(x) \; dx \right)^{Tr} \notag\\
	& - &  a^{5} \, \sum_{n \neq n_{0}} \, \frac{1}{\left( 1 - k^{2} \, \eta \, a^{2} \, \lambda^{(1)}_{n} \right)} \, \langle \mathcal{P}(\cdot , 0) , e_{n}^{(1)} \rangle  \cdot \left( \int_{B} e^{(1)}_{n}(x) \cdot \mathcal{Q}(x)  \; dx \right)^{Tr} \notag\\
	& + &  a^{4} \, \sum_{n} \, \frac{1}{\left( 1 - k^{2} \, \eta \, a^{2} \, \lambda^{(1)}_{n} \right)}  \langle T, e_{n}^{(1)} \rangle  \cdot \left( \int_{B} e^{(1)}_{n}(x) \cdot \mathcal{Q}(x)  \; dx \right)^{Tr}. 
	\end{eqnarray}
	Set
	\begin{equation*}
	J_{9,1} := a^{5} \, \sum_{n \neq n_{0}} \, \frac{1}{\left( 1 - k^{2} \, \eta \, a^{2} \, \lambda^{(1)}_{n} \right)} \, \langle \mathcal{P}(\cdot , 0) , e_{n}^{(1)} \rangle  \cdot \left( \int_{B} e^{(1)}_{n}(x) \cdot \mathcal{Q}(x)  \; dx \right)^{Tr},
	\end{equation*}
	then
	\begin{equation*}
	\left\vert J_{9,1} \right\vert \leq  a^{5} \; \sum_{n \neq n_{0}} \left\vert \langle \mathcal{P}(\cdot , 0) , e_{n}^{(1)} \rangle \right\vert \; \left\vert \langle \mathcal{Q} , e_{n}^{(1)} \rangle \right\vert \sim a^{5}.
	\end{equation*}
	And for the term $J_{9,2}$ given by 
	\begin{equation*}
	J_{9,2} := a^{4} \, \sum_{n} \, \frac{1}{\left( 1 - k^{2} \, \eta \, a^{2} \, \lambda^{(1)}_{n} \right)}  \langle T, e_{n}^{(1)} \rangle  \cdot \left( \int_{B} e^{(1)}_{n}(x) \cdot \mathcal{Q}(x)  \; dx \right)^{Tr},
	\end{equation*}
	we can check, from the definition of the term $T(\cdot)$, see for instance $(\ref{T-term})$, that 
	\begin{eqnarray*}
		J_{9,2} & \sim & a^{4} \, \sum_{n} \, \frac{1}{\left( 1 - k^{2} \, \eta \, a^{2} \, \lambda^{(1)}_{n} \right)}  \langle \int_{B} \Phi_{0}(\cdot,y) \frac{A(\cdot,y)}{\left\Vert \cdot - y \right\Vert^{2}} \cdot \overset{3}{\mathbb{P}}\left(\tilde{W} \right)(y) dy, e_{n}^{(1)} \rangle  \cdot \left( \int_{B} e^{(1)}_{n}(x) \cdot \mathcal{Q}(x)  \; dx \right)^{Tr}\\
		\left\vert J_{9,2} \right\vert & \lesssim & a^{4-h} \, \sum_{n} \, \left\vert  \langle \int_{B} \Phi_{0}(\cdot,y) \frac{A(\cdot,y)}{\left\Vert \cdot - y \right\Vert^{2}} \cdot \overset{3}{\mathbb{P}}\left(\tilde{W} \right)(y) dy, e_{n}^{(1)} \rangle \right\vert  \left\vert \langle e^{(1)}_{n}, \mathcal{Q} \rangle\right\vert \\
		& \lesssim & a^{4-h} \,  \, \left\Vert   \int_{B} \Phi_{0}(\cdot,y) \frac{A(\cdot,y)}{\left\Vert \cdot - y \right\Vert^{2}} \cdot \overset{3}{\mathbb{P}}\left(\tilde{W} \right)(y) dy \right\Vert \lesssim a^{4-h} \,  \, \left\Vert  \overset{3}{\mathbb{P}}\left(\tilde{W} \right) \right\Vert \overset{(\ref{fin-W-3})}{=} \mathcal{O}\left( a^{7-h} \right). 
	\end{eqnarray*}
	Using the estimations $J_{9,1}$ and $J_{9,2}$, we obtain that
	\begin{equation}\notag
	\int_{D_{j_{0}}}  \mathcal{A}(x) \, dx =  a^{5-h}  \, \, \langle \bm{\mathcal{M}} ,  \phi_{n_{0}}  \rangle  \cdot \left( \int_{B}  \phi_{n_{0}}(x) \; dx \right)^{Tr} + \mathcal{O}\left( a^{5} \right). 
	\end{equation}  
	Seeing the definition of $\bm{\mathcal{A}_{j_{0}}}$, in $(\ref{Def-Mt-M-Mt-int-A})$, by a straightforward computation, we deduce that 
	\begin{equation}\label{int-phi-n0}
	\bm{\mathcal{A}_{j_{0}}}  =  a^{5-h} \, \left( \left( \int_{B} \, \phi_{n_{0}}(y) \, dy \right) \otimes \left(\int_{B} \phi_{n_{0}}(y) dy \right) \right) + \mathcal{O}\left( a^{5} \right). 
	\end{equation} 
	By substituting \eqref{int-phi-n0} into \eqref{AS-2nd-Term}, after rearranging the terms, we obtain the linear algebraic with the form \eqref{la-1}. Moreover, from the a-priori estimates for $\overset{1}{\mathbb{P}}(\tilde{E}^T_j)$ and $\overset{3}{\mathbb{P}}(\tilde{E}^T_j)$ given by \eqref{max-P1P3} in Proposition \ref{lem-es-multi}, the error term in \eqref{HAPE} can be simplified as \eqref{la-error}.
	
	As for the invertibility condition \eqref{inver-con-1}, indeed, utilizing the precise representation \eqref{A-expr} of the tensor $\bm{\mathcal{A}_{j_{0}}}$ and combining with \eqref{int-phi-n0}, it is direct to verify that $\left\lVert[P_0]\right\rVert$ of the invertibility condition \eqref{invert-condi-gene} in Proposition \ref{pro-general la} can be replaced by
	\begin{equation}\notag
		\frac{k^2 a^5}{|1 - k^2 \, \eta \, a^2 \, \lambda_{n_0}^{(1)}|}\left\lVert\int_{B}\phi_{n_{0}}(y)\,dy\otimes\int_{B}\phi_{n_{0}}(y)\,dy\right\rVert,
	\end{equation}
	which yields \eqref{inver-con-1}.
\end{proof}

\subsection{Construction of the linear algebraic systerm in Proposition \ref{prop-la-2}.}

Similar to Proposition \ref{prop-la-1}, we construct the precise form of the linear algebraic system related to $\overset{3}{\mathbb{P}}(E^T_j)$, $j=1, 2, \cdots, \aleph$, in Proposition \ref{prop-la-2} as follows.

\begin{proof}[Proof of Proposition \ref{prop-la-2}.]
	Recall again the Lippmann-Schwinger system of the equation  
	\begin{equation*}
\boldsymbol{T}_{k} \, \left(E^{T}_{j_{0}}\right)(x) - \eta \, \sum_{j=1 \atop j \neq j_{0}}^{\aleph} \int_{D_{j}} \Upsilon_{k}(x,y) \cdot E^{T}_j(y)\, dy = E^{Inc}_{j_{0}}(x), \mbox{ in } D_{j_0}, \; j_0=1, ..., \aleph.
	\end{equation*}
	Define $V$ as
	\begin{equation}\label{def-W}
	V := \boldsymbol{T}_{- \, k}^{-1}\left( I \right),
	\end{equation}
Inverting the operator $\boldsymbol{T}_{k}$, integrating over $D_{j_{0}}$ and using the definition of $V$, see for instance $(\ref{def-W})$, we obtain that
	\begin{equation*}
	\int_{D_{j_{0}}} E^{T}_{j_{0}}(x) \, dx - \eta \, \sum_{j=1 \atop j \neq j_{0}}^{\aleph} \int_{D_{j_{0}}} V(x) \cdot \int_{D_{j}} \Upsilon_{k}(x,y) \cdot E^{T}_j(y)\, dy \, dx = \int_{D_{j_{0}}} V(x) \cdot E^{Inc}_{j_{0}}(x) dx,
	\end{equation*}
	or in the following form 
	\begin{eqnarray*}
		\int_{D_{j_{0}}} \overset{3}{\mathbb{P}}\left(E^{T}_{j_{0}}\right)(x) \, dx &-& \eta \, \sum_{j=1 \atop j \neq j_{0}}^{\aleph} \int_{D_{j_{0}}} V(x) \cdot \int_{D_{j}} \Upsilon_{k}(x,y) \cdot \overset{3}{\mathbb{P}}\left(E^{T}_j\right)(y)\, dy \, dx \\ &-& \eta \,k^{2} \sum_{j=1 \atop j \neq j_{0}}^{\aleph} \int_{D_{j_{0}}} V(x) \cdot \int_{D_{j}} \Phi_{k}(x,y) \overset{1}{\mathbb{P}}\left(E^{T}_j\right)(y)\, dy \, dx = \int_{D_{j_{0}}} V(x) \cdot E^{Inc}_{j_{0}}(x) dx.
	\end{eqnarray*}
We rewrite the previous equation, with the help of \eqref{Expansion-Upsilon}, as 
	\begin{eqnarray}\label{equa-M-L}
	\nonumber
	\int_{D_{j_{0}}} \overset{3}{\mathbb{P}}\left(E^{T}_{j_{0}}\right)(x) \, dx &-& \eta \, \sum_{j=1 \atop j \neq j_{0}}^{\aleph} \int_{D_{j_{0}}} V(x) \, dx \cdot  \Upsilon_{k}(z_{j_{0}},z_{j}) \cdot \int_{D_{j}} \overset{3}{\mathbb{P}}\left(E^{T}_j\right)(y)\, dy = \int_{D_{j_{0}}} V(x) \cdot E^{Inc}_{j_{0}}(x) dx  \\ \nonumber
	&+&  \eta \, \sum_{j=1 \atop j \neq j_{0}}^{\aleph} \int_{D_{j_{0}}} V(x) \cdot \int_{D_{j}} \int_{0}^{1} \underset{y}{\nabla} \left(\Upsilon_{k} \right)(x,z_{j}+t(y-z_{j})) \cdot \mathcal{P}(y,z_{j}) \; dt \cdot \overset{3}{\mathbb{P}}\left(E^{T}_j\right)(y)\, dy \, dx \\ \nonumber
	&+&  \eta \, \sum_{j=1 \atop j \neq j_{0}}^{\aleph} \int_{D_{j_{0}}} V(x) \cdot  \int_{0}^{1} \underset{x}{\nabla} \left(\Upsilon_{k} \right)(z_{j_{0}}+t(x-z_{j_{0}}),z_{j}) \cdot \mathcal{P}(x,z_{j_{0}}) \; dt \cdot \int_{D_{j}} \overset{3}{\mathbb{P}}\left(E^{T}_j\right)(y)\, dy \, dx\\
	&+& \eta \,k^{2} \sum_{j=1 \atop j \neq j_{0}}^{\aleph} \int_{D_{j_{0}}} V(x) \cdot \int_{D_{j}}\Phi_{k}(x, y)\overset{1}{\mathbb{P}}\left(E^{T}_j\right)(y)\,dy\,dx
	\end{eqnarray}
	Then, we estimate the last three terms on the right hand side of \eqref{equa-M-L}. Set, 
	\begin{eqnarray*}
		J_{10} & := &\eta \, \sum_{j=1 \atop j \neq j_{0}}^{\aleph} \int_{D_{j_{0}}} V(x) \cdot \int_{D_{j}} \int_{0}^{1} \underset{y}{\nabla} \left(\Upsilon_{k} \right)(x,z_{j}+t(y-z_{j})) \cdot \mathcal{P}(y,z_{j}) \; dt \cdot \overset{3}{\mathbb{P}}\left(E^{T}_j\right)(y)\, dy \, dx \\
		\left\vert J_{10} \right\vert & \leq & \left\vert \eta \right\vert \; \left\Vert V \right\Vert_{\mathbb{L}^{2}(D_{j_{0}})} \, \sum_{j=1 \atop j \neq j_{0}}^{\aleph} \left[ \int_{D_{j_{0}}} \int_{D_{j}} \; \left\vert \int_{0}^{1} \underset{y}{\nabla} \left(\Upsilon_{k} \right)(x, z_{j}+t(y-z_{j})) \cdot \mathcal{P}(y,z_{j}) \; dt \right\vert^{2} dy \, dx \right]^{\frac{1}{2}} \; \left\Vert \overset{3}{\mathbb{P}}\left(E^{T}_j\right) \right\Vert_{\mathbb{L}^{2}(D_{j})} \\
		& \lesssim & a^{2} \; \left\Vert V \right\Vert_{\mathbb{L}^{2}(D_{j_{0}})} \, \sum_{j=1 \atop j \neq j_{0}}^{\aleph}  \left\vert z_{j} - z_{j_{0}} \right\vert^{-4} \; \left\Vert \overset{3}{\mathbb{P}}\left(E^{T}_j\right) \right\Vert_{\mathbb{L}^{2}(D_{j})}.
	\end{eqnarray*}
	Then,  
	\begin{equation}\label{Estimation-J-10}
	J_{10} = \mathcal{O}\left( a^{2} \; \left\Vert V \right\Vert_{\mathbb{L}^{2}(D_{j_{0}})} \; d^{-4}  \; \max_{j} \left\Vert \overset{3}{\mathbb{P}}\left(E^{T}_j\right) \right\Vert_{\mathbb{L}^{2}(D_j)} \right) \overset{(\ref{max-P1P3})}{=} \mathcal{O}\left( \left\Vert V \right\Vert_{\mathbb{L}^{2}(D_{j_{0}})} \; d^{-4}  \; a^{\frac{10+h}{2}} \right).
	\end{equation} 
	Denote
	\begin{eqnarray*}
		J_{11} &:=& \eta \, \sum_{j=1 \atop j \neq j_{0}}^{\aleph} \int_{D_{j_{0}}} V(x) \cdot  \int_{0}^{1} \underset{x}{\nabla} \left(\Upsilon_{k} \right)(z_{j_{0}}+t(x-z_{j_{0}}),z_{j}) \cdot \mathcal{P}(x,z_{j_{0}}) \; dt \cdot \int_{D_{j}} \overset{3}{\mathbb{P}}\left(E^{T}_j\right)(y)\, dy \, dx \\
		J_{11} &=& \eta \, \sum_{j=1 \atop j \neq j_{0}}^{\aleph} \int_{D_{j_{0}}} V(x) \cdot  \int_{0}^{1} \underset{x}{\nabla} \left(\Upsilon_{k} \right)(z_{j_{0}}+t(x-z_{j_{0}}),z_{j}) \cdot \mathcal{P}(x,z_{j_{0}}) \; dt \, dx \cdot \int_{D_{j}} \overset{3}{\mathbb{P}}\left(E^{T}_j\right)(y)\, dy  \\ 
		\left\vert J_{11} \right\vert & \leq & a^{-\frac{1}{2}} \; \left\Vert V \right\Vert_{\mathbb{L}^{2}(D_{j_{0}})}   \;  \sum_{j=1 \atop j \neq j_{0}}^{\aleph} \left\Vert \int_{0}^{1} \underset{x}{\nabla} \left(\Upsilon_{k} \right)(z_{j_{0}}+t( \cdot -z_{j_{0}}),z_{j}) \cdot \mathcal{P}(\cdot ,z_{j_{0}}) \; dt \, \right\Vert_{\mathbb{L}^{2}(D_{j_{0}})}  \left\Vert \overset{3}{\mathbb{P}}\left(E^{T}_j\right) \right\Vert_{\mathbb{L}^{2}(D_{j})}  \\
		& \lesssim & a^{2} \; \left\Vert V \right\Vert_{\mathbb{L}^{2}(D_{j_{0}})}   \;  \sum_{j=1 \atop j \neq j_{0}}^{\aleph} \left\vert z_{j} - z_{j_{0}} \right\vert^{-4}  \left\Vert \overset{3}{\mathbb{P}}\left(E^{T}_j\right) \right\Vert_{\mathbb{L}^{2}(D_{j})}. 
	\end{eqnarray*}
	Then,   
	\begin{equation}\label{Estimation-J-11}
	J_{11} = \mathcal{O}\left( a^{2} \; \left\Vert V \right\Vert_{\mathbb{L}^{2}(D_{j_{0}})} \; d^{-4}   \; \max_{j} \left\Vert \overset{3}{\mathbb{P}}\left(E^{T}_j\right) \right\Vert_{\mathbb{L}^{2}(D_j)}  \right)\overset{(\ref{max-P1P3})}{=} \mathcal{O}\left( \left\Vert V \right\Vert_{\mathbb{L}^{2}(D_{j_{0}})} \; d^{-4}  \; a^{\frac{10+h}{2}} \right).
	\end{equation} 
	Also, we denote $J_{12}$ to be 
	\begin{align*}
	J_{12} &:=\eta \,k^{2} \sum_{j=1 \atop j \neq j_{0}}^{\aleph} \int_{D_{j_{0}}} V(x) \cdot \int_{D_{j}}\Phi_{k}(x, y)\overset{1}{\mathbb{P}}\left(E^{T}_j\right)(y)\,dy\,dx\notag\\
	&=\eta \, k^2\sum_{j=1 \atop j \neq j_{0}}^{\aleph} \int_{D_{j_{0}}} V(x) \cdot \int_{D_{j}}\bigg[\Phi_{k}(x, z_j)+\underset{y}{\nabla}\Phi_{k}(x, z_j)\cdot(y-z_j)\notag\\
	&+\frac{1}{2}\int_{0}^1(y-z_j)^\perp\cdot\underset{y}{Hess}\left(\Phi_{k}\right)(x, z_j+t(y-z_j))\cdot (y-z_j)\,dt\bigg] \, \overset{1}{\mathbb{P}}\left(E^{T}_j\right)(y)\,dy\,dx.
	\end{align*}
For the first term on the right hand side of the above equation, it is clear that: 	
\begin{equation*}
 \int_{D_{j_{0}}} V(x) \cdot \int_{D_{j}} \Phi_{k}(x, z_j) \, \overset{1}{\mathbb{P}}\left(E^{T}_j\right)(y)\,dy\,dx = \int_{D_{j_{0}}} V(x) \cdot \Phi_{k}(x, z_j) \, \int_{D_{j}}  \overset{1}{\mathbb{P}}\left(E^{T}_j\right)(y)\,dy\,dx = 0.
\end{equation*}
And for the last two terms, there holds
\begin{eqnarray*}
L_{4,1} &:=& \sum_{j = 1\atop j\neq j_0}^\aleph\int_{D_{j_{0}}}V(x)\int_{D_{j}}\underset{y}{\nabla }\Phi_{k}(x, z_j)\cdot(y-z_j)\cdot\overset{1}{\mathbb{P}}\left(E^T_j\right)(y)\,dy \, dx, \\
\left\vert L_{4,1} \right\vert &=& \left\vert \sum_{j = 1\atop j\neq j_0}^\aleph\int_{D_{j_{0}}}V(x)\int_{D_{j}}\underset{y}{\nabla }\Phi_{k}(x, z_j)\cdot(y-z_j)\cdot\overset{1}{\mathbb{P}}\left(E^T_j\right)(y) \, dy \, dx \right\vert \\
& \lesssim & a^4 \, \lVert V \rVert_{\mathbb{L}^2(D_{j_0})} \, \sum_{j = 1\atop j\neq j_0}^\aleph \frac{1}{|z_{j_0}-z_j|^2} \, \left\lVert\overset{1}{\mathbb{P}}\left(E^T_j\right)\right\rVert_{\mathbb{L}^2(D_{j})} \\ &=& \mathcal{O}\left(a^4 \, d^{-3} \, \lVert V\rVert_{\mathbb{L}^2(D_{j_0})} \, \max_{j}\left\lVert\overset{1}{\mathbb{P}}\left(E^T_j\right)\right\rVert_{\mathbb{L}^2(D_{j})} \right) \overset{(\ref{max-P1P3})}{=} \mathcal{O}\left(a^{\frac{13}{2}-h} \, d^{-3} \, \lVert V\rVert_{\mathbb{L}^2(D_{j_0})} \right),
\end{eqnarray*}	

	and
	\begin{eqnarray*}
L_{4,2} & := & \sum_{j = 1\atop j\neq j_0}^\aleph\int_{D_{j_{0}}}V(x)\int_{D_{j}}\int_{0}^1(y-z_j)^\perp\cdot\underset{y}{Hess}\left(\Phi_{k}\right)(x, z_j+t(y-z_j))\cdot (y-z_j)\,dt\cdot \overset{1}{\mathbb{P}}\left(E^{T}_j\right)(y)\,dy\,dx \\
\left\vert L_{4,2} \right\vert &\lesssim& 
a^5 \lVert V\rVert_{\mathbb{L}^2(D_{j_0})}\sum_{j = 1\atop j\neq j_0}^\aleph \frac{1}{|z_{j_0}-z_j|^3}\lVert \overset{1}{\mathbb{P}}\left(E^T_{j}\right)\rVert_{\mathbb{L}(D_j)}
\notag\\
&\lesssim & a^5 \, d^{-3} \, \left\vert \log (d)\right\vert \, \lVert V \rVert_{\mathbb{L}^2(D_{j_0})}\max_{j}\left\lVert\overset{1}{\mathbb{P}}\left({E}^{T}_j\right)\right\rVert_{\mathbb{L}^2(D_{j})}\overset{(\ref{max-P1P3})}{=} \mathcal{O}\left( a^{\frac{15}{2}-h} \, d^{-3} \, \left\vert \log (d)\right\vert \, \lVert V \rVert_{\mathbb{L}^2(D_{j_0})}\right).
	\end{eqnarray*}
Hence, $J_{12}$ can be estimated as 
\begin{equation}\label{Estimation-J-12}
J_{12} = \mathcal{O}\left(a^{\frac{9}{2}-h} \, d^{-3} \, \lVert V\rVert_{\mathbb{L}^2(D_{j_0})} \right).
\end{equation}
Recall $(\ref{equa-M-L})$. Using $(\ref{Estimation-J-10}),(\ref{Estimation-J-11})$ and $(\ref{Estimation-J-12})$, we deduce that	
	\begin{eqnarray*}
	\nonumber
	\int_{D_{j_{0}}} \overset{3}{\mathbb{P}}\left(E^{T}_{j_{0}}\right)(x) \, dx &-& \eta \, \sum_{j=1 \atop j \neq j_{0}}^{\aleph} \int_{D_{j_{0}}} V(x) \, dx \cdot  \Upsilon_{k}(z_{j_{0}},z_{j}) \cdot \int_{D_{j}} \overset{3}{\mathbb{P}}\left(E^{T}_j\right)(y)\, dy \\ &=& \int_{D_{j_{0}}} V(x) \cdot E^{Inc}_{j_{0}}(x) dx  
 +\mathcal{O}\left(a^{5+\frac{h}{2}} d^{-4} \lVert V\rVert_{\mathbb{L}^2(D_{j_0})}\right)+\mathcal{O}\left(a^{\frac{9}{2}-h}d^{-3}\lVert V\rVert_{\mathbb{L}^2(D_{j_0})}\right), 
	\end{eqnarray*}
which can be reduced, using the conditions on $t$ and $h$ given by $(\ref{conditions-t-h})$, to:  	
	\begin{eqnarray}\label{final-1st}
	\nonumber
	\int_{D_{j_{0}}} \overset{3}{\mathbb{P}}\left(E^{T}_{j_{0}}\right)(x) \, dx - \eta \, \sum_{j=1 \atop j \neq j_{0}}^{\aleph} \mathcal{C}_{j_{0}} \cdot  \Upsilon_{k}(z_{j_{0}},z_{j}) \cdot \int_{D_{j}} \overset{3}{\mathbb{P}}\left(E^{T}_j\right)(y)\, dy &=& \int_{D_{j_{0}}} V(x) \cdot E^{Inc}_{j_{0}}(x) dx  
\\ &+& \mathcal{O}\left(a^{\frac{9}{2}-h}d^{-3}\lVert V\rVert_{\mathbb{L}^2(D_{j_0})}\right),
	\end{eqnarray}
where $\mathcal{C}_{j_{0}} := \int_{D_{j_{0}}} V(x) dx. $
	Moreover,  
	\begin{equation*}
		\int_{D_{j_{0}}} V(x) \cdot E^{Inc}_{j_{0}}(x) dx = \mathcal{C}_{j_{0}} \cdot E^{Inc}_{j_{0}}(z_{j_{0}}) + \int_{D_{j_{0}}} V(x) \cdot \int_{0}^{1} \underset{x}{\nabla}\left( E^{Inc}_{j_{0}}\right)(z_{j_{0}}+t(x-z_{j_{0}})) \cdot (x-z_{j_{0}}) dt dx,  
	\end{equation*}
	and we estimate the last term as 
	\begin{equation*}
	\left\vert \int_{D_{j_{0}}} V(x) \cdot \int_{0}^{1} \underset{x}{\nabla}\left( E^{Inc}_{j_{0}}\right)(z_{j_{0}}+t(x-z_{j_{0}})) \cdot (x-z_{j_{0}}) dt dx \right\vert \lesssim a^{\frac{5}{2}} \;  \left\Vert V \right\Vert_{\mathbb{L}^{2}(D_{j_{0}})}.
	\end{equation*}
	Therefore, equation $(\ref{final-1st})$ takes the following form 
	\begin{eqnarray}\label{Dj.E.B.C}
	\nonumber
	\int_{D_{j_{0}}} \overset{3}{\mathbb{P}}\left(E^{T}_{j_{0}}\right)(x) \, dx &-& \eta \, \sum_{j=1 \atop j \neq j_{0}}^{\aleph} \mathcal{C}_{j_0} \cdot  \Upsilon_{k}(z_{j_{0}},z_{j}) \cdot \int_{D_{j}} \overset{3}{\mathbb{P}}\left(E^{T}_j\right)(y)\, dy \\ &=& \mathcal{C}_{j_0}  \cdot E^{Inc}_{j_{0}}(z_{j_{0}})   + \mathcal{O}\left(a^{\frac{5}{2}} \lVert V\rVert_{\mathbb{L}^2(D_{j_0})}\right)
	+ \mathcal{O}\left(a^{\frac{9}{2}-h}d^{-3}\lVert V\rVert_{\mathbb{L}^2(D_{j_0})}\right).  
	\end{eqnarray}  
	Next, we compute the estimation of  $\mathcal{C}_{j_0}$.
	Recalling the definition of the matrix $V$ in $(\ref{def-W})$, i.e. $V := \boldsymbol{T}_{- \, k}^{-1}\left( I \right)$, and scaling to the domain $B$, we obtain 
	\begin{equation}\label{scale-W}
	\overset{\sim}{V}  := \boldsymbol{T}_{- \, k \, a}^{-1}\left( I \right).
	\end{equation} 
Now, to give the estimation of $\mathcal{C}_{j_0}$, we compute $\langle \overset{\sim}{V}; e^{(j)}_{n} \rangle, j=1,2,3$. Indeed, we have  	
	\begin{eqnarray*}
		0 = \int_{B} e^{(1)}_{n}(x) \, dx & = & \int_{B} I \cdot e^{(1)}_{n}(x) \, dx \overset{(\ref{scale-W})}{=} \int_{B} \boldsymbol{T}_{- \, k \, a} \left( \overset{\sim}{V} \right)(x) \cdot e^{(1)}_{n}(x) \, dx =  \int_{B}  \overset{\sim}{V}(x) \cdot \boldsymbol{T}_{k \, a} \left(e^{(1)}_{n}\right)(x) \, dx.
\end{eqnarray*}
Knowing that $\nabla M^{ka}\left(e^{(1)}_{n}\right) = 0$, combining with $(\ref{expansion-Nk})$, we deduce from the above equation that
	\begin{equation}\label{w-en1}
	\langle \overset{\sim}{V}, e_{n}^{(1)} \rangle = \frac{k^{2} \, \eta \, a^{2}}{4 \, \pi \, \left( 1 - k^{2} \, \eta \, a^{2} \, \lambda_{n}^{(1)} \right)} \; \sum_{\ell \geq 1} \frac{(ika)^{\ell+1}}{(\ell+1)!} \; \int_{B} \overset{\sim}{V}(x) \cdot \int_{B} \; \left\Vert x - y \right\Vert^{\ell}  e_{n}^{(1)}(y) \, dy \, dx.
	\end{equation}
	Similarly, we have 
	\begin{equation}\label{W-en2}
	0 = \int_{B} e^{(2)}_{n}(x) \, dx  =  \int_{B} I \cdot e^{(2)}_{n}(x) \, dx  \overset{(\ref{scale-W})}{=}  \int_{B} \boldsymbol{T}_{- \, k \, a} \left( \overset{\sim}{V} \right) \cdot e^{(2)}_{n}(x) \, dx  \overset{(\ref{grad-M-k-2nd-subspace})}{=}  (1 + \eta) \; \langle \overset{\sim}{V}; e_{n}^{(2)} \rangle. 
	\end{equation}
	In the subspace $\nabla \mathcal{H}armonic$, we have
	\begin{eqnarray*}
		\int_{B} e^{(3)}_{n}(x) \, dx & = & \int_{B} I \cdot e^{(3)}_{n}(x) \, dx \overset{(\ref{scale-W})}{=} \int_{B} \boldsymbol{T}_{- \, k \, a} \left( \overset{\sim}{V} \right)(x) \cdot e^{(3)}_{n}(x) \, dx 
		 =  \int_{B}  \overset{\sim}{V}(x) \cdot \boldsymbol{T}_{k \, a} \left(e^{(3)}_{n}\right)(x) \, dx \\
		& = & \left( 1 + \eta \lambda_{n}^{(3)} \right) \langle \overset{\sim}{V}, e^{(3)}_{n} \rangle +   \eta  \, \langle \overset{\sim}{V}, \left( - k^{2} \, a^{2} \, N^{ka} + \left( \nabla M^{ka} - \nabla M \right)\right) \left(e^{(3)}_{n}\right) \rangle.
	\end{eqnarray*}
	Then, 
	\begin{eqnarray}\label{W,e3}
	\nonumber
	\langle \overset{\sim}{V}, e^{(3)}_{n} \rangle & = & \left( 1 + \eta \lambda_{n}^{(3)} \right)^{-1} \left[ \int_{B} e^{(3)}_{n}(x) \, dx + \eta  \, \langle \overset{\sim}{V}, \left(  k^{2} \, a^{2} \, N^{ka} - \left( \nabla M^{ka} - \nabla M \right)\right) \left(e^{(3)}_{n}\right) \rangle \right] \\
	& = &  \frac{1}{1 + \eta \lambda_{n}^{(3)}}  \int_{B} e^{(3)}_{n}(x) \, dx + \frac{\eta }{1 + \eta \lambda_{n}^{(3)}}  \, \langle \overset{\sim}{V}, \mathcal{S} \left(e^{(3)}_{n}\right) \rangle,
	\end{eqnarray}
	where $\mathcal{S}$ is given by \eqref{DefOperatorS}.
	Now,  
	\begin{eqnarray}\label{beforJ12}
	\nonumber
	\mathcal{C}_{j_0} &=& a^{3} \,  \int_{B}  \, \overset{\sim}{V}(x) \, dx = a^{3} \; \sum_{n} \langle \overset{\sim}{V}, e^{(3)}_{n} \rangle \otimes \int_{B} e^{(3)}_{n}(x) \, dx \\ \nonumber
	& \overset{(\ref{W,e3})}{=} & a^{3} \; \sum_{n} \left[ \frac{1}{1 + \eta \lambda_{n}^{(3)}}  \int_{B} e^{(3)}_{n}(x) \, dx + \frac{\eta }{1 + \eta \lambda_{n}^{(3)}}  \, \langle \overset{\sim}{V}, \mathcal{S} \left(e^{(3)}_{n}\right) \rangle \right] \otimes \int_{B} e^{(3)}_{n}(x) \, dx \\ \nonumber 
	& = & a^{3} \; \sum_{n} \left[ \frac{1}{1 + \eta \lambda_{n}^{(3)}}  \int_{B} e^{(3)}_{n}(x) \, dx + \frac{\eta }{1 + \eta \lambda_{n}^{(3)}}  \, \langle \overset{\sim}{V}, \mathcal{S}_{d} \left(e^{(3)}_{n}\right) \rangle \right] \otimes \int_{B} e^{(3)}_{n}(x) \, dx \\
	& + & a^{3} \, \eta \; \sum_{n}  \frac{1}{1 + \eta \lambda_{n}^{(3)}}  \, \langle \overset{\sim}{V}, \mathcal{S}_{r} \left(e^{(3)}_{n}\right) \rangle \otimes \int_{B} e^{(3)}_{n}(x) \, dx,
	\end{eqnarray}
	where, from $(\ref{DefOperatorS})$, we have  
	\begin{equation}\label{SdW}
	\mathcal{S}_{d}(e^{(3)}_{n})(x)  =  \frac{a^{2} \, k^{2}}{2} \, N(e^{(3)}_{n})(x)  + \frac{a^{2} k^{2}}{2} \int_{B} \Phi_{0}(x,y) \frac{A(x,y)\cdot e^{(3)}_{n}(y)}{\Vert x -y \Vert^{2}}  dy 
	\end{equation}
	and
	\begin{eqnarray}\label{SrW}
	\nonumber
	\mathcal{S}_{r} \left(e^{(3)}_{n}\right)(x) &:=& \frac{i (ak)^{3}}{6 \pi} \int_{B} e^{(3)}_{n}(y) dy + \frac{1}{4 \pi} \; \sum_{\ell \geq 3} \frac{(ika)^{\ell +1}}{(\ell +1)!} \; \int_{B} \; \underset{x}{Hess} \left( \left\Vert x - y \right\Vert^{\ell} \right) \cdot e^{(3)}_{n}(y) \, dy \\
	&+&   \frac{k^{2} \, a^{2}}{4 \pi} \; \sum_{\ell \geq 1} \frac{(ika)^{\ell +1}}{(\ell +1)!} \; \int_{B} \; \left\Vert x - y \right\Vert^{\ell}  e^{(3)}_{n}(y) \, dy.
	\end{eqnarray} 
	First, we estimate 
	\begin{equation*}
		J_{13} := a^{3} \eta \; \sum_{n}  \frac{1}{1 + \eta \lambda_{n}^{(3)}}  \, \langle \overset{\sim}{V}, \mathcal{S}_{r} \left(e^{(3)}_{n}\right) \rangle \otimes \int_{B} e^{(3)}_{n}(x) \, dx  \simeq  a^{3}  \; \sum_{n}  \langle \overset{\sim}{V}, \mathcal{S}_{r} \left(e^{(3)}_{n}\right) \rangle \otimes \int_{B} e^{(3)}_{n}(x) \, dx.
	\end{equation*}
	By keeping the dominant term of $(\ref{SrW})$, we see that $J_{13}$ fulfills 
	\begin{eqnarray*}
		J_{13} & \simeq & a^{6}  \; \sum_{n}  \langle \overset{\sim}{V}, \int_{B} e^{(3)}_{n}(x) \, dx \rangle \otimes \int_{B} e^{(3)}_{n}(x) \, dx \\
		\left\vert J_{13} \right\vert & \lesssim & a^{6}  \; \left( \sum_{n}  \left\vert \langle \overset{\sim}{V}, \int_{B} e^{(3)}_{n}(x) \, dx \rangle \right\vert^{2} \right)^{\frac{1}{2}} \, \left( \sum_{n}  \left\vert \int_{B} e^{(3)}_{n}(x) \, dx\right\vert^{2} \right)^{\frac{1}{2}} =   \mathcal{O}\left(a^{6} \; \left\Vert \overset{\sim}{V} \right\Vert_{\mathbb{L}^{2}(B)} \right).
	\end{eqnarray*}
	Then equation $(\ref{beforJ12})$ becomes
	\begin{equation*}
	\mathcal{C}_{j_0}  =  a^{3} \; \sum_{n} \frac{1}{1 + \eta \lambda_{n}^{(3)}} \left[   \int_{B} e^{(3)}_{n}(x) \, dx + \eta  \, \langle \overset{\sim}{V}, \mathcal{S}_{d} \left(e^{(3)}_{n}\right) \rangle \right] \otimes \int_{B} e^{(3)}_{n}(x) \, dx + \mathcal{O}\left(a^{6} \; \left\Vert \overset{\sim}{V} \right\Vert_{\mathbb{L}^{2}(B)} \right). 
\end{equation*}		
With the help of the expression of\footnote{To write short, in $(\ref{SdW})$, the estimation of the second term of the right side is neglected because the singularity of the corresponding kernel behaves like the one of the Newtonian operator.} $\mathcal{S}_{d} \left(e^{(3)}_{n}\right)$ in $(\ref{SdW})$, we have
\begin{equation*}		
\mathcal{C}_{j_0}  =  a^{3} \; \sum_{n} \frac{1}{1 + \eta \lambda_{n}^{(3)}} \,  \left[  \int_{B} e^{(3)}_{n}(x) \, dx + \eta \, \frac{k^{2} \, a^{2}}{2} \langle \overset{\sim}{V}, N \left(e^{(3)}_{n}\right) \rangle \right] \otimes \int_{B} e^{(3)}_{n}(x) \, dx + \mathcal{O}\left(a^{6} \; \left\Vert \overset{\sim}{V} \right\Vert_{\mathbb{L}^{2}(B)} \right).
\end{equation*}
Then, using the fact that $k^{2} \, \eta \, a^{2} = \dfrac{1 \mp c_{0} \, a^{h}}{\lambda^{(1)}_{n_{0}}}$ given by  $(\ref{choice-k-1st-regime})$, there holds
\begin{eqnarray*}		
\mathcal{C}_{j_0} & = & a^{3} \; \sum_{n} \frac{1}{1 + \eta \lambda_{n}^{(3)}} \,  \left[  \int_{B} e^{(3)}_{n}(x) \, dx +  \frac{1}{2 \, \lambda^{(1)}_{n_{0}}} \,  \langle \overset{\sim}{V}, N \left(e^{(3)}_{n}\right) \rangle \right] \otimes \int_{B} e^{(3)}_{n}(x) \, dx  \\
& \mp & a^{3} \; \sum_{n} \frac{1}{1 + \eta \lambda_{n}^{(3)}} \,   \frac{ c_{0} \, a^{h}}{2 \, \lambda^{(1)}_{n_{0}}} \,  \langle \overset{\sim}{V}, N \left(e^{(3)}_{n}\right) \rangle  \otimes \int_{B} e^{(3)}_{n}(x) \, dx + \mathcal{O}\left(a^{6} \; \left\Vert \overset{\sim}{V} \right\Vert_{\mathbb{L}^{2}(B)} \right).
\end{eqnarray*}
It is easy to verify that for 
\begin{eqnarray*}
L_{5} &:=& a^{3} \; \sum_{n} \frac{1}{1 + \eta \lambda_{n}^{(3)}} \,   \frac{ c_{0} \, a^{h}}{2 \, \lambda^{(1)}_{n_{0}}} \,  \langle \overset{\sim}{V}, N \left(e^{(3)}_{n}\right) \rangle  \otimes \int_{B} e^{(3)}_{n}(x) \, dx, \\
\left\vert L_{5} \right\vert & \lesssim & a^{5+h} \; \sum_{n} \left\vert  \langle N \left( \overset{\sim}{V}\right), e^{(3)}_{n} \rangle  \otimes \int_{B} e^{(3)}_{n}(x) \, dx \right\vert \\
 & \lesssim & a^{5+h} \;  \left\Vert   N \left( \overset{\sim}{V}\right) \right\Vert_{\mathbb{L}^{2}(B)} = \mathcal{O}\left(  a^{5+h} \;  \left\Vert  \overset{\sim}{V} \right\Vert_{\mathbb{L}^{2}(B)} \right).
\end{eqnarray*}
Hence, 
\begin{equation*}		
\mathcal{C}_{j_0} =  a^{3} \; \sum_{n} \frac{1}{1 + \eta \lambda_{n}^{(3)}} \,  \left[  \int_{B} e^{(3)}_{n}(x) \, dx +  \frac{1}{2 \, \lambda^{(1)}_{n_{0}}} \,  \langle \overset{\sim}{V}, N \left(e^{(3)}_{n}\right) \rangle \right] \otimes \int_{B} e^{(3)}_{n}(x) \, dx + \mathcal{O}\left(a^{5+h} \; \left\Vert \overset{\sim}{V} \right\Vert_{\mathbb{L}^{2}(B)} \right),
\end{equation*}
	where we deduce that\footnote{The smallness of the term $a^{3} \; \underset{n}{\sum} \dfrac{1}{1 + \eta \lambda_{n}^{(3)}} \,  \dfrac{1}{2 \, \lambda^{(1)}_{n_{0}}} \,  \langle \overset{\sim}{V}, N \left(e^{(3)}_{n}\right) \rangle  \otimes \int_{B} e^{(3)}_{n}(x) \, dx$, can be proved using the estimation of $\left\Vert \overset{\sim}{V} \right\Vert_{\mathbb{L}^{2}(B)}$, given by $(\ref{Norm-Scale-W})$.} 
	\begin{equation}\label{int-W}
\mathcal{C}_{j_0} \sim  a^{3} \; \sum_{n} \frac{1}{1 + \eta \lambda_{n}^{(3)}} \,    \int_{B} e^{(3)}_{n}(x) \, dx  \otimes \int_{B} e^{(3)}_{n}(x) \, dx \sim a^{5}. 
	\end{equation}
	Next, we estimate the $\mathbb{L}^{2}(B)$-norm of $\overset{\sim}{V}$. For this, using $(\ref{w-en1}),(\ref{W-en2})$ and $(\ref{W,e3})$, we obtain that 
	\begin{eqnarray*}
		\left\Vert \overset{\sim}{V} \right\Vert^{2}_{\mathbb{L}^{2}(B)} &=& \sum_{n} \left\vert \langle \overset{\sim}{V}; e_{n}^{(1)} \rangle \right\vert^{2} + \sum_{n} \left\vert \langle \overset{\sim}{V}; e_{n}^{(3)} \rangle \right\vert^{2} \\
		& \simeq & \sum_{n}\left\vert \frac{1}{\left( 1 - k^{2} \, \eta \, a^{2} \, \lambda_{n}^{(1)} \right)} \; \sum_{\ell \geq 1} \frac{(ika)^{\ell+1}}{(\ell+1)!} \; \int_{B} \overset{\sim}{V}(x) \cdot \int_{B} \; \left\Vert x - y \right\Vert^{\ell}  e_{n}^{(1)}(y) \, dy \, dx \right\vert^{2} \\
		&+& \sum_{n} \left\vert  \frac{1}{1 + \eta \lambda_{n}^{(3)}}  \int_{B} e^{(3)}_{n}(x) \, dx + \frac{\eta }{ 1+ \eta \lambda_{n}^{(3)}}  \, \langle \overset{\sim}{V}, \mathcal{S}_{d}\left(e^{(3)}_{n}\right) \rangle \right\vert^{2} \\ 
		& \lesssim & a^{4-2h} \, \sum_{n}  \sum_{\ell \geq 2}  \left\vert \int_{B} \overset{\sim}{V}(x) \cdot \int_{B} \; \frac{\left\Vert x - y \right\Vert^{\ell}}{(\ell+1)!}  e_{n}^{(1)}(y) \, dy \, dx \right\vert^{2} \\
		&+& a^{4} \sum_{n} \left\vert  \langle I , e^{(3)}_{n} \rangle \right\vert^{2} +  \sum_{n} \left\vert   \, \langle\mathcal{S}^{\star}_{d}\left(\overset{\sim}{V}\right), e^{(3)}_{n} \rangle \right\vert^{2}.
	\end{eqnarray*}
	Since the kernel of $\mathcal{S}_{d}$ behaves like the one of the Newtonian operator, in terms of singularity, the corresponding of the adjoint operator $\mathcal{S}^*_{d}$ also behaves like the one of the Newtonian operator. Then for the last term above, we get  
	\begin{equation*}
	\sum_{n} \left\vert   \, \langle\mathcal{S}^{\star}_{d}\left(\overset{\sim}{V}\right), e^{(3)}_{n} \rangle \right\vert^{2} \overset{(\ref{SdW})}{\sim} a^{4} \,  \sum_{n} \left\vert   \,\langle N\left(\overset{\sim}{V}\right), e^{(3)}_{n} \rangle \right\vert^{2} \lesssim a^{4} \; \left\Vert N\left(\overset{\sim}{V}\right) \right\Vert^{2} \lesssim a^{4} \; \left\Vert \overset{\sim}{V} \right\Vert^{2}.
	\end{equation*}
In addition,	 we have 
	\begin{eqnarray*}
		\sum_{n}  \sum_{\ell \geq 1}  \left\vert \int_{B} \overset{\sim}{V}(x) \cdot \int_{B} \; \frac{\left\Vert x - y \right\Vert^{\ell}}{(\ell+1)!}  e_{n}^{(1)}(y) \, dy \, dx \right\vert^{2} & = &    \sum_{\ell \geq 1}  \sum_{n} \left\vert \int_{B} \int_{B} \overset{\sim}{V}(x) \frac{\left\Vert x - y \right\Vert^{\ell}}{(\ell+1)!} \, dx \cdot e_{n}^{(1)}(y) \, dy \right\vert^{2} \\
		& \lesssim & \sum_{\ell \geq 1} \left\Vert \int_{B}  \overset{\sim}{V}(x) \frac{\left\Vert x - \cdot \right\Vert^{\ell}}{(\ell+1)!} \, dx \right\Vert^{2} \lesssim  \left\Vert  \overset{\sim}{V} \right\Vert^{2}.
	\end{eqnarray*}
	We use all these estimates to deduce that
	\begin{equation}\label{Norm-Scale-W}
	\left\Vert \overset{\sim}{V} \right\Vert^{2}_{\mathbb{L}^{2}(B)} 
	\lesssim   a^{4} \sum_{n} \left\vert  \langle I , e^{(3)}_{n} \rangle \right\vert^{2} \sim a^{4}.
	\end{equation} 
	Going back to $(\ref{Dj.E.B.C})$, by using $(\ref{Norm-Scale-W})$, we get 
	\begin{equation}\label{la-1-mid}
	\int_{D_{j_{0}}} \overset{3}{\mathbb{P}}\left(E^{T}_{j_{0}}\right)(x) \, dx - \eta \, \sum_{j=1 \atop j \neq j_{0}}^{\aleph} \mathcal{C}_{j_0} \cdot  \Upsilon_{k}(z_{j_{0}},z_{j}) \cdot \int_{D_{j}} \overset{3}{\mathbb{P}}\left(E^{T}_j\right)(y)\, dy = \mathcal{C}_{j_0}  \cdot E^{Inc}_{j_{0}}(z_{j_{0}})  + \mathcal{O}\left(a^{\min(6;8-h-3t)}\right).  
	\end{equation} 
	By injecting the expression $(\ref{int-W})$, of $\mathcal{C}_{j_0}$, into \eqref{la-1-mid}, after rearranging terms, we derive the linear algebraic systerm of the form \eqref{la-2}. Finally, replacing $\left\lVert[P_0]\right\rVert$ in \eqref{invert-condi-gene} by 
	\begin{equation}\notag
a^3 \, \left\lVert \sum_{n} \, \frac{1}{1+\eta \, \lambda_{n}^{(3)}} \, \int_{B}e_n^{(3)}(y)\,dy\otimes\int_{B}e_n^{(3)}(y)\,dy\right\rVert,
	\end{equation}
	 we deduce the invertibility condition \eqref{inver-con-2} for the linear algebraic systerm \eqref{la-2}.
\end{proof}

\end{document}